\let\svthefootnote\thefootnote
\newcommand\freefootnote[1]{%
  \let\thefootnote\relax%
  \footnotetext{#1}%
  \let\thefootnote\svthefootnote%
}
\definecolor{ps}{RGB}{0,0,200}
\newcommand{\longcomment}[1]{}
\begin{document}

\title{
%A Precise Characterization of Finite-Dimensional Posterior Marginals for GLMs under Proportional Asymptotics or 
Characterizing Finite-Dimensional Posterior Marginals in
High-Dimensional GLMs via Leave-One-Out}
%\title{Characterizing Finite-Dimensional Posterior Marginals in High Dimensions}
%\\ or ``under Proportional Asymptotics via Leave-one-Out}
\author[1]{Manuel S\'aenz\textsuperscript{\textdagger}}
\author[2]{Pragya Sur\textsuperscript{\textdagger}}
\affil[1]{Departamento de Matem\'atica y Ciencias, Universidad de San Andres}
\affil[2]{Department of Statistics, Harvard University}

\maketitle
%Despite extensive advances in high-dimensional Bayesian inference, majority of prior work is restricted to the ultra-high-dimensional regime that requires sparsity assumptions for suitable inference. %consider the setting where 
%We characterize the behavior of finite-dimensional marginals of the posterior distribution in high-dimensional generalized linear models (GLMs). Specifically, we consider the proportional asymptotics regime, where the number of samples and features diverge at a comparable rate. 
%We characterize the behavior of finite-dimensional marginals of the posterior distribution in high-dimensional generalized linear models (GLMs). Specifically, we consider the proportional asymptotics regime, where the number of samples and features diverge at a comparable rate.
\begin{abstract}
We investigate Bayes posterior distributions in high-dimensional generalized linear models (GLMs) under the proportional asymptotics regime, where the number of features and samples diverge at a comparable rate. Specifically, we characterize the limiting behavior of finite-dimensional marginals of the posterior.  
%While the proportional asymptotics regime has been extensively studied in frequentist statistics, it has only recently received attention in the Bayesian context.
We establish that the posterior does not contract in this setting. Yet, the finite-dimensional posterior marginals converge to Gaussian tilts of the prior, where the mean of the Gaussian depends on the true signal coordinates of interest. Notably, the effect of the prior survives even in the limit of large samples and dimensions. %Notably, the Gaussian tilt depends on the true signal coordinates of interest. We show that the posterior does not contract in this setting, yet its finite-dimensional marginals retain information about the signal coordinates. 
We further characterize the behavior of the posterior mean and demonstrate that the posterior mean can strictly outperform the maximum likelihood estimate in mean-squared error in natural examples.  Importantly, our results hold regardless of the sparsity level of the underlying signal.
%As a consequence of our results, we can study the frequentist performance of the posterior mean. In particular, we demonstrate that the posterior mean can strictly outperform the maximum likelihood estimate in mean-squared error in natural examples. 
On the technical front, we introduce leave-one-out strategies for studying these marginals that may be of independent interest for analyzing low-dimensional functionals of high-dimensional signals in other Bayesian inference problems.

.\freefootnote{\textsuperscript{\textdagger}Contact: saenzm@udesa.edu.ar, pragya@fas.harvard.edu.}
\end{abstract}

    %\tableofcontents

\section{Introduction}\label{sec:intro}
High-dimensional Bayesian inference has garnered significant recent attention. Early work considered high-dimensional Bayesian regression where the dimension diverges at a rate  slower than the sample size, and explored when low-dimensional results continue to hold in such settings \cite{ghosal1997normal,ghosal1999asymptotic,ghosal2000asymptotic}. Following this, extensive prior work studied the ultra-high-dimensional regime where the number of features $p$ is much larger than the number of samples $n$, and the underlying signal is sufficiently sparse or approximately sparse \cite{jiang07,castillo2015bayesian,narisetty_he_14,song2023nearly,atchade17,ningetal20,gaoetal20,wei2020contraction}. This line of work identified conditions under which the posterior contracts, is asymptotically normal or mixture of normals, and Bernstein-von-Mises theorems (BvMs) hold. In this paper, we deviate from both these settings and consider the high-dimensional regime where the number of features and samples diverge at a rate comparable to each other, also known as the proportional asymptotics regime. 

This regime has become popular in modern high-dimensional statistics due to the following attractive features. 
First, asymptotic theory established under this regime exhibits remarkable finite sample behavior, as demonstrated extensively in  prior work  \cite{donoho2009message,el2018impact,el2013robust,sur2019likelihood,sur2019modern,candes2020phase,zhao2022asymptotic}. Second, the regime allows one to move away from traditional strong sparsity assumptions on the underlying signal, and develop rigorous procedures that remain valid under sparse to dense signals, leading to methods with impressive practical performance \cite{sur2019modern,celentano2020lasso,bellec2019biasing,zhong2022empirical,li2023spectrum,song2024hede,chen2024method}. Finally, this regime allows for the precise characterization of new high-dimensional phenomena that may be difficult to track or explain using other mathematical frameworks \cite{sur2019modern,mignacco2020role,hastie2022surprises,song2024generalization}. 
%In turn, this leads to the development of novel methods with impressive practical performance \cite{celentano2020lasso,bellec2019biasing,li2023spectrum,zhong2022empirical}. 
In light of these attractive properties, several prior work has studied this regime in a frequentist setting. %(c.f.~\cite{montanari2022short,feng2022unifying} and references cited therein). 

%[MAYBE a line logistic likelihood stuff here would be good.] 

In the Bayesian context, a long literature in statistical physics and information theory has studied this regime in the so-called matched setting, where the statistician knows the true prior underlying the signal \cite{zdeborova2016statistical,barbier2019optimal,montanari2024friendly,feng2022unifying}. However, from a statistical perspective, the mismatched setting, where the statistician does not know the true underlying prior, is certainly more interesting. Only recently, \cite{barbier2021performance} studied a high-dimensional linear model in a mismatched setting, establishing properties of the log-normalizing constant and mean-squared error of Bayesian estimators in presence of a Gaussian prior. In this paper, we provide for the first time precise characterization of finite-dimensional marginals of the posterior in a high-dimensional Bayesian generalized linear model under the proportional asymptotics regime in the more realistic mismatched setting. Understanding marginals requires a more nuanced analysis than that for understanding the log-normalizing constant or mean-squared errors. For the latter, an ``average case'' analysis of the model suffices. In our case, a more nuanced leave-one-out technique is necessary that we introduce in our work. 
We summarize our major contributions below. 
%\textcolor{red}{To take pass on below list, especially in light of \cite{lee2025clt}.}
%\ps{ERvisit the below later.}

\begin{enumerate}
\item We provide the first characterization of finite-dimensional marginals of the posterior in the aforementioned setting (Theorem \ref{thm:marginals}). Specifically, we uncover that, unlike in the low-dimensional or ultra-high-dimensional regression literature in statistics, the posterior marginals are Gaussian tilts of the prior, where we precisely pin down the tilting function. The mean of the Gaussian tilt is a constant times the true signal and additional random noise variables. 
\item  Our result also shows that the effect of the prior does not wash away, even in the limit of large samples and dimensions. 
 This is a critical observation---since this can pave the path for designing priors that help in high-dimensional learning, and constructing  credible intervals with low width. 
 \item Furthermore in this regime, posterior contraction does not occur (Theorem \ref{thm:contraction}), yet our posterior marginal characterization can provide first steps toward conducting inference on pre-specified finite number of coordinates of the signal.
\item As a corollary, our result provides a characterization of the posterior mean in the aforementioned high-dimensional regime (Corollary \ref{cor:marginals_bayes}). Once again, the effect of the prior survives even asymptotically. %\textcolor{red}{Should I elaborate on this? To take a call at the end.}\ms{Maybe it is a good idea. It emphasizes that Bayesian and frequentist methods will differ in this regime. In particular, when data is scarse, a good prior can help A LOT. Do you think this is a reasonable storytelling?}
\item Crucially, based on our aforementioned characterization, we construct an explicit setting where the posterior mean strictly outperforms the MLE (c.f., the left panel of Figure \ref{fig:mse_graphs2}), even in a setting where the MLE is well-defined.\footnote{Note, although we consider a high-dimensional regime where the number of features $p$ and samples $n$ both diverge, we allow them to grow proportionally so that $p/n \rightarrow \kappa$. For $\kappa \in (0,1)$ the setting is still high-dimensional in that high-dimensional phenomena would emerge but the MLE can still be well-defined for some subset of $\kappa$ values in this range. The exact subset would depend on the GLM considered.} This regime uncovers a striking departure from classical behavior: the posterior mean no longer converges to the MLE, even in the limit of large samples. Indeed, this divergence is favorable for Bayesian methods, since the MLE is known to perform poorly in high dimensions \cite{sur2019modern}. But it also consolidates the fact that one should not pursue classical BvMs in this regime
%should look different from classical BvMs
since the MLE is strictly sub-optimal. To overcome issues with the MLE, a debiased MLE was introduced in \cite{sur2019modern}. We illustrate in the right panel of Figure \ref{fig:mse_graphs2} that a function of the posterior mean can strictly outperform this debiased MLE.
%\textcolor{red}{Write Figure 1b as a takeaway. Very possible to beat the MLE even with very simple priors. Another along the line of famous Stein shrinkage phenomenon plus literature.}
\item In terms of technical contributions, this paper provides the first leave-one-out analysis for a high-dimensional Bayesian model with a planted signal and shows how such leave-one-out techniques can be used to investigate posterior marginals. 
In the frequentist literature, leave-one-out analyses for understanding fluctuations and other properties of estimators have been introduced for high-dimensional linear and logistic regression \cite{el2013robust,el2018impact,sur2019modern}, treatment effect estimation \cite{jiang2022new} (see \cite{chen2021spectral} for other applications). To the best of our knowledge, in the Bayesian literature, leave-one-out ideas have been employed only for global null models in the statistical physics and probability literature \cite{talagrand2010mean,barbier2022marginals}. These settings are distinct from settings of interest to the statistician where there is an unknown underlying signal of interest in the data generating process. Presence of such a planted signal, especially in conjunction with the non-linear link functions in GLMs, raises significant technical difficulties that we overcome in this manuscript. 
\end{enumerate}

In recent work, a regime where the posterior does not contract was studied in the context of regression problems through the lens of  Variational Bayes \cite{mukherjee2022variational,mukherjee2023mean,lee2025clt}, but their framework does not cover ours. In fact, in our setting, their feature dimension can grow at most negligible to the sample size---thus they work in  a setting complementary to ours. The proportional asymptotics regime has been studied for empirical Bayes problems in recent literature, including interesting algorithmic advances \cite{zhong2022empirical,fan2023gradient,fan2025dynamical}. However, these works do not study the behavior of finite-dimensional posterior marginals, and it remains unclear how their tools can be utilized to address this question (see also \cite{sterzinger2023diaconis}). Our results also have interesting connections to the literature on Gaussian sequence models \cite{johnstone2001distribution} and non-parametric Bayes \cite{regazzini2003distributional,james2009posterior,rousseau2011asymptotic,de2013asymptotic,ghosal2017fundamentals}. We discuss these connections to prior work in detail in Section \ref{sec:results}. 

Finally, our leave-one-out analyses provide an analogue of classical local asymptotic analysis in high dimensions. Our key proof idea relies on tilting the full posterior with respect to leave-one-out posteriors that we introduce in Section \ref{sec:model}. Subsequently, we establish that the tilt is asymptotically Gaussian,  as in traditional Le Cam  theory \cite{le1956asymptotic,van2002statistical}. Our proof heuristics section (Section \ref{sec:pfoutline}) elaborates on this connection. The analogy with local asymptotic analyses can be particularly observed in \eqref{eq:fullnlooo} and \eqref{eq:loo2}, where we express the full posterior in terms of tilts of the leave-a-variable-out and leave-an-observation-out posteriors respectively. We believe this leave-one-out framework can be used to study many other problems in Bayesian inference where low-dimensional functionals of the high-dimensional signal are of interest. Given the emerging interest in this area in the high-dimensional Bayesian community \cite{castillo2024variational,lee2025clt}, our work here is meant to provide a general-purpose tool for studying such problems in Bayesian inference. 

%\textcolor{red}{Write two lines about the culture of studying coefficients or low-dimensional functionals in the recent literature; and that we are providing a general math tool for this. }

The rest of this manuscript is organized as follows: Section \ref{sec:model} introduces our formal setup and key ingredients required for our leave-one-out analyses. Section \ref{sec:results} introduces our main results and provides a short discussion on immediate applications. Section \ref{sec:pfoutline} describes our proof outline---given the technical nature of the paper, we believe this is the most accessible description of our overarching leave-one-out strategies. Finally, we conclude with a discussion of future directions in Section \ref{sec:discussion}. 
    \section{Formal Setup and Leave-one-Out Posteriors}\label{sec:model}
%\noindent
%\subsection{Formal setup}
%\subsection{{\large Formal Setup}}
We work in a high-dimensional regime where the number of samples $n$ and features $p(n)$ both diverge with $p(n)/n \rightarrow \kappa >0$. That is, we consider a sequence of problem instances 
%\begin{equation*}
 $  \{\bm{y}(n),\bm{X}(n),\bbeta_{\star}(n),\bm{e}(n) \}_{n \geq 1},$
%\end{equation*}
where $\bm{y}(n)\in \mathbb{R}^{n}$ denotes the vector of $n$ outcomes, $\bm{X}(n) \in \mathbb{R}^{n \times p}$ the design matrix, $\bbeta_{\star}(n) \in \mathbb{R}^{p}$ the unknown signal and $\bm{e}(n) \in \mathbb{R}^n$ the unobserved noise variables. Moving forward, we suppress the dependence on $n$ when it is clear from context. We assume that the samples are i.i.d.~with the $i$-th sample satisfying the following single-index model
\begin{equation}\label{eq:model}
    y_i = f(\bX_i^{\top}\bm{\beta}_{\star},e_i), 
\end{equation}
where $\bX_i$ is the $i$-th row of $\bX$, $f(\cdot)$ is some non-linear function and $e_i\sim \text{Unif}[0,1]$ is independent of $\bX_i$. As a running example, consider logistic regression where $f(\bX_i^{\top}\bbeta_\star,e_i) = \mathbb{I}_{\{\bX_i^{\top}\bbeta_\star \geq \sigma^{-1}(e_i)\}}$ and $\sigma(x) = e^x/(1+e^x)$ is the sigmoid function.

%We consider the setting where the true model link $f(\cdot)$ and the measure $\pi(\cdot)$ are, possibly, not known to the statistician. We therefore allow for a mismatch between the model and the data generating mechanism.
%\ps{Does it matter to us whether f is unknown or not, I feel } \note{It does not have to be unknown, but we include that case. We should frame it differently.}

We seek to study properties of the posterior distribution when the statistician posits an i.i.d.~prior $\mu(\cdot)$ on the signal coordinates and assumes the data is generated from a canonical generalized linear model (GLM). 
%To this end, we assume the statistician posits an i.i.d.~prior $\mu(\cdot)$ on the coordinates and calculates the posterior distribution. In this paper, we will characterize properties of finite-dimensional marginals of this posterior. For s
%she computes the posterior distribution using a factorised prior (given by the product of some distribution $\mu(\cdot)$) in a Bayesian estimation scheme. We seek to characterize properties of the posterior in the aforementioned setting. 
For simplicity, we assume that the covariates satisfy $\bX_i \stackrel{\text{i.i.d.}}{\sim} \mathcal{N}(\bm{0},\bm{I}/n)$. Though this  assumption is stylized, we will see that new high-dimensional phenomena emerge in our aforementioned setting even in this simple setting. Defining functions 
\begin{equation}\label{eq:def_tT_u}
    \tilde{T}_i(\cdot) := T \circ f(\cdot,e_i) \,\, \text{and} \,\, u_i(x,y) := x \tT_i(y) - A(x),
\end{equation}
for some function  $T(\cdot)$, the log-likelihood of our canonical GLM takes the form
\begin{equation}\label{eq:glmhamilton}
    \cL_{n,p}(\bB) := \sum_{i \in[n]} \tilde{T}_i(\bX_i^{\top}\bbeta_{\star})\bX_i^{\top}\bB - A(\bX_i^{\top}\bB) = \sum_{i \in[n]} u_i(\bX_i^{\top}\bB,\bX_i^{\top}\bbeta_{\star}),
\end{equation}
%where we defined the random functions $\tilde{T}_i(\cdot) := T \circ f(\cdot,e_i)$ and $u_i(x,y) := x \tT_i(y) - A(x)$. 
for $\bB\in\R^p$ and suitable log-normalizing constant $A(\cdot)$. 

We often analyze the joint behavior of independent samples from the posterior. For a given realization of the data, we denote $l$ independent samples from a posterior\footnote{We will use different posteriors for different parts of our calculations; when we refer to samples from a posterior, it should be clear from context which posterior is involved; where this is not the case, we will specify explicitly.} by $\bbeta^{(1)},\bbeta^{(2)}, \hdots, \bbeta^{(l)}$. When the superscript is omitted, we assume it equals $1$, i.e.~$\bbeta$ is used to denote $\bbeta^{(1)}$. 
Note the log-likelihood involves $\bbeta_\star$ and $\bm{b}$ through the inner products with $\bX_i$'s. We introduce a specific notation for these inner products for convenience.  For $m\in[l]$, $\ba_\star$ and $\ba_m$ will denote the random vectors in $\R^n$ given by 
\begin{equation}\label{eq:a_is}
\ba_\star=(\bX_1^{\top}\bbeta_\star,\hdots,\bX_n^{\top}\bbeta_\star), \qquad \ba_m = (\bX_1^{\top}\bbeta^{(m)},\hdots,\bX_n^{\top}\bbeta^{(m)})
%a_{\star,i}:=\bX_i^\top\bbeta_\star, \,\, a_{l,i}:=\bX_i^\top\bbeta^{(l)},
 \end{equation}
As before, when the $m$ subscript is omitted, we assume it equals $1$. That is, $\bm{a}$ refers to $\bm{a}_1$. Let $a_{\star,i}$ and $a_{m,i}$ denote the $i$-th coordinates of $\ba_{\star}$ and $\ba_{m}$ respectively. 
%respectively. 

%\ps{Which one is $a_1$ really? I added the last above line, then can we delete the below.}for example, $\ba$ and $a_n$ will be used to refer to $\ba_1$ and $a_{1,n}$.

%\ps{Why do we need the below? Suggest move later whenever we first need. Or put in abbreviation para}In a similar way, we will denote deterministic vectors in $\R^p$ by $\bB^{(1)}$, $\bB^{(2)}$, $\bB^{(3)}$, etc.

Toward understanding the behavior of the posterior distribution, one of our central results quantifies the expected value of a function $g(\cdot)$ applied to $l$ independent samples from the posterior, for any integer $l$. Note, any such expectation involves two sources of randomness---one coming from the  data, and the other coming from sampling from the posterior, given any realization of the data. We will distinguish these sources of randomness, using $\mathbb{E}(\cdot)$ to denote the expectation with respect to the data randomness and $\langle\cdot \rangle$ to denote the expectation with respect to the posterior. Thus, for a function $g:\R^{pl}\mapsto\R$, we define
\begin{equation}\label{eq:posterior_expectation}
    \left\langle g(\bbeta^{(1)},\hdots, \bbeta^{(l)})\right\rangle = \frac{1}{Z_n}\int  g(\bbeta^{(1)},\hdots,\bbeta^{(l)}) \exp\left\{\sum_{m\in[l]} \cL_{n,p}(\bbeta^{(m)})\right\}\prod_{m\in[l]}\prod_{j\in[p]}\mu(d\beta_j^{(m)}),
\end{equation}
where $Z_n$ denotes the appropriate normalizing constant. 
%\ms{Add integral definition of expectations wrt leave one out}

%\vspace{0.3cm}
\subsection{Leave-one-Out Posteriors}\label{sec:LOOs}
\noindent

Two critical components in our mathematical analysis are the \emph{leave-an-observation-out} and \emph{leave-a-variable-out} posteriors. The former is the posterior distribution obtained using $n-1$ observations instead of all $n$. It corresponds to the log-likelihood
\begin{equation}\label{eq:looposterior}
    \logl_{n-1,p}(\bB) = \sum_{i \in[n-1]} u_i(\bX_i^\top\bB,\bX_i^\top\bbetas)
\end{equation}
with i.i.d.~prior $\mu$ applied to each of the $p$ coordinates. If $\bbeta^{(1)},\dots,\bbeta^{(l)}$ are $l\geq1$ independent samples from the leave-an-observation-out posterior, we will let, with some abuse of notation, $\ba_1,\dots,\ba_l\in\R^{n}$ be defined as the vectors with $i$-th coordinates $\bX_i^\top\bbeta^{(1)},\dots,\bX_i^\top\bbeta^{(l)}$, respectively. Again, dropping the $m$ superscript corresponds to taking $m=1$.
%\ps{My suggestion is to keep $k,r$ the same. Any issues or inconsistency with that later? Else I will change this.}. 
Then for $g:\R^{n(l+1)}\mapsto\R$ we denote by $\langle g(\ba_\star,\ba_1,\dots,\ba_l)\rangle_o$ the expectation of $g(\ba_\star,\ba_1,\dots,\ba_l)$ under the leave-an-observation-out posterior (given the observed data). Thus, formally
\begin{equation}\label{eq:mean_observation}
    \thermal{g(\ba_\star,\ba_1,\dots,\ba_l)}_o := \frac{\int g(\bX_1^\top\bbetas,\dots,\bX_{n}^\top\bbeta^{(l)}) \prod_{m\in[l]}\Big\{\exp\{\logl_{n-1,p}(\bbeta^{(m)})\}\prod_{j=1}^p \mu(d\beta^{(m)}_j)\Big\}}{\int \prod_{m\in[l]}\Big\{\exp\{\logl_{n-1,p}(\bbeta^{(m)})\} \prod_{j=1}^p \mu(d\beta^{(m)}_j)\Big\}}.
\end{equation}
%\textcolor{red}{Write what above would mean when we have independent copies like explained in eqn \ref{eq:posterior_expectation}}\ms{Done.}

Analogously, the leave-a-variable-out posterior is the posterior distribution obtained on dropping one of the $p$ variables while observing all $n$ samples. Fix $j_0\in[p]$. We will use the following convention. Given a vector $\bB\in\R^p$, we will denote by $\tbB\in\R^{p-1}$ the vector obtained by dropping the $j_0$-th coordinate. Thus, $\tilde{\bX}_i,\tilde{\bbeta_\star}$ denote the vectors $\bX_i,\bbeta_{\star}$ respectively, with the $j_0$-th component removed. Define the leave-a-variable-out log-likelihood as
\begin{equation}\label{eq:lovposterior}
    \logl_{n,p-1}(\tbB) = \sum_{i \in[n]} u_i(\tbX_i^\top\tbB,\tbX_i^\top\tbbetas),
\end{equation}
where $\tbB \in \R^{p-1}$. The leave-a-variable-out posterior is a measure on $\R^p$ corresponding to the log-likelihood $\logl_{n,p-1}(\cdot)$ and prior $\mu^{\otimes p}$. Let $\bbeta^{(1)},\dots,\bbeta^{(l)}$ be $l\geq1$ samples from the leave-a-variable-out posterior. Then, for every integrable $f:\R^{p(l+1)}\mapsto\R$, we will define expectations under the leave-a-variable-out posterior (given the observed data) to be
\begin{equation}\label{eq:mean_variable}
    \thermal{f(\bbetas,\bbeta^{(1)},\dots,\bbeta^{(l)})}_v := \frac{\int f(\bbetas,\bbeta^{(1)},\dots,\bbeta^{(l)})\prod_{m\in[l]} \Big\{\exp\{ \logl_{n,p-1}(\tbbeta^{(m)}) \} \prod_{j\in[p]} \mu(d\beta^{(m)}_j)\Big\}}{\int \prod_{m\in[l]} \Big\{\exp\{ \logl_{n,p-1}(\tbbeta^{(m)}) \} \prod_{j\in[p]} \mu(d\beta^{(m)}_j)\Big\}}.
\end{equation}
%\textcolor{red}{Write what above would mean when we have independent copies like explained in eqn \ref{eq:posterior_expectation}}
%\textcolor{red}{When we use this in appendix, we sometimes use this for an $\ell$-independent copy version, should we give that a separate notation or do an abuse of notation?}\ms{Now it is a multi replica definition as we agreed.}
Observe that, if $\bbeta$ is a sample from the leave-a-variable-out measure, then $\tbbeta$ is distributed according to a posterior on $\R^{p-1}$ corresponding to log-likelihood $\logl_{n,p-1}(\cdot)$ and prior $\mu^{\otimes p-1}$ and is independent of $\beta_{j_0}$ (the $j_0$-th coordinate of $\bbeta$), which is distributed according to the prior $\mu$. In analogy to the previous notations, we will let $\tbas$ and $\tba_l$ be the random vectors in $\mathbb{R}^n$ given by 
\begin{equation}\label{eq:ta_is}
 \tbas =(\tbX_1^\top \tbbeta_\star,\hdots, \tbX_n^\top\tbbeta_\star)^{\top}, \qquad \tba_{\ell} = (\tbX_1^\top \tbbeta^{(\ell)},\hdots, \tbX_n^\top\tbbeta^{(\ell)})^{\top},
 \end{equation}
 where, again, $\tbbeta^{(1)},\dots,\tbbeta^{(l)}$ are obtained by dropping the $j_0$-th coordinate from samples $\bbeta^{(1)},\dots,\bbeta^{(l)}$ drawn from  the leave-a-variable-out posterior.
%\textcolor{red}{We want to change this to make RHS $\prod_{j=1}^p \mu(db_j)$ and LHS $\langle f(\bbeta) \rangle_v$ but we should make this change only after checking the proofs.}\ms{I think this is an old comment addressing how to define the lavo posterior which we have already agreed and implemented.}

As with the notations introduced in \eqref{eq:mean_observation} and \eqref{eq:mean_variable}, whenever a new relevant posterior arises in our analysis, we refer to its expectation (given the observed data) to be $\thermal{\cdot}$, adding a suitable subscript to distinguish it from other posterior expectations previously defined. The need to introduce this notation will transpire later when we discuss our proof strategies. 

% For any vector $\bm{v} \in \mathbb{R}^p,$ we define its empirical distribution to be the probability distribution that assigns mass $1/p$ to each coordinate of the vector. \ms{Should we keep this definition of the empirical distribution? We only use it here in the hypothesis a couple of lines below and we write the formula down anyway.}  
\subsection{Assumptions on prior, likelihood and signal}
 We work with the following regularity conditions on the prior, likelihood and underlying signal. In Section \ref{sec:applications}, we discuss examples where our theory applies---we show that either the following assumptions apply directly or that the example can be well-approximated by a model where the following apply.
% \textcolor{red}{TILL HERE ON FINAL PASS.}
%We later discuss examples where these directly hold or some variant of these allow us to cover the respective example. 
\begin{hypothesis}\label{hyp1}
    Our assumptions on the prior, likelihood and signal are as follows:
    \begin{enumerate}[(i)]
        \item $\log \mu:\R\mapsto\R$ is a $\mathcal{C}^2$ and strongly concave function;
        \item $\tT_i:\R\mapsto\R$ is almost surely (a.s.) $\mathcal{C}^3$ and there exists a constant $d_1 > 0$ s.t. $||\tT'_i||_\infty,||\tT''_i||_\infty$, $||\tT'''_i||_\infty < d_1$ a.s.;
        \item $A:\R\mapsto\R$ is a non-negative, $\mathcal{C}^3$ and convex function s.t. $||A''||_\infty\leq1$ and $||A'''||_\infty < d_2$ for some $d_2 > 0$;
        \item the signal vectors $\bbetagt\in\R^p$ define a deterministic sequence such that $\sup_{n\geq1}||\bbetas||_\infty<\infty$, and as $n \rightarrow \infty$, the empirical measure $p^{-1}\sum_{j=1}^p \delta_{\beta_{\star, j}}(\cdot)$ converges weakly to some limiting distribution $\pi(\cdot)$, with second moments converging; in fact, we assume $\mbox{Var}(\bX_i^{\top}\bbeta_{\star}) = ||\bbetagt||^2/n$ converges to some finite constant of the form $\kappa \gamma^2 > 0$.
        \item for $i\in[n]$, $u_i(x,y)$ is an a.s. non-positive function (recall the definition of $u_i(\cdot,\cdot)$ from \eqref{eq:def_tT_u});
      \item and, for $k \in \mathbb{N}$, $\E(n^{-1}\sum_{i\in[n]}\tT^2_i(0))^{2k}$ is a bounded sequence.
    \end{enumerate}
\end{hypothesis}

In the former, all \emph{almost sure} statements are with respect to the randomness associated to the noise variables $e_1,\dots,e_n$. Furthermore, the measure $\pi(\cdot)$ in Hypothesis 1(iv) may place a positive mass at zero which would correspond to a fraction of the coordinates of $\bbeta_\star$ being zero, i.e.~signal vectors with such linear sparsity are allowed within our framework. In this sense, our framework accommodates both sparse and dense signals. We mention the aforementioned conditions as an assumption, but later we will show that all our examples satisfy these conditions. Finally, in addition to Hypothesis \ref{hyp1}, our main result requires some control on the moments of   $a_i$ as summarized below.

\begin{hypothesis}\label{hyp2}
    With $\mathbb{E}(\cdot)$ and $\thermal{\cdot}$ as above, we further assume that, for any $k \in \mathbb{N}$, there exists some $C_{k} > 0$ s.t. for all $n\geq1$ we have $\E\thermal{a_{1,1}^k} \leq C_k$.
\end{hypothesis}

\begin{remark}[establishing Hypothesis \ref{hyp2}]
    Note that in contrast to Hypothesis \ref{hyp1}, Hypothesis \ref{hyp2} might not be straightforward to check for a given model. In Appendix \ref{app:conv_mon_control}, we present general results that allow to check this, and show for our main examples that this indeed holds. 
\end{remark}
%\ms{Although this is presented as Hyp, make clear this is proved in the paper!!!}

\subsection{Auxiliary random measures and fixed point equations}\label{sec:auxiliary}

%{\color{blue}REMARK: I think this paragraph is a bit confusing as it is because we do not determine the convergence of $\bX_i^{\top}\bbeta_{\star},\bX_i^{\top}$ under the full posterior but only under the loo posterior. But this is not clear here. The role of Proposition 4 is a bit more convoluted than this. I don't really know how to convey the message of this paragraph without risking being to vague.}
Our main result (Theorem \ref{thm:marginals}) states that the finite-dimensional posterior marginals behave as Gaussian tilts of the prior. To describe the mean and variance of the tilt, 
we require two auxiliary measures that we introduce here. These measures arise repeatedly in our analysis. 
Before we describe these, note that the posterior is proportional to $\exp\{\cL_{n,p}(\bB)\} \prod_{j\in[p]}\mu(b_j)$ where the  likelihood $\mathcal{L}_{n,p}(\bB)$ from \eqref{eq:glmhamilton} depends on the data through the quantities $\{\bX_i^{\top}\bbeta_{\star},\bX_i^{\top} \bB\}_{i\in[n]}$. Thus, for a sample $\bbeta$ from the posterior distribution, if we can quantify the limit of the joint distribution of $(\bX_i^{\top}\bbeta_{\star},\bX_i^{\top} \bbeta)$ as $n,p \rightarrow \infty$ and $p/n \rightarrow \kappa$, we can establish the behavior of posterior marginals using this characterization. 
Later, in Proposition \ref{prop:hat_fix_point}, we do exactly that: we describe this asymptotic joint distribution. The main message from the proposition is that $(\bX_i^{\top}\bbeta_{\star},\bX_i^{\top} \bbeta)$ asymptotically converges to $(\theta_\star,\theta)\in\R^2$ where
%\ps{I am writing this interpretation here, but this may actually be quite wrong. I guess I am confused because your description below keeps $\xi_B$ free instead of taking it to be Gaussian. So my way of writing here may be wrong or misplaced, but we can discuss together once you are back if there is a better way to do this.}
\begin{equation}\label{eq:thetathetast}
 \begin{bmatrix}  \theta_\star \\ \theta \end{bmatrix} \equiv \begin{bmatrix}
     \theta_\star \\ \theta (\xi_B) 
\end{bmatrix} := \begin{bmatrix}
    0 \\ \sqrt{\kappa (v_B- c_B)} \xi_B 
\end{bmatrix} + \mathcal{N} \begin{bmatrix}
    \boldsymbol{0},\kappa\boldsymbol{\Sigma}
\end{bmatrix}, \qquad \text{with} \qquad \bSigma = \begin{bmatrix}
\gamma^2 \quad c_{B B_\star} \\ 
c_{B B_{\star}} \quad c_B
\end{bmatrix}.
\end{equation}
%\ms{Minor corrections in the formula above: $\kappa$ appeared to times in the Normal distribution and the $\kappa$ should not be square-rooted.}
The argument in $\theta(\xi_B)$ is used to denote that $\theta$ is a function of the dummy variable $\xi_B$. Recall that $\gamma^2$ was defined in Hypothesis \ref{hyp1}(iv).
Above, $v_B,c_B,c_{BB_\star}$
%we introduce the measure That is, we define
%\begin{equation}\label{eq:density_h}
%  p_{h}(b) \propto e^{-\frac{1}{2v}(b - m)^2} \mu(d b);
%\end{equation}
%where $m = \alpha B_\star + \sigma Z$ with $B_\star \sim \pi(\cdot)$ \ps{Which one do you use for this small $\pi(\cdot)$ or $\Pi(\cdot)$?} and $Z \sim \mathcal{N}(0,1)$ independent of everything else. 
%an independent standard Normal random variable. 
%As before, we will let $\thermal{\cdot}_h$ and $\E_{Z,B_\star}(\cdot)$ to be the expectations with respect to $p_h$ and $(Z,B_\star)$ respectively. 
%The following three quantities will be central in our characterization of the posterior marginals:
%\begin{equation}\label{eq:FPE_all1}
 %   \begin{dcases}
 %       v_B := \E_{Z,B_\star}\thermal{\beta^2}_{h} \\
 %       c_B := \E_{Z,B_\star}\thermal{\beta}^2_{h} \\
 %       c_{BB_\star} := \E_{Z,B_\star}\thermal{\beta B_\star}_{h}
%    \end{dcases}
%\end{equation}
are constants that we define later. Roughly, they correspond to the (normalized) norm of a typical sample from the posterior, the inner product between two independent samples from  the posterior, and the inner product between a sample from the posterior and the true signal (see Proposition \ref{prop:conv_order_param}). 
%sample and the true signal, and as follows. 
%That is, $v_B,c_B, c_{BB_\star}$  correspond to the (normalized) norm of a typical sample from a posterior, the inner product between two independent samples and  the inner product between a sample from the posterior and the true signal. 
%Alternately

Note that an alternate way of representing $\theta_\star$ and $\theta(\xi) $ would be via the following representations: 
\begin{equation}\label{eq:thetasearlier}
    \theta(\xi_{B}) := \sqrt{\kappa(v_{B}-c_{B})}\xi_{B}+\sqrt{\kappa c_{B}} z_{BB_{\star}} \,\,\, \mbox{ and } \,\,\, \theta_\star := \sqrt{\kappa\left(\gamma^2 - \frac{c_{BB_\star}^2}{c_B}\right)}\xi_{B_{\star}} +c_{BB_{\star}}\sqrt{\frac{\kappa}{c_B}}z_{BB_{\star}}
\end{equation}
if $c_B >0$ or $\theta(\xi_{B}) := \sqrt{\kappa v_{B}}\xi_{B}$ and $\theta_\star := \sqrt{\kappa\gamma^2}\xi_{B_{\star}}$ otherwise. 
Here, $\xi_{B_{\star}}$ and $z_{BB_\star}$ are independent standard Gaussian random variables. For now, we keep $\xi_B$ free. 

Now let $e \sim \mbox{Unif}[0,1]$ independent of everything else and define $\tT(\theta_\star):=T\circ f(\theta_\star,e)$. Define the following random measure (random since it depends on the random variables $e$, $\xi_{B_\star}$ and $z_{B B_\star}$)
 with density 
\begin{equation}\label{eq:density_s}
    p_{s}(\xi_B) \propto \exp\left\{\tT(\theta_\star)\theta(\xi_B) - A(\theta(\xi_B)) - \frac{\xi_B^2}{2}\right\}
\end{equation}
%As before, we will denote by $\thermal{\cdot}_{d_2}$ the expectation with respect to the random measure of density $\rho_{d_2}(\xi_{B})$ defined up to normalization by 
Let $\thermal{\cdot}_s$ denote the expectation under $p_s$. We will also let $\E_{G \otimes e}(\cdot)$  denote the expectation with respect to $\xi_{B_\star}$, $z_{BB_\star}$, and $e$. Here $G$ is used to denote the 2-dimensional standard Gaussian vector $(\xi_{B_\star},z_{B B_\star})$. Notice that $p_{s}$ is the posterior corresponding to a canonical GLM likelihood (recall the form from \eqref{eq:glmhamilton}) where $\theta_\star, \theta(\xi_B)$ respectively play the roles of $\bX_i^{\top}\bbeta_\star,\bX_{i}^{\top}\bbeta$ and a Gaussian prior is posited on $\xi_B$. %Using this measure, define the constants $(r_1,r_2,r_3)$ as 
%To define the constants of interest  $\alpha,\sigma,v$ that show up in \eqref{eq:density_hj} and \eqref{eq:density_h}, we need one final ingredient.
 If $\mathcal{L}_s$ denotes the log-likelihood corresponding to the density $p_s$, then the scores with respect to both $\theta_\star$ and $\theta$ will serve critical in our later derivations; we denote this by 
 %\textcolor{red}{$r_1$ is measuring in some sense how different from a certain exponential family this measure is. To revisit if I wish to comment about this}
\begin{equation}\label{eq:scalarscores}
    S_{\theta_\star}(\xi_B) := \partial_{\theta_\star} \mathcal{L}_s = \tTp(\theta_\star)\theta(\xi_B) \qquad \text{and} \qquad S_{\theta}(\xi_B) := \partial_{\theta}\mathcal{L}_s = \tT(\theta_\star)-A'(\theta(\xi_B)).
\end{equation}
With these definitions in place, define the constants $r_1,r_2,r_3 $ as
\begin{equation}\label{eq:FPE_all2}
    \begin{dcases}   
        r_1 := \E_{G \otimes e} \Big[\thermal{A''(\theta(\xi_B))}_s - \text{Var}_{\thermal{\cdot}_s}(S_\theta(\xi_B))\Big]\\
        r_2 := \E_{G\otimes e}\left[\text{Cov}_{\thermal{\cdot}_s}(S_{\theta_\star}(\xi_B);S_{\theta}(\xi_B))\right]\\
        r_3 := \E_{G\otimes e}\left[\thermal{S_{\theta}(\xi_B)}^2_{s}\right];
    \end{dcases}
\end{equation}
%\textcolor{red}{ON MAR 18, WE AGREE WE KEEP ABOVE AS IS, BUT THAT MEANS EVERYTHING ELSE INVOLVING $r_1,r_2,r_3$ NEEDS TO CHANGE. BECAUSE REST IS BASED ON OLDER EQUATIONS WHICH USED TO LOOK NASTIER.}
where $\text{Var}_{\thermal{\cdot}_s}$ and $\text{Cov}_{\thermal{\cdot}_s}$ denote, respectively, the variance and covariance under the measure with density $p_s$. 
Note these constants depend on $v_B,c_B,c_{BB_\star}$ that appeared in \eqref{eq:thetathetast}. We now introduce a different auxiliary measure that determines these.  

Recalling from Hypothesis \ref{hyp1}(iv) that $\pi(\cdot)$ denotes the limiting empirical distribution of the signal sequence $\bbeta_\star(n)$,  we define the measure
\begin{equation}\label{eq:density_h}
  p_{h}(b) \propto e^{-\frac{1}{2v}(b - m)^2} \mu(d b) 
\end{equation}
with $B_{\star}\sim \pi(\cdot)$, $Z \sim \mathcal{N}(0,1)$ independent of everything else, and 
%\textcolor{red}{MANUEL says change should be $\alpha=(t_\gamma + r_2)/r_1, \sigma = \sqrt{r_3}/r_1, v=1/r_1$ PRAGYA says this should be checked, $\alpha$ does not matfch for me, like where does $\bar{m}$ go?}\ms{I checked this and corrected it. What I was saying when we talked, was correct.}
\begin{equation}\label{eq:alphasv}
m = \alpha B_\star + \sigma Z, \quad \alpha := \frac{r_2 + t_\gamma}{r_1}, \quad \sigma := \frac{\sqrt{r_3}}{r_1}, \quad v := \frac{1}{r_1}, \quad t_\gamma = \mathbb{E}[\tT'_1(\sqrt{\kappa}\gamma Z)]
\end{equation}

%an independent standard Normal random variable. 
%\textcolor{red}{We both agree above is all correct now. Note that $\gamma^2$ should be the one normalized by $p$.}
We denote $\thermal{\cdot}_h$ and $\E_{Z,B_\star}(\cdot)$ to be the expectations with respect to $p_h$ and $(Z,B_\star)$ respectively.\footnote{With some abuse of notation, we will use $\thermal{\cdot}_h$ to denote expectation under multiple independent copies of $p_h(\cdot)$ on some cases later, but this would either be clarified explicitly or be clear from context.} Then, let
\begin{equation}\label{eq:FPE_all1}
    \begin{dcases}
        v_B := \E_{Z,B_\star}\thermal{\beta^2}_{h} \\
        c_B := \E_{Z,B_\star}\thermal{\beta}^2_{h} \\
        c_{BB_\star} := \E_{Z,B_\star}\thermal{\beta B_\star}_{h}
    \end{dcases}
\end{equation}
Note the RHS of \eqref{eq:FPE_all1} depends on $\alpha,\sigma, v$ (which in turn depends on $r_1,r_2,r_3$), whereas the RHS of \eqref{eq:FPE_all2} depends on $v_B,c_B,c_{BB_\star}$. Thus,  \eqref{eq:FPE_all2} and \eqref{eq:FPE_all1} provide a coupled system of equations inter-dependent on each other. We assume throughout that this coupled system admits a unique solution. Strategies for proving uniqueness of such systems of equations have appeared in the prior frequentist literature \cite{bellec2023existence,montanari2025generalization,li2023spectrum}. Proving them in our setting would require case-by-case analysis of GLMs, which we refrain from diving into since it draws significant attention away from our main contributions. Numerically, we find that if our other assumptions are met, the system can be solved with extremely high precision. The code for solving this system of equations can be found in \cite{github_manu}.

    \section{Main Results and Applications}\label{sec:results}
\subsection{Main Results}
%\ps{Need a result on the full posterior mean}
%\ps{This section needs mass re-writing: goal should be to directly talk about the finite-dimensional marginals result, that contraction does not occur, and the posterior mean result as a consequence--also want for the full posterior mean.}
Our main result characterizes the behavior of finite-dimensional marginals of the posterior distribution in the setting of Section \ref{sec:model}. Before we delve into this, we highlight a major difference between our setting and the more traditional high-dimensional Bayesian literature that posits sparsity-type assumptions on the underlying signal, and considers the setting where the number of features is much larger than the sample size. A central picture that emerges in this  traditional regime is that the $p$-dimensional posterior measure $\proba(\cdot|\bX,\by)$ concentrates around the true signal---a phenomenon known as posterior consistency. Formally, posterior consistency states that for all $\epsilon >0 $, the probability that the posterior distribution assigns to balls (in a suitable metric) centered at $\bbeta_{\star}$ with radius $\epsilon$ converges almost surely or in probability to $1$ asymptotically. In our regime, a different picture emerges---the posterior no longer concentrates around the truth. In fact the following holds. 

%\textcolor{red}{Chenge statement for means of general separable functions and then MSE is particular case.}
%Proposition \ref{prop:conv_order_param} readily implies the non-contraction of the posterior measure.
\begin{theorem}\label{thm:contraction}
In the setting of Section \ref{sec:model}, let $f:\R^l\mapsto\R$ grow at most polynomially. Then we have that
\begin{equation*}
    \lim_{n\rightarrow \infty, p/n \rightarrow \kappa} \E \thermal{\frac{1}{p}\sum_{j\in[p]}f(\beta^{(1)}_j,\dots,\beta^{(l)}_j)} = \E_{Z,B_\star}\thermal{f(\beta^{(1)},\dots,\beta^{(l)})}_h,
\end{equation*}
where on the left, $\beta_j^{(1)},\beta_j^{(2)},\hdots, \beta_{j}^{(l)}$ denote the $j$-th coordinates of $l$ independent samples from the posterior and on the right, the expectation is over $\beta^{(1)},\hdots,\beta^{(l)}$ drawn from $l$ independent copies of $p_h(\cdot)$ defined in \eqref{eq:density_h}. 
%\textcolor{red}{The h notation on RHS neds clarification.}
In particular,
\begin{equation*}
    \lim_{n\rightarrow \infty, p/n \rightarrow \kappa} \E \thermal{(p^{-1}\|\bbeta - \bbeta_\star \|^2-c)^2} \rightarrow 0,
\end{equation*}
where $\bbeta$ denotes a sample from the posterior and $c := \gamma^2 + v_B - 2 c_{BB_\star}$, with $\gamma^2$ defined as in Hypothesis \ref{hyp1}(iv) and $v_B, c_{BB_\star}$ defined as in \eqref{eq:FPE_all1}. 
\end{theorem}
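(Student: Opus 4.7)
The plan is to derive Theorem \ref{thm:contraction} from Theorem \ref{thm:marginals} (the main marginal characterization), which presumably identifies, for any fixed finite collection of coordinates $j_1,\ldots,j_k$ and any $l\geq 1$, the joint limiting law of the $l$-tuple of posterior samples restricted to those coordinates as a product of independent copies of $p_h$ (with $B_\star = \beta_{\star,j_s}$ in the $s$-th copy). The first statement of Theorem \ref{thm:contraction} is then a law-of-large-numbers across $j\in[p]$, while the second is a concentration statement obtained from the $l=2$ case.

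For the first statement, I would proceed in three steps. First, restrict to bounded continuous $f$ and apply Theorem \ref{thm:marginals} coordinatewise: for each $j$,
\begin{equation*}
\E\thermal{f(\beta_j^{(1)},\ldots,\beta_j^{(l)})} = \Phi(\beta_{\star,j}) + o_n(1), \qquad \Phi(b):=\E_Z\thermal{f(\beta^{(1)},\ldots,\beta^{(l)})}_h\big|_{B_\star=b},
\end{equation*}
with the $o_n(1)$ uniform in $j$ since Hypothesis \ref{hyp1}(iv) forces the $\beta_{\star,j}$'s to lie in a bounded set. Second, average over $j$ and use Hypothesis \ref{hyp1}(iv) to replace the empirical mean of $\Phi(\beta_{\star,j})$ by $\int \Phi\, d\pi = \E_{Z,B_\star}\thermal{f}_h$. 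Third, extend from bounded to polynomially growing $f$ via truncation; the required uniform integrability follows from Hypothesis \ref{hyp2} (bounded moments of $a_{1,1}=\bX_1^\top\bbeta$) together with the strong log-concavity of $\mu$ in Hypothesis \ref{hyp1}(i), which together yield uniform bounds $\E\thermal{\beta_j^{2k}}\leq C_k$ (for instance via Brascamp--Lieb for strongly log-concave measures).

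For the second statement, expand
\begin{equation*}
p^{-1}\|\bbeta - \bbeta_\star\|^2 = p^{-1}\sum_{j\in[p]} \beta_j^2 - 2p^{-1}\sum_{j\in[p]} \beta_j\beta_{\star,j} + p^{-1}\|\bbeta_\star\|^2.
\end{equation*}
Hypothesis \ref{hyp1}(iv) gives $p^{-1}\|\bbeta_\star\|^2\to\gamma^2$, while the first statement (applied with $f(\beta)=\beta^2$ and, in a mild extension allowing the test function to depend parametrically on $\beta_{\star,j}$, with $f(\beta;b)=\beta b$) shows the other two terms converge in expectation to $v_B$ and $-2c_{BB_\star}$, giving $\E\thermal{p^{-1}\|\bbeta-\bbeta_\star\|^2}\to c$. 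To upgrade to $L^2$ convergence to $c$, I would compute $\E\thermal{(p^{-1}\|\bbeta-\bbeta_\star\|^2)^2}$ by expanding the square into a double sum over $j,k$: the diagonal $j=k$ contribution is $O(p^{-1})$ by the uniform moment bounds above, while the off-diagonal $j\neq k$ contribution is handled by applying the two-coordinate version of Theorem \ref{thm:marginals} with $l=2$ posterior samples, which asymptotically decouples coordinates $j$ and $k$ into independent copies of $p_h$. This yields $\E\thermal{(p^{-1}\|\bbeta-\bbeta_\star\|^2)^2}\to c^2$, hence vanishing variance. The main obstacle here is the propagation-of-chaos step: verifying that the two-coordinate marginal factorizes across distinct $j\neq k$ with rates uniform in $(\beta_{\star,j},\beta_{\star,k})$ over the bounded range allowed by Hypothesis \ref{hyp1}(iv), so that the double-sum expectations pass to products against $\pi\otimes\pi$.
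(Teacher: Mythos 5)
Your plan has a genuine gap, and it concerns exactly the step you flag but do not resolve. Both parts of Theorem \ref{thm:contraction} involve averages over all $p\to\infty$ coordinates, whereas the marginal results you want to invoke (Theorem \ref{thm:marginals}, Proposition \ref{prop:fix_point} and its $r$-coordinate extension) are stated for a \emph{fixed} coordinate or a fixed finite set of coordinates. Passing from pointwise-in-$j$ limits to limits of $p^{-1}\sum_{j\in[p]}$, and in your second-moment computation to $p^{-2}\sum_{j\neq k}$ over $\sim p^2$ pairs, requires convergence that is uniform in $j$ (equivalently, uniform over $\beta_{\star,j}$ in the bounded range, with factorization rates uniform over pairs). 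Nothing in the paper supplies this: the proof of the per-coordinate limit goes through conditioning, dominated-convergence/geometric-series and subsequence arguments that do not track rates in $\beta_{\star,j_0}$, so the asserted ``$o_n(1)$ uniform in $j$'' is an unproved claim, and the propagation-of-chaos step for pairs is the hardest part of your plan rather than a technicality. A further minor slip: since the square in $\E\thermal{(p^{-1}\|\bbeta-\bbetas\|^2-c)^2}$ sits \emph{inside} the posterior bracket, what you need is the two-coordinate, single-sample version ($r=2$, $l=1$) to show $\E\thermal{(p^{-1}\|\bbeta-\bbetas\|^2)^2}\to c^2$; two replicas ($l=2$) would only control the posterior mean $\thermal{p^{-1}\|\bbeta-\bbetas\|^2}$, which is weaker than the stated claim.

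The paper avoids all of this by arguing globally. It writes $p^{-1}\|\bbeta-\bbetas\|^2 = Q_{11}+\gamma^2_n-2Q_{1\star}$ and invokes Proposition \ref{prop:conv_order_param}, which gives $L^2$ concentration of the order parameters $Q_{11}$ and $Q_{1\star}$ around $v_B$ and $c_{BB_\star}$ (proved via Brascamp--Lieb for the strongly log-concave tilted posterior, fluctuation bounds on the log-normalizing constant, and the leave-one-out identification of the limits through the unique fixed point), together with $\gamma^2_n\to\gamma^2$ from Hypothesis \ref{hyp1}(iv); the second statement then follows in two lines, with no per-coordinate uniformity or decoupling needed. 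Your auxiliary ingredients are otherwise sound --- the uniform moment bounds $\E\thermal{\beta_j^{2k}}\leq C_k$ are available (Lemma \ref{lem:cont_mom}), and the identification $\int(\text{limit})\,d\pi = \gamma^2+v_B-2c_{BB_\star}=c$ is correct --- but to complete the proposal you would either have to establish the uniform-in-coordinates convergence you assume, or simply route the $L^2$ statement through the order-parameter concentration that the paper already provides.
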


\begin{figure}[ht]
  \centering
  \includegraphics[width=0.6\textwidth]{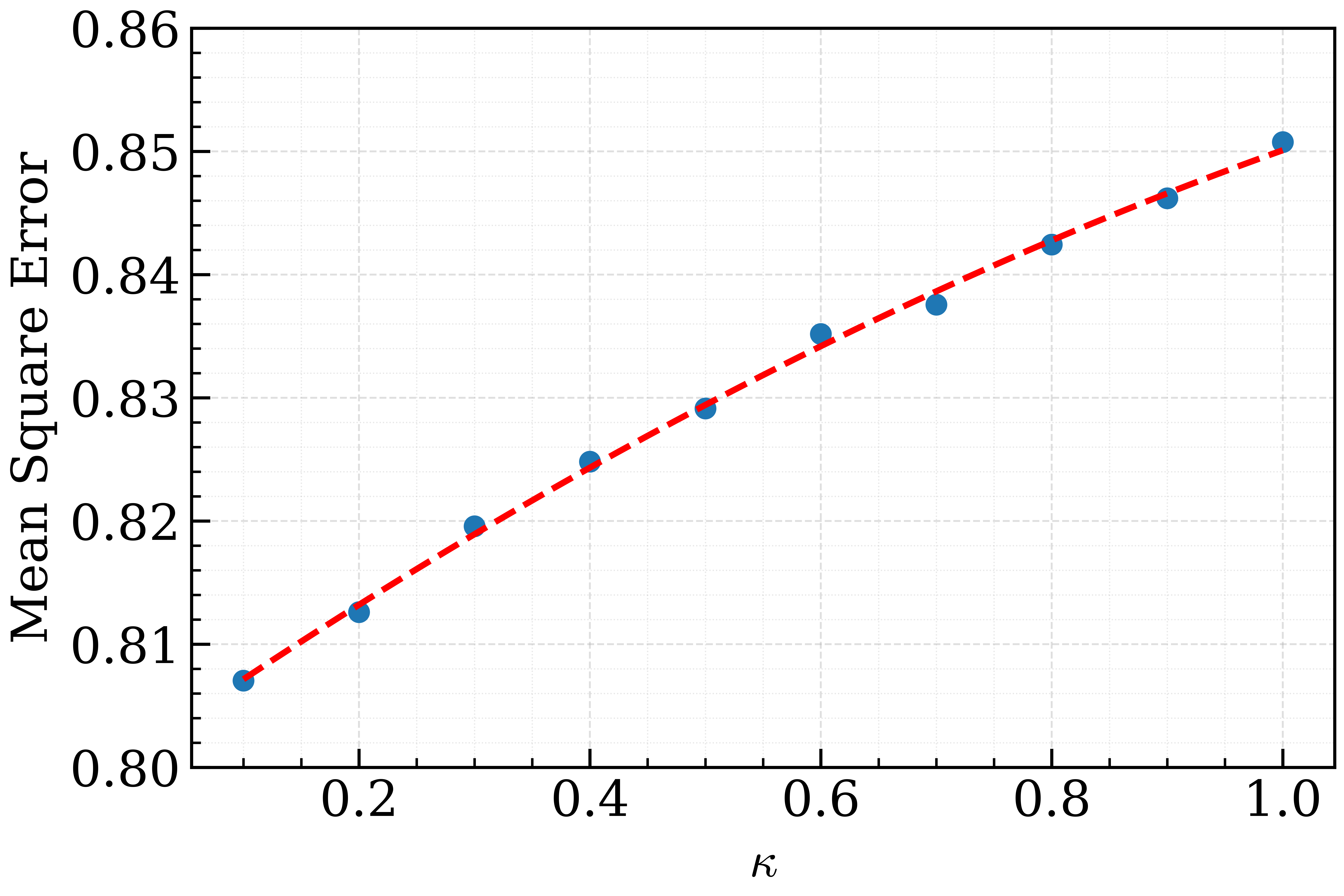}
  \caption{The asymptotic mean squared error, constant $c$ of Theorem \ref{thm:contraction}, as a function of the aspect ratio $\kappa$ for a logistic regression model with Beta prior and signal. Specifically, we choose $\mu=\text{Beta}(2,2)$ and $\pi=\text{Beta}(2,5)$. Observe the increase in MSE with increase in $\kappa$. Computing $c$ requires  numerically calculating $v_B$ and $c_{BB_\star}$, which in turn requires solving the fixed point equations \eqref{eq:FPE_all2} and \eqref{eq:FPE_all1}.  Precise details on how these equations are solved numerically is provided in Appendix \eqref{app:computation} and linked in \cite{github_manu}.}
  \label{fig:mse_graphs}
  %\textcolor{blue}{Figure \ref{fig:mse_graphs} displays the asymptotic mean squared error, contant $c$ of Theorem \ref{thm:contraction}, as a function of the aspect ratio $\kappa$. In the left panel, the MSE is shown for a $\mathrm{Beta}(2,2)$ prior and a $\mathrm{Beta}(2,5)$ signal, illustrating the monotone deterioration of performance as $\kappa$ increases. There, we present the values of the MSE of Bayes mean as predicted by Theorem \ref{thm:contraction}. Each numerical computation was repeated $5$ times. These predictions were compared with MCMC simulations of $p=500$ and $n = \lfloor \kappa p\rfloor$, $5$ simulations for each value of $\kappa$. All predictions fall within error margin of the MCMC simulations. }
\end{figure}

%\textcolor{red}{A picture for $c$ as a function of $\kappa$ growing should be included here in this section for whatever setting is doable. So we can at least validate numerically that $c>0$ as soon as $\kappa > 0$}\ms{This figure has been added in the applications section.}
%We defer the formal description of the constant $c$ to the Appendix.
The proof of this theorem can be found in Appendix \ref{sec:proof_contraction}.
Typically, $c > 0$ as long as $\kappa >0$.  We demonstrate this in Figure \ref{fig:mse_graphs}, where we plot $c$ as a function of $\kappa$ for a logistic model with Beta prior. Observe that $c$ is positive and increases as a function of the dimension to sample size ratio $\kappa$, suggesting that the larger the dimensionality the larger the mean squared error of a typical sample from the posterior.
 %\textcolor{red}{TILL HERE ON FINAL PASS.} \ps{We need a plot of $c$ as a function of $p/n$ in a logistic model - beta prior setting, keeping everything else  fixed}. Crucially, $c$ increases as a function of the dimensionality factor, the larger the number of features compared to the sample size, the larger this value. 
 The main takeaway is that since the $L_2$ distance between a sample from the posterior and the true signal converges in $L_2$ to a positive constant, the full $p$-dimensional posterior cannot concentrate around the truth. %\ps{If we could come up with sufficient condition under which we can mathematically show that $c >0$, that would be really nice.} 
 This highlights the stark contrast between the traditional ultra-high-dimensional (parametric) regime  and the proportional asymptotics regime. We remark that in non-parametric and semi-parametric models, there are multiple instances in prior literature where the posterior is inconsistent in a frequentist sense \cite{diaconis1986consistency,kim2001posterior,ghosal2010dirichlet,miller2014inconsistency,alamichel2024bayesian}. In that sense, our parametric setting behaves similar to these models rather than the high-dimensional parametric ultra-sparse models. Recently, \cite{mukherjee2022variational,lee2025clt} studied such an inconsistency regime as well, but in our framework, their results remain within the $p=o(n)$ regime. 
 %\textcolor{red}{Call Sumitda to clarify details about a particular point in their work; same for Subhabrata's work.}

%\textcolor{red}{Figure 1 description, anything else to include here or in caption?}
Despite this negative picture about posterior consistency, perhaps fascinatingly, the finite-dimensional marginals of the posterior still contain information about the corresponding coordinates of the true signal. We formalize this in our main result below. 
%\textcolor{red}{TILL HERE ON SECOND PASS}
%\textcolor{red}{TILL HERE ON FINAL PASS.}
%\textcolor{red}{CLarify h notation in previous theorem then go to the next theorem.}
\begin{theorem}\label{thm:marginals}
In the setting of Section \ref{sec:model},
   % Assume Hypothesis \ref{hyp1},\ref{hyp2}, and \ref{hyp3} hold \ps{I am in favor of removing Hypothesis 2 if indeed we have a proof of this later} and let $(v_B,c_B,c_{BB_\star},r_1,r_2,r_3)$ be the unique solution of \eqref{eq:FPE_all1} and \eqref{eq:FPE_all2}.  
   suppose $\bbeta$ denotes a sample from the posterior distribution. For each $r\in\mathbb{N}$ and set of distinct indices $j_1,\dots,j_{r}\in\mathbb{N}$, let $(\beta_{j_1},\dots,\beta_{j_{r}})\in\R^r$ denote the random vector formed by the $j_1,\dots,j_{r}$-th coordinates of $\bbeta$. Then
    \begin{equation*}
      (\beta_{j_1},\hdots, \beta_{j_r}) \stackrel{\textrm{d}}{\rightarrow} (B_{j_1},\hdots, B_{j_r}),
    \end{equation*}
    where $(B_{j_1},\hdots,B_{j_r})$ has a joint density on $\R^r$ proportional to $p_{h,{j_1}} \times \dots \times p_{h,{j_r}}$, where $p_{h,j}$ is given by \begin{equation}\label{eq:density_hj}
  p_{h,j}(b) \propto e^{-\frac{1}{2v}(b - m_{j})^2} \mu(d b), \quad \text{with} \quad m_j = \alpha \beta_{\star,j}+\sigma Z_j.
\end{equation}
Above $\beta_{\star, j}$ denotes the $j$-th coordinate of the true signal $\bbeta_{\star}$, the variables $Z_j$ are i.i.d. standard Gaussian independent of everything else, and $\alpha,\sigma, v$ are as in  \eqref{eq:alphasv}.   
    %   with the values of $(\alpha,\sigma,v)$ in \eqref{eq:density_hj} given by $\alpha = \frac{t_\gamma+r_3}{r_1+r_2}$, $\sigma = \frac{\sqrt{r_2}}{r_1+r_2}$, $v = \frac{1}{r_1+r_2}$, and $t_\gamma$ as in \eqref{eq:def_tp}.
\end{theorem}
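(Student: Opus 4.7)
The plan is to analyze a single coordinate $\beta_{j_0}$ via a leave-a-variable-out (LOV) tilt, identify its limiting marginal as $p_{h,j_0}$, and then extend to any fixed collection of coordinates by dropping them simultaneously. Under the LOV posterior $\thermal{\cdot}_v$ of \eqref{eq:mean_variable}, $\beta_{j_0}$ is independent of $\tilde{\bbeta}$ and distributed as $\mu$. Hence the marginal density of $\beta_{j_0}$ under the full posterior can be written as the Radon--Nikodym tilt
\begin{equation*}
p_{\mathrm{full}}(\beta_{j_0}) \;\propto\; \mu(d\beta_{j_0}) \cdot \thermal{\exp\{\mathcal{L}_{n,p}(\bB) - \mathcal{L}_{n,p-1}(\tilde{\bB})\}}_v,
\end{equation*}
which is the local Le~Cam structure alluded to around \eqref{eq:fullnlooo}. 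The task reduces to showing that this random tilt converges in distribution to the Gaussian factor $\exp\{-\tfrac{1}{2v}(\beta_{j_0} - m_{j_0})^2\}$ from \eqref{eq:density_hj}.

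Since $X_{i,j_0} \sim \mathcal{N}(0,1/n)$, both $X_{i,j_0}\beta_{\star,j_0}$ and $X_{i,j_0}\beta_{j_0}$ are $O(n^{-1/2})$. A third-order Taylor expansion of $u_i(\bX_i^\top\bB, \bX_i^\top\bbeta_\star)$ about $(\tilde{\bX}_i^\top\tilde{\bB}, \tilde{\bX}_i^\top\tilde{\bbeta}_\star)$, using the bounded derivatives of $\tT_i$ and $A$ from Hypothesis \ref{hyp1}(ii)--(iii), yields
\begin{align*}
\mathcal{L}_{n,p}(\bB) - \mathcal{L}_{n,p-1}(\tilde{\bB}) \;\approx\; \beta_{j_0}\,L_n + \beta_{\star,j_0}\,C_n + \beta_{\star,j_0}\beta_{j_0}\,Q_n - \tfrac{1}{2}\beta_{j_0}^2\,H_n,
\end{align*}
where $L_n = \sum_i X_{i,j_0}[\tT_i(\tilde{\bX}_i^\top\tilde{\bbeta}_\star) - A'(\tilde{\bX}_i^\top\tilde{\bB})]$, $C_n = \sum_i X_{i,j_0}\,\tT'_i(\tilde{\bX}_i^\top\tilde{\bbeta}_\star)\,(\tilde{\bX}_i^\top\tilde{\bB})$, $Q_n = \sum_i X_{i,j_0}^2\,\tT'_i(\tilde{\bX}_i^\top\tilde{\bbeta}_\star)$, and $H_n = \sum_i X_{i,j_0}^2\,A''(\tilde{\bX}_i^\top\tilde{\bB})$. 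These are exactly the sums built from the scalar scores $S_{\theta_\star}, S_\theta$ of \eqref{eq:scalarscores}, so the $r_1, r_2, r_3$ of \eqref{eq:FPE_all2} are the intended limits.

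The crux is then a conditional CLT combined with concentration, fed through the LOV posterior. Conditionally on $\tilde{\bX}$ and on $\tilde{\bbeta}$ drawn from $\thermal{\cdot}_v$, the $\{X_{i,j_0}\}_i$ are i.i.d.\ $\mathcal{N}(0,1/n)$ and independent of everything else; a cumulant expansion in the LOV measure combined with the Gaussian integral over $\{X_{i,j_0}\}$ reduces the random tilt to an expression whose coefficients are empirical averages involving $\tT_i, \tT'_i, A'(\tilde{\bX}_i^\top\tilde{\bB})$, and $A''(\tilde{\bX}_i^\top\tilde{\bB})$. Proposition \ref{prop:hat_fix_point} identifies the joint empirical law of $(\tilde{\bX}_i^\top\tilde{\bbeta}_\star, \tilde{\bX}_i^\top\tilde{\bbeta})$ with the pair $(\theta_\star,\theta)$ drawn from $p_s$, so these averages converge to their $p_s$-expectations and match exactly the definitions $r_1 = \E_{G\otimes e}[\thermal{A''(\theta)}_s - \text{Var}_{\thermal{\cdot}_s}(S_\theta)]$ (the $-\text{Var}$ contribution arising from the cumulant expansion), $r_2$, $r_3$, and $t_\gamma$ in \eqref{eq:FPE_all2}--\eqref{eq:alphasv}. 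Completing the square in $\beta_{j_0}$ yields $p_{h,j_0}$ with $v = 1/r_1$, $\alpha = (r_2+t_\gamma)/r_1$, $\sigma = \sqrt{r_3}/r_1$. The principal technical obstacle is uniform control of the cubic Taylor remainder in $\beta_{j_0}$ (over the support of $\mu$) and in $\tilde{\bbeta}$ drawn from $\thermal{\cdot}_v$; this forces the LOV moment control formalized in Hypothesis \ref{hyp2}, and the convergence of the cumulant-expansion coefficients must itself be propagated through a second, nested leave-an-observation-out analysis, as signposted by \eqref{eq:loo2}.

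For joint marginals at distinct indices $j_1, \dots, j_r$ with $r$ fixed, drop all $r$ variables simultaneously. The same expansion produces a sum of per-coordinate contributions plus cross terms of the form $\beta_{j_a}\beta_{j_b}\sum_i X_{i,j_a}X_{i,j_b}(\cdots)$ for $a \neq b$; these vanish in probability because distinct columns $X_{\cdot,j_a}, X_{\cdot,j_b}$ are independent Gaussian and $r$ is fixed. A multivariate conditional CLT then yields asymptotically independent standard Gaussians $(Z_{j_1},\dots,Z_{j_r})$, and the tilt factorizes as $\prod_{a=1}^r p_{h,j_a}$, simultaneously establishing the product form and the asymptotic independence of the marginals at any fixed finite set of coordinates.
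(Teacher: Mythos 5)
Your route is the paper's route in outline: the paper proves this theorem by combining Proposition \ref{prop:fix_point} with a moment-determinacy argument (the limit $p_{h,j}$ is strongly log-concave, hence determined by its moments, and Billingsley's Theorem 30.2 finishes), and Proposition \ref{prop:fix_point} is itself proved by exactly the leave-a-variable-out tilt and Taylor decomposition you describe, with the limits of the $Y_i$'s fed by the leave-an-observation-out analysis and the square completed at the end. So what you have written is essentially a fleshed-out version of the paper's Section \ref{sec:pfoutline} heuristic rather than an independent argument, and the place where it falls short of a proof is precisely where the heuristic does.

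Two concrete gaps. First, you never say how the limit is passed through the normalizing denominator of the tilt $\cD_n$. Convergence in distribution of the exponent (your conditional CLT for $L_n, C_n$ and concentration for $Q_n, H_n$) does not by itself give convergence of $\E\thermal{f(\beta_{j_0})} = \E\thermal{f(\beta_{j_0})\cD_n}_v$, because $\cD_n$ is an unbounded ratio of posterior averages: you need uniform integrability of the exponential tilt and a device to linearize the denominator. The paper handles this with the truncation events $A_K$ and $B$ (the measures $\thermal{\cdot}_K$ and $\thermal{\cdot}'_K$ of Lemmas \ref{lem:aprox_K}--\ref{lem:taylor_control}, which also absorb the Taylor error $E_n$) together with a geometric-series expansion of $1/\thermal{e^{\Delta\cL}\mathbb{I}_{A_K}}_v$, which reduces everything to expectations of bounded continuous functions of finitely many replicas $(Z_n^{(1)},\dots,Z_n^{(k)})$; this is also why the joint, multi-replica convergence of Lemma \ref{lem:conv_dist_Z} is needed rather than a single-copy CLT. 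Your ``cumulant expansion in the LOV measure'' is doing this work implicitly, but the log of a posterior average does not truncate at second order without such an argument, so as stated the $-\mathrm{Var}_{\thermal{\cdot}_s}(S_\theta)$ contribution to $r_1$ is asserted rather than derived. Second, the constants you plug in via Proposition \ref{prop:hat_fix_point} are defined through the coupled fixed-point system \eqref{eq:FPE_all2}--\eqref{eq:FPE_all1}; the paper closes this circularity by a subsequence/uniqueness argument (Propositions \ref{prop:fix_point2}, \ref{prop:hat_fix_point2}, \ref{prop:FPidentify} and \ref{prop:conv_order_param}), a step you should at least invoke. Minor bookkeeping: your expansion drops the $\beta_{\star,j_0}^2 Y_5$ term of \eqref{eq:DeltaL}; it depends on $\tbbeta$ and so does not cancel at finite $n$, though its limit $c_g$ does cancel between numerator and denominator. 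The $r$-coordinate extension and the vanishing cross terms are as in the paper.
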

%\emph{Arrange this way: (i) Gaussian calculation, contrast etc, (ii) Gaussian sequence model comments (iii) Nonparametric Bayes comments (more details than below), (iv) MLE comparison comments (v) Discussion -- what do we need to estimate to make our results useful, optimal priors.}
The proof is deferred to Section \ref{sec:proof_marginals}. The theorem says that the finite-dimensional posterior marginals can be expressed as Gaussian tilts of the prior. Crucially, the tilt itself has a random mean, that is in the form of a constant times the corresponding true signal coordinate plus additional Gaussian noise. Our proof heuristics section (Section \ref{sec:pfoutline}) further illuminates why the posterior marginals take this form. In particular, our proof uncovers interesting connections with classical Le Cam theory. For instance, we express the full posterior as tilts of the leave-a-variable-out and leave-an-observation-out posteriors introduced in Section \ref{sec:LOOs}. A key step in our proof is to show the tilt part converges to appropriate Gaussian variables, as in classical local asymptotic analysis of standard estimators such as the MLE \cite{van2000asymptotic}. We explain this in further detail in Section \ref{sec:pfoutline}. Here, we present some discussions surrounding the theorem.

%\textcolor{red}{This will go later, and other comments if any needs to be merged into our setting. Below discussion needs a pass and work.} 
To gain insights, we contrast our theorem with the case of a high-dimensional Gaussian linear model with Gaussian prior, where explicit closed-form formulae are available for the posterior. Specifically, if we assume the observed data comes from a linear model with $y_i = \bX_i^{\top}\bbeta_{\star} + \epsilon_i$ and $\epsilon_i \sim \mathcal{N}(0,\tilde{\sigma}^2\mathbb{I}_n)$. In our setting, this means $f(\bX_{i}^{\top}\bbeta_\star,e_i) = \bX_i^{\top}\bbeta_\star + \Phi_{\tilde{\sigma}}^{-1} (e_i)$ where $\Phi_{\tilde{\sigma}}^{-1}(\cdot)$ is the inverse Gaussian cdf of a $N(0,\tilde{\sigma}^2\mathbb{I}_n )$. Suppose that we consider an i.i.d. $\mathcal{N}(0,\tilde{v})$ prior on each coordinate. Although we can perform closed-form calculations here, we assume for simplicity $\tilde{\sigma}=\tilde{v} =1$. The posterior distribution at $\bbeta$ is proportional to 
\begin{equation}\label{eq:linmod} 
\exp\{-\frac{1}{2}\|\by - \bX\bbeta \|^2 - \underbrace{\frac{1}{2}\|\bbeta \|^2}_{\text{tilt from the prior}}\} = \exp\{-\frac{1}{2}\bbeta^{\top}(\bX^{\top}\bX + \bm{I})\bbeta + \by^{\top} \bX \bbeta\}, 
\end{equation}
which is a multivariate Gaussian $N(\bm{\mu}, \bm{\Sigma})$ with $\mu = (\bX^{\top}\bX + \mathbb{I}_p)^{-1}\bX^{\top}\by, \bm{\Sigma} = (\bX^{\top}\bX + \mathbb{I}_p)^{-1}.$

From here the asymptotic distribution of the marginals can be explicitly calculated. We show later (Appendix \ref{app:direct_integration_op}) that this closed-form calculation and our Theorem \ref{thm:marginals} indeed match. Here we show the form of the posterior to emphasize that it is indeed a tilt from the prior. Note that the posterior marginals in  a Gaussian linear model remains Gaussian due to both the 
prior and likelihood being Gaussian. Once we move beyond this simple example, the posterior naturally no longer remains Gaussian. 
Notably, the structure of the posterior being a tilt from the prior still remains, as outlined in  \eqref{eq:density_hj}. Theorem \ref{thm:marginals} shows that, even for GLMs where the likelihood is far from a Gaussian, the likelihood part still contributes a Gaussian density (with suitable mean and variance) in the limit. This is a non-trivial takeaway and we defer more detailed discussion for why this is the case to the proof heuristics section (Section \ref{sec:pfoutline}). Unlike many preceding parametric high-dimensional Bayesian settings, the prior effect does not wash away, even in the limit of large sample and dimensions, in our setting.

Among existing works, Theorem \ref{thm:marginals} is reminiscent of a few concrete settings studied in prior literature.  
 First, in \cite{johnstone2010high}, Gaussian sequence models were extensively studied. Letting the prior variance depend on the sample size, the author pinned down the precise relation between the prior variance versus the noise level in the data, as a function of the dimension, that determine when the effect of the prior washes away versus not, in the limit of large samples; the calculations there already allude to the fact that, in a parametric problem when the number of samples and dimensions are diverging at a comparable rate, traditional results such as Bernstein-von-Mises, and effect of the prior washing away, should no longer hold. However, \cite{johnstone2010high} heavily exploited the Gaussian sequence model structure, which ensured the posterior remains Gaussian (since the assumed priors were also Gaussian so the posterior has a neat closed form, analogous to \eqref{eq:linmod} but for the sequence model case). Whereas in our setting, Gaussianity of the posterior is not automatic, and we find that it is a Gaussian tilt of the prior instead.

Second, there are ample examples from the non-parametric Bayes, mixture model, and normal means literature where the effect of the prior does not wash away even in the limit of large samples \cite{regazzini2003distributional,james2009posterior,rousseau2011asymptotic,de2013asymptotic,ghosal2017fundamentals} (see also numerical observations reported in \cite{ray2022variational}). Third, there has been an emerging interest in studying low-dimensional parameters in high-dimensional models in the Bayesian inference literature \cite{castillo2024variational,lee2025clt}). We anticipate that our leave-one-out strategies, explained in Section \ref{sec:pfoutline}, will provide general-purpose tools to advance the study of low-dimensional functionals in high-dimensional Bayesian problems. Finally, a naive mean field and variational Bayes literature has emerged \cite{mukherjee2022variational,mukherjee2023mean,mukherjee2024naive}, where the posterior has been expressed as tilts of the prior. Translated to our setting, these papers cover the case of $p=o(n)$ which is lower dimensional than ours (see also \cite{katsevich2023improved,katsevich2024approximation}).
%and furthermore, provide average coverage guarantees which is different from studying finite-dimensional marginals. 
In fact, in our setting, naive mean field approximations fail \cite{qiu2023tap,qiu2023sub,fan2021tap,celentano2023mean}, thus prior naive mean field-based techniques \cite{mukherjee2022variational,mukherjee2024naive} no longer apply. In some sense, our paper is precisely complementary to this line of work. 

\begin{figure}[h!]
  \centering
  % First row
    \includegraphics[width=0.48\linewidth]{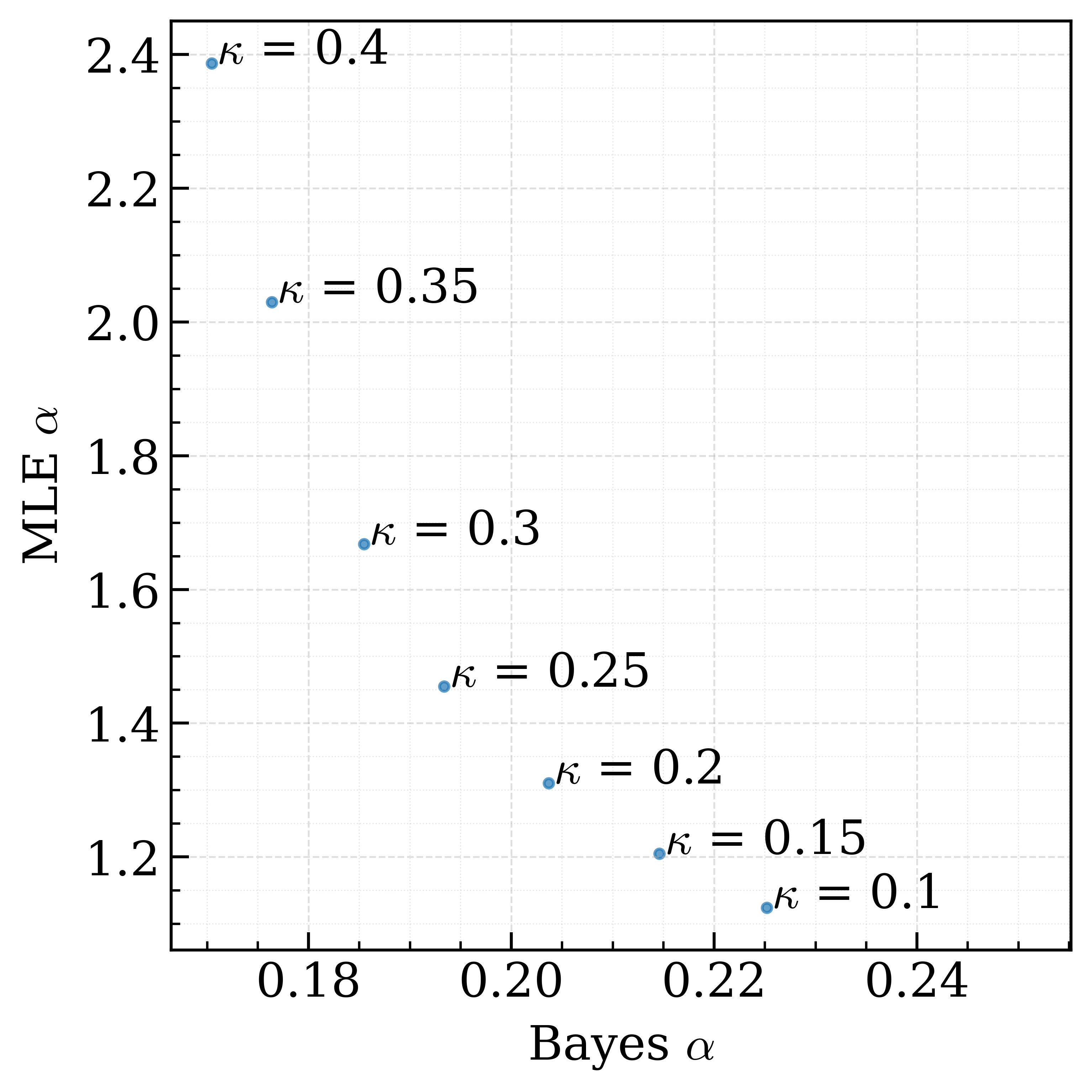}\hfill
    \includegraphics[width=0.48\linewidth]{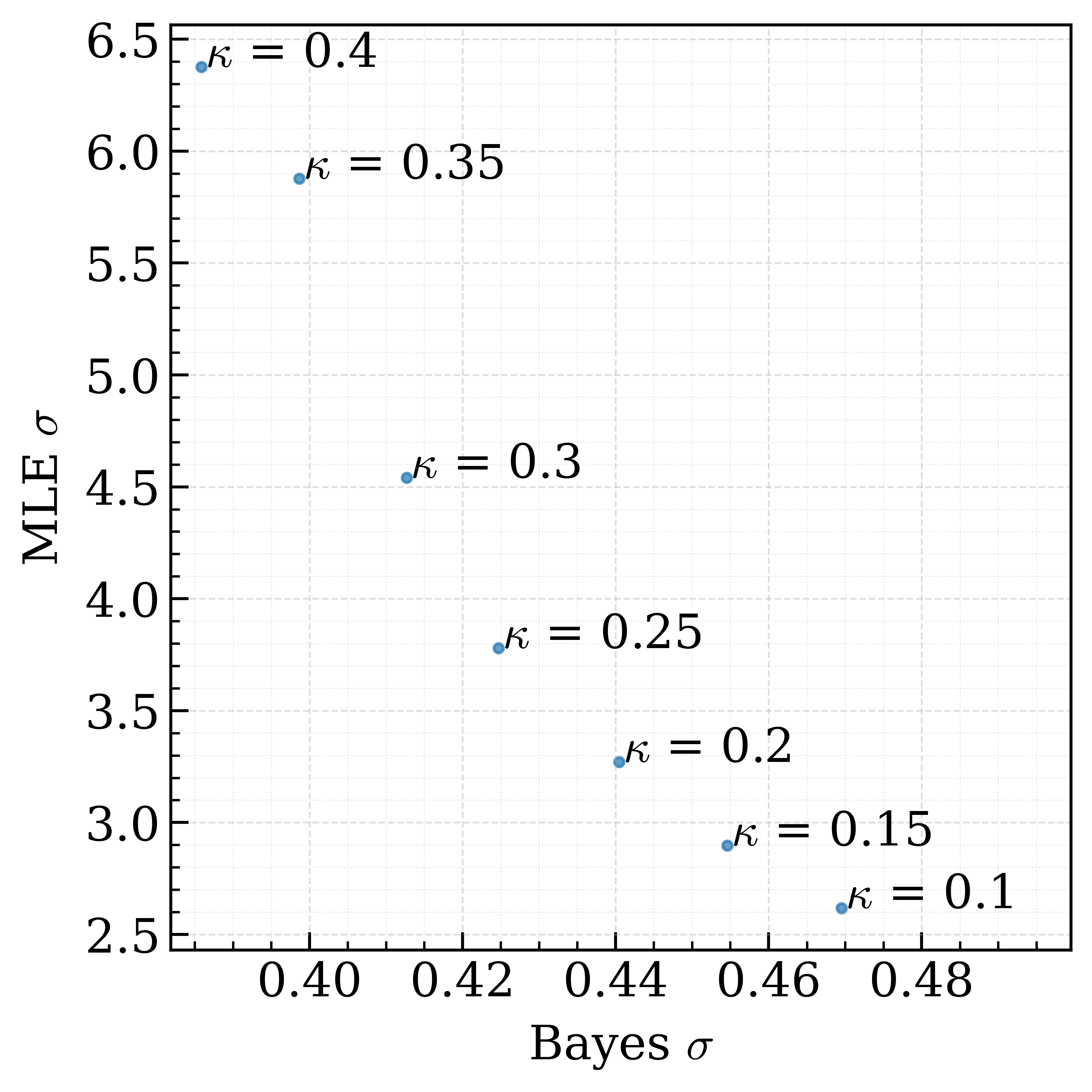}
  \caption{Comparison of the coefficients $\alpha$ and $\sigma$, from \eqref{eq:mle_coef} and \eqref{eq:bayes_mean_coef}, for the MLE and the posterior mean corresponding to various values of $\kappa$. Here, $\pi(\cdot)$ is taken to be a Rademacher distribution and the prior $\mu(\cdot)$ is taken to be standard Gaussian. We choose such a setting with stark difference between $\pi(\cdot)$ and $\mu(\cdot)$ to illustrate that our results allow for such mismatch; this is in fact one of the strengths of our approach.  Interestingly, between the MLE and the posterior mean,  the growth trends for $\alpha$ and $\sigma$ see a reversal as a function of $\kappa$.}
\label{fig:comparison_mle_bayes}
\end{figure}
  %Here the prior considered is a standard Normal distribution and the signal is Rademacher multiplied by a constant.} 
  %Left panel, comparison of coefficients $\alpha$ for various values of $\kappa$. Right panel, comparison of coefficients $\sigma$ for various values of $\kappa$.}

%Finally, the proportional asymptotics regime has been studied for empirical Bayes problems extensively in recent literature, including interesting algorithmic advances in this front \cite{}. That said, these works do not study behavior of the posterior distribution and it is unclear how to generalize their tools to study the posterior.

%In contrast, \cite{zhou} operates in a
%Bayes optimal regime where the true signal prior is known to the statistician, again a regime different from ours. 
%[\textcolor{red}{CHECK SUBHA and ZHOU work details and re-arrange last line if required.] 
%[\textcolor{red}{CITE Subha and Zhou's work earlier}] 
%[CITE GHOSHAL AND VdV, DeBlasi et al from Marta's email]].

One critical aspect of our result is the form the mean of the Gaussian in the tilt takes on. One would naturally inquire if the finite-dimensional posterior marginals can be connected to the finite-dimensional marginals of the MLE in these models in the same sense that a classical BvM theorem does. At a first pass, we should not expect any traditional BvM to hold here, following observations already pointed out in \cite{cox1993analysis,freedmanwaldled, johnstone2010high}. However, some connections can still be seen between the posterior marginals and the MLE for models that are well-studied in this proportional asymptotics regime in the frequentist literature. For instance, for logistic models, it is known by now \cite{sur2019modern,zhao2022asymptotic} that the $j_0$-th marginal of the MLE satisfies
\begin{equation}\label{eq:mle_coef}
    \hat{\beta}_{{\rm MLE},j_0} \xrightarrow{d} \alpha_{\rm MLE} \beta_{\star,j_0} + \sigma_{\rm MLE} Z',
\end{equation}
 $Z' \sim \mathcal{N}(0,1)$, independent of everything else. An analogous result is known for finite-dimensional marginals \cite{sur2019modern,zhao2022asymptotic} but we choose a single coordinate here for simplicity. Above, $\alpha_{\rm MLE},\sigma_{\rm MLE}$ are constants, with $\alpha_{\rm MLE}$ related to the projection of the MLE in the direction of the true signal, and $\sigma_{\rm MLE}$ given by the norm of the residual. 
 
 By our Theorem \ref{thm:marginals}, the mean of the Gaussian tilt for a posterior marginal takes on a similar form but with different constants $\alpha,\sigma$. The presence of the prior in the limit \eqref{eq:density_hj} makes it hard to relate these constants to simple objects such as projection of the posterior sample in the direction of the signal, etc., as was possible in case of the MLE. It is still worthwhile to compare how similar or different these constants are. Note that if the prior is taken to be Gaussian, then even in logistic models the posterior marginal limit in \eqref{eq:density_hj} simplifies---comparing the constants for this setting with the logistic MLE constants from \eqref{eq:mle_coef} is insightful. Additionally, performing this comparison at the level of means of the posterior marginal is particularly instructive. We present this comparison next, but we first mention the following corollary that describes posterior marginals at the level of means. 

 \begin{corollary}\label{cor:marginals_bayes}
Under the conditions of Theorem \ref{thm:marginals}, the posterior mean $\thermal{(\beta_{j_1},\hdots,\beta_{j_r})}$ of the r-tuple $(\beta_{j_1},\hdots,\beta_{j_r})$ converges in distribution toward a random vector $\bm{m}\in\R^r$ with independent coordinates $k\in[r]$ distributed according to
    \begin{equation*}
        m_k \overset{d}{=} \int B \, p_{h,{j_k}}(d B),
    \end{equation*}
with $p_{h,{j_k}}$ defined as in \eqref{eq:density_hj}.
\end{corollary}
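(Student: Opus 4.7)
The plan is a method-of-moments argument that exploits conditional independence of multiple posterior samples given the data. The key identity is that, for any nonnegative integers $l_1,\dots,l_r$,
\begin{equation*}
    \prod_{k=1}^r \thermal{\beta_{j_k}}^{l_k} \;=\; \thermal{\prod_{k=1}^r\prod_{m=1}^{l_k}\beta^{(m,k)}_{j_k}},
\end{equation*}
where the right-hand side is taken with respect to $L:=\sum_{k=1}^r l_k$ independent samples from the posterior, and $\beta^{(m,k)}_{j_k}$ denotes the $j_k$-th coordinate of the sample labelled $(m,k)$. Taking $\E$ on both sides rewrites joint integer moments of the (data-measurable) posterior means as joint moments of a larger tuple of coordinates drawn from the product posterior.

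I would then invoke the natural multi-sample extension of Theorem \ref{thm:marginals}: the joint law of $\{\beta^{(m,k)}_{j_k}\}_{m\in[l_k],k\in[r]}$ converges, under the joint data-times-posterior measure, to a limit in which, conditional on independent standard Gaussians $Z_{j_1},\dots,Z_{j_r}$, the coordinates $\{B^{(m,k)}_{j_k}\}$ are independent with $B^{(m,k)}_{j_k}\sim p_{h,j_k}$. Theorem \ref{thm:marginals} is stated for a single sample, but the posterior expectation in \eqref{eq:posterior_expectation} and Theorem \ref{thm:contraction} already operate with $l$ independent replicas, so the leave-one-out arguments sketched in Section \ref{sec:pfoutline} apply verbatim with an enlarged product posterior. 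I would next upgrade this convergence in distribution to convergence of joint moments by verifying uniform integrability: strong log-concavity of $\mu$ (Hypothesis \ref{hyp1}(i)), combined with Hypothesis \ref{hyp2} and the boundedness assumptions on $A''$ and on $\tT_i$ and its derivatives (Hypothesis \ref{hyp1}(ii)--(iii)), bound $\E\thermal{|\beta_{j_k}|^q}$ uniformly in $n$ for every $q\in\mathbb{N}$; Jensen's inequality applied to $|\cdot|^q$ then gives the same bound for $\E|\thermal{\beta_{j_k}}|^q$. Combining the identity above, the multi-sample convergence, and uniform integrability yields, for every $(l_1,\dots,l_r)\in\mathbb{N}^r$,
\begin{equation*}
    \E\left[\prod_{k=1}^r \thermal{\beta_{j_k}}^{l_k}\right] \;\longrightarrow\; \E\left[\prod_{k=1}^r \left(\int B\, p_{h,j_k}(dB)\right)^{l_k}\right],
\end{equation*}
where the product structure on the right follows because the densities $p_{h,j_k}$ depend on distinct independent Gaussians $Z_{j_k}$ (and independent copies of $B_\star$ coming from the deterministic signal coordinates).

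Finally, convergence of all mixed moments is promoted to convergence in distribution by the method of moments. Each coordinate $m_k = \int B\, p_{h,j_k}(dB)$ is a Lipschitz function of $Z_{j_k}$: the map sending a location parameter $m$ to the mean of the strongly log-concave tilted density $p_h$ of \eqref{eq:density_h} is smooth with derivative bounded in terms of the strong log-concavity constant of $\mu$. Consequently $\bm{m}$ has sub-Gaussian coordinates and its law is determined by its joint moments; an appeal to the Cram\'er--Wold device then gives the claimed convergence in distribution of $\thermal{(\beta_{j_1},\dots,\beta_{j_r})}$ to $\bm{m}$. The main technical obstacle is the formal multi-sample extension of Theorem \ref{thm:marginals} together with the uniform-in-$n$ posterior moment bounds; both should follow from the machinery already developed for Theorem \ref{thm:marginals}, but require careful bookkeeping with the enlarged replica system.
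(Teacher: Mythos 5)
Your proposal is correct and follows essentially the same route as the paper: the paper also uses the replica identity $\E\thermal{\beta_{j_1}}^m = \E\thermal{\prod_{l\in[m]}\beta^{(l)}_{j_1}}$, applies the multi-replica convergence result (Proposition \ref{prop:fix_point}, which already covers polynomially bounded test functions, so moments converge directly without a separate uniform-integrability step), and concludes by the method of moments via \cite[Theorem 30.2]{billingsley2012probability}. Your added justifications (uniform moment bounds from Lemma \ref{lem:cont_mom} plus Jensen, and moment-determinacy of the limit via the Lipschitz dependence of $\int B\,p_{h,j_k}(dB)$ on $Z_{j_k}$) are consistent with, and only elaborate on, what the paper leaves implicit.
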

%We present this comparison in [CITE Figure]
The proof of this corollary can be found in Section \ref{sec:proofcor1}.
As mentioned earlier,  $p_{h,j_k}$ critically retains the effect of the prior. However, if the prior is Gaussian, it serves conjugate to the Gaussian tilt part and the limiting mean distribution simplifies. In fact, for a  Gaussian prior, Corollary \ref{cor:marginals_bayes} can be re-written as
%we can see that there are constants can be integrated giving that, for proper constants $\alpha_{\rm bayes}$ and $\sigma_{\rm bayes}$
\begin{equation}\label{eq:bayes_mean_coef}
    \thermal{\beta_{j_0}} \xrightarrow{d} \alpha_{\rm Bayes} \beta_{\star,j_0} + \sigma_{\rm Bayes} Z
\end{equation}
where $Z \sim \mathcal{N}(0,1)$, independent of everything else and we choose $r=1$ for simplicity.  Here the constants $\alpha_{\rm Bayes},\sigma_{\rm Bayes}$ depend on the choice of the GLM and other problem parameters such as the dimension to sample size ratio $\kappa$ and the signal norm $\gamma^2$ (this may also be thought of as a signal-to-noise ratio parameter). 

In the specific case where the underlying GLM is taken to be logistic regression, we compare the Bayes constants from \eqref{eq:bayes_mean_coef} with the MLE constants from \eqref{eq:mle_coef} in Figure \ref{fig:comparison_mle_bayes} and the left plot of Figure \ref{fig:mse_graphs2}. For simplicity, we take the prior to be standard Gaussian. Also, note that if the dimensionality is sufficiently large compared to the sample size, the MLE would not exist, since the data would be perfectly linearly separable. We restrict to the regime where the MLE exists as identified in \cite{candes2020phase}. 
%Note that both the multiplicative bias $\alpha$ and the standard deviation $\sigma$ increase as a function of $\kappa$ and $\gamma$ for both the MLE and the posterior mean. Therefore the same trend is observed for the asymptotic MSE shown in Figure \ref{fig:mse_graphs} since here we simply plot $\alpha^2+\sigma^2$. But 
Notably, the MLE values for $\alpha,\sigma$ are significantly higher than the Bayes values.  This  contrasts sharply with classical statistics or other parametric frameworks where the posterior mean asymptotically behaves like the MLE. Remarkably, this difference persists even though we operate within a well-behaved parametric setting---one as simple as logistic regression. 
%Thus, our high-dimensional regime uncovers rather different phenomena than those traditionally familiar to Bayesian statisticians, warranting further investigation.  
% although we operate within a nice parametric setting, something as simple as logistic regression. In that sense, this high-dimensional regime uncovers rather different phenomena than what Bayesian statisticians may be traditionally familiar with and thus merits additional investigations. 
\begin{figure}[ht]
  \centering
  \includegraphics[width=0.48\textwidth]{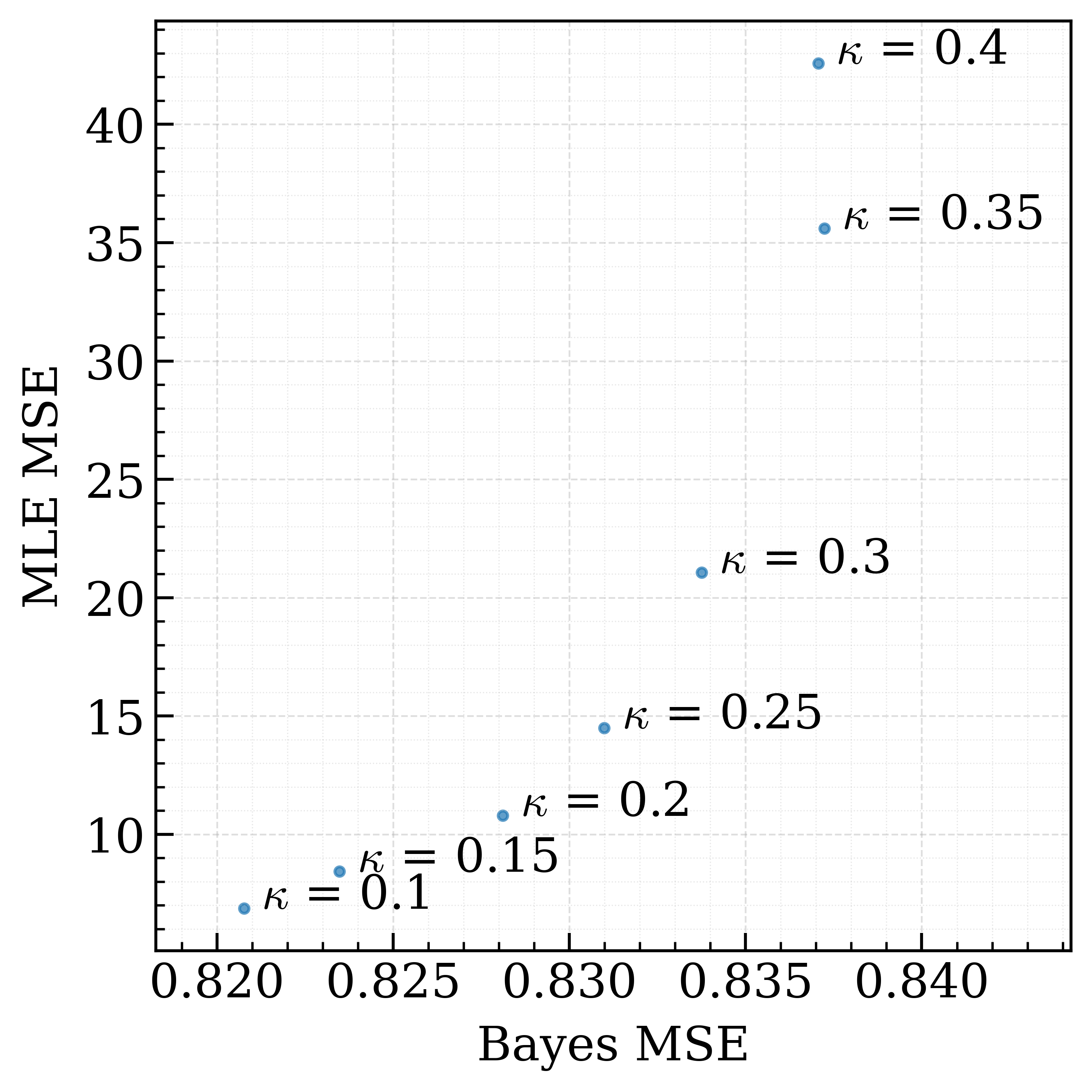}\hfill
  \includegraphics[width=0.48\textwidth]{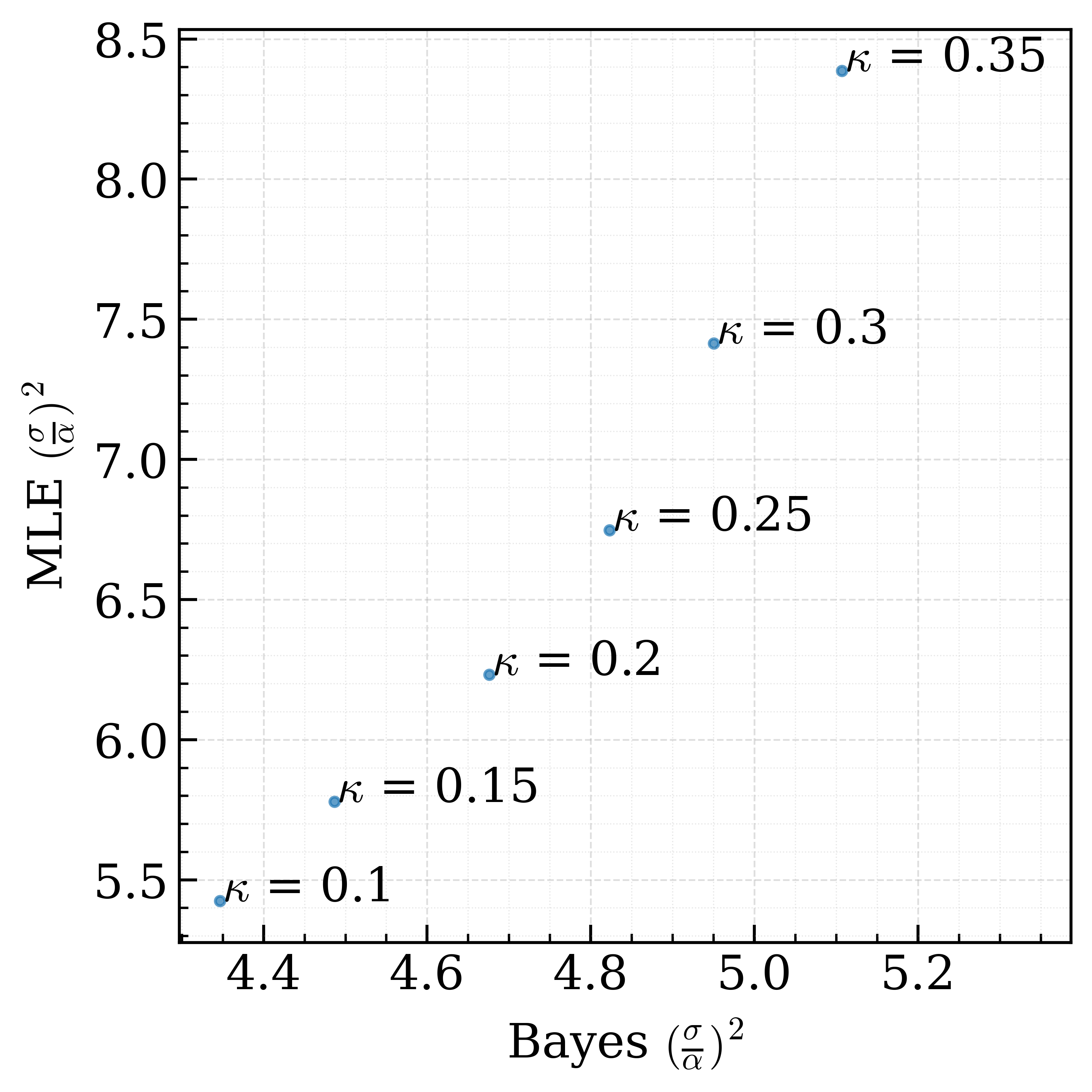}
  \caption{The left panel plots the asymptotic MSE of an MLE coordinate versus that of the posterior mean corresponding to that coordinate. The setting is the same as for Figure \ref{fig:comparison_mle_bayes}. 
Due to the relations \eqref{eq:mle_coef} and \eqref{eq:bayes_mean_coef}, the MSEs plotted on the left are given by  $(1-\alpha)^2+\sigma^2$. The right panel presents the asymptotic MSE of debiased versions, obtained by dividing the MLE and the posterior mean by their respective $\alpha$ coefficients. Thus the MSE plotted on the right is given by $(\sigma/\alpha)^2$. In both cases, the MLE performs worse than the corresponding Bayes procedure. Across the board, the MSEs increase with increase in $\kappa$. For the Bayes procedures, the unbiasedness on the right comes at the cost of significant increase in variance.}
%\textcolor{red}{The description for the right figure needs more details on how the MSE is actually computed. Like in terms of the $\alpha,\sigma$ constants that have now been introduced in the test, what exactly is the y-axis and what exactly is the x-axis.}}
  \label{fig:mse_graphs2}
\end{figure}

In fact, in our regime, beating the MLE with an exceptionally simple Bayes estimator---the posterior mean with a Gaussian prior---is easy. The left panel in Figure  \ref{fig:mse_graphs2} compares the asymptotic MSE of  the MLE versus the posterior mean for a marginal. A striking feature is the pronounced gap between the two values: across the full range of $\kappa$ values considered, the Bayes estimator exhibits substantially smaller error than the MLE. 
%This highlights the improvement in performance achieved by incorporating prior information in our high-dimensional regime. 
Notably, one does not even need a sophisticated prior to see such gains in performance. 
Given the issues with the MLE pointed out in \cite{sur2019modern}, this may be unsurprising. But, this reinforces the understanding that seeking a BvM type result with the MLE as the center would be unreasonable to pursue in this regime; since the MLE itself is sub-optimal. Identifying the correct centering frequentist estimator would be an interesting direction to pursue in the future.  

%\textcolor{blue}{Integrate with previous writing:
%We finally compare Bayes mean and Maximum Likelihood Estimation of a fixed coordinate in the context of a standard Normal prior and Rademacher signal multiplied by a constant. In Figure \ref{fig:comparison_mle_bayes}, we compare coefficients of equations \eqref{eq:bayes_mean_coef} and \eqref{eq:mle_coef}, of Bayes mean and MLE, for various values of $\kappa$ and $\gamma^2$ in this setting. This allows to visually contrast the corresponding Bayes and MLE single coordinate estimators under different scenarios.
%}

%Figure blah shows --- figure description.

In \cite{sur2019modern} the authors proposed a debiased MLE that removes the multiplicative bias in \eqref{eq:mle_coef} by rescaling the MLE with a consistent estimator of $\alpha_{\rm MLE}$. One might ask if this rescaled MLE is a better choice for comparing with the posterior mean. But, this comparison is unfair since the posterior mean is biased per \eqref{eq:bayes_mean_coef}. This is perhaps unsurprising given observations already seen in \cite{sur2019modern,sur2019likelihood}, which show that traditionally unbiased estimators no longer remain so in this high-dimensional regime. Such bias in the posterior mean has also been observed in semi-parametric Bayesian causal inference problems \cite{ray2020semiparametric}; for us, this is seen purely due to high-dimensional effects. Since there is no apriori assumed sparsity in the true underlying signal (it could be sparse or dense), the cumulative contribution of all coordinates incurs the bias in the mean for the posterior marginals, in a spirit similar to \cite{sur2019modern}.  

The corresponding debiased version of the posterior mean---the one obtained by dividing the mean by $\alpha_{\rm Bayes}$---would exhibit an asymptotic MSE given by $\sigma_{\rm Bayes}^2/\alpha_{\rm Bayes}^2$. In the right panel of Figure \ref{fig:mse_graphs2}, we compare this MSE with the debiased MLE MSE given by $\sigma_{\rm MLE}^2/\alpha_{\rm MLE}^2$. Once again, the Bayes procedure outperforms the debiased MLE. That said, the debiased Bayes procedure performs worse than the original posterior mean as is evident on comparing the range of the x-axis values between the left and the right panels. This is unsurprising given the phenomenon of Stein shrinkage that we famously know in statistics. The unbiasedness in the debiased Bayes procedure costs sufficient raise in the variance. Hence we do not pursue this debiasing direction further.  %\textcolor{red}{Fill in takeaways from new plot; for now wrote a line and commented it out use it later if simulation outcomes are consistent with what I expect.}

Finally, that the effect of the prior survives even in the limit of large samples and dimensions, may be perceived as a blessing of dimensionality, since it implies we can design data-driven priors that help in high-dimensional learning and inference. 

%that the Bayes MSE is strictly lower. Thus, this regime uncovers interesting phenomena where the Bayes procedure beats the MLE strictly even where the MLE is well-defined. 

Next we turn to explaining the main distributional limit that arises in Theorem \ref{thm:marginals} further. Critically, the limit involves constants $\alpha,\sigma,v$ that were defined in \eqref{eq:alphasv} based on the systems introduced in \eqref{eq:FPE_all2} and \eqref{eq:FPE_all1}. The quantities that appear in these systems possess specific interpretations. In the remainder of this section, we provide alternate ways of expressing the constants appearing in \eqref{eq:FPE_all2} and \eqref{eq:FPE_all1} that yields additional insights into how these emerge in our results. 
To this end, define 
 for $m,m'\in \mathbb{N}$,
\begin{equation} \label{eq:overlaps}
    Q_{mm'} := \frac{1}{p} \bbeta^{(m)\top}\bbeta^{(m')}, \qquad Q_{m\star} := \frac{1}{p} \bbetas^\top \bbeta^{(m)},
\end{equation}
where $\bbeta^{(m)},\bbeta^{(m')}$ denote two independent samples from the posterior. Proposition \ref{prop:conv_order_param} below, that forms one of our main intermediate results, shows that the constants \eqref{eq:FPE_all1} can be interpreted as limits of these quantitires, in the sense that for any $m \neq m' \in \mathbb{N}$
\[Q_{m m} \rightarrow v_B, \quad Q_{m m'} \rightarrow c_B, \quad Q_{m\star} \rightarrow c_{BB_\star},\]
in an $L_2$ convergence sense. That is, $v_B,c_B, c_{BB_\star}$  correspond respectively to the (normalized) norm of a typical sample from the posterior, the inner product between two independent samples and  the inner product between a sample from the posterior and the true signal. Interestingly, the other set of constants $r_1,r_2,r_3$, critical for defining the mean and variance parameters $\alpha,\sigma,v$ in our main result, Theorem \ref{thm:marginals}, \eqref{eq:density_hj}, can be interpreted using versions of the score functions introduced in \eqref{eq:scalarscores}.
%omponents of the score  corresponding to a model which has the same likelihood as in \eqref{}

To see this, define $\bS^{(m)}, \bS_{\star}^{(m)} \in \mathbb{R}^n$ to be the vectors with $i$-th entry given by
\begin{equation}\label{eq:scores}
    S^{(m)}_i:= \tT_i(\bX_{i}^{\top}\bbeta_\star)  - A'(\bX_i^{\top}\bbeta^{(m)})  \,\,\mbox{ and } \,\, S_{\star,i}^{(m)} :=\tT'_i(\bX_i^{\top}\bbeta_\star)\bX_i^{\top}\bbeta^{(m)}.
\end{equation}
where $A'(\bX_i^{\top}\bbeta^{(m)}) = dA/dx |_{x = \bX_i^{\top}\bbeta^{(m)}}$ and $\tT'_i(\bX_{i}^{\top}\bbeta_\star) =\
d\tT_i/dx|_{x = \bX_i^{\top}\bbeta_\star}$. In other words $\bS^{m}, \bS_\star^{m}$ are vector versions of the scalar scores introduced in \eqref{eq:scalarscores}, with $\bX_i^\top\bbeta_\star$ and $\bX_{i}^{\top}\bbeta^{(m)}$ substituted for $\theta_\star$ and $\theta$ respectively. 
We then define, 
%\begin{equation} \label{eq:overlaps}
%    Q_{mm'} := \frac{1}{n} \bbeta^{(m)\top}\bbeta^{(m')}, \,\, Q_{m\star} := \frac{1}{n} \bbetas^\top \bbeta^{(m)},
%\end{equation}
\begin{equation}\label{eq:overlaps2}
    \tQ_{m m'}:=n^{-1} \bS^{(m)\top}\bS^{(m')},\,\, {M}_{mm'}:=n^{-1}\bS^{(m)\top}\bS_\star^{(m')} \,\, \bar{Q}_{mm'}:=n^{-1}\bS^{(m)\top}_\star \bS^{(m')}_\star.
\end{equation}
\begin{equation}\label{eq:converseSstar}
 \mbox{ and } \,\, a_{dp} :=\E_{G\otimes e}\thermal{A''(\theta)}_s,
\end{equation}
%\textcolor{red}{This $a_{dp}$ definition should be moved somewhere else, I realize that now, will do on my final manuscript pass. Just revisit this once, where $a_{dp}$ def should appear first.}
where recall that $\langle \cdot\rangle_s$ denotes expectations under the measure $p_s$ introduced in \eqref{eq:density_s}.
With these definitions, we will see below that the constants $r_1,r_2,r_3$ from \eqref{eq:FPE_all1} are intricately tied to inner products between the scores introduced above.  We will sometimes refer to the quantities introduced in \eqref{eq:overlaps} and \eqref{eq:overlaps2} as the \emph{order parameters}. 

\begin{proposition}\label{prop:conv_order_param}
  In the setting of Section \ref{sec:model}, for any $m \neq m' \in \mathbb{N}$, we have that
  %Assume Hypothesis \ref{hyp1}, \ref{hyp2}, and \ref{hyp3}; and let $\thermal{\cdot}_s$ be the expectation defined by \eqref{eq:density_s} with parameters given by $(v_B,c_B,c_{BB_\star})$. Then, for every $m\neq m'\geq1$,
    \begin{equation*}
         \lim_{n\to\infty} \E\thermal{A''(a_{m,1})}=a_{dp},
    \end{equation*}
    \begin{equation*}
        \lim_{n\to\infty} \E\thermal{(Q_{mm} - v_B)^2}, \,\, \lim_{n\to\infty} \E\thermal{(Q_{m m'} - c_B)^2}, \,\, \lim_{n\to\infty} \E\thermal{(Q_{m\star} - c_{BB_\star})^2} = 0,
    \end{equation*}
    \begin{equation*}
        \lim_{n\to\infty} \E\thermal{(\tilde Q_{m m} - \tv)^2}, \,\, \lim_{n\to\infty} \E\thermal{(\tilde Q_{m m'} - r_3)^2}, \,\, \mbox{ and } \lim_{n\to\infty} \E\thermal{(M_{mm} - M_{mm'} - r_2)^2} = 0;
    \end{equation*}
    where $\tv := a_{dp} + r_3 - r_1$. 
    %\textcolor{red}{Definitions of order parameters of dim p should be normalized by $p$ and same for $\gamma^2$; $r_2$ sign may be wrong above, and will need a fix if wrong.}
%    \textcolor{red}{CHECK IF CONSISTENT WITH CURRENT SYS OF EQNS FOR $r_1,r_2,r_3$. Also it does not match with what Appendix C.4 for instance says, according to that the $\tilde{m}$ definition is different --- this needs fixing. We should also have the $\bar{Q}_{mm'}$'s defined here, at one place instead of having them later. There is a point to discuss whether we need this proposition really here, or it should be somewhere else? WE just }\ms{Now the definitions of $r_1,r_2,r_3$ are consistent. I don't know if it is a good idea to define the $\bar{Q}_{mm'}$ as these nor their limits appear in the main results. It will be confusing, I think. But we can discuss it.}
\end{proposition}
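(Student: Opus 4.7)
The plan is to reduce each order parameter to a one-dimensional average governed by the effective scalar measures $p_s$ or $p_h$ of Section~\ref{sec:auxiliary}, and then identify the limit via the fixed-point system \eqref{eq:FPE_all2}--\eqref{eq:FPE_all1}.

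For the variable-type overlaps I would use the leave-a-variable-out posterior at each coordinate $j_0\in[p]$. Writing $Q_{m\star}=p^{-1}\sum_{j_0}\beta_{\star,j_0}\thermal{\beta^{(m)}_{j_0}}$, the conditional law of $\beta_{j_0}$ under the full posterior, given $\tbbeta$, is a one-dimensional exponential tilt of the prior $\mu$ by a linear functional of the data; using Proposition~\ref{prop:hat_fix_point} to identify the asymptotic joint distribution of this functional, the resulting marginal on $\beta_{j_0}$ converges to $p_{h,j_0}$ of \eqref{eq:density_hj}. Averaging over $j_0$ and using Hypothesis~\ref{hyp1}(iv), one gets $\E Q_{m\star}\to\E_{Z,B_\star}\thermal{\beta B_\star}_h=c_{BB_\star}$ and similarly $\E Q_{mm}\to v_B$, $\E Q_{mm'}\to c_B$, where the last uses that two independent posterior samples become conditionally independent draws from $p_h$ in the limit. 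Dually, the score-side quantities are handled by the leave-an-observation-out posterior: writing $\tilde Q$ and $M$ as $n^{-1}\sum_i(\cdot)_i$ and leaving observation $i$ out renders the $i$-th contribution a tilt governed by the scalar GLM likelihood $p_s$, so by the joint limit $(\bX_i^\top\bbeta_\star,\bX_i^\top\bbeta^{(m)})\Rightarrow(\theta_\star,\theta(\xi_B))$ one computes $\E\thermal{A''(a_{m,1})}\to a_{dp}$, $\E\tilde Q_{mm'}\to\E_{G\otimes e}\thermal{S_\theta(\xi_B)}_s^2 = r_3$ for $m\neq m'$, and $\E\tilde Q_{mm}\to\E_{G\otimes e}\thermal{S_\theta^2(\xi_B)}_s=a_{dp}+r_3-r_1=\tilde v$ using the definition of $r_1$. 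For $M_{mm}-M_{mm'}$ the diagonal keeps both scores under a single $p_s$-sample (yielding $\thermal{S_\theta S_{\theta_\star}}_s$) while the off-diagonal factorizes across two independent samples (yielding $\thermal{S_\theta}_s\thermal{S_{\theta_\star}}_s$), so the difference equals the covariance $r_2$.

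To upgrade these mean limits to the $L^2$ statements in the proposition, I would bound the variance of each overlap under $\E\thermal{\cdot}$ by $o(1)$. The design randomness is controlled by Gaussian concentration together with a row/column-perturbation bound using the bounded-derivative conditions in Hypothesis~\ref{hyp1}(ii)--(iii); the posterior fluctuations are controlled by a Brascamp--Lieb-type inequality exploiting the strong log-concavity of $\mu$ (Hypothesis~\ref{hyp1}(i)) and the convexity of $A$ (Hypothesis~\ref{hyp1}(iii)); and the moment control of Hypothesis~\ref{hyp2} handles tail contributions from large $a_{m,i}$.

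The hardest step is that the six limits cannot be obtained in isolation, because $p_s$ depends on $v_B,c_B,c_{BB_\star}$ while $p_h$ depends on $r_1,r_2,r_3$, so each reduction produces a limit expressed in terms of constants that are themselves limits of other overlaps. I would close this loop by a joint bootstrap: establish concentration of the entire tuple of six overlaps under $\E\thermal{\cdot}$, use the leave-one-out identities to show that any subsequential limit point of this tuple must satisfy the coupled system \eqref{eq:FPE_all2}--\eqref{eq:FPE_all1}, and then invoke the assumed uniqueness of that system's solution to pin the limit to the declared values. The most delicate piece within this bootstrap is the cancellation in $M_{mm}-M_{mm'}$, where the individual diagonal and off-diagonal terms stabilize only through their difference; the cross-overlap contributions must therefore be tracked along with the score averages rather than bounded separately.
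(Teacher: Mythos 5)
Your proposal is correct and follows essentially the same route as the paper: leave-a-variable-out and leave-an-observation-out reductions identifying the limits via the scalar measures $p_h$ and $p_s$ (matching \eqref{eq:r123} and Proposition \ref{prop:FPidentify}), $L^2$ concentration of the overlaps via strong log-concavity/Brascamp--Lieb and decorrelation (Propositions \ref{prop:RS} and \ref{prop:conv_RS}), and the same subsequence-compactness plus uniqueness-of-the-fixed-point bootstrap to close the circular dependence between the two sets of constants. No essential gap.
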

%\textcolor{red}{I should write a description of prop 1 in proof outline, or maybe here itself. It is very easy to check heuristically.}
Thus, the mean and variance parameters in our main results $\eqref{eq:density_hj}$,  which are defined in terms of $r_1,r_2,r_3$ from \eqref{eq:FPE_all2}, essentially arise as limits of  \eqref{eq:overlaps2}.      Proposition \ref{prop:conv_order_param} forms a crucial first step for our proofs. Its proof is deferred to Section \ref{sec:proof_main_result}. 

\subsection{Applications}\label{sec:applications}
%\textcolor{red}{Plots we need total : (a) QQ plot for logistic with Beta (done) (b) To do: QQ plot Linear with Gaussian. (c) To do: QQ plot  Linear with Beta (d) MSE for logistic with Gaussian (done)-beautify (e) TODO: Compare logistic with Beta constants in the mean part of the Gaussian part of posterior with analogous $\alpha$'s and $\sigma$'s from PS equations.}
Before we delve into the main results of the paper, we provide specific GLM examples where our results apply. 
%some concrete generalized linear models that are widely used and that can be worked out with our results. 
We complement the logistic regression example below with numerical experiments showing the efficacy of our results.
%In the first two examples, we present numerical experiments to visualize some of the consequences of the corresponding asymptotics. 
%\ps{It will be easier for reader if this section is written using $y_i,\bX_i$ notation rather than $a_i$ notation. Also each example should start with the $y_i, \bX_i$ relation, write the likelihood given the model, and then conclude what the $\tilde{T}$ and $A$'s are supposed to be, Currently this is written a bit in the reverse order. Also I like $a_i$'s referring to $\bX_i^{\top}\bbeta^{(m)}$'s and not both this and $\bX_i^{\top}\bm{b}$. For the latter, maybe we can use full hand instead of the $a_i$ short form? }
%\ms{I will update this in my next version of the section.}
%\ps{I will edit this bit more in my next pass, content at the moment seems okay-ish, we should remove multinomial example, and add a picture for probit and multinomial. The QQ plots and density plots are both very nice.}

\paragraph{Example 1: Linear regression---prototype for checking validity} Perhaps the simplest example of a canonical GLM is the classical linear model. Although any prior that satisfies the conditions in Hypothesis \ref{hyp1} works for our theory, we  mainly discuss the case of the Gaussian linear model since here closed-form expressions allow us to provide independent validation of the accuracy of our results. Thus we take this example as a means to cross-check our theory, and present non-trivial examples later in this section. 

For this example, our observed data arises from the linear model
\begin{equation*}
    y_i = \bX_i^\top\bbetas+z_i
\end{equation*}
where $z_1,\dots,z_n$ are i.i.d. standard Gaussian random variables (we take the variance of these to be $1$ for simplicity). The log-likelihood is given by 
\begin{equation*}
    \logl_{n,p}(\bB) := -\frac{1}{2}\sum_{i\in[n]}\left(y_i-\bX_i^\top\bB\right)^2
\end{equation*}
so that matching with our formulation in \eqref{eq:glmhamilton} we have that $\tT_i(x)=f(x,e_i), \,\, f(x,e_i)= x + \Phi^{-1}(e_i)$, where $\Phi(\cdot)$ is the cdf of a standard Gaussian and $A(x) = x^2/2$. 
%\ps{There is a small repeat from the previous section (to be fixed).}

It is easy to check that Hypothesis \ref{hyp1} holds for this model. With a bit 
of work it can be proved, as a consequence of \cite[Lemma 5.1]{barbier2021performance} with $u(x) = -\frac{x^2}{2}$, that Hypothesis \ref{hyp2} also holds. We now show how our fixed point equations look for this model. In particular, given these explicit forms for $\tT_i$ and $A$, \eqref{eq:FPE_all2} simplifies to yield: 

\begin{equation}\label{eq:FPE_all2_linear}
    \begin{cases}        
        r_1 = 1 + \E_{G \otimes e}[\thermal{\theta(\xi)}^2_s] - \E_{G \otimes e}\thermal{\theta^2(\xi)}_s \\
        r_2 = r_1 - 1 \\
        r_3 = \E_{G \otimes e}\thermal{\theta(\xi)-\theta_\star-\Phi^{-1}(e)}^2_s,
    \end{cases}
\end{equation}
where $\theta(\xi),\theta_\star$ are given by \eqref{eq:thetasearlier} with $\xi$ used for $\xi_B$ to shorten the notation.  
%\textcolor{red}{I agree with the definition of $r_3$ in \eqref{eq:matchingr1to3}, which means the third equation above should be $r_3 = \E_{G \otimes e}\thermal{\theta(\xi)-\theta_\star-\Phi^{-1}(e)}^2_s.$}\ms{I agree and have updated it.}

For a linear model, \eqref{eq:FPE_all2_linear} can be further simplified by  integrating $\E_{G \otimes e}[\thermal{\theta^2(\xi)}_s]$ and $\E_{G \otimes e}\thermal{\theta(\xi)}^2_s$ to yield closed form formulae in terms of $v_B$, $c_B$, and $c_{BB_\star}$. In contrast,  \eqref{eq:FPE_all1} cannot in general be integrated due to the presence of the prior. However, in the special case of the Gaussian linear model, these can also be integrated reducing \eqref{eq:FPE_all2},\eqref{eq:FPE_all1} to an explicit system of equations. This is summarized in the next proposition whose proof is delayed to Appendix \ref{app:linear_regression}.

\begin{proposition}\label{prop:lin_reg_eqs}
For linear regression,  \eqref{eq:FPE_all2_linear} simplifies to
    \begin{equation*}
        \begin{dcases}
            r_1 = \frac{1}{\kappa(v_B-c_B)+1} \\
            r_2 = - \frac{\kappa(v_B-c_B)}{\kappa(v_B-c_B)+1} \\
            r_3 = \frac{\kappa(\gamma^2+c_B -2 c_{BB_\star})+1}{(\kappa(v_B-c_B)+1)^2}.
        \end{dcases}
    \end{equation*}
    Furthermore, if $\mu(\cdot)$ is standard Gaussian and $\pi(\cdot)$ is a centered distribution, we have
    \begin{equation*}
        \begin{dcases}
            v_B = \frac{1}{r_1 + 1} + c_B \\
            c_B = \frac{(r_2+1)^2 \gamma^2 + r_3}{(r_1 + 1)^2} \\
            c_{BB_\star} = \frac{(r_2+1) \gamma^2}{r_1 + 1},
        \end{dcases}
    \end{equation*}
    which implies that
    \begin{equation}\label{eq:r1r2}
        r_1 = r_2+1 = \sqrt{\left(\frac{\kappa}{2}\right)^2 + 1} - \frac{\kappa}{2}.
    \end{equation}
\end{proposition}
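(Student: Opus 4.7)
The plan is to exploit the fact that, in the linear model, the auxiliary measures $p_s$ and $p_h$ are both Gaussian, so every integral in \eqref{eq:FPE_all2} and \eqref{eq:FPE_all1} can be computed in closed form.

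First, for the simplification of $r_1, r_2, r_3$, I will substitute $A(x) = x^2/2$ and $\tT(\theta_\star) = \theta_\star + \Phi^{-1}(e)$ into the definition \eqref{eq:density_s} of $p_s$. Writing $\theta(\xi_B) = a\xi_B + b$ with $a = \sqrt{\kappa(v_B - c_B)}$ and $b = \sqrt{\kappa c_B}\, z_{BB_\star}$, the exponent in $p_s$ becomes a quadratic in $\xi_B$ with leading coefficient $-(a^2+1)/2$. Thus under $p_s$, $\xi_B$ (and hence $\theta(\xi_B)$) is Gaussian with variance $1/(a^2+1)$ and mean read off from the linear term. A routine computation then gives $\mathrm{Var}_{p_s}(\theta(\xi_B)) = \kappa(v_B-c_B)/(\kappa(v_B-c_B)+1)$ and $\langle \theta(\xi_B)\rangle_s - \theta_\star - \Phi^{-1}(e) = (b-\theta_\star-\Phi^{-1}(e))/(\kappa(v_B-c_B)+1)$. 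Plugging these into the three expressions of \eqref{eq:FPE_all2_linear} yields $r_1 = 1/(\kappa(v_B-c_B)+1)$ and $r_2 = r_1 - 1$ immediately. For $r_3$, I will take the outer expectation and use the joint law of $(\theta_\star, z_{BB_\star})$ from \eqref{eq:thetasearlier}, which gives $\mathbb{E}[\theta_\star^2] = \kappa\gamma^2$, $\mathbb{E}[b\theta_\star] = \kappa c_{BB_\star}$, and $\mathbb{E}[b^2] = \kappa c_B$; assembling the pieces (and noting $\Phi^{-1}(e)$ is independent of everything with unit variance) produces the claimed formula.

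For the Gaussian-prior part, I exploit Gaussian conjugacy: since $\mu$ is standard normal, $p_h$ in \eqref{eq:density_h} is the density of $N(m/(v+1), v/(v+1))$ with $m = \alpha B_\star + \sigma Z$. Computing the posterior moments gives $\langle \beta\rangle_h = m/(v+1)$ and $\langle \beta^2\rangle_h = (m/(v+1))^2 + v/(v+1)$, so direct integration over $(B_\star,Z)$ (using that $\pi$ is centered with second moment $\gamma^2$) yields $v_B - c_B = v/(v+1) = 1/(r_1+1)$, $c_B = (\alpha^2\gamma^2 + \sigma^2)/(v+1)^2$, and $c_{BB_\star} = \alpha\gamma^2/(v+1)$. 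Substituting $\alpha = (r_2+t_\gamma)/r_1$, $\sigma^2 = r_3/r_1^2$, $v = 1/r_1$ (where crucially $t_\gamma = \mathbb{E}[\tT_1'(\sqrt{\kappa}\gamma Z)] = 1$ since $\tT_i' \equiv 1$ for the linear model) produces the three displayed formulas for $v_B, c_B, c_{BB_\star}$ after straightforward algebra.

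Finally, to obtain \eqref{eq:r1r2}, I will combine the two systems. From the first block, $v_B - c_B = 1/(r_1+1)$; substituting this into $r_1 = 1/(\kappa(v_B-c_B)+1)$ gives $r_1(\kappa + r_1 + 1) = r_1 + 1$, which simplifies to the quadratic $r_1^2 + \kappa r_1 - 1 = 0$. Taking the positive root yields $r_1 = \sqrt{(\kappa/2)^2 + 1} - \kappa/2$, and the identity $r_2 = r_1 - 1$ already established gives $r_2 + 1 = r_1$. There is no genuine obstacle here: everything reduces to routine Gaussian calculus once one recognizes that $p_s$ is Gaussian in $\xi_B$ and $p_h$ is Gaussian-Gaussian conjugate. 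The only mild care is needed in (i) tracking the two sources of randomness ($\theta_\star$ and the independent $z_{BB_\star}$ from the representation \eqref{eq:thetasearlier}) when computing $r_3$, and (ii) verifying $t_\gamma = 1$ so that the $\alpha$ coefficient collapses to $(r_2+1)/r_1$.
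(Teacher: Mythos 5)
Your proposal is correct and follows essentially the same route as the paper: completion of squares shows $\xi_B$ is Gaussian under $p_s$ (giving $r_1,r_2,r_3$ in terms of $v_B,c_B,c_{BB_\star}$ exactly as in the paper's computation of $\langle\theta\rangle_s,\langle\theta^2\rangle_s$), Gaussian conjugacy with $t_\gamma=1$ gives the $p_h$ moments, and combining the two blocks yields the quadratic $r_1^2+\kappa r_1-1=0$. No gaps.
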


%\ps{Write a couple of lines explaining direct computations in appendix}
The Gaussian linear model is particularly instructive since the posterior can be explicitly analyzed in this setting. Specifically, the posterior takes the form \eqref{eq:linmod}. In Appendix \ref{app:direct_integration_op} we analyze this posterior directly using random matrix theory tools. We use this to check the validity of  our results presented in the preceding subsection. Specifically, we check that the posterior limit is indeed in the form given by Theorem  \ref{thm:marginals} and that the expressions for $r_1,r_2$ that our theory predicts, i.e. \eqref{eq:r1r2}, match with those obtained from direct random matrix theory computations.

\paragraph{Example 2: Binary Outcome GLMs} We now consider GLMs with binary outcomes defined as follows
\begin{equation*}
y_i = \mathbb{I}_{\{\sigma(\bX_i^\top\bbetas) \geq e_i \}},
\end{equation*}
with $e_i$ an independent $\mbox{Unif}[0,1]$ random variable.

When $\sigma(x)=e^x/(1+e^x)$, the sigmoid function, this yields logistic regression, whereas for $\sigma(x) = \Phi(x)$, the cdf of a standard normal random variable, this leads to probit regression. %\textcolor{red}{Probit regression is as popular as logistic regression in statistics. Should we write a comment somewhere in this section that our results can also cover that by suitable approximation strategies? What extra do we need to check for this probit case beyond what you already do for the logistic case?}\ms{Proving the results hold is probably very little work. Simulations are another story.}
The log-likelihood becomes
\begin{equation*}
    \logl_{n,p}(\bB) = \sum_{i\in[n]} \{y_i \log\frac{ \sigma(\bX_i^\top\bB)}{1-\sigma(\bX_i^\top\bB)} + \log(1-\sigma(\bX_i^\top\bB))\}.
\end{equation*}
We show the applicability of our theory for logistic regression where the likelihood simplifies to
\begin{equation*}
    \logl_{n,p}(\bB) =  \sum_{i\in[n]} \{y_i \bX_i^\top\bB - \log(1+e^{\bX_i^\top\bB})\},
\end{equation*}
although other binary outcome GLMs would also fall in our framework.
Note that
\begin{equation*}
\tT_i(x) = \mathbb{I}_{\{x \geq \sigma^{-1}(e_i)\}} \,\, \mbox{ and } \,\, A(x) = \log(1+e^{x}).
\end{equation*}

%Although the model functions lack the smoothness required by Hypothesis \ref{hyp1}, i.e. 
Importantly, $\tT$ is not differentiable, and therefore does not satisfy Hypothesis \ref{hyp1}(ii). However, it can be arbitrarily approximated by smooth functions. To this end, consider a smooth approximation of $\mathbb{I}_{\{x\geq0\}}$ given by $f_\delta(x):=(\tanh(x/\delta)+1)/2$, where $\delta>0$ is small.\footnote{In simulations, we consider $\delta=1/1000$.} Define 
\begin{equation*}
    \tT_{\delta,i}(x) := f_\delta(x-\sigma^{-1}(e_i)).
\end{equation*}
%Notice that $\sigma^{-1}(e_i)$ has a standard logistic distribution.
When $\delta\approx0$, we have $\tT_{\delta,i}\approx\tT_i$ (see  Appendix \ref{app:smooth_app} for suitable formalization).
%we formally prove closeness of the two models in terms of statistics of interest to us. 
%that the posterior marginals under the smoothed model and the original logistic model become arbitrarily close as $\delta \to 0$.
%prove, in a precise way, that our main results applied to the smooth model extrapolate to logistic regression when $\delta\to0$. } \textcolor{red}{The approximation is written differently than the first paragraph of Appendix I. Should we write it here to match that first paragraph?}\ms{Yes, they should be made to match.}\ms{I changed the name of the smooth function, the position of $\sigma$ in its definition and removed the $\lambda$ parameter. I removed the "As explained in Section 3.2" of the appendix. Because there was a loop of references. I just kept the "In Appendix \ref{app:smooth_app}, we prove..." of this section.} 
The log-likelihood in the smoothed model is given by
\begin{equation*}
    \logl_{n,p,\delta}(\bB) :=  \sum_{i\in[n]} \tT_{\delta,i}(\bX_i^\top\bbetas) \bX_i^\top\bB - \log(1+e^{\bX_i^\top\bB}).
\end{equation*}
It is easy to check that Hypothesis \ref{hyp1} holds for the smoothed model. By Proposition \ref{prop:control_mom_conv} (Appendix \ref{app:conv_mon_control}), Hypothesis \ref{hyp2} also holds. Thus our main results hold for the smoothed model.
%is means that we are within the range of models covered by our main results. Moreover, by Lemma \ref{lem:ord_param_smooth} of Appendix \ref{app:smooth_app}, the order parameters for the approximate model with approximation parameter $\delta >0$, approach the corresponding asymptotic order parameters for logistic regression when $\delta \to 0$. As in 
In Proposition \ref{prop:marginal_smooth}, we establish that the posterior marginals under the smoothed model approach those under  logistic regression as $\delta\to 0$. Thus, the posterior marginals in logistic regression can be studied using our results through this approximation strategy. Similar approximations can be invoked for other binary outcome GLMs.  Any log-concave prior would fall in our framework. As a use-case example, we consider a ${\rm Beta}(\alpha,\beta)$ prior with $\alpha > 1, \beta>1$. 

Figure \ref{fig:marginals}  compares the theoretical posterior marginal distribution for a fixed coordinate, as predicted by Theorem \ref{thm:marginals}, with its empirical counterpart obtained from MCMC simulations. In the left panel, the QQ-plot shows a close alignment between the theoretical and empirical quantiles, indicating good agreement between the two distributions. The right panel further illustrates this correspondence by juxtaposing the marginal density obtained from our theory with the histogram of MCMC samples. A notable feature is that the theoretical curve closely tracks the empirical distribution across the full support, including in the vicinity of the true parameter value, marked by the red dashed line. 
The figure illustrates the applicability of our results in  finite samples and dimensions.

\begin{figure}[h!]
  \centering
  \includegraphics[width=0.48\textwidth]{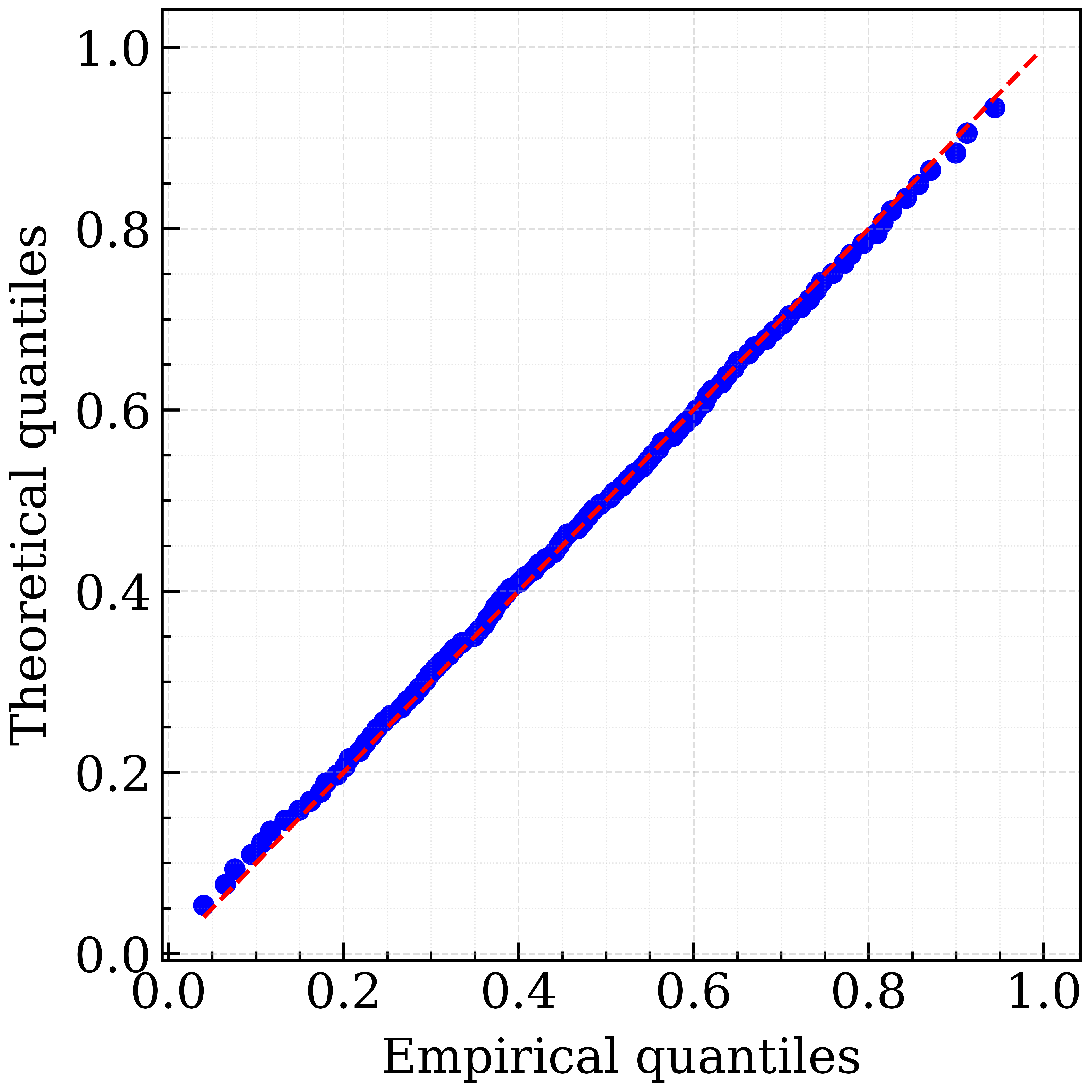}\hfill
  \includegraphics[width=0.48\textwidth]{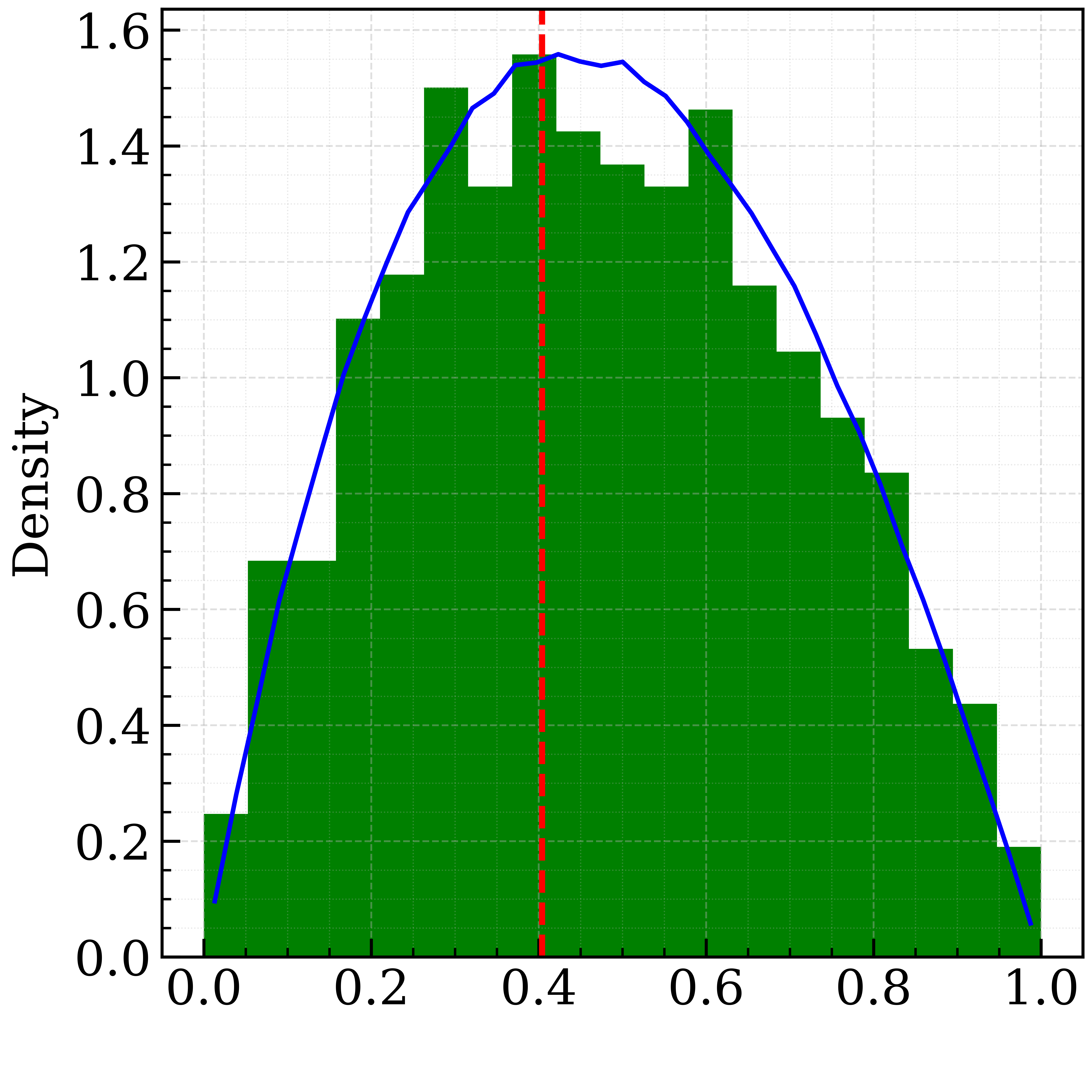}
  \caption{Comparison of posterior marginals as predicted by Theorem \ref{thm:marginals} and MCMC simulations. The setting is that of Figure \ref{fig:mse_graphs} and $\kappa=1$. For the MCMC simulations, $n=1000$, the number of chains was taken to be $4$, and $1000$ draws were produced with a tune of $2000$. The left panel shows QQ-plots for theoretical quantiles, obtained using Theorem \eqref{thm:marginals} versus empirical quantiles obtained from MCMC. The right panel shows the predicted marginal density from our theory versus the MCMC histogram; the true signal value $\beta_{\star,j_0}$ is marked using a red dashed line.}
  \label{fig:marginals}
\end{figure}
%\textcolor{red}{Honestly, Figure 3 looks legit; like MCMC and our theory matches here. I would have felt much more comfortable if figure 1b and Figure 2 were in the same setting. There were some numerical reasons why this is not the case although I dont remember why. Remind me please?}\ms{Just that the hardware I was using was old and not capable of running a lot of Beta-Beta simulations.}

%\ms{Here I commented out the Logistic regression with Gaussian prior section.}
%\paragraph{Example 3: Logistic regression with Gaussian prior.} Here we consider the same setting as above but for standard Gaussian signal and prior. In the absence of mismatch, the simplifications discussed in \ref{app:Bayes_optimal} hold. This simplifies the numerical complexity of solving equations \eqref{eq:FPE_all1}--\eqref{eq:FPE_all2}. 

%Because the prior is centered, in this case, if either $\lambda = 0$ or $1/\kappa \approx 0$ the performance of the posterior mean should be trivial. That is, the expected mean square error should be close to $1$. In Figures \ref{fig:logistic_normal_normal_kappa} and \ref{fig:logistic_normal_normal_snr} we present simulations for the mean square error of the posterior mean as a function of $1/\kappa$ and $\lambda$, respectively. As we can appreciate, the error approaches $1$ and the origin. Moreover, as expected, the behavior in both cases is monotonically decreasing as a function of the free variable.

%\ps{I would really get rid of logistic with Gaussian and instead give a couple of simulations for binomial with beta prior or probit with beta.}

\paragraph{Example 3: Binomial regression} Binomial regression with log-concave priors is naturally covered by our results by extending the calculations from logistic regression. Specfically, in binomial regression, for some fixed $m \in \mathbb{N}$, $y_i \sim \mathrm{Bin}(m,p(\bX_i^{\top}\bbeta_\star))$; where $p(\bX_i^{\top}\bbeta_\star) = \sigma(\bX_i^{\top}\bbeta_\star)$ with $\sigma(\cdot)$ the sigmoid function. Thus, the log-likelihood is
\begin{equation*}
    \logl_{n,p}(\bB) = \sum_{i\in[n]} \{y_i \bX_i^\top\bB - m \log(1+e^{\bX_i^\top\bB})\}.
\end{equation*}
and the approximations from the prior example can be naturally generalized. 
\section{Proof heuristics}\label{sec:pfoutline}

%\ms{This is not so much a proof outline but the heuristics of the proof. Should we change the name of the section?}
%\ms{This section had been heavily rewritten since oyur first pass.}

Our proof strategy for characterizing the posterior marginals relies on relating the posterior measure to the leave-a-variable-out measure, \eqref{eq:mean_variable}, through an appropriate Radon-Nikodym derivative and then approximating this derivative in the proportional high-dimensional limit. We now explain this in further detail. 

To study a posterior marginal, say the $j_0$-th marginal, we proceed as follows. Recall that for a vector $\bm{v} \in \mathbb{R}^p$, we denote $\tilde{\bm{v}} \in \mathbb{R}^{p-1}$ and $ v_{j_0}$ to be the vector obtained on removing the $j_0$-th coordinate, and the $j_0$-th coordinate, respectively. Then, by a Taylor expansion, the log-likelihood at $\bm{b}$ admits the following decomposition:
\begin{equation}
    \logl_{n,p}(\bB) = \logl_{n,p-1}(\tbB) + \Delta\logl(b_{j_0},\tbB) + E_n(\bB), \qquad \text{where} \label{loovdecomposition}
\end{equation}
\begin{equation}\label{eq:DeltaL}
  \Delta \logl(b_{j_0},\tbB) :=  Y_1(\tbB,\tbbetas) b_{j_0} + Y_2(\tbB,\tbbetas) \beta_{\star,{j_0}} - Y_3(\tbB) b^2_{j_0}  + Y_4(\tbbetas) \beta_{\star,{j_0}}b_{j_0}  + Y_5(\tbB,\tbbetas) \beta_{\star,{j_0}}^2
\end{equation}
and the $Y_i$'s are given by 
\begin{equation}\label{eq:Y1thro5}  
    \begin{dcases}
        Y_1(\tbB,\tbbetas) := \sum_{i \in[n]}X_{i{j_0}}\left( \tT_i(\tXibs)  - A'(\tbX_i^{\top}\tbB)\right)\\
        Y_2(\tbB,\tbbetas) := \sum_{i \in[n]}X_{i{j_0}}\tTp_i(\tXibs)\tbX_i^{\top}\tbB\\
        Y_3(\tbB) := \frac{1}{2}\sum_{i \in[n]} X_{i{j_0}}^2A''(\tbX_i^{\top} \tbB)\\
        Y_4(\tbbetas) := \sum_{i \in[n]} X_{i{j_0}}^2  \tTp_i(\tXibs)\\
        Y_5(\tbB,\tbbetas) := \frac{1}{2}\sum_{i \in[n]}  X_{i{j_0}}^2 \tTdp_i(\tbX_{i}^{\top}\tbbeta_{\star})\tbX_i^{\top}\tbB 
    \end{dcases}.   
\end{equation}
The arguments in the $Y_i$'s are spelled out to showcase their dependence on $\tbB,\tbbeta_\star$ and that these do not depend on $\beta_{\star,j_0}$ or $b_{j_0}$. In the sequel, whenever it is clear from context, we will suppress the arguments of the $Y_i$'s. 
Further, $E_n(\bB)$ equals the error term arising from the Taylor expansion, defined in \eqref{eq:taylor_error}. Since it involves sums of terms proportional to $X_{ij_0}^3$, $E_n(\bB)$ is of order $\mathcal{O}(n^{-1/2})$. Above, the notation $\Delta \cL(b_{j_0},\tbB)$ is used to emphasize that in this term, $b_{j_0}$ and $\tbB$ are ``disentangled'' in that $b_{j_0}$ appears only as coefficients to the $Y_i$'s and $\tbB$ appears only in the $Y_i$'s.

Recall that $\thermal{\cdot}_v$ stands for expectation with respect to the leave-a-variable-out posterior, defined in \eqref{eq:mean_variable}. By \eqref{loovdecomposition}, we have that, for $f:\R\mapsto\R$, 
\begin{equation}\label{eq:fullnlooo}
    \begin{split}
        \thermal{f(\beta_{j_0})} & = \frac{\int f(\beta_{j_0}) \exp\{\logl_{n,p}(\bbeta)\}\prod_{j=1}^p \mu(d\beta_j)}{\int \exp\{\logl_{n,p}(\bbeta)\}\prod_{j=1}^p \mu(d\beta_j)} \\
        & = \frac{\int f(\beta_{j_0}) \exp\{\logl_{n,p-1}(\tbbeta) + \dlogl(\beta_{j_0},\tbbeta) + E_n(\bbeta)\}\prod_{j=1}^p \mu(d\beta_j)}{\int \exp\{\logl_{n,p-1}(\tbbeta) + \dlogl(\beta_{j_0},\tbbeta) + E_n(\bbeta)\}\prod_{j=1}^p \mu(d\beta_j)}\\
        & = \frac{\frac{\int f(\beta_{j_0}) \exp\{\logl_{n,p-1}(\tbbeta) + \dlogl(\beta_{j_0},\tbbeta) + E_n(\bbeta)\}\prod_{j=1}^p \mu(d\beta_j)}{\int \exp\{\logl_{n,p-1}(\beta_{j_0},\tbbeta)\}\prod_{j=1}^p\mu(d\beta_j)}}{\frac{\int \exp\{\logl_{n,p-1}(\tbbeta) + \dlogl(\beta_{j_0},\tbbeta) + E_n(\bbeta)\}\prod_{j=1}^p \mu(d\beta_j)}{\int \exp\{\logl_{n,p-1}(\beta_{j_0},\tbbeta)\}\prod_{j=1}^p\mu(d\beta_j)}}\\
        & = \frac{\thermal{f(\beta_{j_0})\exp\{\dlogl(\beta_{j_0},\tbbeta) + E_n(\bbeta)\}}_v}{\thermal{\exp\{\dlogl(\beta_{j_0},\tbbeta) + E_n(\bbeta)\}}_v} = \thermal{f(\beta_{j_0})\cD_n}_v,
    \end{split}
\end{equation}
where we defined\footnote{Our notational convention is that dummy variables used for arguments of log-likelihoods and any term without integrals are represented by $\bB$ and its variants, whereas variables over which integration is performed are represented using $\bbeta$ and its variants.}
\begin{equation}\label{eq:def_deriv_variable}
    \cD_n := \frac{\exp\{\dlogl(b_{j_0},\tbB) + E_n(\bB)\}}{\thermal{\exp\{\dlogl(\beta_{j_0},\tbbeta) + E_n(\bbeta)\}}_v}.
\end{equation}
Note $\cD_n$ is  the Radon-Nikodym derivative between the full posterior and the leave-a-variable-out posterior, corresponding to \eqref{eq:posterior_expectation} and \eqref{eq:mean_variable} respectively. This calculation is analogous to how local asymptotic distributions are calculated in classical asymptotic statistics \cite{van2000asymptotic}.

The proof then involves establishing that, in the setting of Section \ref{sec:model} and under $\thermal{\cdot}_v$, $\cD_n$ converges in the proportional high-dimensional limit to the random Gaussian tilt \eqref{eq:density_hj}, which is a function of the $j_0$-th coordinate of the true signal $\beta_{\star,j_0}$ and additional gaussian noise $Z_{j_0}$. Note that, as a function of $b_{j_0}$, $\Delta \cL(b_{j_0},\tbB)$ contains only linear and quadratic terms. Completion of squares naturally yields a Gaussian density in $b_{j_0}$. To see that this density takes the specific form outlined in \eqref{eq:density_hj}, in the limit of large sample and dimensions, requires analyzing the limiting behavior of the $Y_i$'s under $\thermal{\cdot}_v$. These limits are established in Lemma \ref{lem:conv_dist_Z}. Plugging in the form of these limits, we obtain that 
\begin{equation}\label{loocentre}
\cD_n \approx \frac{\exp\{-\frac{1}{2v}(b_{j_0}-m_{j_0})^2\}}{\int \exp\{-\frac{1}{2v}(\beta_{j_0}-m_{j_0})^2 \mu(d\beta_{j_0})},   
\end{equation}
where $m_{j_0}$ is defined as in \eqref{eq:density_hj}. 
Thus, essentially, as in classical Le Cam theory, the Radon Nikodym derivative $\cD_n$ is also asymptotically Gaussian in our setting. In some sense, our leave-one-out technology allows to build an analogue of classical Le Cam type theory in high dimensions.

This argument can be generalized to studying a single coordinate simultaneously of $l$ independent samples from the posterior. In fact, in its most general form, we obtain Proposition \ref{prop:fix_point} that  formalizes our aforementioned argument for a multivariate function applied to a pre-fixed coordinate of the true signal and $l$-independent samples from the posterior. 
\begin{proposition}\label{prop:fix_point}
In the setting of Section \ref{sec:model},
 let $f:\R^{l+1} \to \R$ be continuous and polynomially bounded. For any $j\in \mathbb{N}$, we have that
    \begin{equation*}        \lim_{n\to\infty}\E\thermal{f(\beta_{\star,j},\beta_{j}^{(1)},\dots,\beta_{j}^{(l)})} = \E_{Z}\thermal{f(\beta_{\star,j},\beta_{j}^{(1)},\dots,\beta_{j}^{(l)})}_{h,j},
    \end{equation*}
 where on the left, $\beta_{j}^{(1)},\beta_{j}^{(2)},\hdots, \beta_{j}^{(l)}$ refer to the $j$-th coordinates of $l$ independent samples from the posterior and on the right $\thermal{\cdot}_{h,j}$ refers to expectation over $\beta^{(1)}_j,\hdots,\beta^{(l)}_j$ drawn from  $l$ independent copies of $p_{h,j}$ defined in  \eqref{eq:density_hj}.  Furthermore, $Z=(Z_1,\hdots,Z_l) \sim \mathcal{N}(0,I_l)$ refers to the Gaussians arising in the respective means for these copies. 
%density $\prod_{m=1}^l p(b_m)$ with $p(b_m)$ proportional to 
  %  with $\E_{Z}(\cdot)$ and $\thermal{\cdot}_{h,j}$ as in Section \ref{sec:results}. \textcolor{red}{Check exactly where $\E_Z$ is defined.}\textcolor{red}{Needs changing back to what it was before, will get to this after Appendix B.}
\end{proposition}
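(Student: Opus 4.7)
The plan is to extend the single-variable calculation of \eqref{eq:fullnlooo}--\eqref{loocentre} to $l$ independent posterior replicas simultaneously. Fix $j\in\mathbb{N}$ (playing the role of $j_0$). Since the joint posterior over $l$ independent samples is a product measure, iterating the change-of-measure to the leave-a-variable-out (LOV) posterior across all $l$ replicas yields
\begin{equation*}
    \thermal{f(\beta_{\star,j},\beta_j^{(1)},\dots,\beta_j^{(l)})} = \thermal{f(\beta_{\star,j},\beta_j^{(1)},\dots,\beta_j^{(l)}) \prod_{m=1}^l \cD_n^{(m)}}_v,
\end{equation*}
where $\cD_n^{(m)}$ is the analogue of the Radon--Nikodym derivative \eqref{eq:def_deriv_variable} constructed from the $m$-th replica, and $\thermal{\cdot}_v$ now denotes expectation under the $l$-fold product LOV measure. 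Crucially, under this measure the $\tbbeta^{(m)}$ are i.i.d.\ LOV samples and, for each $m$, $\beta_j^{(m)}$ is independent of $\tbbeta^{(m)}$ with prior law $\mu$.

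The next step is to obtain the joint limit of the $\cD_n^{(m)}$'s by invoking Lemma \ref{lem:conv_dist_Z}, which controls the asymptotic behaviour of the coefficients $Y_1^{(m)},\dots,Y_5^{(m)}$ entering $\dlogl(\beta_j^{(m)},\tbbeta^{(m)})$. Because the $\tbbeta^{(m)}$'s are independent across $m$ under $\thermal{\cdot}_v$, the replica-dependent pieces of the $Y_i^{(m)}$'s decouple and produce a vector of i.i.d.\ standard Gaussians $Z_1,\dots,Z_l$ after completing the square in $\beta_j^{(m)}$. Combined with the deterministic limits identified via the order-parameter convergence in Proposition \ref{prop:conv_order_param} and the constants from \eqref{eq:alphasv}, this gives
\begin{equation*}
   \prod_{m=1}^l \cD_n^{(m)} \xrightarrow{d} \prod_{m=1}^l \frac{\exp\{-\tfrac{1}{2v}(\beta_j^{(m)} - \alpha\beta_{\star,j} - \sigma Z_m)^2\}}{\int \exp\{-\tfrac{1}{2v}(\beta - \alpha\beta_{\star,j} - \sigma Z_m)^2\} \mu(d\beta)}
\end{equation*}
under $\thermal{\cdot}_v$, with the $Z_m$'s i.i.d.\ $\mathcal{N}(0,1)$. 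Since each $\beta_j^{(m)}$ is drawn from $\mu$ independently across $m$ and independently of the $Z_m$'s and of $\beta_{\star,j}$, conditioning on $(Z_1,\dots,Z_l)$ and applying Fubini in the limit identifies the resulting expectation as $\E_Z \thermal{f(\beta_{\star,j},\beta_j^{(1)},\dots,\beta_j^{(l)})}_{h,j}$ with each $\beta_j^{(m)}$ drawn from an independent copy of $p_{h,j}$, as in the statement.

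The main obstacle is justifying the interchange of limit and expectation: each $\cD_n^{(m)}$ is a ratio of integrals containing $\exp\{-Y_3^{(m)}(\beta_j^{(m)})^2+\dots\}$ and is not a priori uniformly integrable. Polynomial growth of $f$ together with moment control from Hypothesis \ref{hyp2} and Proposition \ref{prop:conv_order_param} should supply the required tail bounds, but a uniform lower bound on the denominator $\thermal{\exp\{\dlogl(\beta_j,\tbbeta)+E_n(\bbeta)\}}_v$ is also required, and this is the delicate point. A secondary concern is handling the cubic Taylor remainder $E_n(\bbeta)=\mathcal{O}(n^{-1/2})$ uniformly in the integration variable, which should follow from Hypothesis \ref{hyp2}. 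A clean packaging is to first establish the statement for bounded continuous $f$ via standard weak-convergence arguments once the RN derivative is controlled, and then to remove the truncation by combining the polynomial growth of $f$ with the uniform moment bounds.
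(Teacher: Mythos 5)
Your overall route is the paper's own heuristic (leave-a-variable-out tilting, limits of the $Y_i$'s, completion of squares), and the reduction to the $l$-fold product of leave-a-variable-out measures plus Lemma \ref{lem:conv_dist_Z} and Proposition \ref{prop:conv_order_param} is the right skeleton. However, the step you yourself flag as ``the delicate point''---passing from convergence in distribution of $(Z_n^{(1)},\dots,Z_n^{(l)})$ to convergence of the \emph{expectation} of the ratio $\thermal{f\,e^{\Delta\logl+E_n}}_v/\thermal{e^{\Delta\logl+E_n}}_v$---is exactly where the actual proof lives, and your proposal leaves it unresolved. Writing ``once the RN derivative is controlled'' assumes the conclusion: $\cD_n^{(m)}$ is neither bounded nor obviously uniformly integrable, the denominator $\thermal{e^{\Delta\logl+E_n}}_v$ has no easy uniform lower bound, and convergence in distribution of unbounded functionals does not by itself yield convergence of $\E\thermal{\cdot}$. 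So as written there is a genuine gap.

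The paper closes this gap with three pieces of machinery that your plan omits. First, a conditioning/truncation step: expectations are compared with those under $\bE\thermal{\cdot}_K$, restricted to the events $A_K=\{|\beta_{j_0}|,|Y_1|,\dots,|Y_5|\leq K\}$ and $B=\{\|\bX_{\bullet j_0}\|,\|\bX\|_{op}\leq 3\}$ (Lemma \ref{lem:aprox_K}, which in turn needs the second-moment bounds on the $Y_i$'s of Lemma \ref{lem:2mon_Y}); on $A_K\cap B$ the tilt $\Delta\logl$ is bounded. Second, the Taylor remainder $E_n$ is removed by Lemma \ref{lem:taylor_control}, which requires these truncations together with moment bounds for the conditional measures---it is not a direct consequence of Hypothesis \ref{hyp2}. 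Third, and most importantly, instead of taking a limit of the ratio, the paper expands the reciprocal of the (truncated) denominator as a geometric series: choosing $\delta>0$ so small that $\delta e^{\Delta\logl}\mathbb{I}_{A_K}\leq 1/2$, one writes $\bE\thermal{f(\beta_{j_0})}'_K=\delta\sum_{k\leq L}\bE\thermal{g_k(Z_n^{(1)},\dots,Z_n^{(k)})}_v+\text{(remainder)}$, where each $g_k$ is a \emph{bounded continuous} function of finitely many replicas, so Corollary \ref{cor:cond_conv_dist_Z} applies term by term, while the remainder is controlled uniformly in $n$ via Jensen's inequality and dominated convergence as $L\to\infty$, and the Gaussian MGF identity then produces the tilt \eqref{eq:density_hj}. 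This expansion is precisely the device that makes a denominator lower bound and uniform integrability of $\cD_n$ unnecessary; without it (or an equivalent argument), your proof does not go through.
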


The proof is deferred to Section \ref{sec:proof34}. Proposition \ref{prop:fix_point} is a key result since it directly yields a characterization of  moments of all orders for marginals corresponding to a posterior sample. From here, establishing our main result on the limiting behavior of a posterior marginal is  straightforward, and described in Section \eqref{sec:proof_marginals}. The necessitaty of establishing Proposition \eqref{prop:fix_point} in the general version we presented above, instead of simply for $l=1$, becomes clear on examining our proofs. 

%\textcolor{red}{I wrote the following for r coordinates. Quick question: to do the thing for r coordinates rigorously, would you actually need what I wrote below, or can PRoposition 3 somehow be used smartly to get there? Nothing about the $r>1$ coordinate case was written anywhere in this proof heuristics section, but I am wondering what I wrote is an overkill or the right way to rigorize this. We dont have to show the rigorization for $r > 1$, but I wrote it once to have everything written out.}\ms{As it is right now, I do think what you wrote is needed. It might be possible, I think, to prove some kind of asymptotic independence through concentrations of functions of the form $p^{-1}\sum_j f(\beta^{(1)}_j)f(\beta^{(2)}_j)$ which should follow as for the order parameters. But, at this point, I think your argument should be there.}

To extend to multiple but finitely many coordinates, e.g, to $r$ coordinates as in the statement of Theorem \ref{thm:marginals}, we adopt a leave-r-variables-out strategy. To this end, the starting point would be a log-likelihood decomposition of the form
\begin{equation*}
    \logl_{n,p}(\bB) = \logl_{n,p-r}(\bB_{-[j_1,\hdots,j_r]}) + \Delta\logl(b_{j_1},b_{j_2},\hdots,b_{j_r},\bB_{-[j_1,\hdots,j_r]}) + E_n(\bB_{-[j_1,\hdots,j_r]}), \label{loovdecomposition}
\end{equation*}
where $\bB_{-[j_1,\hdots,j_r]}$ refers to all but the $j_1,j_2,\hdots,j_r$-th coordinates of $\bB$ and 
 $\Delta\logl$ is given by a suitable analogue of \eqref{eq:DeltaL} and \eqref{eq:Y1thro5} that isolates the effect of the $r$ coordinates from the rest.  
As long as $r$ is finite, the arguments for our analyses remain essentially the same as $r=1$.

In the aforementioned proof description, the main detail that was skipped pertained to how the limits of the $Y_i$'s are derived under $\thermal{\cdot}_v$. We describe this step next. 
%The step that remains is to find the values of the constants that appear in the limits of $Y_1,\dots,Y_5$. 
Note that under $\thermal{\cdot}_v$, the $Y_i$'s are functions of $ \tilde{\bX_i}^{\top}\tilde{\bbeta}_\star, \tilde{\bX_i}^{\top}\tilde{\bbeta}$, where $\tilde{\bbeta}$ refers to all but the $j_0$-th coordinate of a sample from the  leave-a-variable-out posterior introduced in \ref{eq:mean_variable}. Thus, understanding the joint distribution of these under $\thermal{\cdot}_v$ should allow us to characterize the limiting behavior of the $Y_i$'s. Our main observation is that for any function $g$, asymptotically, 
\begin{equation}\label{eq:loorelation}
\thermal{g(\tbX_i^{\top}\tbbeta)}_v \approx \thermal{g(\bX_i^{\top}\bbeta)}.
\end{equation}
That is, in the limit of large sample and dimensions, functions of $\tbX_i^{\top}\tbbeta$, where $\tbbeta$ is all but the $j_0$-th coordinate of a sample from the  leave-a-variable-out measure, behave similarly to functions of $\bX_i^{\top}\bbeta$, where $\bbeta$ is a sample from the full posterior. To understand the behavior of the $Y_i$'s, we will therefore study the RHS in \eqref{eq:loorelation}.
%Thus, \textcolor{red}{TILL HERE ON FINAL PASS.} 
%Thus, as mentioned above, the constants to be determined are then asymptotic values of the means of functions of the fitted values $\tbX_1^\top\tbbeta,\dots,\tbX_n^\top\tbbeta$ under the leave-a-variable-out posterior measure. Now recall from \eqref{eq:a_is} that $a_{\star,i}:=\bX_i^\top\bbeta_\star$ and $a_{l,i}:=\bX_i^\top\bbeta^{(l)}$, where for $l\geq1$, $\bbeta^{(1)},\bbeta^{(2)},\dots,\bbeta^{(l)}$ denote independent samples from the posterior. Thus, $a_{i,\star},a_{i,l}$ are the analogous of $\tilde{\bX_i}^{\top}\tilde{\bbeta}_\star, \tilde{\bX_i}^{\top}\tilde{\bbeta}$ calculated from $n$ samples, as opposed to $n-1$ samples. But as $n \rightarrow \infty$, asymptotically these  should admit the same characterization. Therefore, to establish these quantities, it will suffice to find the limiting values of the order parameters \eqref{eq:overlaps2} under the full posterior.  For doing this, we use a leave-an-observation-out argument similar to the leave-a-variable-out above. We first note that, for all $g:\R\mapsto\R$,
Next observe that
\begin{equation}\label{eq:loo2}
    \thermal{g(\bX_i^\top\bbeta)} = \frac{\thermal{g(\bX_i^\top\bbeta)\exp\{u_i(\bX_i^\top\bbeta,\bX_i^\top\bbetas)\}}_o}{\thermal{\exp\{u_i(\bX_i^\top\bbeta,\bX_i^\top\bbetas)\}}_o} = \thermal{g(\bX_i^\top\bbeta)\bar{\cD}_n}_o,
\end{equation}
where we define
\begin{equation}\label{eq:def_deriv_variable2}
    \bar{\cD}_n := \frac{\exp\{u_i(\bX_i^\top\bB,\bX_n^\top\bbetas)\}}{\thermal{\exp\{u_i(\bX_i^\top\bbeta,\bX_n^\top\bbetas)\}}_o}.
\end{equation}
Note  $\bar{\cD}_n$ is the Radon-Nikodym derivative between the full posterior and the leave-an-observation-out posterior defined in \eqref{eq:mean_observation}.
%measures $\proba(\cdot)$ and $\proba_o(\cdot)$ induced by \eqref{eq:posterior_expectation} and \eqref{eq:mean_variable}, respectively. 
Again, the remaining argument involves computing the limit of $\bar{\cD}_n$ under $\thermal{\cdot}_o$. In fact, we show in Subsection \ref{subsec:prop2}, c.f., \eqref{eq:torefearlier} that asymptotically
%Recall definitions \eqref{eq:thetathetast} of $\theta$ and $\theta_\star$. \textcolor{red}{TILL HERE ON FINAL PASS.} By Proposition \ref{prop:RS} and Lemma \ref{lem:normal_approx_dist} it follows that
\begin{equation}\label{looosimplify}
   \thermal{g(\bX_i^\top\bbeta) \bar{\cD}_n}_o \approx \frac{\int g(\theta) \exp\{u(\theta,\theta_\star)\}\phi(\xi_B)d\xi_B}{\int \exp\{u(\theta,\theta_\star)\} \phi(\xi_B)d\xi_B},
\end{equation}
where $\theta,\theta_\star$ are as  defined in \eqref{eq:thetathetast},  $u(\theta,\theta_\star)=\tT(\theta_\star)\theta-A(\theta)$, and $\phi(\cdot)$ denotes the standard Gaussian density. Subsequently, using the definitions of $\theta$ and $\theta_\star$, the RHS of \eqref{looosimplify} can be expressed in terms of the measure $p_s(\cdot)$ introduced in \eqref{eq:density_s}. 

Proposition \ref{prop:hat_fix_point} states this formally for the more general case where we consider a multivariate function of $a_{\star,i},a_{1,i},\hdots,a_{l,i}$ where recall from  \eqref{eq:a_is} that $a_{\star,i}=\bX_i^{\top}\bbeta$ and $a_{m,i}=\bX_i^{\top}\bbeta^{(m)}$ with $m \in [l]$ and $\bbeta^{(1)},\hdots,\bbeta^{(l)}$ independent samples from the full posterior. 
%\bX_i^{\top}\bbeta_{\star},\bX_i^{\top}\bbeta^{(1)}, \hdots, \bX_i^{\top}\bbeta^{(l)}$, where $\bbeta^{(1)},\hdots,\bbeta^{(l)}$ denote $l$ independent samples from the posterior.
\begin{proposition}\label{prop:hat_fix_point}
   In the setting of Section \ref{sec:model},
%{\color{blue}that
%    \begin{equation*}
 %       \lim_{n\to\infty} \E\thermal{Q_{11}} = v_B, \,\, \lim_{n\to\infty} \E\thermal{Q_{12}} = c_B, \,\, \lim_{n\to\infty} \E\thermal{Q_{1\star}} = c_{BB_\star},
  %  \end{equation*}
    %for some constants $v$, $m$, and $q$,} 
 let $g:\R^{l+1} \to \R$ be  continuous and polynomially bounded. Then for any $i \in [n]$
    \begin{equation}\label{eq:prop4}
    \lim_{n\to\infty}\E\langle g(a_{\star,i},a_{1,i},\hdots,a_{l,i})\rangle = \E_{G \otimes e}\thermal{g(\theta_\star,\theta_1,\hdots, \theta_l)}_s,
    \end{equation}
   where, for $m\in[\ell]$, when $c_B > 0$,
\begin{equation}\label{eq:thetas}
    \theta_m := \sqrt{\kappa(v_{B}-c_{B})}\xi_{B}^{(m)}+\sqrt{\kappa c_{B}}z_{BB_{\star}}  
    \mbox{ and }  \theta_\star := \sqrt{\kappa\left(\gamma^2 - \frac{c_{BB_{\star}}^2}{c_B}\right)}\xi_{B_{\star}} +c_{BB_\star}\sqrt{\frac{\kappa}{c_B}}z_{BB_{\star}},
\end{equation}
else $  \theta_m = \sqrt{\kappa v_B} \xi_B^{(m)}$ and $ \theta_\star := \sqrt{\kappa \gamma^2} \xi_{B_\star}$.
Here, $\xi_{B_\star},z_{BB_\star}$ are i.i.d.~standard Gaussian variables, independent of everything else and $G=(\xi_{B_\star},z_{BB_{\star}})$. Furthermore, in \eqref{eq:prop4},  $\thermal{\cdot}_s$ denotes expectation over  $\xi_B^{(1)},\hdots, \xi_B^{(l)}$ that have density proportional to 
$$\prod_{m=1}^l \exp\left\{\tT(\theta_\star)\theta(\xi_B^{(m)}) - A(\theta(\xi_B^{(m)})) - \frac{(\xi_B^{(m)})^2}{2}\right\},
  \quad \text{where} \quad \tT(\theta_\star) = T \circ f(\theta_\star,e),$$ and $e \sim \text{Unif}(0,1)$ independent of everything else.
\end{proposition}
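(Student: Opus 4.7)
The plan is to rewrite the posterior expectation as a Radon-Nikodym tilt of the leave-an-observation-out posterior $\thermal{\cdot}_o$, identify the limiting joint law of $(a_{\star,i},a_{1,i},\dots,a_{l,i})$ under $\thermal{\cdot}_o$ using the Gaussianity of $\bX_i$ together with Proposition \ref{prop:conv_order_param}, and then recognize the limiting tilt as $l$ independent copies of $p_s$. Without loss of generality, take $i=n$. Decomposing $\logl_{n,p}(\bB) = \logl_{n-1,p}(\bB) + u_n(\bX_n^{\top}\bB,\bX_n^{\top}\bbetas)$ and extending the calculation in \eqref{eq:loo2} to $l$ independent replicas gives
\[
\thermal{g(a_{\star,n},a_{1,n},\dots,a_{l,n})} \;=\; \frac{\thermal{g(a_{\star,n},a_{1,n},\dots,a_{l,n})\,\prod_{m=1}^l e^{u_n(a_{m,n},a_{\star,n})}}_o}{\thermal{\prod_{m=1}^l e^{u_n(a_{m,n},a_{\star,n})}}_o}.
\]

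Next, I would exploit that $\logl_{n-1,p}$ does not involve $\bX_n$, so under $\thermal{\cdot}_o$ the row $\bX_n \sim \mathcal{N}(0,\bm{I}/n)$ is independent of $\bbetas$ and of the replicas $\bbeta^{(1)},\dots,\bbeta^{(l)}$. Conditional on those vectors, $(a_{\star,n},a_{1,n},\dots,a_{l,n})$ is centered Gaussian with covariance whose entries are $\|\bbetas\|^2/n$, $\|\bbeta^{(m)}\|^2/n$, $\bbeta^{(m)\top}\bbeta^{(m')}/n$ and $\bbeta^{(m)\top}\bbetas/n$, i.e.\ $p/n$ times the order parameters from \eqref{eq:overlaps}. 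Proposition \ref{prop:conv_order_param} combined with Hypothesis \ref{hyp1}(iv) delivers $L^2$ convergence of these to $v_B$, $c_B$, $c_{BB_\star}$, and $\gamma^2$, so the conditional covariance matrix converges to $\kappa$ times the $(l+1)\times(l+1)$ matrix encoded by the representation \eqref{eq:thetas}. This gives joint convergence in distribution $(a_{\star,n},a_{1,n},\dots,a_{l,n}) \Rightarrow (\theta_\star,\theta_1,\dots,\theta_l)$ under $\thermal{\cdot}_o\otimes\E$; the degenerate case $c_B = 0$ follows by continuity of the Gaussian representation.

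Feeding this into the ratio, and using that the randomness $e_n\sim\mathrm{Unif}[0,1]$ in $\tT_n$ plays the role of the independent $e$ of $p_s$, so that $\tT_n(a_{\star,n}) \Rightarrow T\circ f(\theta_\star,e) = \tT(\theta_\star)$, would yield
\[
\lim_{n\to\infty}\E\thermal{g(a_{\star,n},\dots,a_{l,n})} \;=\; \E_{G\otimes e}\!\left[\frac{\int g(\theta_\star,\theta_1,\dots,\theta_l)\prod_{m=1}^l e^{u(\theta_m,\theta_\star)}\phi(\xi_B^{(m)})\,d\xi_B^{(m)}}{\int \prod_{m=1}^l e^{u(\theta_m,\theta_\star)}\phi(\xi_B^{(m)})\,d\xi_B^{(m)}}\right],
\]
where $u(\theta_m,\theta_\star) := \tT(\theta_\star)\theta_m - A(\theta_m)$, $\phi$ is the standard Gaussian density, and each $\theta_m$ is expressed via $\xi_B^{(m)}$ through \eqref{eq:thetas}. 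Recognizing the integrand inside the ratio as the $l$-fold product form of $p_s$ identifies the limit as $\E_{G\otimes e}\thermal{g(\theta_\star,\theta_1,\dots,\theta_l)}_s$, finishing the argument.

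The main technical obstacle will be justifying the interchange of the weak convergence in distribution with the expectations in the numerator and denominator. Hypothesis \ref{hyp1}(v) gives $u_n \leq 0$, so $\exp(u_n)\leq 1$ and the numerator is controlled by $|g|$, whose polynomial growth is absorbed by the posterior moment bounds in Hypothesis \ref{hyp2}. The more delicate point is showing that the denominator $\thermal{\prod_m e^{u_n(a_{m,n},a_{\star,n})}}_o$ stays bounded below away from zero with overwhelming probability, so that the tilt does not collapse. This can be handled via a Jensen-type lower bound together with the $L^2$ concentration of the order parameters from Proposition \ref{prop:conv_order_param}, in the spirit of the uniform control arguments appearing later in Subsection \ref{subsec:prop2}.
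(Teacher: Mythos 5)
Your overall architecture is the same as the paper's: drop the $n$-th observation, write $\E\thermal{g(a_{\star,n},a_{1,n},\dots,a_{l,n})}$ as an expectation of a ratio of thermal averages under the leave-an-observation-out measure $\thermal{\cdot}_o$, use that $\bX_n$ is independent of the leave-one-out replicas so that the fitted values are conditionally Gaussian with covariance given by the order parameters, invoke their concentration, and recognize the limit as $\E_{G\otimes e}\thermal{\cdot}_s$. However, there is a genuine gap at the decisive step. The weak convergence you establish, $(a_{\star,n},a_{1,n},\dots,a_{l,n})\Rightarrow(\theta_\star,\theta_1,\dots,\theta_l)$, is an \emph{annealed} statement: it only controls quantities of the form $\E\thermal{F(a_{\star,n},a_{1,n},\dots,a_{k,n})}_o$ for bounded continuous $F$ of finitely many replicas. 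The quantity you must pass to the limit, $\E\bigl[\thermal{g\,\prod_m e^{u_n}}_o\big/\thermal{e^{u_n}}_o^l\bigr]$, is the expectation of a \emph{ratio of data-dependent thermal averages}; neither a dominated-convergence bound on the numerator nor a lower bound on the denominator converts annealed replica convergence into convergence of this nonlinear functional. Moreover, the lower bound you propose is not actually available in the form you need: the denominator converges to $\bigl(\int e^{u(\theta,\theta_\star)}\phi(\xi)d\xi\bigr)^l$, a nondegenerate random variable that is arbitrarily small with positive probability (e.g.\ when $\theta_\star$ is large), so ``bounded below with overwhelming probability'' cannot be upgraded to the uniform control your interchange requires --- and on the exceptional event you are back to the same ratio problem. (For bounded $g$ the ratio itself is bounded by $\|g\|_\infty$, so integrability was never the obstruction; identifying the limit is.)

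The paper closes exactly this gap with a replica/geometric-series device: since $u_n\le 0$ a.s.\ (Hypothesis \ref{hyp1}(v)), one has $0\le 1-\thermal{e^{u_n}}_o<1$, so $1/\thermal{e^{u_n}}_o=\sum_{k\ge0}(1-\thermal{e^{u_n}}_o)^k$, and each truncated term $\thermal{g\,e^{u_n}}_o(1-\thermal{e^{u_n}}_o)^{k}$ is rewritten, using independence of replicas under $\thermal{\cdot}_o$, as a \emph{single} thermal average $\thermal{f_k(a_{1,n},\dots,a_{k+1,n},a_{\star,n})}_o$ of a bounded continuous function of finitely many replicas (eq.\ \eqref{eq:fkloo}); each such term converges by the annealed weak convergence, the remainder after $M$ terms is bounded via Jensen by $\E\thermal{(1-e^{u_n})^M}_o$, and dominated convergence in $M$ finishes the identification and resummation into $\E_{G\otimes e}\thermal{g}_s$. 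Your proposal contains no mechanism of this kind (nor an alternative, such as a quenched convergence statement for the random tilted measure), so the plan as written does not go through. A secondary omission: you apply Proposition \ref{prop:conv_order_param} to the order parameters of leave-one-out replicas, but that proposition concerns the full posterior; transferring the limits to $\thermal{\cdot}_o$ requires an extra interpolation argument (the paper's Lemma \ref{lem:same_limit_lvo}).
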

The proof is deferred to Section \ref{sec:proof34}. 
Propositions \ref{prop:fix_point} and \ref{prop:hat_fix_point} can be thought of as contrasts to each other. The key idea underlying the former is to relate the full posterior (involving $n$ samples and $p$ variables) to the leave-a-variable-out posterior where one of the variables has been dropped from the log-likelihood. In contrast, the key idea underlying Proposition \eqref{prop:hat_fix_point} is to relate the full posterior and the leave-an-observation-out posterior, where a sample is dropped but all variables are retained. Such leave-one-out arguments have appeared in the prior statistics and optimization literature for frequentist problems \cite{el2018impact,bean2013optimal,sur2019modern,chen2021spectral} and in the statistical physics/spin glasses literature either for models where the statistician knows the true signal distribution and uses it during the fitting process (the Bayes optimal setting) or for global null models, where there is no underlying signal \cite{talagrand2003spin,zdeborova2016statistical,mezard1987spin,montanari2024friendly}. A major technical contribution of our paper is to rigorously develop this machinery, for the first time, for commonly used statistical models, e.g., GLMs, under a general  Bayesian setting (and not the Bayes optimal setting) for which prior leave-one-out approaches do not apply; significant additional heavy-lifting is necessary in this case, as is evident from our proofs. 

%\ps{Should I expand on this last point a bit ? -- Contemplate on this on the next and final pass. Somehow emphas}
%\textcolor{red}{REvisit earlier in the paper, and if things are repeat, just skip the literature review here. Or keep this in color, and use in last round to integrate earlier.}

%{\color{blue} Apr 7: I have already checked the consistency of Sections 2,3,4 with the notations and definitions agreed.}

\section{Discussion}\label{sec:discussion}
An outstanding question that emerges from this work is whether our posterior characterization can be used to provide intervals with valid frequentist coverage for finite-dimensional marginals of the true signal. Although we do not develop this here, with some work we believe this would be feasible. In particular, our main result shows that finite-dimensional marginals of the posterior are suitable Gaussian tilts of the prior where the tilt parameters are determined by a system of equations that we characterize. Utilizing this, one can develop a method of moments based approach where we equate empirical moments under the posterior distribution with population moments. We require three moments to estimate the parameters $\alpha, \sigma, v$ in our limiting distribution. Thus, for distributions where third moments exist, we can calculate data-driven estimates for these parameters. Subsequently, a function inversion technique should allow us to infer about finite-dimensional marginals of the true signal. 
%Such an approach was recently invoked by \cite{} for frequentist inference of doubly robust functionals in  GLMs.
Another direction of future work could involve understanding to what extent our current assumptions  can be relaxed. For instance, we believe the Gaussianity assumption on the covariates can be relaxed using independent universality arguments as done in the recent frequentist literature \cite{hu2022universality,liang2022precise,montanari2022universality,dudeja2022spectral,han2023universality,lahiry2023universality}. It would be important to establish this formally. 
%and a leave-one-out type approach can still be used to characterize the posterior distribution. We require the current conditions for certain technical aspects of our proofs and it would certainly be interesting to understand to what extend these can be relaxed. 
%Specifically, relaxing the log-concavity assumption on the prior would be an important direction for future work. 

%In fact, the only places in our proof where we use Gaussianity are \textcolor{red}{Manuel, very few places in the proof actually use this. Can you refer to the specific lemmas or results that rely on Gaussianity here so that it becomes clear where they are used?}\ms{Well, it is used in many places: concentration of free energy, exponential bounds for norm of vector $\bbeta$, exponential bound for $a_{1,i}$, etc. But in most cases it is used by convenience and proofs could be tackled differently if the vectors weren't Gaussian. As you point out, probably in most parts it could be replaced by sub-gaussianity. But this should be checked with a lot of care before statign it as a fact. In interpolation, Gaussianity is used in a fundamental way as Gaussian integration by parts is at its core. Here this is not the case.}. For all other arguments, sub-Gaussian tails, bounded second moments, etc.~suffice.

Finally, the fact that the effect of the prior does not wash away raises an interesting question: given an inference problem, could we identify the optimal prior that would lead to credible intervals (or transformations thereof) with valid frequentist coverage and minimal possible length? In settings where the prior effect has mattered in previous literature, authors have exploited this phenomenon for suitable Bayesian inference \cite{rousseau2011asymptotic,ascolani2023clustering}, but thus far, this has been understood for quite different settings than the one considered in our work. These aspects would be important to explore in the future. For this manuscript, we focused on introducing the leave-one-out technology for GLMs with planted signals (the case of no signal had already been done in \cite{talagrand2003spin}) and demonstrate how it can help characterize finite-dimensional marginals of high-dimensional posteriors without traditional sparsity-type assumptions on the true underlying signal. 

\section{Acknowledgements}
P.S.~and M.S.~would like to thank Jean Barbier for insightful discussions in the early stages of the project. P.S.~would like to thank Filippo Ascolani, Marta Catalano, Iain Johnstone, Sumit Mukherjee, Igor Pruenster, Subhabrata Sen and Surya Tokdar for helpful discussions and pointers to the literature. M.S.~would like to thank Ernesto Mordecki for valuable discussions. P.S.~was supported by the NSF CAREER Award.
%[Emphasize the beyond log-concave prior setting the mathematical difficulty] 
%[ESTIMATION OF CONSTANTS.]
\iffalse
\ms{MANU TODO:
\begin{itemize}
    \item Compute expression for c here, 
    \item produce a graph of c vs kappa, 
    \item write a proof for thm 1,
    \item computations Comparison 1
\end{itemize}
}

\ps{PRAGYA TODO:
\begin{itemize}
    \item Theorem discussion with non-parametrics,
    \item comparison with gaussian sequence model (Johnstone),
    \item comparison MLE (my paper),
    \item incorporating computations of Comparison 1 (if any (hopefully some)),
    \item write discussion
\end{itemize}
}
\fi
%\iffalse

%\fi

    \bibliographystyle{unsrtnat}
\bibliography{references}

    \appendix

    \section{Proofs of the main results: Proposition \ref{prop:conv_order_param} and Theorems \ref{thm:contraction} and \ref{thm:marginals}}\label{app:proof_mainres}
Before we start our proofs, we state some basic notations that would be useful throughout. 
%Recall that $\mu(\cdot)$ is the prior the statistician uses on each coordinate. 
Recalling the definitions of $\ba_\star$ and $\ba_\ell$ from \eqref{eq:a_is}, we  
define $\bA^{(l)}$, ${\bA'}^{(l)}$, and ${\bA''}^{(l)}$ to be the random vectors in $\R^n$ with coordinates given, for $i\in[n]$, by $A^{(l)}_i := A(a_{\ell,i})$, ${A'_i}^{(l)} := dA/dx|_{x=a_{\ell,i}}$, and ${A''_i}^{(l)} := d^2A/dx^2|_{x=a_{\ell,i}}$ respectively. Again, when the $\ell$ index is omitted, it is taken to be equal to $1$. Recalling from the discussions before \eqref{eq:glmhamilton} that $\tT_i(x) := T(f(x,e_i))$,
%, recall also the definitions of $a_{i,\star},a_{i,}$ from \eqref{eq:a_is}. As before, 
we denote by $\tbT,\tbTp\in\R^n$ the random vectors with coordinates $\tT_i := \tT_i(a_{\star,i})$ and $\tT'_i := d\tT_i/dx|_{x=a_{\star,i}}$, respectively. With these abbreviated notations, the log-likelihood \eqref{eq:glmhamilton} at $\bbeta^{(\ell)}$ can be re-expressed as 
$  \cL_{n,p}(\bbeta^{(\ell)}) = \sum_{i\in[n]} \tT_i a_{\ell,i} - A_i^{(\ell)}$ which for $\ell=1$ simplifies by our notations to to $\sum_{i\in[n]} \tT_i a_{i} - A_i$.
%\textcolor{red}{CONVENTION THROUGHOUT: FOR LIKELIHOOD DEFINITIONS USE A DUMMY $\boldsymbol{b}$ BUT FOR INTEGRALS UNDER MEASURES JUST USE $\bbeta$ since the integral makes it clear what the measure is under.}
\subsection{Proof of Proposition \ref{prop:conv_order_param}}\label{sec:proof_main_result}
The proof of Proposition \ref{prop:conv_order_param} relies on the following variants of Propositions \ref{prop:fix_point} and \ref{prop:hat_fix_point}. The difference between the original propositions and the following lies in the fact that the below assumes certain limits exist and then conclude relevant convergence results. For this subsection and Section \ref{sec:proofs}, we redefine constants used earlier in the manuscript as follows. Subsequently, in Proposition \ref{prop:FPidentify}, we establish that if these constants were defined as in Propositions \ref{prop:fix_point2} and \ref{prop:hat_fix_point2}, they also satisfy the system of equations based definitions provided in \eqref{eq:FPE_all2} and \eqref{eq:FPE_all1}. In the proof of Proposition \ref{prop:conv_order_param}, we will invoke the next two propositions along sub-subsequences.

\begin{proposition}[Variant of Proposition \ref{prop:fix_point}]\label{prop:fix_point2}
    In the setting of Section \ref{sec:model}, assume that the following limits hold,
    \begin{equation*}
        \lim_{n\to\infty} \E\thermal{\tilde Q_{11}} =: \tv, \,\, \lim_{n\to\infty} \E\thermal{\tilde Q_{12}} =: \tq ,\lim_{n\to\infty} \E\thermal{M_{11}} =: \tm, \,\,   \lim_{n\to\infty} \E\thermal{M_{12}} =: \bar{m} \\
    \end{equation*}
    \begin{equation*}
\lim_{n\to\infty}\E\thermal{A''(a_1)} =: a_{dp}, \,\, \lim_{n\to\infty} \E\thermal{\bar{Q}_{11}} =: \bar{v},\,\, \lim_{n\to\infty} \E\thermal{\bar{Q}_{12}} =: \bar{q} \,\, \lim_{n\to\infty}\frac{1}{2}\E\thermal{\tTdp_i(\bX_i^{\top}\tbbetas)\bX_i^{\top}\tbbeta} =: c_g,
    \end{equation*}
    where $\tilde Q_{mm'}, M_{mm'}, \bar{Q}_{mm'}$ are defined as in \eqref{eq:overlaps2}.
    Define 
    \begin{equation}\label{eq:r123}
    r_1 := a_{dp}+\tilde{q}-\tilde{v}, r_2:=\tilde{m}-\bar m, r_3 := \tilde{q}
    \end{equation}
     %\textcolor{red}{Why is $r_2$ definition even needed here?}
   % \textcolor{red}{To add $c_g$ here.} 
%where $a_{dp}$ is defined as in \eqref{eq:overlaps2}. 
 %  $$  \tv := \lim_{n\to\infty} \E\thermal{\tQ_{11}}, \,\,
  %  \tq := \lim_{n\to\infty} \E\thermal{\tQ_{12}}, \,\, \tm := \lim_{n\to\infty} \E\thermal{M_{11}}, \,\, \text{and} \,\, \bar{m} := \lim_{n\to\infty} \E\thermal{M_{12}}.$$  
%    for some constants $\tv, r_2, r_3, a_{dp}, \bar{v}, \bar{q}$}, and, 
For $l\geq1$, let $f:\R^{l+1} \to \R$ be a continuous and polynomially bounded function. We then have that, for every $j\geq1$,
    \begin{equation}\label{eq:RHSloov}
        \lim_{n\to\infty}\E\thermal{f(\beta_{\star,j},\beta_{j}^{(1)},\dots,\beta_{j}^{(l)})} = \E_{Z}\thermal{f(\beta_{\star,j},\beta_{j}^{(1)},\dots,\beta_{j}^{(l)})}_{h,j}
    \end{equation}
    where $Z=(Z_1,\hdots,Z_l) \sim \mathcal{N}(0,I_l)$  independent of everything else and $\thermal{\cdot}_{h,j}$ here refers to expectation under density $\prod_{m=1}^l p(b_m)$ with $p(b_m)$ proportional to 
    \begin{equation}\label{eq:pbdef}
    p(b_m) \propto e^{-(r_1/2)(b_m-m_{j})^2} \,\,\text{where} \,\,m_{j}=\frac{r_2+t_\gamma}{r_1}\beta_{\star,j}+\frac{\sqrt{r_3}}{r_1}{Z_m}.
    \end{equation}
\end{proposition}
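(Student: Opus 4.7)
The plan is to extend the leave-a-variable-out Radon--Nikodym strategy of Section~\ref{sec:pfoutline} from a single posterior sample to $l$ independent samples at the same coordinate $j$. Starting from the $l$-sample analogue of \eqref{eq:fullnlooo},
\begin{equation*}
\thermal{f(\beta_{\star,j},\beta_j^{(1)},\dots,\beta_j^{(l)})} = \thermal{f(\beta_{\star,j},\beta_j^{(1)},\dots,\beta_j^{(l)})\prod_{m=1}^l \cD_n^{(m)}}_v,
\end{equation*}
where each $\cD_n^{(m)}$ is as in \eqref{eq:def_deriv_variable} applied to the $m$-th independent draw under the $l$-sample leave-a-variable-out measure, I would apply the Taylor decomposition \eqref{loovdecomposition}--\eqref{eq:Y1thro5} to each $\cD_n^{(m)}$. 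The Taylor remainder $E_n^{(m)}$ is $\mathcal{O}(n^{-1/2})$ in expectation because it is a sum of terms proportional to $X_{ij_0}^3=\mathcal{O}(n^{-3/2})$; Hypothesis~\ref{hyp1}(iii) together with the moment bounds of Hypothesis~\ref{hyp2} upgrade this to the $L^p$-control needed to drop $E_n^{(m)}$ from the exponent.

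Second, I would replace the ``$b$-free'' factors $Y_3^{(m)}, Y_4, Y_5^{(m)}$ by their deterministic limits. Gaussian concentration of $X_{ij_0}^2$ about $n^{-1}$, combined with the cavity-type identity \eqref{eq:loorelation} and the hypothesis $\E\thermal{A''(a_1)}\to a_{dp}$, gives $Y_3^{(m)}\to a_{dp}/2$; a parallel argument using Hypothesis~\ref{hyp1}(iv) delivers $Y_4\to t_\gamma$; and the hypothesized limit $\tfrac12\E\thermal{\tTdp_i(\tbX_i^\top\tbbetas)\tbX_i^\top\tbbeta}\to c_g$ yields $Y_5^{(m)}\to c_g$. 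Hypothesis~\ref{hyp2} then promotes these into convergences strong enough to substitute the limits inside $\exp(\sum_m \Delta\cL^{(m)})$.

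The core step analyzes the joint asymptotic fluctuations of $(Y_1^{(m)}, Y_2^{(m)})_{m=1}^l$. Conditional on the leave-a-variable-out samples $(\tbbeta^{(m)})_m$ and on the rest of the design, this vector is a linear functional of the independent Gaussian family $\{X_{ij_0}\}_{i\in[n]}$, hence jointly centered Gaussian with covariance built from $\tilde Q_{mm'}, M_{mm'}, \bar Q_{mm'}$ of \eqref{eq:overlaps2}. The hypothesized limits identify these entries with $\tv,\tq,\tm,\bar m,\bar v,\bar q$. Combining the induced Gaussian moment generating function in $\{X_{ij_0}\}_i$ with the deterministic constants from the previous step produces an effective exponent whose replica-diagonal quadratic coefficient in $(b_j^{(m)})_m$ is $-(a_{dp}+\tq-\tv)/2=-r_1/2$; the remaining replica-coupling contributions are linearized via a Hubbard--Stratonovich decoupling with iid auxiliary Gaussians $Z_m\sim\cN(0,1)$. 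Matching the linear-in-$\beta_{\star,j}$ part then gives the shift coefficient $r_2+t_\gamma=\tm-\bar m+t_\gamma$ and per-sample noise scale $\sqrt{r_3}=\sqrt{\tq}$, recovering the density \eqref{eq:pbdef}.

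The principal technical obstacle is the interchange between posterior integration and the substitution of $Y_i^{(m)}$ by deterministic limits inside $\exp(\sum_m \Delta\cL^{(m)})$. Strong log-concavity of $\mu$ from Hypothesis~\ref{hyp1}(i) yields sub-Gaussian tails for $b_j^{(m)}$ under the leave-a-variable-out measure, while Hypothesis~\ref{hyp2} provides the uniform-in-$n$ moment control on $a_{m,1}$ needed for dominated convergence against a polynomially bounded $f$. The other delicate point is the Hubbard--Stratonovich bookkeeping that converts the replica-off-diagonal pieces of the covariance into the iid $Z_m$ structure while absorbing the common-replica contributions into the deterministic $\beta_{\star,j}$ shift, which is what produces the specific combinations $r_1=a_{dp}+\tq-\tv$, $r_2=\tm-\bar m$, $r_3=\tq$ as stated.
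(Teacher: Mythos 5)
Your outline follows the paper's skeleton (Taylor split of $\logl_{n,p}$ into $\logl_{n,p-1}+\dlogl+E_n$, constants for $Y_3,Y_4,Y_5$, joint Gaussian limit for $(Y_1^{(m)},Y_2^{(m)})$ with covariance read off from $\tilde Q,M,\bar Q$), but the central step is missing. The quantity to control is a \emph{ratio}: $\E\thermal{f\,\prod_m \cD_n^{(m)}}_v$ has the random normalizer $\thermal{\exp\{\dlogl+E_n\}}_v$ in the denominator, and this normalizer depends on the same Gaussian column $\bX_{\bullet j_0}$ and the same $Y_i$'s as the numerator. You therefore cannot ``combine the induced Gaussian moment generating function in $\{X_{ij_0}\}_i$'' to produce an effective exponent: integrating the Gaussian field out of a ratio of correlated integrals is not a moment-generating-function computation. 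The paper's proof is organized precisely around this obstruction: it truncates to the events $A_K$ and $B$ (Lemma \ref{lem:aprox_K}, which itself needs the second-moment bounds on $Y_1,\dots,Y_5$ of Lemma \ref{lem:2mon_Y}), removes $E_n$ under the truncated measure (Lemma \ref{lem:taylor_control}), transfers the hypothesized limits to $\thermal{\cdot}_v$ by an interpolation argument (Lemma \ref{lem:equivalence_mean}), and then expands the reciprocal normalizer in a finite geometric series with a small $\delta$ so that every term $g_k$ is a bounded continuous function of $(Z_n^{(1)},\dots,Z_n^{(k)})$; only then can the weak convergence of Lemma \ref{lem:conv_dist_Z} be applied term by term, followed by resummation and the limits $L\to\infty$, $K\to\infty$. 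None of your dominated-convergence / sub-Gaussian-tail remarks address how the limit passes through the random denominator, and this is where the identities $r_1=a_{dp}+\tq-\tv$, $r_2=\tm-\bar m$ actually come from.

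Relatedly, the ``Hubbard--Stratonovich decoupling with iid auxiliary Gaussians $Z_m$'' is not a correct description of the replica structure. The off-diagonal limits $\tq$ and $\bar m$ mean that the limiting fields decompose as $\tilde\theta_m=\tilde\xi_m+\tilde z'$, $\bar\theta_m=\bar\xi_m+\bar z$ with the components $\tilde z',\bar z$ \emph{common to all $l$ copies}; in the limit only the idiosyncratic parts $\tilde\xi_m,\bar\xi_m$ are averaged inside the tilt (in both numerator and denominator), which is exactly what converts $a_{dp}$ into $r_1$ and $\tm$ into $r_2=\tm-\bar m$ and lets the shared terms $\beta_{\star,j_0}\bar z$ and $c_g\beta_{\star,j_0}^2$ cancel between numerator and denominator, while the shared field (of variance $\tq=r_3$) remains outside the ratio and appears in the mean $m_j$. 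A Hubbard--Stratonovich linearization of the rank-one cross-replica coupling would in any case introduce a single common Gaussian, not $l$ independent ones, so the mechanism you describe neither reproduces the paper's derivation nor yields the coupling structure that the downstream results (e.g.\ $c_B=\E_{Z,B_\star}\thermal{\beta}_h^2$ and the proof of Corollary \ref{cor:marginals_bayes}) rely on. As written, your argument asserts the correct constants by matching coefficients but does not supply the mechanism that produces them.
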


\begin{proposition}[Variant of Proposition \ref{prop:hat_fix_point}]\label{prop:hat_fix_point2}
In the setting of Section \ref{sec:model}, assume that the following limits hold
    \begin{equation}\label{eq:intermlimit}
        \lim_{n\to\infty} \E\thermal{Q_{11}} =: v_B, \,\, \lim_{n\to\infty} \E\thermal{Q_{12}} =: c_B, \,\, \lim_{n\to\infty} \E\thermal{Q_{1\star}} =: c_{BB_\star}.
    \end{equation}
   % for some constants $v$, $m$, and $q$,} 
For $l\geq1$, let $g:\R^{l+1} \to \R$ be a continuous and polynomially bounded function. We then have that, for every $i\geq1$,
    \begin{equation}\label{eq:limloov}
    \lim_{n\to\infty}\E\langle g(a_{\star,i},\dots,a_{l,i})\rangle = \E_{G \otimes e}\thermal{g(\theta_\star,\theta_1,\hdots, \theta_l)}_s,
    \end{equation}
    where we define (for $m\in[\ell]$) when $c_B > 0$,
\begin{equation}\label{eq:thetas}
    \theta_m := \sqrt{\kappa(v_{B}-c_{B})}\xi_{B}^{(m)}+\sqrt{\kappa c_{B}}z_{BB_{\star}}  
    \mbox{ and }  \theta_\star := \sqrt{\kappa\left(\gamma^2 - \frac{c_{BB_{\star}}^2}{c_B}\right)}\xi_{B_{\star}} +c_{BB_\star}\sqrt{\frac{\kappa}{c_B}}z_{BB_{\star}}
\end{equation}
else $  \theta_m = \sqrt{\kappa v_B} \xi_B^{(m)}, \theta_\star := \sqrt{\kappa \gamma^2} \xi_{B_\star}$
Here, $\xi_{B_\star}, z_{BB_\star}$ are i.i.d.~standard Gaussian random variables, independent of everything else and $G=(\xi_{B_\star},z_{BB_{\star}})$. Furthermore, $\thermal{g(\theta_\star,\theta_1,\hdots, \theta_l)}_s$ denotes expectation of $g(\theta_\star,\theta_1,\hdots, \theta_l)$ with respect to $\xi_B^{(1)},\hdots, \xi_B^{(l)}$ that have density proportional to 
  $$\prod_{m=1}^l \exp\left\{\tT(\theta_\star)\theta(\xi_B^{(m)}) - A(\theta(\xi_B^{(m)})) - \frac{(\xi_B^{(m)})^2}{2}\right\}
  \quad \text{where} \quad \tT(\theta_\star) = T \circ f(\theta_\star,e),$$ and $e \sim \text{Unif}(0,1)$ independent of everything else.
  %Finally, $G$ denotes the joint dis
%} \textcolor{red}{Needs updating like previous proposition; mostly the $\langle \rangle_s$ definition should be explicitly clarified here. I will do this once Manuel elaborates on equation (70).}
\end{proposition}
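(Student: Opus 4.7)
The plan is to extend the leave-an-observation-out (LOO) representation sketched in \eqref{eq:loo2} from a single posterior sample to $l$ independent samples. Since the full posterior treats distinct samples independently and the $i$-th observation couples them symmetrically in $m$ through $u_i(\bX_i^{\top}\bbeta^{(m)},\bX_i^{\top}\bbeta_\star)$, the same Radon--Nikodym manipulation used to obtain \eqref{eq:loo2} yields
\begin{equation*}
\thermal{g(a_{\star,i},a_{1,i},\dots,a_{l,i})} = \frac{\thermal{g\,\prod_{m=1}^{l} e^{u_i(a_{m,i},\,a_{\star,i})}}_o}{\thermal{\prod_{m=1}^{l} e^{u_i(a_{m,i},\,a_{\star,i})}}_o}.
\end{equation*}
The decisive feature is that under $\thermal{\cdot}_o$ neither $\bX_i$ nor $e_i$ enters the defining likelihood, so both are independent of the samples $\bbeta^{(1)},\dots,\bbeta^{(l)}$ drawn from the LOO posterior.

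Next, I would condition on those $l$ samples and on the deterministic $\bbeta_\star$. The vector $(a_{\star,i},a_{1,i},\dots,a_{l,i})=\bX_i^{\top}(\bbeta_\star,\bbeta^{(1)},\dots,\bbeta^{(l)})$ is then exactly Gaussian with mean zero and covariance $n^{-1}\mathbf{M}$, where $\mathbf{M}$ is the Gram matrix of $(\bbeta_\star,\bbeta^{(1)},\dots,\bbeta^{(l)})$. The entries of $n^{-1}\mathbf{M}$ are (up to the factor $p/n$) exactly $\|\bbeta_\star\|^2/p$, $Q_{mm}$, $Q_{mm'}$, and $Q_{m\star}$, so hypothesis \eqref{eq:intermlimit} identifies their first-moment limits as $\kappa\gamma^2,\kappa v_B,\kappa c_B,\kappa c_{BB_\star}$. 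Upgrading this to $\mathbb{E}\thermal{\cdot}_o$-concentration of $n^{-1}\mathbf{M}$ about the resulting limiting covariance will yield convergence of the conditional Gaussian law of $(a_{\star,i},a_{m,i})$, and a Cholesky factorization of the limiting covariance produces exactly the decomposition \eqref{eq:thetas} in terms of fresh standard Gaussians $\xi_{B_\star}$, $z_{BB_\star}$, $\xi_B^{(m)}$; the degenerate case $c_B=0$ is treated separately via the second display in \eqref{eq:thetas}.

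Inserting the Cholesky decomposition into both numerator and denominator, each factor $e^{u_i(a_{m,i},a_{\star,i})}$ becomes $\exp\{\tT(\theta_\star)\theta_m-A(\theta_m)\}$ with $\tT(\theta_\star)=T\circ f(\theta_\star,e_i)$. Combining these factors with the standard Gaussian weight on each $\xi_B^{(m)}$ reproduces precisely the density $p_s$ from \eqref{eq:density_s}, so the ratio collapses to $\thermal{g(\theta_\star,\theta_1,\dots,\theta_l)}_s$. The outer expectation $\mathbb{E}$ then integrates over the Gaussians $(\xi_{B_\star},z_{BB_\star})=G$ inherited from $\bX_i$ and over $e_i$, delivering the claimed limit $\mathbb{E}_{G\otimes e}\thermal{g(\theta_\star,\theta_1,\dots,\theta_l)}_s$.

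The hard part will be the concentration step, because the hypothesis supplies only first-moment limits of $\mathbb{E}\thermal{Q_{mm'}}$ under the full posterior. I anticipate two successive upgrades. First, a single-observation perturbation should transfer these first-moment limits from $\thermal{\cdot}$ to $\thermal{\cdot}_o$, controlled by $\|A''\|_\infty\le 1$ together with the bounds on $\tT_i$ in Hypothesis \ref{hyp1}(ii)--(iii). Second, exploiting the replica structure offered by drawing arbitrarily many i.i.d.\ samples from the LOO posterior, in conjunction with the strong concavity of $\log\mu$ in Hypothesis \ref{hyp1}(i) and the uniform moment control of Hypothesis \ref{hyp2}, one upgrades first-moment convergence of the $Q_{mm'}$'s to variance-vanishing convergence. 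This second-moment control is precisely what the conditional Gaussian integral needs to pass the limit inside the integrand for a polynomially bounded $g$, with Hypothesis \ref{hyp2} supplying the uniform integrability that legitimizes the passage.
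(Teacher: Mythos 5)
Your skeleton matches the paper's route for the first half: the leave-an-observation-out identity, the observation that the LOO samples are independent of $\bX_i$ and $e_i$, the conditional Gaussianity of the fitted values with covariance given by the Gram matrix of $(\bbeta_\star,\bbeta^{(1)},\dots,\bbeta^{(l)})$, the transfer of the limits \eqref{eq:intermlimit} from the full posterior to the LOO posterior by a one-observation interpolation (the paper's Lemma \ref{lem:same_limit_lvo}), and the $L^2$ concentration of the overlaps via strong log-concavity (the paper's Proposition \ref{prop:RS}, proved with Brascamp--Lieb and a tilted log-likelihood). The Cholesky representation you describe is exactly the paper's asymptotic representation lemma, Lemma \ref{lem:normal_approx_dist}.

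The genuine gap is the final step, where you assert that ``the ratio collapses to $\thermal{g(\theta_\star,\theta_1,\dots,\theta_l)}_s$'' and the outer $\E$ then integrates over $(G,e)$. The quantity to control is $\E\bigl[\thermal{g\prod_m e^{u_i}}_o \big/ \thermal{e^{u_i}}_o^{\,l}\bigr]$, and the outer expectation sits \emph{outside} the ratio of two quenched LOO-posterior averages that share the same data randomness. Weak convergence of the finite vector $(a_{\star,i},a_{1,i},\dots,a_{l,i})$ under $\E\thermal{\cdot}_o$ does not let you pass the limit through this functional, because the denominator $\thermal{e^{u_i}}_o$ is an average over the whole LOO posterior, not a bounded continuous function of finitely many replicas' fitted values; nor have you shown it concentrates, or that it is bounded away from zero. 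The paper resolves precisely this point by never dividing: it expands $1/\thermal{e^{u}}_o$ as a partial geometric series, so that each term $f_k$ in \eqref{eq:fkloo} is a bounded continuous function of the fitted values of $k$ independent replicas (boundedness coming from Hypothesis \ref{hyp1}(v), $u_i\le 0$ a.s., so $0\le e^{u}\le1$ and $0\le 1-e^{u}<1$), applies the distributional limit term by term, controls the truncation error via Jensen's inequality and dominated convergence, and only resums the series \emph{after} taking $n\to\infty$ to recover the ratio $\E_{G\otimes e}\thermal{g}_s$ in \eqref{eq:limit3}. Your proposal contains no mechanism of this kind (neither a series/replica expansion nor a concentration statement for $\thermal{e^{u_i}}_o$), and it never invokes the a.s.\ non-positivity of $u_i$, which is the hypothesis that makes the paper's error control work; as written, the limit-interchange in your last step is unjustified. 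A secondary, smaller omission: polynomially bounded $g$ must first be reduced to the bounded continuous case by uniform integrability (Proposition \ref{prop:control_mom_conv} / Hypothesis \ref{hyp2}), which you gesture at but should make explicit before the truncation argument.
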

%Note that $\theta_m$'s are copies of $\theta(\xi_B)$ from \eqref{eq:thetasearlier} with $\xi_B$ replaced by independent standard Gaussian variables.
We present the proofs for the aforementioned propositions in Section \ref{sec:proofs}. 
\begin{proposition}\label{prop:FPidentify}
Under the assumptions of Propositions \ref{prop:fix_point2} and \ref{prop:hat_fix_point2}, the constants $r_1,r_2,r_3$ from \eqref{eq:r123} and $v_B,c_B,c_{BB_\star}$ from \eqref{eq:intermlimit} satisfy the fixed point equations \eqref{eq:FPE_all2} and \eqref{eq:FPE_all1} respectively, and $a_{dp}$ satisfies \eqref{eq:overlaps2}. 
\end{proposition}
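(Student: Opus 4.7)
The plan is to evaluate the limits appearing in the assumptions of Propositions \ref{prop:fix_point2} and \ref{prop:hat_fix_point2} by invoking those very propositions with carefully chosen test functions, and to verify line-by-line that the resulting expressions agree with \eqref{eq:FPE_all2}, \eqref{eq:FPE_all1}, and \eqref{eq:overlaps2}. The score-based quantities ($a_{dp},\tilde v,\tilde q,\tilde m,\bar m$) reduce to single-observation expectations by exchangeability of $(\bX_i,e_i)$ across $i\in[n]$, and are then handled by Proposition \ref{prop:hat_fix_point2}. The overlap-based quantities ($v_B,c_B,c_{BB_\star}$) reduce to coordinate averages $p^{-1}\sum_j$ of quantities depending on $\beta_{\star,j}$, and are handled by Proposition \ref{prop:fix_point2} combined with the weak convergence $p^{-1}\sum_j\delta_{\beta_{\star,j}}\Rightarrow\pi$ from Hypothesis \ref{hyp1}(iv).

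For the score group, exchangeability gives $n^{-1}\sum_i\E\thermal{H_i}=\E\thermal{H_1}$ for each summand of interest. Applying Proposition \ref{prop:hat_fix_point2} with $l=2$ to the choices $g=A''(a_1)$, $(\tT-A'(a_1))^2$, $(\tT-A'(a_1))(\tT-A'(a_2))$, $(\tT-A'(a_1))\tT'a_1$, and $(\tT-A'(a_1))\tT'a_2$ (extended in the obvious way to integrands that also depend on the observation noise $e_1$, legitimate by the uniform bounds in Hypothesis \ref{hyp1}(ii)--(iii), with $e_1$ playing the role of the $e$ appearing in $p_s$) produces, respectively,
\begin{align*}
a_{dp}&=\E_{G\otimes e}\thermal{A''(\theta)}_s, \qquad \tilde v=\E_{G\otimes e}\thermal{S_\theta^2}_s,\\
\tilde q&=\E_{G\otimes e}\thermal{S_\theta(\xi_B^{(1)})\,S_\theta(\xi_B^{(2)})}_s=\E_{G\otimes e}\thermal{S_\theta(\xi_B)}^2_s,\\
\tilde m&=\E_{G\otimes e}\thermal{S_\theta\,S_{\theta_\star}}_s, \qquad \bar m=\E_{G\otimes e}\left[\thermal{S_\theta}_s\thermal{S_{\theta_\star}}_s\right],
\end{align*}
where in $\tilde q$ and $\bar m$ we use that $\xi_B^{(1)}$ and $\xi_B^{(2)}$ are i.i.d.~under $p_s^{\otimes 2}$. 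Substituting into \eqref{eq:r123} then gives $r_3=\E_{G\otimes e}\thermal{S_\theta(\xi_B)}^2_s$, $r_2=\E_{G\otimes e}[\text{Cov}_{\thermal{\cdot}_s}(S_\theta,S_{\theta_\star})]$, and $r_1=\E_{G\otimes e}[\thermal{A''(\theta)}_s-\text{Var}_{\thermal{\cdot}_s}(S_\theta)]$, which are exactly \eqref{eq:FPE_all2}, while $a_{dp}=\E_{G\otimes e}\thermal{A''(\theta)}_s$ matches \eqref{eq:overlaps2}.

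For the overlap group, write $\E\thermal{Q_{11}}=p^{-1}\sum_j\E\thermal{(\beta_j^{(1)})^2}$, and similarly for $Q_{12}$ and $Q_{1\star}$. Proposition \ref{prop:fix_point2} applied with $l=2$ and test functions $b_1^2$, $b_1b_2$, $b_\star b_1$ identifies the pointwise-in-$j$ limit of each summand as $\E_Z\thermal{b^2}_{h,j}$, $\E_Z\thermal{b_1 b_2}_{h,j}$, and $\beta_{\star,j}\E_Z\thermal{b}_{h,j}$, respectively, each depending continuously on $\beta_{\star,j}$. The main obstacle lies here: commuting $\lim_{n\to\infty}$ with the coordinate average $p^{-1}\sum_j$, since Proposition \ref{prop:fix_point2} only asserts convergence for each fixed $j$. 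We resolve this via (a) the weak convergence of the empirical measure of $\bbetas$ to $\pi$ with second-moment convergence from Hypothesis \ref{hyp1}(iv), (b) the uniform bound $\sup_n\|\bbetas\|_\infty<\infty$ confining the $\beta_{\star,j}$ to a fixed compact interval, and (c) inspection of the proof of Proposition \ref{prop:fix_point2} in Section \ref{sec:proofs} to verify that its convergence is uniform over $\beta_{\star,j}$ ranging in such a compact set. Together these ingredients yield $v_B=\E_{B_\star,Z}\thermal{\beta^2}_h$, $c_B=\E_{B_\star,Z}\thermal{\beta}^2_h$, and $c_{BB_\star}=\E_{B_\star,Z}\thermal{\beta B_\star}_h$, matching \eqref{eq:FPE_all1} and completing the identification.
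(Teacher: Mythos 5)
Your proof is correct and follows essentially the same route as the paper's (very brief) argument: identify $r_1,r_2,r_3$ and $a_{dp}$ by applying Proposition \ref{prop:hat_fix_point2} to single-observation score products (using exchangeability in $i$), and identify $v_B,c_B,c_{BB_\star}$ by applying Proposition \ref{prop:fix_point2} coordinate-wise together with the empirical-measure convergence of Hypothesis \ref{hyp1}(iv). The additional care you take---allowing test functions that depend on $e_1$ and justifying the exchange of the $n\to\infty$ limit with the coordinate average $p^{-1}\sum_j$ via uniformity of the convergence over the compact range of $\beta_{\star,j}$---fills in steps the paper leaves implicit.
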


\emph{Proof of Proposition \ref{prop:FPidentify}.} For the constants $r_1,r_2,r_3$, this follows directly from  the definitions \eqref{eq:r123} on an application of \eqref{eq:limloov}. For the other set of constants we start from the definition \eqref{eq:intermlimit} and use 
\eqref{eq:RHSloov}. Analogous arguments hold for the constant $a_{dp}$
%\textcolor{red}{To finish. Worried about a scaling issue  while checking this even for $r_1$.}
%[NEEDS DISCUSSION AND ALSO SHOULD BE MOVED SOMEWHERE AFTER INTERMEDIATE PROP STATEMENTS?]
%\ps{I think there is some writing error here or I am missing something. The lemma and proposition deal with moments of $\beta_{j_0}$ and $a_n$'s which are all sample quantities, whereas (10) and (11) never even involve these. So is this a citation error to which lemma etc you wished to call? It might be best to write this more mathematically, spelling out exactly which means you are referring to, to avoid confusion, since we have means of so many things in the paper. Generally, I would suggest a rewrite of this proof using the math notations for what you are referring to, for ease of reading. For instance, $Q_11$ and the other order parameters are average quantities calculated. based on a posterior sample. Now Prop 1 tells you how to handle functions of one coordinate at a time while Proposition 2 tells us the same for functions of fitted values. We didnt write a proposition about average of separable functions applied to all coordinates, so I think there is a gap (not in the argument I know but in the current writing); so we should elaborate this proof in my opinion. }\ms{I added explicitly the reference ot the means I refer to. I also changed the wording a bit so that it is a bit clearer. The notations in A1 have been (up to possible mistakes) updated to the new notation.}

\emph{Proof of Proposition \ref{prop:conv_order_param}} Recall definitions \eqref{eq:overlaps} and \eqref{eq:overlaps2}. By Lemma \ref{lem:cont_mom} the expectations 
\begin{equation*}
    \E\thermal{Q_{11}}, \quad \E\thermal{Q_{12}}, \quad \text{and} \quad \E\thermal{Q_{1\star}}
\end{equation*}
define bounded sequences in $n$. By Hypothesis \ref{hyp2}, the same holds true for
\begin{equation*}
    \E\thermal{\tQ_{11}}, \quad \E\thermal{\tQ_{12}}, \quad \E\thermal{\bar{Q}_{11}}, \quad \E\thermal{\bar{Q}_{12}}, \quad \E\thermal{A''(a_1)}, \quad \E\thermal{M_{11}}, \quad \text{and} \quad \quad \E\thermal{M_{12}}.
\end{equation*}
Therefore, for every subsequence $(n_k)_{k\geq1}\subseteq\mathbb{N}$ there is a sub-subsequence $(n_{k_l})_{l\geq1}$ such that all these expectations converge to constants.

Let $(n_k)_{k\geq1}$ be some subsequence and $(n_{k_l})_{l\geq1}$ a sub-subsequence such that these expectations converge to finite limits. By Propositions \ref{prop:fix_point2}-\ref{prop:FPidentify}, the limit of these expectations must define solutions to the fixed point equations \eqref{eq:FPE_all2} and \eqref{eq:FPE_all1}. However, we operate in the regime where this system admits a unique solution, implying that these expectations converge to the same value for some sub-subsequence of each subsequence $(n_k)_{k\geq1}$. Hence, these expectations  converge along the entire sequence. Since the concentration in $L^2$ of these quantities is proved in Propositions \ref{prop:RS} and \ref{prop:conv_RS}, this completes the result.

\subsection{Proofs of Propositions \ref{prop:fix_point} and \ref{prop:hat_fix_point}}\label{sec:proof34}
Proofs of Propositions \ref{prop:fix_point} and \ref{prop:hat_fix_point} follow from Propositions \ref{prop:conv_order_param}, \ref{prop:fix_point2} and \ref{prop:hat_fix_point2}. 
\subsection{Proof of Theorem \ref{thm:contraction}}\label{sec:proof_contraction}
Recalling the definitions from \eqref{eq:overlaps}, 
notice that
\begin{equation*}
    \frac{1}{p}||\bbeta-\bbetas||^2 = Q_{11} + \gamma^2_n - 2 Q_{1\star}.
\end{equation*}
Then, if we let $c := v_B + \gamma^2 - 2c_{BB_\star}$,
\begin{equation*}
    \E\thermal{(p^{-1}||\bbeta-\bbetas||^2-c)^2} \leq 16 \left( \E\thermal{(Q_{11}-v_B)^2} + \E\thermal{(\gamma_n^2-\gamma)^2} + \E\thermal{(Q_{1\star}-2c_{BB_\star})^2} \right).
\end{equation*}
The result then follows by combining Hypothesis \ref{hyp1}(iv) and Proposition \ref{prop:conv_order_param}.

\subsection{Proof of Theorem \ref{thm:marginals}}\label{sec:proof_marginals}
By considering moments, this can be proved directly as a corollary of Proposition \ref{prop:fix_point}. Here we show the proof in the special case where $r=1$. The general case follows identically. Notice that the random density from \eqref{eq:density_hj} defined up to normalization by
\begin{equation*}
    p_{h,{j_1}}(db) \propto e^{-\frac{1}{2v}(b-m_{j_1})^2}\mu(db)
\end{equation*}
is strongly log-concave and thus is sub-exponential (see \cite[Theorem 5.1]{saumard2014log}). This implies that its moment generating function is defined in an open interval containing $0$ and thus $p_{h,{j_1}}$ is \emph{determined by its moments} (see \cite[Chapter 30]{billingsley2012probability}). Furthermore, by Lemma \ref{lem:cont_mom}, all the moments of $\beta_{j_1}$ are defined and by Proposition \ref{prop:fix_point} they converge to the moments of $p_{h,{j_1}}$. Then, by \cite[Theorem 30.2]{billingsley2012probability} the proof is complete. 
%conclude the first part of the corollary.

\subsection{Proof of Corollary \ref{cor:marginals_bayes}}\label{sec:proofcor1}

%\textcolor{red}{The following is proof of Corollary 1, we should make it so.}
For every $m\in\mathbb{N}$ let $g_m(x_1,\dots,x_m):=\prod_{l\in[m]}x_l$.  Similarly as in the previous proof, from Proposition \ref{prop:fix_point} applied to $g_{m}(\beta_{j_1}^{(1)},\dots,\beta_{j_1}^{(m)})$ we obtain that
\begin{equation*}
    \E\thermal{\beta_{j_1}}^m = \E\thermal{g_m(\beta_{j_1}^{(1)},\dots,\beta_{j_1}^{(m)})}\xrightarrow{n\to\infty}\E\left(\frac{\int b \, p_{h,{j_1}}(db)}{\int p_{h,{j_1}}(db)}\right)^m.
\end{equation*}
The result then follows, by \cite[Theorem 30.2]{billingsley2012probability}.

%\textcolor{red}{Write a line re proofs of props 3 and 4, and revisit statements for prop 5 and 6, specifically, what are the $v_B, c_B$ definitions? Through the FP equations or through limits of those quantities.}   
    \section{Leave-one-out arguments: Proofs of Propositions \ref{prop:fix_point2} and \ref{prop:hat_fix_point2}}\label{sec:proofs}
%We first present the proofs of Propositions \ref{prop:fix_point2} and \ref{prop:hat_fix_point2}.
\subsection{Proof of Proposition \ref{prop:fix_point2}}\label{sec:proof_fpe}
%\textcolor{red}{WE AGREED LIKELIHOOD DEFNITIONS HAVE $\bm{b}$'s BUT FOR CONVENIENCE INTEGRALS HAVE $\bbeta$'s. }

%\textcolor{red}{MANUEL, CAN YOU PLEASE KEEP YOUR CHANGES IN BLUE OR SOME OTHER COLOR SO I CAN READ THEM QUICK IN NEXT ROUND.}
%\ps{I feel we are repeating ourselves way too much. This part og this section, that is, everything before Subsection \ref{subsec:conditioning} should be integrated to the extent possible in Section \ref{sec:pfoutline} to the extent possible. I understand for somethings like the $Y_i^m$ the m-th replica version definition should stay here.But many/most of the other things could go earlier and be stated only once in the manuscript, and not twice.}\ms{I removed the paragraph redefining everything and just added equation references.}
Let $j_0\in[p]$ be the index of the marginal of interest. For any vector $\bB$, we will denote $\tilde{\boldsymbol{b}}$ and $b_{j_0}
$ to be the vector with the $j_0$-th coordinate removed and the $j_0$-th coordinate of $\bB$ respectively. We begin by expanding the log-likelihood into the following terms  %\eqref{eq:lovposterior}--\eqref{eq:mean_variable}. To avoid restating it for every result, we will assume that all lemmas in this appendix hold under Hypothesis \ref{hyp1}, \ref{hyp2}, and \ref{hyp3}.
%We will start by writing the relationship between the leave-a-variable-out log-likelihood and the original one. For this, we will Taylor expand the log-likelihood to order $O(n^{-1/2})$ which corresponds to a second order in the expansion. This yields
\begin{equation}\label{eq:lvarout}
    \begin{split}
      &  \logl_{n,p}(\bB)  =  \logl_{n,p-1}(\tbB) +  b_{j_0}\underbrace{\sum_{i \in[n]} X_{ij_0}(\tT_i(\tbX_i^\top\tbbetas) -A'(\tbX_i^\top\tbB))}_{Y_1}+ \beta_{\star,{j_0}} \underbrace{\sum_{i \in [n]}X_{i{j_0}}\tTp_i(\tbX_i^\top\tbbetas)\tbX_i^\top\tbB }_{Y_2} \\ &+  \beta_{\star,{j_0}}b_{j_0}\underbrace{\sum_{i \in [n]}X_{i{j_0}}^2\tTp_i(\tbX_i^\top\tbbetas)}_{Y_4}
       + \beta_{\star,{j_0}}^2\underbrace{\frac{1}{2}\sum_{i \in [n]}X_{i{j_0}}^2\tTdp_i(\tbX_i^\top\tbbetas)\tbX_i^\top\tbB }_{Y_5}  -b_{j_0}^2\underbrace{\frac{1}{2}\sum_{i \in [n]}A''(\tbX_i^\top\tbB)X_{i{j_0}}^2}_{Y_3} \Big] \nonumber + E_n(\bB)\\
        & = \logl_{n,p-1}(\tbB) + \Delta \logl(b_{j_0},\tbB) + E_n(\bB),
    \end{split},
\end{equation}
where recall that $\Delta \logl(b_{j_0},\tbB)$ and $Y_1,\hdots, Y_5$ were introduced in  \eqref{eq:DeltaL} and  \eqref{eq:Y1thro5} respectively. Above the error term $E_n$ is defined as follows
%\ps{One thing I noticed is the following: The $\tilde{T}$'s actually depend on $i$, since for each $i$, there is an external randomness $e_i$ on which it depends. THus this create any difficulties in the proof. I am not making the change in this pass in case I start doing it and it gets stuck somewhere, then might be trouble. But I will try to keep this in mind as I go along.}\ms{I think this is not a problem.} 
%Where in the last line, the last term is the error in the Taylor expansion and is equal to
\begin{equation}\label{eq:taylor_error}
    \begin{split}
        E_n (\bB):= & -\frac{b_{j_0}^3}{6}\sum_{i\in[n]} A'''(\xi_i) X_{i{j_0}}^3 + \frac{\beta_{\star,{j_0}}^3}{6}\sum_{i\in[n]} \tT_i'''(\chi_i) \tbX_i^\top\tbB X_{i{j_0}}^3 \\
        & \,\,\,\,\, + \frac{\beta_{\star,{j_0}}^2b_{j_0}}{2}\sum_{i\in[n]} \tT_i''(\tbX_i^{\top}\tbbeta_\star) X_{i{j_0}}^3+\frac{\beta_{\star,{j_0}}^3b_{j_0}}{6}\sum_{i\in[n]} \tilde{T}_i'''(\chi_i)X_{i{j_0}}^4
    \end{split}
\end{equation}
%\textcolor{red}{The difference is in the argument of $\tT''$ and that the arguments of $\tT'''$ in the second and fourth terms are supposed to be the same $\chi_i$. Also the first term has a negative sign. I checked many times I think what I wrote above is correct?}\ms{Yes, I agree}
with $\xi_i$ some value in the interval between $\tbX_i^\top\tbbeta$ and $\tbX_i^\top\tbbeta + X_{ij_0} \beta_{j_0}$ and $\chi_i$ a value in the interval between $\tbX_i^\top\tbbetas$ and $\tbX_i^\top\tbbetas + X_{ij_0} \beta_{\star,{j_0}}$. 
%\textcolor{red}{To remember that $Y_3$ and $Y_5$ have the halves and that should be carried through the proof.}

The first hurdle encountered in the proof of Proposition \ref{prop:fix_point2} is that $\beta_{j_0},Y_1,\dots,Y_5$ are unbounded under $\E\thermal{\cdot}$ and that $E_n$ cannot be a priori neglected. To mitigate this, in the next subsection, we present two intermediate results which establish that expectations under $\E\thermal{\cdot}$ can be well-approximated by expectations with respect to a  measure $\bE\thermal{\cdot}'_K$ that is much easier to handle. 
%\ps{define this fully here with self-consistent notation so nobody needs to look up anything else in the manuscript at this point. Also, why do you add a bar over the expectation, does it have a specific meaning?}\ms{Are you sure about moving this, the definition is a couple of lines below. And I think it will make it more confusing if I move it up because I need to introduce many auxiliary definitions without previous motivation.} 
This is done in Section \ref{subsubsec:approx_cavity}. The final technical component involves establishing the asymptotic properties of $Y_1,\dots,Y_5$; the result is stated and proved in Section \ref{subsubsec:asymp_Y_k}.

\subsubsection{Simplification of the leave-a-variable-out measure}\label{subsubsec:approx_cavity}

The simplification of the leave-a-variable-out measure will consist of two steps. We will first show that expectations under the full posterior can be asymptotically approximated by expectations conditional on certain quantities being bounded.
%, and show that the means of a given family of functions remain unchanged asymptotically. 
In a second step we will show that dropping the Taylor error terms $E_n$  does not substantially change these expectations. %substantially the means of these functions either.

%--------------------------------------------------------------------------------

\paragraph{Conditioning argument} 

%\ps{Goes to appendix with suitable placement}
Let $||\cdot||_{op}$ stand for the \emph{operator norm}. For $K>0$, consider the following events
\begin{equation}\label{eq:def_Ak}
    A_K:=\{|\beta_{j_0}|,|Y_1|,\dots,|Y_5|\leq K\} \,\, \mbox{ and } \,\, B:=\{||\bX_{\bullet j_0}||,||\bX||_{op}\leq3\}
\end{equation}
with
\begin{equation}\label{eq:def_X_j_0}
    \bX_{\bullet j_0} = (X_{1j_0},\dots,X_{nj_0}) \in \R^n.
\end{equation}
Notice that, if we define $F:\R^p\mapsto\R^6$ and $G:\R^{p\times n}\mapsto\R^2$ as
\begin{equation*}
    F(\bbeta) = (K-|\beta_{j_0}|,K-|Y_1|,\dots,K-|Y_5|) \quad \text{and} \quad G(\bX) = (3-||\bX_{\bullet j_0}||,3-||\bX||_{op}),
\end{equation*}
we can then write $A_K=\{F(\bbeta)\geq0\}$ and $B=\{G(\bX)\geq0\}$---this is the version of events we work with later in Appendix \ref{app:conditioning}.

%\textcolor{blue}{Manuel to clarify over email if he has already fixed this issue.}
Let $\bE(\cdot)$ denote expectation w.r.t. $(\bX,\be)$ conditional on event $B$. That is, for $f:\R^{n\times p}\times\R^n\mapsto\R$, $\bE(f(\bX,\be))$ is defined according to
\begin{equation}\label{eq:def_mean_bE}
    \bE(f(\bX,\be)) = \frac{\E(f(\bX,\be) \mathbb{I}_{B})}{\proba(B)}.
\end{equation}
Notice that this quantity is well defined as $\proba(B) > 0$. In fact, $\proba(B) \rightarrow 1$ in our high-dimensional setting. %\textcolor{red}{For any function f? I am confused...what goes to 1 exactly?} 
Next we take $K>0$ large enough and define for $f:\R\mapsto\R$, 
%\ps{I am confused here, are we conditioning on both $A_K$ and $B$ here? I feel then the below definition should make that explicit; there is no B below so how are we conditioning on B?}\ms{Yes, this mean expectation is defined under the conditional measure $\bE$. I have changed the wording to make this more clear. I also added a comment below pointing to this explicitely.}, 
%and K large just for defining this?}
\begin{equation}\label{eq:angleK}
    \thermal{f(\beta_{j_0})}_K := \frac{\thermal{f(\beta_{j_0})\mathbb{I}_{A_K}}}{\thermal{\mathbb{I}_{A_K}}}\mathbb{I}_B.
\end{equation}
%\textcolor{red}{PS to take pass on English below; and take final pass on this in final round to decide if $I_B$ stays or not.}
Notice that, 
%by the way it was defined, 
$\thermal{f(\beta_{j_0})}_K$ is non-zero only on the event B. Furthermore, on $B$  for sufficiently large $K>0$, a.s. $\thermal{\mathbb{I}_{A_K}} > 0$. Hence, \eqref{eq:angleK} is well-defined. 
%To see this, notice that conditional on $B$, for every centered ball $\{||\bB||\leq r\}\subseteq\R^p$ of radius $r > 0$, there is a sufficiently large $K>0$ such that $\{||\bB||\leq r\}\subseteq A_K$. By choosing $r>0$ large enough to have positive posterior mass and $K >0$ large enough so that $\{||\bB||\leq r\}\subseteq A_K$, we have that $\thermal{\mathbb{I}_{A_K}} > 0$. \ms{You were not convinced by this argument and I therefore rewrote it. I wouldn't include this. I just write it for you in case you still have doubts. To see the above just notice that if $\bB\in \{||\bB||\leq r\}$, then by Hypothesis \ref{hyp2} and the condition that $B$ holds
%\begin{equation*}
 %   |Y_1| \leq K_1 + K_2 ||\bX_{\bullet j_0}||\, ||\bX||_{op} ||\tbB|| \leq K_1 + 9 K_2 ||\tbB||.
%\end{equation*}
%This means that if we choose $K = K_1 + 9 K_2 r$, then $||\bB||\leq r$ implies $|Y_1| \leq K$. The same can be done with $Y_2,\dots,Y_5$ to prove the statement.
%}
The following lemma controls the difference of expectations of functions under $\E\thermal{\cdot}$ and $\bE\thermal{\cdot}_K$. This allows us to work, in the proof of Proposition \ref{prop:fix_point2}, under the latter measure instead of the original one.
\begin{lemma}\label{lem:aprox_K}
    For $f:\R^p\mapsto\R$ bounded, there are vanishing sequences $(\varepsilon_n)_{n\geq1}$ and $(\varepsilon'_K)_{K\geq1}$ such that
    \begin{equation*}
        |\E\thermal{f(\bbeta)} - \bE\thermal{f(\bbeta)}_K| \leq ||f||_\infty( \varepsilon_n + \varepsilon'_K).
    \end{equation*}
\end{lemma}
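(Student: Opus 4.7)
The strategy is to compare $\E\thermal{f(\bbeta)}$ and $\bE\thermal{f(\bbeta)}_K$ by decomposing the difference into a contribution coming from the event $B^c$ (bad realizations of the design $\bX$) and a contribution coming from the event $A_K^c$ (posterior samples with atypical values of $\beta_{j_0}$ or of $Y_1,\dots,Y_5$). Using $\bE(\cdot) = \E[\,\cdot\,\mathbb{I}_B]/\proba(B)$ and the fact that $\thermal{f}_K$ vanishes on $B^c$, I would first write
\begin{equation*}
\E\thermal{f(\bbeta)} - \bE\thermal{f(\bbeta)}_K = \bigl(\proba(B)-1\bigr)\bE\thermal{f(\bbeta)} + \E\bigl[\thermal{f(\bbeta)}\mathbb{I}_{B^c}\bigr] + \bE\Bigl[\thermal{f(\bbeta)} - \thermal{f(\bbeta)\mathbb{I}_{A_K}}/\thermal{\mathbb{I}_{A_K}}\Bigr].
\end{equation*}
The first two terms on the right are each bounded by $\|f\|_\infty \proba(B^c)$. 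For the third, a short manipulation using $\thermal{f} = \thermal{f\mathbb{I}_{A_K}} + \thermal{f\mathbb{I}_{A_K^c}}$ gives the pointwise bound $|\thermal{f} - \thermal{f\mathbb{I}_{A_K}}/\thermal{\mathbb{I}_{A_K}}| \leq 2\|f\|_\infty \thermal{\mathbb{I}_{A_K^c}}$. It will then suffice to define $\varepsilon_n := 2\proba(B^c)$ and $\varepsilon'_K := 2\sup_n \bE\thermal{\mathbb{I}_{A_K^c}}$ and to prove that each vanishes in its respective limit.

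Next I would verify that $\varepsilon_n \to 0$. Since the entries of $\bX_{\bullet j_0}$ are i.i.d.\ $\mathcal{N}(0,1/n)$, $\|\bX_{\bullet j_0}\|^2$ is a normalized $\chi^2_n$ variable with mean $1$, and standard Gaussian concentration gives exponentially small deviations. For the operator norm, the rescaled $\sqrt{n}\bX$ is a standard $n\times p$ Gaussian matrix, so $\|\bX\|_{op} \to 1+\sqrt{\kappa}$ a.s.\ by Bai--Yin and concentrates exponentially about its mean. Provided the cutoff $3$ in the definition of $B$ is chosen above $1+\sqrt{\kappa}$ (and if not, it can simply be enlarged, as the remainder of the argument only relies on $\|\bX_{\bullet j_0}\|,\|\bX\|_{op}$ being bounded by a fixed constant), we obtain $\proba(B^c)\to 0$.

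For $\varepsilon'_K$, a union bound and Markov with an integer $r\geq 1$ give
\begin{equation*}
\bE\thermal{\mathbb{I}_{A_K^c}} \leq K^{-r}\Bigl(\bE\thermal{|\beta_{j_0}|^r} + \sum_{k=1}^5 \bE\thermal{|Y_k|^r}\Bigr),
\end{equation*}
so the task reduces to uniform-in-$n$ bounds on these seven quantities. For $\bE\thermal{|\beta_{j_0}|^r}$ I would invoke log-concavity: the joint posterior on $\bbeta$ is strongly log-concave (the prior is strongly log-concave by Hypothesis \ref{hyp1}(i) and the log-likelihood is concave since $A$ is convex and the $\tT_i$-terms are linear in $\bB$), so by Pr\'ekopa its $j_0$-th marginal is strongly log-concave, hence sub-exponential with uniformly bounded moments. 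For each $Y_k$, I would first use that on $B$, $\|\bX_{\bullet j_0}\|\leq 3$, then apply Cauchy--Schwarz to pull this factor out, and finally bound the moments of the remaining vectorial objects (e.g.\ the score vector $\tT_i(\tbX_i^\top\tbbetas) - A'(\tbX_i^\top\tbB)$) using Hypothesis \ref{hyp1}(ii),(iii),(vi) (which give pointwise or moment control on $\tT_i,\tT'_i,\tT''_i,A''$) together with Hypothesis \ref{hyp2} (which controls the moments of $\tbX_i^\top\tbbeta$ needed to handle the $A'$-terms and the remainder~$E_n$-type factors appearing in $Y_2,Y_3,Y_5$).

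The main obstacle is this final step. The $Y_k$'s are not bounded functions of the posterior sample: they are sums of $n$ weakly correlated terms that couple the data randomness through $X_{ij_0}$ with the posterior randomness through $\tbbeta$, and involve unbounded quantities like $A'(\tbX_i^\top\tbB)$ and $\tTdp_i(\tbX_i^\top\tbbetas)\tbX_i^\top\tbB$. Decoupling $\bX_{\bullet j_0}$ via $B$ and invoking Hypothesis \ref{hyp2} gives control of the \emph{average} behavior of the sum, but pushing this through Markov at a fixed moment $r$ requires some care in the bookkeeping — this is essentially what makes the technical detour through the approximate measure $\bE\thermal{\cdot}_K$ necessary in the first place.
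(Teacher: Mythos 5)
Your overall skeleton agrees with the paper's: the paper also splits the error into a $\proba(B^c)$ part and a $\bE\thermal{\mathbb{I}_{A_K^c}}$ part (via its general conditioning result, Proposition \ref{prop:comparison_conditional}) and then kills the second part by Markov using uniform-in-$n$ moment bounds. The gap is in how you obtain those moment bounds, and it is exactly where the real work of the lemma lies (the paper's Lemma \ref{lem:2mon_Y}). For $Y_1=\sum_{i\in[n]}X_{ij_0}\big(\tT_i(\tbX_i^\top\tbbetas)-A'(\tbX_i^\top\tbbeta)\big)$ (and similarly $Y_2$), your plan --- use $\|\bX_{\bullet j_0}\|\le 3$ on $B$ and Cauchy--Schwarz to pull that factor out, then bound the moments of the remaining score vector --- cannot work: the residual vector has squared norm of order $n$ (each coordinate is $O(1)$), so this route only yields $|Y_1|\lesssim \sqrt{n}$ and hence $\bE\thermal{|Y_1|^r}=O(n^{r/2})$, which is useless for making $\varepsilon'_K$ vanish uniformly in $n$. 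The reason $Y_1$ actually has $O(1)$ moments is a cancellation: $X_{\bullet j_0}$ is mean-zero with entries of variance $1/n$ and is only \emph{weakly} coupled to the posterior sample. The paper captures this by writing $\E\thermal{Y_1^2}=n\E\thermal{X_{1j_0}^2F_{11}}+n(n-1)\E\thermal{X_{1j_0}X_{2j_0}F_{12}}$ and evaluating the off-diagonal term by Gaussian integration by parts in $X_{1j_0}$ and $X_{2j_0}$, each integration producing a factor $1/n$ (plus bounded factors controlled by Hypotheses \ref{hyp1}--\ref{hyp2}, Lemma \ref{lem:cont_mom} and Proposition \ref{prop:control_mom_conv}); without some such decoupling device the needed bound does not follow. (Your argument is fine for $Y_3,Y_4$, which are genuinely bounded on $B$, and essentially the paper's own for $Y_5$.)

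A second, smaller gap is the moment bound for $\beta_{j_0}$ itself. Strong log-concavity of the posterior (and Pr\'ekopa for the marginal) controls the \emph{fluctuation} of $\beta_{j_0}$ around its posterior mean with constants uniform in $n$, but it says nothing about the \emph{location} of that mean, which is random and data-dependent: it is governed by a quantity of the same type as $Y_1$, namely $\sum_{i\in[n]}\langle(\tT_i-A'(\ta_i))X_{ij_0}\rangle_{j_0,1}$, whose moments again require the decoupling argument above (this is the content of the paper's Lemma \ref{lem:bet_exp_bound} and Lemma \ref{lem:cont_mom}, which uses the independence of the leave-a-variable-out posterior from $\bX_{\bullet j_0}$ plus an interpolation/Gronwall step). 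So "log-concave hence uniformly bounded moments" is not sufficient as stated; you need to supply the mean-zero/weak-dependence cancellation both for $\beta_{j_0}$ and for $Y_1,Y_2$ to close the proof.
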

%\ms{If we follow your suggestion that each measure has a special notation for its samples, we will need to introduce many new notations here which are not used much. What should be do? For now I keep it like it is, only changing the notations when these are samples form the leave-one-out measures.}
The proof of this lemma is delayed to Appendix \ref{app:approximation_cavity_measure}.

% --------------------------------------------------------------------------------------------------------------

\paragraph{Taylor approximation} 

Given a function $f:\R^p\mapsto\R$, define the expectation $\thermal{\cdot}'_K$ according to
\begin{equation}\label{eq:mean_approx_K}
    \thermal{f(\bbeta)}'_K := \frac{\thermal{f(\bbeta) \mathbb{I}_{A_K} \exp\{-E_n\} }}{\thermal{\mathbb{I}_{A_K} \exp\{-E_n\}}}{\mathbb{I}_B}.
\end{equation}
That is, $\thermal{\cdot}'_K$ is the posterior expectation conditional on $A_K$ (and on $B$) with respect to a modified log-likelihood that drops the Taylor error terms $E_n$. The following lemma controls the difference of expectations of functions under $\thermal{\cdot}_K$ and $\thermal{\cdot}'_K$. 
%\textcolor{red}{We should be consistent in our definitions across \eqref{eq:angleK} and\eqref{eq:mean_approx_K}. The first one has $I_B$ within the definition, the second one doesnt. Should we add that back in the second one as well. I think nothing changes if we make this change, since these are used under $\bar{E}$ mostly which is conditional on B anyways. }\ms{I agree that this is mostly for consistency. But I would add it in the second one}
\begin{lemma}\label{lem:taylor_control}
    For every continuous $f:\R\mapsto\R$ there is a constant $C>0$ such that 
    \begin{equation*}
        \bE|\thermal{f(\beta_{j_0})}_K-\thermal{f(\beta_{j_0})}'_K|\leq \frac{C}{n^{1/2}}.
    \end{equation*}
\end{lemma}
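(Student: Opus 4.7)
The plan is to express the difference as a change-of-measure correction, reduce it to moment estimates on the Taylor remainder $E_n$ of \eqref{eq:taylor_error}, and then carry out a term-by-term analysis using the a.s.\ bounds of Hypothesis \ref{hyp1} together with the Gaussianity of the $X_{ij_0}\sim N(0,1/n)$. Writing $M_K := \sup_{|x|\le K}|f(x)|$, which is finite by continuity of $f$, a direct quotient computation on $B$ gives
\begin{equation*}
    \bigl|\thermal{f(\beta_{j_0})}_K - \thermal{f(\beta_{j_0})}'_K\bigr| \;\le\; \frac{2M_K\,\thermal{\mathbb{I}_{A_K}\,|1 - e^{-E_n}|}}{\thermal{\mathbb{I}_{A_K}\,e^{-E_n}}},
\end{equation*}
and applying $|1-e^{-x}|\le |x|e^{|x|}$ followed by a Cauchy--Schwarz split reduces the lemma to establishing $\bE\thermal{\mathbb{I}_{A_K}\,E_n^2}=O(n^{-1})$, a uniform upper bound on $\bE\thermal{\mathbb{I}_{A_K}\,e^{2|E_n|}}$, and a lower bound on $\thermal{\mathbb{I}_{A_K}\,e^{-E_n}}$ holding on a $\bE$-event of probability $1-o(1)$.

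The key step is the $L^2$ estimate on $E_n$. On $A_K\cap B$ one has $|b_{j_0}|\le K$, $|\beta_{\star,j_0}|\le \|\bbeta_\star\|_\infty$, $\|A'''\|_\infty\le d_2$, $\|\tT_i''\|_\infty,\|\tT_i'''\|_\infty\le d_1$, $\|\tilde{\bX}\|_{op}\le 3$ and $\sum_i X_{ij_0}^2\le 9$. The standard Gaussian moment bounds $\sum_i|X_{ij_0}|^3 = O_{L^2}(n^{-1/2})$, $\sum_i X_{ij_0}^4 = O_{L^2}(n^{-1})$ and $(\sum_i X_{ij_0}^6)^{1/2} = O_{L^2}(n^{-1})$ then directly yield $O(n^{-1/2})$ in $L^2(\bE\thermal{\cdot})$ for the first, third and fourth summands of \eqref{eq:taylor_error}. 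The second summand, $-(\beta_{\star,j_0}^3/6)\sum_i\tT_i'''(\chi_i)\,\tbX_i^\top\tbB\,X_{ij_0}^3$, is the only one involving the bulk vector $\tbB$: I would bound its modulus by $C\,\|\tilde{\bX}\tbB\|\,(\sum_i X_{ij_0}^6)^{1/2} \le 3C\,\|\tbB\|\,(\sum_i X_{ij_0}^6)^{1/2}$ on $B$, and then combine Cauchy--Schwarz across $\bE\thermal{\cdot}$ with the crude moment bounds $\bE\thermal{\|\tbB\|^4}=O(n^2)$ (obtained from Lemma \ref{lem:cont_mom}) and $\bE(\sum_i X_{ij_0}^6)^2 = O(n^{-4})$ to again recover $O(n^{-1/2})$.

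The remaining ingredient is control of the exponential factors. On $A_K\cap B$ the first, third and fourth summands of $E_n$ contribute an exponent bounded by a constant depending only on $K, d_1, d_2, \|\bbeta_\star\|_\infty$; the second summand, bounded by $C\|\tbB\|(\sum_i X_{ij_0}^6)^{1/2}$, is the sole source of difficulty since it can grow with $n$. I would handle it via a joint truncation $\{\|\tbB\|\le R_n\} \cap \{(\sum_i X_{ij_0}^6)^{1/2}\le L_n\}$ with $R_n L_n = o(1)$ (for instance $R_n = n^{3/4}$, $L_n = n^{-7/8}$), so that the exponent stays bounded on the truncation, while both complement probabilities are $o(1)$ by Markov's inequality using $\bE\thermal{\|\tbB\|^2}=O(n)$ and $\bE\sum_i X_{ij_0}^6 = O(n^{-2})$; the contribution off the truncation is absorbed via Cauchy--Schwarz with a Gaussian tail estimate. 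The lower bound on $\thermal{\mathbb{I}_{A_K}\,e^{-E_n}}$ then follows from Lemma \ref{lem:aprox_K} applied to $f\equiv 1$ (which gives $\thermal{\mathbb{I}_{A_K}}\ge 1/2$ with $\bE$-probability $1-o(1)$ for large $K$) together with the smallness of $\thermal{\mathbb{I}_{A_K}|1-e^{-E_n}|}$ obtained above. The main obstacle is the second summand of $E_n$, whose modulus is governed by the unbounded $\|\tbB\|$; the joint truncation above is the only nonroutine ingredient, and once in place all remaining estimates are routine applications of Cauchy--Schwarz and Gaussian moment bounds.
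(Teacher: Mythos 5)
Your reduction to the ratio bound is algebraically correct, and your term-by-term estimates on $E_n$ (including the Cauchy--Schwarz treatment of the second summand via $\bE\thermal{\|\tbbeta\|^4}=O(n^2)$ and $\E(\sum_i X_{ij_0}^6)^2=O(n^{-4})$) do give $\bE\thermal{\mathbb{I}_{A_K}E_n^2}=O(n^{-1})$. The genuine gap is the denominator. Your route needs $\thermal{\mathbb{I}_{A_K}e^{-E_n}}$ (essentially $\thermal{\mathbb{I}_{A_K}}$) bounded below by a constant outside a data event whose probability is itself $O(n^{-1/2})$, because on the exceptional event the only available bound on the difference is the trivial $2M_K$. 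Lemma \ref{lem:aprox_K} applied to $f\equiv1$ gives nothing here: by \eqref{eq:angleK}, $\thermal{1}_K=\mathbb{I}_B$, so that application is vacuous. What the available ingredients (Lemma \ref{lem:2mon_Y} plus Markov) actually yield is $\bE\thermal{\mathbb{I}_{A_K^c}}=O(K^{-2})$ uniformly in $n$, hence the $\bE$-probability that $\thermal{\mathbb{I}_{A_K}}<1/2$ is only $O(K^{-2})$ --- small in $K$ but not decaying in $n$. Your final bound therefore carries an additive term of order $M_K K^{-2}$ that does not vanish as $n\to\infty$ for the fixed $K$ of the statement, so the claimed $Cn^{-1/2}$ rate is not established. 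An $n$-quantitative lower-tail bound for the random posterior mass $\thermal{\mathbb{I}_{A_K}}$ would need its own concentration argument, which is not in your sketch and is not available at this stage of the paper without circularity. Relatedly, citing Lemma \ref{lem:cont_mom} for $\bE\thermal{\|\tbbeta\|^4}=O(n^2)$ is itself circular as the paper is organized (Lemma \ref{lem:cont_mom} rests on Proposition \ref{prop:control_mom_conv_t}, whose proof invokes Lemma \ref{lem:taylor_control}); the correct tool is Corollary \ref{cor:bound_mom_norm}.

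This denominator problem is precisely what the paper's proof is designed to bypass: it invokes the symmetric perturbation inequality of Lemma \ref{lem:approx_hamil}, $|\thermal{f(\beta_{j_0})}_K-\thermal{f(\beta_{j_0})}'_K|\le 2\|f\|_\infty\big(\thermal{|E_n|}_K+\thermal{|E_n|}'_K\big)$, in which both error terms are expectations under the already-normalized conditional measures, so no lower bound on any normalizing mass and no exponential moments of $E_n$ are required; one then only bounds $\bE\thermal{|E_{n,1}|}_K,\dots,\bE\thermal{|E_{n,4}|}_K$ and their primed analogues to order $n^{-1/2}$ using Proposition \ref{prop:exp_bound_norm}/Corollary \ref{cor:bound_mom_norm} extended to $\thermal{\cdot}_K$ and $\thermal{\cdot}'_K$. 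If you replace your first step by this inequality, your moment estimates essentially finish the proof. Two smaller repairs in your version: a first-moment Markov bound only gives $O(n^{-1/4})$ for the event $(\sum_i X_{ij_0}^6)^{1/2}>n^{-7/8}$, so use second moments or Gaussian tails there; and the off-truncation contribution in $\bbeta$-space requires an explicit Chernoff-type bound $\thermal{\mathbb{I}_{\{\|\tbbeta\|>R_n\}}}\le e^{-tR_n^2}\thermal{e^{t\|\tbbeta\|^2}}$ combined with Proposition \ref{prop:exp_bound_norm} to dominate the $e^{O(n)}$ exponential-moment factor, which your phrase ``absorbed via Cauchy--Schwarz with a Gaussian tail estimate'' leaves unproved.
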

%\textcolor{red}{I dont like that argument of f in Lemma 1 is  $\mathbb{R}^p$ but in Lemma 2 is $\mathbb{R}$.}
The proof of this lemma is also delayed to Appendix \ref{app:approximation_cavity_measure}.
%\textcolor{red}{TILL HERE.}

%\textcolor{red}{Till here.}

% --------------------------------------------------------------------------------------------------------------

\subsubsection{Asymptotics of $Y_1,\dots,Y_5$ under the leave-a-variable-out measure}\label{subsubsec:asymp_Y_k}
%\textcolor{red}{Lets change $\langle \rangle_v \propto e^{\mathcal{L}_{n,p-1}(\tbbeta)}\mu^{\otimes p}(\bbeta)$. So that the $j_0$th coordinate is a draw from the prior.+}

Let $\bbeta^{(1)},\hdots, \bbeta^{(\ell)}$ denote independent samples from the leave-a-variable-out measure defined in \eqref{eq:mean_variable}. This means that  $\tbbeta^{(1)},\hdots,\tbbeta^{(\ell)}$ denote all but the $j_0$-th coordinates of these and that $\beta_{j_0}^{(1)},\hdots, \beta_{j_0}^{(\ell)}$ are independent samples from the prior. In the next section, we will see that, proving Proposition \ref{prop:fix_point2} requires analyzing the limit of functions of the random vector $Z^{(m)}_n\in\R^6$ defined as $(\beta^{(m)}_{j_0},Y^{(m)}_1,Y^{(m)}_2,Y^{(m)}_3,Y^{(m)}_4,Y^{(m)}_5)$ where we choose $Y_1^{(m)} := Y_1(\tbbeta^{(m)},\tbbeta_\star), \dots, Y_5^{(m)} := Y_5(\tbbeta^{(m)},\tbbetas)$ defined as in \eqref{eq:Y1thro5}. 
%\textcolor{red}{This notation is actually good, that is, making the argument of $Y_i$'s explicit, lets keep this as is for the time being.}
%Here, 

\begin{lemma}\label{lem:equivalence_mean}
    Under the assumptions of Proposition \ref{prop:hat_fix_point}, for $r\geq1$, let $f_1,\dots,f_r:\R^2\mapsto\R$ be continuous and polynomially bounded functions and $\boldsymbol{V}^{(1)},\dots,\boldsymbol{V}^{(r)}\in\R^n$ be random vectors with  coordinates, for each $m\in[r]$ and $i\in[n]$, given by $V_i^{(m)} := f_m(a_{\star,i},a_{m,i})$. Then for every $m,m'\in[r]$, if $\hat Q_{m,m'}:=n^{-1}\boldsymbol{V}^{(m)\top}\boldsymbol{V}^{(m')}$, then
    \begin{equation*}
        \lim_{n\to\infty}\E\langle \hQ_{m,m'}\rangle = \hat{q} \quad \text{implies that} \quad \lim_{n\to\infty}\E\langle \hQ_{m,m'}\rangle_v = \hat{q}.
    \end{equation*}
\end{lemma}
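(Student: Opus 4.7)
The plan is to relate the two expectations through the $l$-sample generalization of the Radon-Nikodym identity \eqref{eq:fullnlooo}. For any test function $\phi$ of $l$ posterior samples, a straightforward extension of the calculation leading to \eqref{eq:fullnlooo} gives $\thermal{\phi} = \thermal{\phi\,\cD_n}_v$, where $\cD_n$ is the product-over-samples Radon-Nikodym derivative; in particular $\thermal{\cD_n}_v = 1$. Applying this with $\phi = \hat{Q}_{m,m'}$ and subtracting $\thermal{\hat{Q}_{m,m'}}_v$ from both sides yields the key decomposition
\begin{equation*}
\thermal{\hat{Q}_{m,m'}} - \thermal{\hat{Q}_{m,m'}}_v \;=\; \mathrm{Cov}_{\thermal{\cdot}_v}(\hat{Q}_{m,m'},\cD_n).
\end{equation*}
Taking $\E$ and applying Cauchy-Schwarz both inside the posterior and across the data randomness, the claim reduces to verifying two ingredients: (i) $\E\,\mathrm{Var}_{\thermal{\cdot}_v}(\hat{Q}_{m,m'})\to 0$, and (ii) $\sup_n\,\E\,\mathrm{Var}_{\thermal{\cdot}_v}(\cD_n)<\infty$.

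For (ii), I would invoke the truncation framework of Lemmas \ref{lem:aprox_K} and \ref{lem:taylor_control} to pass from $\thermal{\cdot}_v$ to the controlled measure $\thermal{\cdot}'_K$ of \eqref{eq:mean_approx_K}, with the replacement error vanishing in $n$ (after letting $K\to\infty$). On the conditioning event $A_K\cap B$, the quantities $Y_1,\dots,Y_5$ and $\beta_{j_0}^{(m)}$ are uniformly bounded by $K$ and $E_n(\bbeta^{(m)})=O(n^{-1/2})$, so $\sum_m[\dlogl(\beta_{j_0}^{(m)},\tbbeta^{(m)})+E_n(\bbeta^{(m)})]$ is a bounded quadratic expression in the $l$ truncated variables $\beta_{j_0}^{(1)},\dots,\beta_{j_0}^{(l)}$. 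This produces a deterministic bound $\cD_n\le C(K)$ on the conditioned event, and hence an $n$-uniform bound on the second moment of $\cD_n$ under $\thermal{\cdot}'_K$.

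Ingredient (i) is the principal step and the main obstacle. Since $\hat{Q}_{m,m'}=n^{-1}\sum_{i\in[n]}f_m(a_{\star,i},a_{m,i})f_{m'}(a_{\star,i},a_{m',i})$ is a normalized sum of $n$ terms depending only on the low-dimensional projections $(a_{\star,i},a_{m,i},a_{m',i})$, one expects $\mathrm{Var}_{\thermal{\cdot}_v}(\hat{Q}_{m,m'})=O(n^{-1})$; establishing this rigorously is delicate because Proposition \ref{prop:conv_order_param} provides the analogous concentration only under the full posterior. I would carry out a second-moment calculation directly under $\thermal{\cdot}_v$, using Gaussian integration by parts against the entries of $\bX$ together with a Brascamp-Lieb-type variance inequality enabled by the strong log-concavity of $\mu$ from Hypothesis \ref{hyp1}(i), paralleling the argument underlying Proposition \ref{prop:conv_order_param}. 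The additional subtlety is that the leave-a-variable-out log-likelihood \eqref{eq:lovposterior} mismatches the true data-generating model in the coordinate $j_0$, producing $O(n^{-1/2})$ perturbations per term; these are absorbed using the $\mathcal{C}^3$ regularity of $\tT_i$ and $A$ in Hypothesis \ref{hyp1}(ii)-(iii). Combining (i) and (ii) through the Cauchy-Schwarz chain gives $\E\thermal{\hat{Q}_{m,m'}}=\E\thermal{\hat{Q}_{m,m'}}_v+o(1)$, and the hypothesis $\E\thermal{\hat{Q}_{m,m'}}\to\hat{q}$ then yields the desired conclusion $\E\thermal{\hat{Q}_{m,m'}}_v\to\hat{q}$.
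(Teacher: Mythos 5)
Your covariance identity $\thermal{\hQ_{m,m'}}-\thermal{\hQ_{m,m'}}_v=\mathrm{Cov}_{\thermal{\cdot}_v}(\hQ_{m,m'},\cD_n)$ is exact, and your ingredient (i) is actually the easy part — you do not need a fresh second-moment computation under $\thermal{\cdot}_v$, because the leave-a-variable-out measure is itself a posterior of exactly the same GLM form (design $\tbX$, signal $\tbbeta_\star$, $p-1$ coordinates, with $\beta_{j_0}$ an independent prior draw), so Proposition \ref{prop:conv_RS} applies to it verbatim and gives $\E\thermal{(\hQ_{m,m'}-\E\thermal{\hQ_{m,m'}}_v)^2}_v\to0$. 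The genuine gap is ingredient (ii). A uniform-in-$n$ bound on $\E\,\mathrm{Var}_{\thermal{\cdot}_v}(\cD_n)$ amounts to controlling $\E\thermal{e^{2(\dlogl+E_n)}}_v\big/\thermal{e^{\dlogl+E_n}}_v^{2}$, i.e.\ an exponential-squared moment of the tilt. Since $Y_1$ is asymptotically Gaussian of order one and $b_{j_0}$ ranges over the prior, this quantity behaves like $\E\,e^{cY_1^2}$ and is finite only under parameter-dependent smallness conditions; it can blow up, so the bound is not free. Your proposed fix — truncate and claim $\cD_n\le C(K)$ on $A_K\cap B$ — does not follow from the cited lemmas: (a) the denominator of $\cD_n$ averages over the \emph{untruncated} measure $\thermal{\cdot}_v$ and has no deterministic lower bound in terms of $K$ alone; (b) $E_n$ is not deterministically $\mathcal{O}(n^{-1/2})$ on $A_K\cap B$ — Lemma \ref{lem:taylor_control} only controls $\bE\thermal{|E_n|}_K$ in expectation, which is too weak once $E_n$ sits in an exponent that must be squared; and (c) replacing $\cD_n$ by its truncated version destroys the exact covariance identity, and quantifying that replacement for the unbounded function $\hQ_{m,m'}$ requires Proposition \ref{prop:comparison_conditional} with additional moment terms, i.e.\ essentially rebuilding the machinery of Section \ref{sec:proof_fpe}.

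This is precisely why the paper does not change measure in one step. Its proof interpolates: with $\logl_{n,p,t}=\logl_{n,p-1}+t(\Delta L+E_n)$ and $\thermal{\cdot}_t$ the associated posterior, $\frac{d}{dt}\E\thermal{\hQ}_t=\E\thermal{(\Delta L+E_n)(\hQ-\thermal{\hQ}_t)}_t$, which by Cauchy--Schwarz needs only a \emph{polynomial} second-moment bound on $\Delta L+E_n$ (the adaptation of Lemma \ref{lem:2mon_Y}) together with the $L^2$ concentration of $\hQ$ under $\thermal{\cdot}_t$ (the adaptation of Proposition \ref{prop:conv_RS}); the Mean Value Theorem then transfers the limit from $t=1$ to $t=0$. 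The interpolation trades your single exponential tilt for a derivative in which the tilt appears linearly, which is exactly what removes the exponential-moment obstruction your sketch runs into. To repair your argument you would either need to prove the exponential-moment bound (not available under Hypotheses \ref{hyp1}--\ref{hyp2} in general) or carry out the full truncation-plus-geometric-series analysis of Section \ref{sec:proof_fpe}; the shorter correct route is the interpolation above.
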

\begin{proof}
    To see this, for every $t\in[0,1]$, let $\thermal{\cdot}_t$ be expectation with respect to the posterior corresponding to the log-likelihood
    \begin{equation*}
        \logl_{n,p,t}(\bB) := t (\Delta L + E_n) + \logl_{n,p-1}(\tbB).
    \end{equation*}
    That is, for each $f:\R^p\mapsto\R$, we define
    \begin{equation*}
        \thermal{f(\bbeta)}_t := \frac{\int f(\bbeta)\exp\{\logl_{n,p,t}(\bbeta)\} \prod_{j\in[p]}\mu(d\beta_j)}{\int \exp\{\logl_{n,p,t}(\bbeta)\} \prod_{j\in[p]}\mu(d\beta_j)}.
    \end{equation*}
    Clearly $\thermal{\cdot}_0$ is equal to $\thermal{\cdot}_v$ (expectation under the leave-a-variable-out posterior) and $\thermal{\cdot}_1$ to $\thermal{\cdot}$ (expectation under the full posterior). Now, notice that 
    \begin{equation*}
        \begin{split}
            \frac{d}{ds}\E\langle\hat{Q}\rangle_s\big|_{s=t} & = \E\left[\frac{d}{ds}\frac{\int \hat{Q}\exp{\{\logl_{n,p,s}(\bbeta)\}} \prod_{j\in[p]}\mu(d\beta_j)}{\int \exp{\{\logl_{n,p,s}(\bbeta)\}} \prod_{j\in[p]}\mu(d\beta_j)}\right]\Big|_{s=t} \\
            & = \E\left[\frac{\int \hat{Q}(\Delta L + E_n)\exp{\{\logl_{n,p,s}(\bbeta)\}} \prod_{j\in[p]}\mu(d\beta_j)}{\int \exp{\{\logl_{n,p,s}(\bbeta)\}} \prod_{j\in[p]}\mu(d\beta_j)}\right]\Big|_{s=t} \\
            & \quad \quad - \E\left[ \frac{\int \hat{Q}\exp{\{\logl_{n,p,s}(\bbeta)\}} \prod_{j\in[p]}\mu(d\beta_j)\int (\Delta L + E_n)\exp{\{\logl_{n,p,s}(\bbeta)\}} \prod_{j\in[p]}\mu(d\beta_j)}{\left(\int \exp{\{\logl_{n,p,s}(\bbeta)\}} \prod_{j\in[p]}\mu(d\beta_j)\right)^2} \right]\Big|_{s=t} \\
            & = \E\thermal{(\Delta L + E_n)(\hat{Q}-\langle \hat{Q} \rangle_t)}_t;
        \end{split}
    \end{equation*}
    where, for the first equality, we used \cite[Proposition A.2.1]{talagrand2010mean} to exchange differentiation and expectation and, for the second one, we used that $\frac{d}{ds}\logl_{n,p,s}(\bbeta) = \Delta L + E_n$. By straightforward adaptations of Propositions \ref{lem:2mon_Y} and \ref{prop:conv_RS} to $\thermal{\cdot}_t$ we have that the right hand side of this last equation is $o(1)$. From this and the Mean Value Theorem we conclude that
    \begin{equation*}
        \lim_{n\to\infty} \E[\langle\hat{Q}\rangle_v - \E\langle\hat{Q}\rangle ]= 0.
    \end{equation*}
\end{proof}

For each distinct pair $m,m'\in[l]$, let $\tilde{Q}_{mm}$, $\tilde{Q}_{mm'}$, $M_{mm}$, and $M_{mm'}$ as in \eqref{eq:overlaps2}, but with $\bbeta^{(1)},\hdots, \bbeta^{(\ell)}$ drawn from the leave a variable-out measure. Recall $t_{\gamma}$ from \eqref{eq:alphasv}. Furthermore, under the conditions of the Proposition,
by Lemma \ref{lem:equivalence_mean} we have that
\begin{equation}\label{eq:def_tp}
    \begin{split}
        &\lim_{n\to\infty} \E\thermal{A''(\tbX^\top_i\tbbeta)}_v = a_{dp}, \\ 
        &\lim_{n\to\infty} \E\thermal {(\tT'(\tbX_{i}^{\top}\tbbeta_\star)(\tbX_i^{\top}\tbbeta))^2}_v = \bar{v}\\
        &\lim_{n\to\infty} \E \thermal{\tT'(\tbX_i^{\top} \tbbeta_\star)^2 \tbX_i^{\top}\tbbeta^{(m)} \bX_i^{\top}\tbbeta^{(m')}}_v = \bar{q} \\
        &\lim_{n\to\infty}\E\thermal{\frac{1}{2}\tTdp_i(\tbX_i^{\top}\tbbetas)\tbX_i^{\top}\tbbeta}_v = c_g.
    \end{split}
\end{equation}
%where $a_{dp}$ is as defined in \eqref{eq:overlaps2}.

%\begin{equation}\label{eq:def_lim_overlaps}
%    \tv := \lim_{n\to\infty} \E\thermal{\tQ_{11}}, \,\,
%    \tq := \lim_{n\to\infty} \E\thermal{\tQ_{12}}, \,\, \tm := \lim_{n\to\infty} \E\thermal{M_{11}}, \,\, \text{and} \,\, \bar{m} := \lim_{n\to\infty} \E\thermal{M_{12}}.
%\end{equation}
%are the limits of the order parameters \eqref{eq:overlaps}--\eqref{eq:overlaps2}.
%\textcolor{red}{Definitions here and the proposition in main manuscript does not match.}
%\textcolor{red}{I havent checked details of this section properly and need to do that.}

Here we will state a lemma that characterizes the limit in distribution of the random vectors $Z_n^{(m)}$. For $m\in[l]$, let $\tilde \theta_m := \tilde \xi_m + \tilde{z}'$ and $\bar \theta_m := \bar \xi_m + \bar z$; with $(\tilde \xi_1,\dots,\tilde \xi_l,\bar\xi_1,\dots,\bar\xi_l)\in\R^{2l}$ a Gaussian random vector with covariance matrix given, for $m,m'\in[l]$, by
\begin{equation}\label{eq:covariance_thetas}
    \E(\tilde{\xi}_m\tilde{\xi}_{m'}) = (\tilde v - \tilde q) \delta_{mm'}, \,\, \E(\bar{\xi}_m\bar{\xi}_{m'}) = (\bar v - \bar q) \delta_{mm'}, \mbox{ and } \E(\tilde{\xi}_m\bar{\xi}_{m'}) = (\tilde m - \bar m) \delta_{mm'}
\end{equation}
and $(\tilde{z}',\bar z)\in\R^2$ 
%\textcolor{red}{Where were $\bar{v},\bar{q},\bar{m}$ defined?}\ms{Now they are defined in the equations above}
a Gaussian random vector independent of $(\tilde{\xi}_m,\bar{\xi}_m)$ with covariance matrix given by $\E {\tilde{z'}}^2 = \tilde q$, $\E {\bar{z}}^2 = \bar q$, and $\E({\tilde{z}}' \bar z) = \bar m$. 
%\textcolor{red}{You meant one of them is $\bar{m}$?}\ms{This is an index}
Then, for $m\in[l]$, let %\textcolor{red}{THIS section is generally very dense, if there is a place to see why the below $\zeta_m$'s should take the form they have based on the limits of the order parameters that would be helpful. Or some lines explaining this here would be helpful. Any de-densifying you can do would be helpful.}
\begin{equation}\label{eq:def_zeta}
    \zeta_m:=(\beta_m,\tilde{\theta}_m,\bar{\theta}_m,a_{dp}/2,t_\gamma,c_g)
\end{equation}
with $\beta_1,\beta_2,\dots$ independent samples from the prior $\mu(\cdot)$. We then have the following limiting characterization. \begin{lemma}\label{lem:conv_dist_Z}
Under the assumptions of Proposition \ref{prop:fix_point2}, for every $l\geq1$ and every continuous and bounded function $f:\R^l\mapsto\R$, we have that%\textcolor{red}{First time $\langle \rangle_v$ this is defined we need to clarify what l copies under that means, for instance, what the LHs below means.}
\begin{equation*}
        \E\thermal{f(Z_n^{(1)},\dots,Z_n^{(l)})}_v \xrightarrow{n\to\infty} \E(f(\zeta_1,\dots,\zeta_l)).
    \end{equation*}
  %  where $\E_{\zeta}(\cdot)$ stands for expectation with respect to $\zeta_1,\dots,\zeta_l$ as defined above.\textcolor{red}{Cant use $\zeta$ here tp denote $\zeta_1,\hdots,\zeta_\ell$ since $\zeta$ refers to $\zeta_1$ by our convention.}
\end{lemma}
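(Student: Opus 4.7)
The plan is to exploit the independence structure of the leave-a-variable-out measure. Under $\E\thermal{\cdot}_v$ the column $\bX_{\bullet j_0}$ of \eqref{eq:def_X_j_0} is $\mathcal{N}(\bm 0, I_n/n)$ and the coordinates $\beta^{(1)}_{j_0},\dots,\beta^{(l)}_{j_0}$ are i.i.d.~$\mu$, and both families are independent of the $\sigma$-algebra $\mathcal{G}_n$ generated by $(\tbX, \be, \tbbeta^{(1)},\dots,\tbbeta^{(l)})$, since neither $X_{\bullet j_0}$ nor $\beta_{j_0}$ enters the log-likelihood \eqref{eq:lovposterior} defining $\thermal{\cdot}_v$. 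I plan to condition on $\mathcal{G}_n$, compute the conditional joint law of $(Z^{(m)}_n)_{m\in[l]}$, and then integrate out.

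Conditional on $\mathcal{G}_n$, the vector $(Y^{(m)}_1, Y^{(m)}_2)_{m\in[l]}$ is a linear combination of the independent Gaussians $X_{1 j_0},\dots,X_{n j_0}$ and is therefore \emph{exactly} a centered Gaussian vector whose covariance entries are sample averages identical to the order parameters of \eqref{eq:overlaps2}, up to dropping the $j_0$-th coordinate from each $\bbeta^{(m)}$---a discrepancy of order $O(1/n)$ that can be absorbed into the error. By the hypotheses of Proposition~\ref{prop:fix_point2}, the $L^2$ concentration of these order parameters (Propositions~\ref{prop:RS}--\ref{prop:conv_RS}), and Lemma~\ref{lem:equivalence_mean} to transfer convergence from $\thermal{\cdot}$ to $\thermal{\cdot}_v$, this conditional covariance matrix converges in $L^2$ to the deterministic covariance prescribed by \eqref{eq:covariance_thetas}. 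The split between the off-diagonal limits $(\tilde q, \bar q, \bar m)$ and the diagonal limits $(\tilde v, \bar v, \tilde m)$ is exactly what produces the decomposition $\tilde\theta_m = \tilde\xi_m + \tilde z'$, $\bar\theta_m = \bar\xi_m + \bar z$: the common component $(\tilde z', \bar z)$ reflects the shared dependence of every replica on the single column $\bX_{\bullet j_0}$, while the replica-specific component $(\tilde\xi_m, \bar\xi_m)$ captures the residual independent fluctuation.

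The remaining coordinates $Y^{(m)}_3, Y^{(m)}_4, Y^{(m)}_5$ are quadratic forms $\sum_i X_{ij_0}^2\, g^{(m)}_i$ with $\mathcal{G}_n$-measurable coefficients $g^{(m)}_i$ controlled by Hypothesis~\ref{hyp1}. The chi-squared concentration $\sum_i X_{ij_0}^2 = 1 + O_{L^2}(n^{-1/2})$ reduces these to the sample averages $\tfrac{1}{2n}\sum_i A''(\tbX_i^\top\tbbeta^{(m)})$, $\tfrac{1}{n}\sum_i \tT'_i(\tbX_i^\top\tbbetas)$ and $\tfrac{1}{2n}\sum_i \tT''_i(\tbX_i^\top\tbbetas)\tbX_i^\top\tbbeta^{(m)}$, which by the hypotheses of Proposition~\ref{prop:fix_point2} (and Lemma~\ref{lem:equivalence_mean} for the transfer to $\thermal{\cdot}_v$) converge in $L^2$ to $a_{dp}/2$, $t_\gamma$ and $c_g$ respectively; $Y^{(m)}_4$ in fact reduces to a plain law of large numbers independent of the posterior. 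Finally, $\beta^{(m)}_{j_0}$ is independent of $\mathcal{G}_n$ and exactly distributed as $\mu$, supplying the first coordinate of $\zeta_m$.

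To close the argument, for continuous bounded $f$ one writes $\E\thermal{f(Z^{(1)}_n,\dots,Z^{(l)}_n)}_v = \E\thermal{\,\E[f(Z^{(1)}_n,\dots,Z^{(l)}_n)\mid\mathcal{G}_n]\,}_v$; the inner conditional expectation factors as the $\mu^{\otimes l}$ integral over the $\beta^{(m)}_{j_0}$'s, a Gaussian integral in $(Y^{(m)}_1, Y^{(m)}_2)_{m\in[l]}$ with conditional covariance $\Sigma_n$, and the concentrated values of $(Y^{(m)}_3, Y^{(m)}_4, Y^{(m)}_5)_m$. Replacing $\Sigma_n$ by its deterministic $L^2$ limit and the quadratic forms by $a_{dp}/2, t_\gamma, c_g$ produces an error that is $o(1)$ by continuity and boundedness of $f$ combined with dominated convergence, yielding $\E(f(\zeta_1,\dots,\zeta_l))$. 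The principal obstacle will be the $L^2$ control of every sample average playing the role of a conditional covariance or conditional mean under $\thermal{\cdot}_v$; for the order parameters entering $r_1, r_2, r_3$ this is supplied by Propositions~\ref{prop:RS}--\ref{prop:conv_RS} together with Lemma~\ref{lem:equivalence_mean}, but the cross-term defining $c_g$ is not among the standard order parameters and will require a direct adaptation of those replica-symmetric estimates.
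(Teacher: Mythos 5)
Your proposal is correct and follows essentially the same route as the paper: the paper likewise uses the independence of $\tbbeta^{(m)}$ and $\beta^{(m)}_{j_0}$ from $\bX_{\bullet j_0}$ under $\thermal{\cdot}_v$, obtains the joint Gaussian limit of $(Y_1^{(m)},Y_2^{(m)})_m$ via a conditional-covariance argument (its adaptation of Lemma~\ref{lem:normal_approx_dist}) with the covariance entries concentrating in $L^2$ by Proposition~\ref{prop:conv_RS} transferred to $\thermal{\cdot}_v$ through the interpolation of Lemma~\ref{lem:equivalence_mean}, and handles $Y_3,Y_4,Y_5$ and the prior-distributed $\beta^{(m)}_{j_0}$ exactly as you describe. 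Your explicit conditioning-plus-dominated-convergence write-up and your flag about the non-standard $c_g$ cross-term are just fleshed-out versions of steps the paper states more tersely.
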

The proof is deferred to Appendix \ref{subsec:limitloov}. 
%\textcolor{red}{The following should actually be shortened and moved to or integrated with proof outline, I will do it in my next pass, for now I keep this as is.}
 In Lemma \ref{lem:conv_dist_Z}, there are two kinds of limits involved---$Y_3$, $Y_4$, and $Y_5$ involve second moments of $X_{ij_0}$ and thus converge to constants. To prove these we will resort to Proposition \ref{prop:conv_RS} which establishes the concentration in $L^2$ of certain functions of the fitted values $\tbX_1^\top\tbbeta,\dots,\tbX_n^\top \tbbeta$. On the other hand, by the results of Appendix \ref{app:asymp_representation}, $Y_1$ and $Y_2$ converge to Gaussians. The convergence holds since $\tbbeta^{(1)},\dots,\tbbeta^{(l)}$ are independent of $\bX_{\bullet j_0}$ thus conditionally on $\tbbeta^{(1)},\dots,\tbbeta^{(l)}$, the random variables $\bX_{\bullet j_0}^\top\tbbeta^{(1)},\dots,\bX_{\bullet j_0}^\top\tbbeta^{(l)}$ are jointly Gaussian with a covariance matrix which converges to a suitable limit due, again, to Proposition \ref{prop:conv_RS}.

By a similar argument as in Lemma \ref{lem:cond_w_conv},  we can extend this convergence in distribution to convergence under $\bE\thermal{\cdot}_v$.
\begin{corollary}\label{cor:cond_conv_dist_Z}
    Let $l\geq1$ be some arbitrary integer. Then, for every continuous and bounded function $f:\R^l\mapsto\R$,
    \begin{equation*}
        \bE\thermal{f(Z_n^{(1)},\dots,Z_n^{(l)})}_v \xrightarrow{n\to\infty} \E(f(\zeta_1,\dots,\zeta_l)).
    \end{equation*}
\end{corollary}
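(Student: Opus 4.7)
The plan is to deduce the corollary directly from Lemma \ref{lem:conv_dist_Z} by exploiting the fact that the conditioning event $B = \{\|\bX_{\bullet j_0}\|,\|\bX\|_{op}\leq 3\}$ used to define $\bE(\cdot)$ in \eqref{eq:def_mean_bE} has probability tending to one, so conditioning on it cannot alter the limit of a uniformly bounded quantity. First, I would establish $\proba(B)\to 1$. Since $\bX_{\bullet j_0}\sim\mathcal{N}(\mathbf{0},\bm{I}/n)$, the norm $\|\bX_{\bullet j_0}\|^2$ concentrates at $1$ by standard chi-squared tail bounds, giving $\proba(\|\bX_{\bullet j_0}\|\leq 3)\to 1$. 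For the second piece, Davidson--Szarek / Bai--Yin bounds for Gaussian random matrices yield $\|\bX\|_{op}\to 1+\sqrt{\kappa}$ almost surely, and the threshold $3$ in the definition of $B$ is chosen (as in Lemma \ref{lem:cond_w_conv}) to dominate this limit, so that $\proba(\|\bX\|_{op}\leq 3)\to 1$. A union bound finishes this step.

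Next, I would use the boundedness of $f$. Since $|\thermal{f(Z_n^{(1)},\dots,Z_n^{(l)})}_v|\leq\|f\|_\infty$ pointwise, the definition \eqref{eq:def_mean_bE} gives
\begin{equation*}
\bE\thermal{f(Z_n^{(1)},\dots,Z_n^{(l)})}_v \;=\; \frac{\E\bigl(\thermal{f(Z_n^{(1)},\dots,Z_n^{(l)})}_v\,\mathbb{I}_B\bigr)}{\proba(B)},
\end{equation*}
and the truncation error satisfies
\begin{equation*}
\bigl|\E\thermal{f}_v - \E\bigl(\thermal{f}_v\,\mathbb{I}_B\bigr)\bigr| \;\leq\; \|f\|_\infty\,\proba(B^c) \;\xrightarrow{n\to\infty}\; 0.
\end{equation*}
Combining this with Lemma \ref{lem:conv_dist_Z} and $\proba(B)\to 1$, both numerator and denominator of the preceding displayed ratio converge (to $\E f(\zeta_1,\dots,\zeta_l)$ and $1$ respectively), yielding
\begin{equation*}
\bE\thermal{f(Z_n^{(1)},\dots,Z_n^{(l)})}_v \;\xrightarrow{n\to\infty}\; \E f(\zeta_1,\dots,\zeta_l),
\end{equation*}
as required.

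The argument is essentially routine and there is no genuine obstacle: the only non-trivial input is the Gaussian concentration of $\|\bX\|_{op}$ and $\|\bX_{\bullet j_0}\|$ that ensures $\proba(B)\to 1$, and these are exactly the facts already used in the analogous Lemma \ref{lem:cond_w_conv} referenced in the statement. Everything else is a standard transfer of convergence from the unconditional expectation $\E$ to the conditional expectation $\bE$, valid for any sequence of uniformly bounded random variables.
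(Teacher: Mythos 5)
Your argument is correct, and it is marginally different in technique from what the paper does. The paper disposes of this corollary in one sentence by appealing to ``a similar argument as in Lemma \ref{lem:cond_w_conv}'', i.e.\ the Portmanteau-based fact that weak convergence is preserved when conditioning on a closed continuity set of positive limiting probability; to apply that literally one would augment $(Z_n^{(1)},\dots,Z_n^{(l)})$ with the statistics $\|\bX_{\bullet j_0}\|$ and $\|\bX\|_{op}$ defining $B$ and check the continuity-set condition in the limit. You instead exploit the stronger fact, stated explicitly in the paper right after \eqref{eq:def_mean_bE} (and quantified as $\proba(B^c)\leq e^{-\alpha n}$ elsewhere), that $\proba(B)\to 1$: then for the uniformly bounded random variable $\thermal{f(Z_n^{(1)},\dots,Z_n^{(l)})}_v$ the truncation error $\|f\|_\infty\proba(B^c)$ vanishes, and dividing by $\proba(B)\to 1$ transfers the limit of Lemma \ref{lem:conv_dist_Z} to $\bE\thermal{\cdot}_v$ with no continuity-set bookkeeping at all. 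This is a cleaner and slightly more elementary route; its only cost is that it genuinely needs $\proba(B)\to 1$ (which, for the constant $3$ in \eqref{eq:def_Ak}, implicitly uses $1+\sqrt{\kappa}<3$ via the standard Gaussian operator-norm bounds the paper cites), whereas the paper's continuity-set argument would in principle tolerate a conditioning event whose probability converges to any positive constant. Two cosmetic slips: the threshold $3$ is fixed in \eqref{eq:def_Ak}, not in Lemma \ref{lem:cond_w_conv}, and that lemma is referenced in the surrounding text rather than in the statement of the corollary itself; neither affects the validity of your proof.
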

%As before, the proof of Lemma \ref{lem:conv_dist_Z} is delayed to Appendix \ref{app:approximation_cavity_measure}.

% --------------------------------------------------------------------------------------------------------------

\subsubsection{Leave-a-variable-out argument}
%\textcolor{red}{Manuel says replica indices should be $m=1,2,\hdots,\ell$ this needs to be fixed and made consistent throughout. There are really an appalling number of notational inconsistencies in this paper now. I am starting to make this change this section onwards.}

Here we will present the proof for $\ell=1$, but the general case follows in an analogous manner. Furthermore, we will prove the result for continuous and bounded functions $f:\R\mapsto\R$. The extension to continuous polynomially bounded functions follows by appealing to the uniform integrability of $\beta_{j_0}$ under expectation $\E\thermal{\cdot}$, obtained as a consequence of Lemma \ref{lem:cont_mom}.Recall that our goal is to analyze the limit of $\mathbb{E} \langle f(\beta_{\star,{j_0}},\beta_{j_0}^{(1)},\hdots, \beta_{j_0}^{(\ell)})\rangle $ where $\beta_{j_0}^{(1)},\hdots, \beta_{j_0}^{(\ell)}$ denotes the $j_0$-th coordinate of $\ell$ independent samples from the posterior. That is, 
\[ \mathbb{E} \langle f(\beta_{\star,{j_0}},\beta_{j_0}^{(1)},\hdots, \beta_{j_0}^{(\ell)})\rangle = \mathbb{E}\left[\frac{\int f(\beta_{\star,{j_0}},\beta_{j_0}^{(1)},\hdots, \beta_{j_0}^{(\ell)})\prod_{m=1}^\ell e^{\cL_{n,p}(\bbeta^{(m)})} \mu^{\otimes p}(d\bbeta^{(m)}) }{\int \prod_{m=1}^{\ell}e^{\cL_{n,p}(\bbeta^{(m)})} \mu^{\otimes p}(d\bbeta^{(m)})}\right], \]
which recall from Section \eqref{sec:pfoutline} simplifies to the following 
%I introduced this new $\langle \rangle_{v ^{\otimes \ell} }$. We should discuss if this is worth using through the rest or not. I havent used it everywhere below since I wanted to have this discussion once.}
\begin{equation} \label{eq:interm}
\mathbb{E}\left[\frac{\Big\langle f(\beta_{\star,{j_0}},\beta_{j_0}^{(1)},\hdots,\beta_{j_0}^{(\ell)})\prod_{m=1}^{\ell} \exp \{\Delta \cL(\beta_{j_0}^{(m)},\tbbeta^{(m)})+E_n(\bbeta^{(m)})\} \rangle_{v}}{\langle  \prod_{m=1}^{\ell}\exp \{\Delta \cL(\beta_{j_0}^{(m)},\tbbeta^{(m)})+E_n(\bbeta^{(m)})\}\rangle_{v}}\right],
\end{equation}
%\textcolor{red}{We are saying we dont do the above $\langle \rangle_{v^{\otimes \ell}}$ but keep it $\langle \rangle_{v}$}
%\textcolor{red}{Please check the notation I introduce here for expectations under $\ell$ independent copies of a LOOV posterior. I think it is important to distinguish a single copy from $\ell$ independent copies. Can we do the same for leave an observation-out when that appears in calculations?}
%where $\langle \cdot \rangle_{v^{\otimes \ell}}$ denotes expectations under $\ell$ independent copies of the leave-a-variable-out posterior defined in \eqref{eq:mean_variable}. 
%$\frac{\thermal{f(\bbeta)\exp\{\dlogl_{n,p-1}(\beta_{j_0},\tbbeta) + E_n(\bbeta)\}}_v}{\thermal{\exp\{\dlogl_{n,p-1}(\beta_{j_0},\tbbeta) + E_n(\bbeta)\}}_v}$
%where $\langle \cdot\rangle_v$ denotes expectations under the leave-a-variable-out posterior defined in \eqref{eq:mean_variable}. 
Crucially, by definition, under the $\langle \cdot \rangle_v$ measure, $\tbbeta^{(m)}$'s are independent draws from a density proportional to $e^{\cL_{n,p-1}(\tbbeta^{(m)})}\mu^{\otimes p-1 }(\tbbeta^{(m)})$ whereas $\beta_{j_0}^{(m)}$'s are independent draws from the prior $\mu(\cdot)$. We work with this definition of the $\tbbeta^{(m)}$'s and $\beta_{j_0}^{(m)}$'s. Our goal is to then show that \eqref{eq:interm} converges to the RHS of \eqref{eq:RHSloov}.

To this end, recall the definition of $Z_n^{(m)}$ from Section \ref{subsubsec:asymp_Y_k} and that of $\Delta \cL(\beta_{j_0},\tbbeta)$ from \eqref{eq:DeltaL}. For convenience we redefine $\Delta \cL(\beta_{j_0}^{(m)},\tbbeta^{(m)})$ to be
%It is product product omg.
\begin{equation*}
    \Delta \logl(Z_n^{(m)}) :=  Y^{(m)}_1 \beta^{(m)}_{j_0} + Y^{(m)}_2 \beta_{\star,{j_0}} - \beta^{(m)2}_{j_0} Y^{(m)}_3 + \beta_{\star,{j_0}}\beta_{j_0}^{(m)} Y_4^{(m)} + \beta_{\star,j_0}^2 Y^{(m)}_5,
\end{equation*}
to emphasize the dependence of this quantity on the entries of $Z_n^{(m)}$. 
For $K$ large enough, define the event $A_K^{(m)}:=\{||Z_n^{(m)}||_\infty\leq K\}.$
%\textcolor{red}{TILL HERE.}
Notice that due to the definition of $A_K^{(m)}$, there is some small enough $\delta > 0$ such that $\delta e^{\Delta\logl(Z_n)}\mathbb{I}_{A_K}\leq 1/2$ 
%\textcolor{red}{Do you mean this inequality in an almost sure sense? If so, why is this true?}
%Although $\dlogl(Z_n^{(m)})$ and $\dlogl(\beta^{(m)}_{j_0})$ refer to the same expression, here we will write $\dlogl(Z_n^{(m)})$ to emphasize the dependence of this quantity over the vector $Z_n^{(m)}$
%\ps{What are $Z_n$'s and $\Delta \mathcal{L}(Z_n)$, also $A_K$ definition will naturally move up here when you try to define $\bE\thermal{\cdot}'_K$.}\ms{Added a comment explaining what this means.} 
Fix $\delta >0$ such that this holds. Let $L\geq1$.
%\textcolor{red}{To check the below once more, have some little confusion about the indicator $A_k$ placement,}For each $k\le L$ define
%\textcolor{red}{I am confused why you can find such a $\delta$ no matter the value of $e^{\Delta \cL(Z_n)}$ On $A_k$ $Z_n$ is bounded but do you have this due to the moment boundedness of the $\beta_{j_0}$'s? I dont agree right? E.g. normal distribution has moments bounded but its support is the entire real line? So there is no guarantee the $e^{\Delta}$ term couldnt be blowing up?} 
%\ps{We were using dummy index $l$ for replicas until some point. May be we keep that? From previous section it seems to me m should be reserved for moment calculations.}\ms{I think that, up to now, the index $m$ has been consistently used as the dummy index and $l$ was reserved for the largest index in a collection of replicas. So usually, $m$ was some index such that $m\leq l$. Do you still think we should change it?}
\begin{equation}\label{eq:gkdef}
   g_{k}(Z_n^{(1)},\dots,Z_n^{(k)}) := f(\beta^{(1)}_{j_0}) \exp\{\Delta\logl(Z_n^{(1)})\}\mathbb{I}_{A_K^{(1)}} \prod_{m=2}^{k} (1 - \delta \exp\{\Delta\logl(Z_n^{(m)})\}\mathbb{I}_{A_K^{(m)}}),
\end{equation}
%\textcolor{red}{Please clarify exponential exponent with additional parentheses.}
where we let $\prod_{j=2}^1 (\cdots) =1$. As per our Hypothesis \ref{hyp1}, the definition of $A_K^{(m)}$ and the fact that $f$ is continuous and bounded, the functions $g_k$ are also continuous and bounded. By \eqref{eq:mean_approx_K},   we have that
\begin{equation}\label{eq:cond_cavity}
\thermal{f(\beta_{j_0})}'_K =\frac{\thermal{f(\beta_{j_0})\exp\{-E_n\}\mathbb{I}_{A_K}}}{\thermal{\exp\{-E_n\}\mathbb{I}_{A_K}}}\mathbb{I}_B = \frac{\thermal{f(\beta_{j_0})\exp\{\Delta\logl(Z_n)\}\mathbb{I}_{A_K}}_v}{\thermal{\exp\{\Delta\logl(Z_n)\}\mathbb{I}_{A_K}}_v}\mathbb{I}_B,
\end{equation}
where the last equality follows by definition of $\langle \cdot \rangle_v $ from \eqref{eq:mean_variable}.
%\ms{This should be replaced with a simple computation using the derivative.}
%where for the second equality we used the fact that $\logl_{n,p}(\bbeta) = \logl_{n,p-1}(\tbbeta) + \dlogl(\beta_{j_0},\tbbeta) + E_n$,
%which is true since from definition we have that
%\begin{equation*}
%\begin{split}  \thermal{f(\beta_{j_0})\exp\{-E_n\}\mathbb{I}_{A_K}} & = \frac{1}{C_n}\int_{A_K} f(\beta_{j_0})\exp\{\logl_{n,p}(\bbeta)-E_n\} \prod_{j=1}^p \mu(d\beta_j) \\
%        & = \frac{1}{C_n}\int_{A_K} f(\beta_{j_0})\exp\{\logl_{n,p-1}(\tbbeta)+\dlogl(\beta_{j_0},\tbbeta)\} \prod_{j=1}^p \mu(d\beta_j) \\
%        & = \frac{1}{C_n}\thermal{f(\beta_{j_0})\exp\{\Delta\logl(Z_n)\}\mathbb{I}_{A_K}}_v,
%    \end{split}
%\end{equation*}
%where $C_n > 0$ is the normalizing constant. A similar derivation applies for the denominator, leading to \eqref{eq:cond_cavity}.%both numerator and e
%and a similar relation for the denominator.

%\ps{The below steps will require full elaboration. Note, currently such elaboration is given in the leave-an-observation-out section, however, we need to do that here to do justice to the below since we are switching the order of proofs of Propositions 1 and 2.}\ms{Done}
Recall the formula for the partial sums of the geometric series; that is, for all $r > 0$, 
    \begin{equation}\label{eq:geom_partial}
        \sum_{0\leq k \leq L-1} r^k = \frac{1-r^L}{1-r}.
    \end{equation}
By equation \eqref{eq:cond_cavity} and this last formula we have  
    
 \begin{equation}
        \begin{split}
            \bE\langle f(\beta_{j_0})\rangle'_K &= \bE\left[\thermal{ f(\beta_{j_0})}'_K \big(1-(1 -\delta\langle e^{\Delta\logl(Z_n)}\mathbb{I}_{A_K} \rangle_v )^{{L}}\big)\right] \\
            & \hspace{4cm} + \bE\left[\langle f(\beta_{j_0}) \rangle'_K(1 -\delta\langle e^{\Delta\logl(Z_n)}\mathbb{I}_{A_K} \rangle_v )^{{L}}\right]\\
            &= \delta\bE\left[\frac{\thermal{f(\beta_{j_0})e^{\Delta\logl(Z_n)}\mathbb{I}_{A_K}}_v}{\delta\thermal{e^{\Delta\logl(Z_n)}\mathbb{I}_{A_K}}_v}\big(1-(1 -\delta\langle e^{\Delta\logl(Z_n)}\mathbb{I}_{A_K} \rangle_v )^{{L}}\big)\right]\\
            & \hspace{4cm} + \bE\left[\langle f(\beta_{j_0}) \rangle'_K(1 -\delta\langle e^{\Delta\logl(Z_n)}\mathbb{I}_{A_K} \rangle_v )^{{L}}\right]\\
            &= \delta\bE\left[\thermal{f(\beta_{j_0})e^{\Delta\logl(Z_n)}\mathbb{I}_{A_K}}_v\frac{1-(1 -\delta\langle e^{\Delta\logl(Z_n)}\mathbb{I}_{A_K} \rangle_v )^{{L}}}{\delta\thermal{e^{\Delta\logl(Z_n)}\mathbb{I}_{A_K}}_v}\right] \\
            & \hspace{4cm} + \bE\left[\langle f(\beta_{j_0}) \rangle'_K(1 -\delta\langle e^{\Delta\logl(Z_n)}\mathbb{I}_{A_K} \rangle_v )^{{L}}\right]\\
            & = \delta\sum_{1\leq k\leq L} \bE \langle g_{k}(Z_n^{(1)},\dots,Z_n^{(k)}) \rangle_v + \bE\left[\langle f(\beta_{j_0}) \rangle'_K(1 -\delta\langle e^{\Delta\logl(Z_n)}\mathbb{I}_{A_K} \rangle_v )^{{L}}\right];\label{eq:cond_geom_series}
        \end{split}    \end{equation}
        where the first equality follows by adding and subtracting a term, the second one by using equation \eqref{eq:cond_cavity}, and the last one by using \eqref{eq:geom_partial} and the definition from \eqref{eq:gkdef}.
        %\textcolor{red}{Can you please fill steps that is the power becomes product of expectations etc?, I think one time showing the full calculation clearly that takes you from second to third line will be useful. Just for one time. }\ms{I added new lines to make this more clear and expanded the explanations below the equations.}
   % \textcolor{red}{It would be really helpful if the intermediate lines steps are actually filled in for the above calculation, I have once again forgotten the key things here, every time I need to check this from scratch, so best to write it once and for all?}
%    where in the last line we used the formula for the partial sums of the geometric series. \ps{Same comment here as before, bring in the elaboration steps from Proposition 1 proof, since we are switching the order of proofs of the propositions.}\ms{Done}

    %The key observation to prove this proposition is that the first term in the last line of \eqref{eq:cond_geom_series} is only a function of $Z_n^{(1)},\dots,Z_n^{(k)}$. To characterize the limit as $n\to\infty$ of this sum, it is then enough to recall that, 
 Using Corollary \ref{cor:cond_conv_dist_Z}, under $\bE\thermal{\cdot}_v$ we have that
    \begin{equation*}
        (Z_n^{(1)},\dots,Z_n^{(l)}) \xrightarrow{d} (\zeta_1,\dots,\zeta_l),
    \end{equation*}
    where, for $m\geq1$, $\zeta_m$ is as in \eqref{eq:def_zeta}. By Hypothesis \ref{hyp1} and the fact that $f$ and $\Delta\logl$ are continuous and bounded in $A_K$, the $g_k$ are all continuous and bounded. This and Corollary \ref{cor:cond_conv_dist_Z} imply that
    \begin{equation}\label{eq:cond_geom_sum_lim}
        \sum_{k\leq L} \bE \langle g_k(Z_n^{(1)},\dots,Z_n^{(k)}) \rangle_v \xrightarrow{n\to\infty} \sum_{k\leq L} \E g_k(\zeta_1,\dots,\zeta_k).
    \end{equation}
    
    For a proper constant $C > 0$ and assuming w.l.g. that $L$ is even, the following bound for the second term holds
   
    \begin{equation}\label{eq:cond_bound_error}
        \begin{split}
            \Big|\bE \left[ \langle f(\beta_{j_0}) \rangle'_K (1 - \delta \langle \exp \Delta\logl(Z_n)\mathbb{I}_{A_K} \rangle_v )^L\right]\Big| & \leq C \bE\Big[  (1 -\delta\langle \exp \Delta\logl(Z_n) \mathbb{I}_{A_K} \rangle_v )^L\Big]\\
            & = C \bE\Big[  \langle 1 -\delta \exp \Delta\logl(Z_n) \mathbb{I}_{A_K} \rangle_v^L\Big]\\
            & \leq C \bE \langle (1 - \delta\exp \Delta\logl(Z_n)\mathbb{I}_{A_K} )^L\rangle_v,
        \end{split}
    \end{equation}
    where in the first line we used that $f$ is bounded and in the last Jensen's inequality.  Let
    \begin{equation*}
        \tilde{A}_K := \{\|\zeta\|_\infty \leq K\},
    \end{equation*}
    and define
    \begin{equation*}
        \dlogl(\zeta_m) := -\frac{a_{dp}}{2}\beta_m^2 + \beta_m \tilde{\theta}_m + \beta_m \beta_{\star,j_0} t_\gamma + \beta_{\star,j_0} \bar{\theta}_m + c_g \beta_{\star,j_0}^2.
    \end{equation*}
  Here, $\dlogl(\zeta_m)$ is essentially $\dlogl(\beta_{j_0},\bbbeta)$ with $Y_1,\dots,Y_5$ replaced by their corresponding limits as in Lemma \ref{lem:conv_dist_Z}. Likewise, the event $\tilde{A}_K$ is the analogous of $A_K$ where $Y_1,\dots,Y_5$ have been replaced by their limits. In the same way as above, we then have that
   %\textcolor{red}{Question: What does $I_{A_K}$ in the limit mean? You mean the event on which $\| \zeta\|_\infty$ is bounded by K right? But we should have a different notation for this, like $A_\zeta$ or something like this, since $A_K$ refers to $\|Z_n\|_{\infty}$ being bounded right?}\ms{This is indeed ambiguous. I definied a new set and function for hte limits to distinguish them from the finite dimensional objects. I changed the notations of this section accordingly. I don't think this is used anywhere else, so we should be good.}
    \begin{equation}\label{eq:cond_n_lim_error}
        \bE \langle (1 - \delta\exp \{\Delta\logl(Z_n)\}\mathbb{I}_{A_K} )^L\rangle_v \xrightarrow{n\to\infty} \E (1 - \delta\exp \{\Delta\logl(\zeta_1)\}\mathbb{I}_{\tilde{A}_K} )^L.
    \end{equation}  
  %\textcolor{red}{Can you please define formally what you mean by $\Delta \cL(\zeta)$? Do you mean $\Delta \cL (\zeta_1)$?? I am very confused what the math expression for this is.}\ms{When a replica index is omitted, it is asumed to be $1$. But it is true that in this context, because notation is different, it is confusing. So I added it everywhere.}
    Thus, combining equations \eqref{eq:cond_geom_series} though \eqref{eq:cond_n_lim_error}, we have for every $L\geq 1$,
    \begin{equation}\label{eq:cond_limit1}
        \lim_{n\to\infty} \bE\langle f(\beta_{j_0})\rangle'_K = \delta\sum_{1\leq k\leq L} \E g_k(\zeta_1,\dots,\zeta_k) + \mathcal{O}\Big(\E(1 - \delta\exp \{\Delta\logl(\zeta_1)\}\mathbb{I}_{\tilde{A}_K} )^L\Big).
    \end{equation}
%    \textcolor{red}{The below needs a proper derivation, like can you show starting from the definition of $g_k(\cdot)$ that the LHS indeed converges to RHS as $L \rightarrow \infty$? Showing the derivation is critical since otherwise it seems out of the blue. Also, how exactly is the DCT being used. Dont you need to show the LHS is summable for infinite L?}\ms{In the new changes below, the computaiton is now more explicit.}
 
Now, by the definition of the functions $g_k$ and \eqref{eq:geom_partial} we have
    \begin{footnotesize}
        \begin{equation}\label{eq:bound-partial-sum}
        \begin{split}
            \delta \sum_{1\leq k\leq L} \E g_k(\zeta_1,\dots,\zeta_k) & = \delta \E_{\tilde{z}',\bar{z}}\left[\E_{\tilde \xi,\bar \xi}\int f(\beta)\exp\{\Delta\logl(\zeta_1)\}\mathbb{I}_{\tilde{A}_K}\mu(d\beta)\frac{1-\left(1-\delta\E_{\tilde \xi,\bar \xi}\int\exp\{\Delta\logl(\zeta_1)\}\mathbb{I}_{\tilde{A}_K}\mu(d\beta)\right)^L}{\delta\E_{\tilde \xi,\bar \xi}\int\exp\{\Delta\logl(\zeta_1)\}\mathbb{I}_{\tilde{A}_K}\mu(d\beta)}\right] \nonumber \\
            & = \E_{\tilde{z}',\bar{z}}\left[\frac{\E_{\tilde \xi,\bar \xi}\int f(\beta)\exp\{\Delta\logl(\zeta_1)\}\mathbb{I}_{\tilde{A}_K}\mu(d\beta)}{\E_{\tilde \xi,\bar \xi}\int\exp\{\Delta\logl(\zeta_1)\}\mathbb{I}_{\tilde{A}_K}\mu(d\beta)}\right] \nonumber\\
            & \hspace{0.5cm} -\E_{\tilde{z}',\bar{z}}\left[\frac{\E_{\tilde \xi,\bar \xi}\int f(\beta)\exp\{\Delta\logl(\zeta_1)\}\mathbb{I}_{\tilde{A}_K}\mu(d\beta)}{\E_{\tilde \xi,\bar \xi}\int\exp\{\Delta\logl(\zeta_1)\}\mathbb{I}_{\tilde{A}_K}\mu(d\beta)}\left(1-\delta\E_{\tilde \xi,\bar \xi}\int\exp\{\Delta\logl(\zeta_1)\}\mathbb{I}_{\tilde{A}_K}\mu(d\beta)\right)^L\right].
        \end{split}
     \end{equation}
    \end{footnotesize}
    We would like to bound the second term of the last line, which represents the error term resulting of the approximation of the full series $\delta \sum_{k\geq 1} g_k$ by its first $L$ terms $\delta \sum_{1\leq k \leq L} g_k$. For this we use that $f$ is a bounded function to get that, for some $K\geq0$,
    \begin{equation*}
        \left|\E_{\tilde{z}',\bar{z}}\left[\frac{\E_{\tilde \xi,\bar \xi}\int f(\beta)\exp\{\Delta\logl(\zeta_1)\}\mathbb{I}_{\tilde{A}_K}\mu(d\beta)}{\E_{\tilde \xi,\bar \xi}\int\exp\{\Delta\logl(\zeta_1)\}\mathbb{I}_{\tilde{A}_K}\mu(d\beta)}\left(1-\delta\E_{\tilde \xi,\bar \xi}\int\exp\{\Delta\logl(\zeta_1)\}\mathbb{I}_{\tilde{A}_K}\mu(d\beta)\right)^L\right]\right|
    \end{equation*}
    is smaller or equal to
    \begin{equation*}
        K \E_{\tilde{z}',\bar{z}}\left(1-\delta\E_{\tilde \xi,\bar \xi}\int\exp\{\Delta\logl(\zeta_1)\}\mathbb{I}_{\tilde{A}_K}\mu(d\beta)\right)^L.
    \end{equation*}
    Moreover, by Jensen's inequality, because the map $x\mapsto x^L$ is convex, we have that
    \begin{equation*}
        \begin{split}
            \E_{\tilde{z}',\bar{z}}\left(1-\delta\E_{\tilde \xi,\bar \xi}\int\exp\{\Delta\logl(\zeta_1)\}\mathbb{I}_{\tilde{A}_K}\mu(d\beta)\right)^L & = \E_{\tilde{z}',\bar{z}}\left(\E_{\tilde \xi,\bar \xi}\int(1-\delta\exp\{\Delta\logl(\zeta_1)\}\mathbb{I}_{\tilde{A}_K})\mu(d\beta)\right)^L \\
            & \leq \E\left(1-\delta\exp\{\Delta\logl(\zeta_1)\}\mathbb{I}_{\tilde{A}_K}\right)^L.
        \end{split}
    \end{equation*}
    Combining this with \eqref{eq:bound-partial-sum} we get that
    \begin{equation*}\label{eq:limit-plus-error-lavo}
    \begin{split}
        \delta \sum_{1\leq k\leq L} \E g_k(\zeta_1,\dots,\zeta_k) & = \E_{\tilde{z}',\bar{z}}\left[\frac{\E_{\tilde \xi,\bar \xi}\int f(\beta)\exp\{\Delta\logl(\zeta_1)\}\mathbb{I}_{\tilde{A}_K}\mu(d\beta)}{\E_{\tilde \xi,\bar \xi}\int\exp\{\Delta\logl(\zeta_1)\}\mathbb{I}_{\tilde{A}_K}\mu(d\beta)}\right] \\
        & \hspace{2cm} + \mathcal{O}\left(\E\left(1-\delta\exp\{\Delta\logl(\zeta_1)\}\mathbb{I}_{\tilde{A}_K}\right)^L\right).
    \end{split}
    \end{equation*}
   % \textcolor{red}{Proper elaboration of this last step---showing the intermediate steps---will be useful.} 
    Above, $(\tilde{\xi},\bar{\xi},\tilde{z}',\bar{z})$ have the same joint distribution as described for the $(\tilde{\xi}_1,\bar{\xi}_1,\tilde{z}',\bar{z})$'s in \eqref{eq:covariance_thetas}. We then conclude that, for every $L\geq1$,
    \begin{equation}\label{eq:cond_limit2}
        \lim_{n\to\infty} \bE\langle f(\beta_{j_0})\rangle'_K = \E_{\tilde{z}',\bar{z}}\left[\frac{\E_{\tilde \xi,\bar \xi}\int f(\beta)\exp\{\Delta\logl(\zeta_1)\}\mathbb{I}_{\tilde{A}_K}\mu(d\beta)}{\E_{\tilde \xi,\bar \xi}\int\exp\{\Delta\logl(\zeta_1)\}\mathbb{I}_{\tilde{A}_K}\mu(d\beta)}\right] + \mathcal{O}\Big(\E(1 - \delta\exp \Delta\logl(\zeta_1)\mathbb{I}_{\tilde{A}_K} )^L\Big).
    \end{equation}
    Note that, by the choice of $\delta$, we have that $1 - \delta\exp \Delta\logl(\zeta_1)\mathbb{I}_{\tilde{A}_K}$ is a.s smaller than $1/2$. Taking the limit $L\to\infty$ in \eqref{eq:cond_limit2}, by dominated convergence we then have that
    \begin{equation*}
        \E\thermal{f(\beta_{j_0})}'_K\xrightarrow{n\to\infty}\E_{\tilde{z}',\bar{z}}\left[\frac{\E_{\tilde \xi,\bar \xi}\int f(\beta)\exp\{\Delta\logl(\zeta_1)\}\mathbb{I}_{\tilde{A}_K}\mu(d\beta)}{\E_{\tilde \xi,\bar \xi}\int\exp\{\Delta\logl(\zeta_1)\}\mathbb{I}_{\tilde{A}_K}\mu(d\beta)}\right].
    \end{equation*}
%  \textcolor{red}{By $\Delta \cL$ on the RHS I assume you mean $\Delta \cL(\zeta)$?}\ms{I added explicitely the argument of this everywhere.} Furthermore, by taking the limit $K\to\infty$ we get
    \begin{equation}\label{eq:Kn_lim_mean}
        \lim_{K\to\infty}\lim_{n\to\infty} \E\thermal{f(\beta_{j_0})}'_K = \E_{\tilde{z}',\bar{z}}\left[\frac{\E_{\tilde \xi,\bar \xi}\int f(\beta)\exp\{\Delta\logl(\zeta_1)\}\mu(d\beta)}{\E_{\tilde \xi,\bar \xi}\int\exp\{\Delta\logl(\zeta_1)\}\mu(d\beta)}\right].
    \end{equation}
%    \textcolor{red}{It would be really helpful if intermediate steps for going from the above to the step below is shown. It is a simple gaussian integration argument, but it helps reviewers if we show steps so that they dont have to check accuracy of every step. Reviewers get really annoyed if they have to check accuracy of every step. Basically, I am asking to show the calculation explicitly, starting from recalling the $\Delta \cL(\zeta)$ definition, so reviewer can quickly see the calculation is correct.}
Notice that, by the definition of $\dlogl(\zeta_1)$ and those of $\tilde{\theta}_1$ and $\bar{\theta}_1$, the right hand side of \eqref{eq:Kn_lim_mean} is equal to 
    \begin{equation}\label{eq:aux_expect}
       \E_{\tilde{z}',\bar{z}}\left[\frac{\E_{\tilde \xi,\bar \xi}\int f(\beta)\exp\{ -\frac{a_{dp}}{2}\beta^2 + \beta (\tilde{\xi} + \tilde{z}') + \beta \beta_{\star,j_0} t_\gamma + \beta_{\star,j_0} (\bar{\xi}+\bar{z}) + c_g\beta_{\star,j_0}^2\}\mu(d\beta)\}\mu(d\beta)}{\E_{\tilde \xi,\bar \xi}\int\exp\{ -\frac{a_{dp}}{2}\beta^2 + \beta (\tilde{\xi} + \tilde{z}') + \beta \beta_{\star,j_0} t_\gamma + \beta_{\star,j_0} (\bar{\xi}+\bar{z}) + c_g\beta_{\star,j_0}^2\}\mu(d\beta)}\right];
    \end{equation}
    Recall the formula, valid for all $a,b\in\R$, for the generating function of two jointly Normal random variables $\xi$ and $\xi'$ of means $\mu_1,\mu_2\in\R$ and covariance matrix $\Sigma\in\R^2$
    \begin{equation}\label{eq:normal_mgf}
        \E_{\xi,\xi'}( \exp\{a \xi+b\xi'\} ) = \exp\{a\mu_1+b\mu_2 +(a,b)\Sigma (a,b)^{\top}/2\}.
    \end{equation}
    We can then use \eqref{eq:covariance_thetas} and \eqref{eq:normal_mgf} to integrate $\tilde \xi$ and $\bar \xi$ obtaining that \eqref{eq:aux_expect} is equal to
    \begin{equation}\label{eq:expectation}
       \E_{\tilde{z}}\left[\frac{\int f(\beta)\exp\{ -(a_{dp}+\tilde{q}-\tilde{v})\frac{\beta^2}{2}+(\tilde{m}-\bar{m}+t_\gamma)\beta\beta_{\star,j_0}+\sqrt{\tilde{q}}\beta\tilde{z}\}\mu(d\beta)}{\int\exp\{ -(a_{dp}+\tilde{q}-\tilde{v})\frac{\beta^2}{2}+(\tilde{m}-\bar{m}+t_\gamma)\beta\beta_{\star,j_0}+\sqrt{\tilde{q}}\beta\tilde{z}\}\mu(d\beta)}\right],
    \end{equation}
    where $\tilde{z} \sim \mathcal{N}(0,1)$, independent of everything else. 
 %\ps{A recall of equation numbers where these constants were defined wouldbe useful.}
 %with the constants above defined by equations \eqref{eq:def_tp}, \eqref{eq:def_lim_overlaps}, and \eqref{eq:def_zeta}; and $\tilde z := \tilde{z}'/\sqrt{\tilde{q}}$ which is a standard Gaussian variable independent of everything else. 
 Observe that, in this last equation, the terms $\beta_{\star,j_0} \bar{z}$ and $c_g$ in the exponents are missing as we have simplified the ones occurring in the numerator with those of the denominator. 
 %\textcolor{red}{First, it needs to be checked the above is correct, it seems to me it is? But remember we changed the $r_1,r_2,r_3$ definition to the current system of equations, to make things compact, so the below definition is wrong. }
 %Because this means \eqref{eq:alphasv} is correct which it should be. However, this means both \eqref{eq:FPE_all2} and Proposition \ref{prop:conv_order_param} are wrong (!!) and that needs to be fixed.}
 
From the above, we then conclude that
    \begin{small}
     \begin{equation*}
        \lim_{K\to\infty}\lim_{n\to\infty} \E\thermal{f(\beta_{j_0})}'_K = \E_{\tilde{z}}\left[\frac{\int f(\beta)\exp\{ -(a_{dp}+\tilde{q}-\tilde{v})\frac{\beta^2}{2}+(\tilde{m}-\bar{m}+t_\gamma)\beta\beta_{\star,j_0}+\sqrt{\tilde{q}}\beta\tilde{z}\}\mu(d\beta)}{\int\exp\{ -(a_{dp}+\tilde{q}-\tilde{v})\frac{\beta^2}{2}+(\tilde{m}-\bar{m}+t_\gamma)\beta\beta_{\star,j_0}+\sqrt{\tilde{q}}\beta\tilde{z}\}\mu(d\beta)}\right].
    \end{equation*}
    \end{small}
    Combining this with Lemmas \ref{lem:aprox_K} and \ref{lem:taylor_control},
     %\textcolor{red}{Explain in more detail relation of $(r_1,r_2,r_3)$ to order parameters.}\ms{I just added a comment pointing out that, from this derivation, we can see that the limits of the order parameters determine these constants. But I don't know what mroe to add.}   
    %noticing that 
   %and defining   $r_1 := a_{dp}+\tilde{q}-\tilde{v}$, $r_2:=\tilde{m}-\bar m$, and $r_3 := \tilde{q}$, 
   we conclude that
    \begin{equation*}
        \lim_{n\to\infty}\E\thermal{f(\beta_{j_0})} = \E_{Z}\thermal{f(\beta)}_{h,j_0},
    \end{equation*}
    where $\thermal{\cdot}_{h,j_0}$ is given by \eqref{eq:pbdef}.
    %refers to expectation under density proportional to 
    %$$e^{-(r_1/2)(b-m_{j_0})^2}, \,\,\text{where} \,\,m_{j_0}=\frac{r_2+t_\gamma}{r_1}\beta_{\star,j_0}+\frac{\sqrt{r_3}}{r_1}\tilde{Z}$$

%    \textcolor{red}{Final plan on outlining how this proposition statement should be: So statement of Prop 5a should be define all constants a certain way (including $r_1,r_2,r_3$ the way they are defined at the end of the proof here), then show the above result as final result. Then comes next subsection which will be Prop 5b. And at the end, the fact that this means the constants need to have a certain fixed point description. All of this should be a single master proposition.;}  
%As we can see from the derivation, the constants $(r_1,r_2,r_3)$ that determine the asymptotic marginal of the posterior are determined by the limits of the order parameters \eqref{eq:overlaps2}.
    
    This proves the proposition in the case in which $r=1$ and $f$ is bounded. The extension to general $r\geq1$ follows in the same way. 
    
%\textcolor{red}{First, revisit and fix statement of Prop 5 after this; main decision is where and how should the constants be defined for the first time. And remember to do same to prop 6 after done with next section, and write a line re proofs of props 3 and 4}
%By Propositions \ref{prop:2mon_Y} and \ref{prop:taylor_control} it is enough to prove it for $\bE\thermal{\cdot}'_K$ and then take the appropriate limits. We will do this for this expectation in a similar fashion as it was done in the proof of Proposition \ref{prop:hat_fix_point}.

\subsection{Proof of Proposition \ref{prop:hat_fix_point2}: leave-an-observation-out argument}\label{sec:leave_obs}

Recall definitions \eqref{eq:def_tT_u} and \eqref{eq:looposterior}--\eqref{eq:mean_observation} and that $\ba_\star=(\bX_1^{\top}\bbeta_\star,\hdots,\bX_n^{\top}\bbeta_\star)^{\top}, \ba_m = (\bX_1^{\top}\bbeta^{(m)},\hdots,\bX_n^{\top}\bbeta^{(m)})^{\top}$ Isolating the dependence of the log-likelihood on the last observation we get that
\begin{equation}
    \logl_{n,p}(\bB) = \logl_{n-1,p}(\bB) + u_n(\bX_n^\top\bB,a_{\star,n}),
\end{equation} 
where $a_{\star,n}$ denotes the $n$-th coordinate of $\bm{a}_\star$.
This implies that we can express the following expectation under the full posterior in terms of the leave-an-observation-out measure, defined in 
\eqref{eq:mean_observation}, according to
%\textcolor{red}{The following section had dummy index as $\ell$ and total index as $r$, I am changing them to $m$ and $\ell$ resp. Manuel, in your pass, please check I didnt miss anything.}
\begin{equation}\label{eq:loo_cavity}
    \E\langle g(a_{\star,n},a_{1,n},\dots,a_{\ell,n}) \rangle = \E \left[\frac{\left\langle g(a_{\star,n},a_{1,n},\dots,a_{\ell,n}) \exp\{\sum_{m\in[\ell]} u_n(a_{m,n},a_{\star,n})\}\right\rangle_o}{\left\langle \exp u_n(a_{1,n},a_{\star,n})\right\rangle_o^\ell}\right],
\end{equation}
where note that the denominator is obtained by noticing that $\langle \prod_{m=1}^{\ell} \exp \{u_n(a_{m,n},a_{\star,n}) \}\rangle_o = \langle \exp u_n(a_{1,n},a_{\star,n})\rangle_o^\ell $. %\textcolor{red}{TILL HERE.}
% \ps{If we added $a_{\star,i}$ on the left as the $(r+1)$-th argument, would anything change other than including the same as the $r+1$-th term on the RHS numerator within the $g$. If thats the only change, we should both write this above equation and Proposition 5 in this version. This more general result can be useful to statisticians, since they often want to compute joint functions of a $\bX_i^{\top}\bbeta^{(m)},\bX_i^{\top}\bbeta_{\star}$. Also it seems this general version is the one we use to obtain our system of equations. But I am leaving the changes for you since I am afraid introducing errors if I do it myself.}\ms{Yes, I agree that we need that generalisation to derive the equations. We should check it goes through in the same way.}

Note that on the right hand side, within the expectation, we have expectations w.r.t.~the leave-an-observation-out measure. The latter does not involve the $n$-th observation, thus a sample from the posterior induced by this measure will be independent of $\bX_n$. 
For calculating the limits of these expectations, it will be enough to track the joint distribution of the following $\ell+1$ variables

\begin{equation*}
    (a_{\star,n},a_{1,n},\hdots, a_{\ell,n})=(\bX_n^{\top}\bbeta_{\star},\bX_n^{\top}\bbeta^{(1)}, \hdots, \bX_n^{\top}\bbeta^{(\ell)}),
\end{equation*}
where $\bbeta^{(1)},\bbeta^{(2)},\hdots, \bbeta^{(\ell)}$ are independent samples from the leave-an-observation-out measure.
%\textcolor{red}{It is good to use the same notation but then in notation section, when we introduce $\bar{a}$'s and $\tilde{a}$'s we shouldnt. Using a single set of $a$'s to denote both full and LOO out measure samples notationally is fine since it is usually clear from context which one it means.}\ms{Now in the introduction, the definition of the $\bar{a}$ has been removed as it is not used. We still keep the $\tilde{a}$ as I think it is important to remark these do not depend on the removed coordinate. Do you agree with this?}
%\ps{should we use the same notation? $\bbeta^{(m)}$ has earlier been used to denote sample from the full posterior. Lets use a different notation? Like $\bbeta_o^{(m)}$ or something like this?}\ms{I sent you an email about this} 
By Gaussianity of $\bX_n$, and the fact that $\bbeta^{(m)}$ is independent of $\bX_n$, these are zero mean multivariate Gaussian, so it suffices to track only their covariance matrix.

Now the asymptotic representation theorem of Appendix \ref{app:asymp_representation} (Lemma \ref{lem:normal_approx_dist}), by Proposition \ref{prop:RS}, holds under the hypothesis of this proposition which states that the expectations of $Q_{11}$, $Q_{12}$, and $Q_{1\star}$ converge, and we obtain under the leave-an-observation-out measure, %yields that under the full posterior (i.e.~when $\bbeta^{(1)},\bbeta^{(2)},\hdots, \bbeta^{(\ell)}$ is sampled from the full posterior), the corresponding $a_{\star,n},a_{1,n},\dots,a_{\ell,n}$ satisfy \textcolor{red}{Is this under the full measure or LOO measure}\ms{I added one phrase specifying the measure in question.}
\begin{align}\label{eq:fittedvaluedecoupling}
    (a_{\star,n},a_{1,n},\hdots, a_{\ell,n}) \xrightarrow{d} (\theta_\star,\theta_1,\hdots, \theta_\ell).
\end{align}
 %and  $v_B,c_B,c_{BB_\star}$ defined as in \eqref{eq:FPE_all1}. 
We will use \eqref{eq:fittedvaluedecoupling} later in this section. Before we present the rest of the proof we comment on the different sources of randomness in the aforementioned display.

Next, we will need the following auxiliary result that proves concentration of   $Q_{11},Q_{12},Q_{1\star}$ defined in \eqref{eq:overlaps} under the leave-an-observation-out measure.
\begin{lemma}\label{lem:same_limit_lvo}     There exists a positive constant $C>0$ such that 
    \begin{equation*}
        \E\thermal{(Q_{11}-v_B)^2}_o, \, \E\thermal{(Q_{12}-c_B)^2}_o, \, \E\thermal{(Q_{1\star}-c_{BB_\star})^2}_o \leq \frac{C}{\sqrt{n}},
    \end{equation*}
%    \textcolor{red}{Trying to understand the meaning of the above: do you mean these are the order parameters when the $\bbeta$'s that come up in their definition is drawn from the leave-an-observation-out measure?}\ms{Exactly, this says that, under the loo measure, the order parameters converge to the same constant as in the full measure.}
    where $v_B,c_B,c_{BB_\star}$ are defined as in \eqref{eq:intermlimit}.
\end{lemma}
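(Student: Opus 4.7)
The plan is to transfer the $L^2$-concentration of the overlaps under the full posterior $\thermal{\cdot}$, established in Proposition \ref{prop:conv_order_param}, to the leave-an-observation-out posterior $\thermal{\cdot}_o$ via an interpolation argument that mirrors the proof of Lemma \ref{lem:equivalence_mean}. For $t\in[0,1]$, I would introduce the interpolating Gibbs measures
\[
\thermal{f(\bbeta)}_t := \frac{\int f(\bbeta)\exp\{\logl_{n-1,p}(\bbeta)+t\,u_n(\bX_n^\top\bbeta,a_{\star,n})\}\prod_{j\in[p]}\mu(d\beta_j)}{\int \exp\{\logl_{n-1,p}(\bbeta)+t\,u_n(\bX_n^\top\bbeta,a_{\star,n})\}\prod_{j\in[p]}\mu(d\beta_j)},
\]
so that $\thermal{\cdot}_0=\thermal{\cdot}_o$ and $\thermal{\cdot}_1=\thermal{\cdot}$. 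Writing $\hat Q$ for any of $Q_{11},Q_{12},Q_{1\star}$ and $q$ for the corresponding limit in $\{v_B,c_B,c_{BB_\star}\}$, the goal becomes to control the map $\Phi(t):=\E\thermal{(\hat Q-q)^2}_t$ at $t=0$ using information at $t=1$.

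Differentiating in $t$, exactly as in the proof of Lemma \ref{lem:equivalence_mean}, gives
\[
\Phi'(t) = \E\thermal{(\hat Q-q)^2\bigl(u_n - \thermal{u_n}_t\bigr)}_t,
\]
which I would bound uniformly in $t$ by Cauchy--Schwarz as
\[
|\Phi'(t)| \leq \sqrt{\E\thermal{(\hat Q-q)^4}_t}\cdot \sqrt{\E\thermal{(u_n-\thermal{u_n}_t)^2}_t}.
\]
The second factor is $O(1)$ uniformly in $t$ and $n$: by Hypothesis \ref{hyp1}(ii)--(iii) $u_n(x,a_{\star,n})$ grows at most quadratically in $x$, while Hypothesis \ref{hyp2}, extended along the interpolation path (the extra $t\leq 1$ factor only dampens the measure), provides uniform moment control of $\bX_n^\top\bbeta$ under $\thermal{\cdot}_t$. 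The first factor is an $L^4$-version of the overlap concentration, and its rate is obtained by tracking constants through the replica-symmetry arguments behind Propositions \ref{prop:RS} and \ref{prop:conv_RS}, verifying as in Lemma \ref{lem:equivalence_mean} that those estimates remain valid along the interpolation because the perturbation affects only one of the $n$ terms in the Hamiltonian.

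Combining these ingredients produces a uniform bound $|\Phi'(t)|\leq C/\sqrt n$ on $[0,1]$, so integrating from $t=1$ down to $t=0$ yields $|\Phi(0)-\Phi(1)|\leq C/\sqrt n$, and together with the quantitative version of Proposition \ref{prop:conv_order_param} (which gives $\Phi(1)\leq C/\sqrt n$) we conclude $\Phi(0)\leq C/\sqrt n$, as claimed. The main technical obstacle I anticipate is making the $L^4$-rate above fully explicit and uniform in $t$: as stated, Proposition \ref{prop:conv_order_param} provides only qualitative $L^2$ convergence, so one must reopen the proofs of Propositions \ref{prop:RS} and \ref{prop:conv_RS} to extract the higher-moment version and to check that their cavity/Gaussian-integration-by-parts estimates survive along the single-term perturbation defining $\thermal{\cdot}_t$ with essentially unchanged constants. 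A parallel route that bypasses the interpolation is to observe that $\thermal{\cdot}_o$ is literally the full posterior for a GLM with $n-1$ samples and $p$ features in the same proportional regime, so that by uniqueness of the fixed point the same limits $v_B,c_B,c_{BB_\star}$ are targeted and the rate falls out from the corresponding concentration analysis.
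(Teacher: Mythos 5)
Your overall strategy (interpolate between $\thermal{\cdot}_o$ and $\thermal{\cdot}$ through the one-term perturbation $t\,u_n$) is the same device the paper uses, but you apply it to the wrong object, and this creates a genuine gap. The paper interpolates only the \emph{first} moment: it bounds $\frac{d}{dt}\E\thermal{\hat Q}_t$ by Cauchy--Schwarz as $\sqrt{\E\thermal{u_n^2}_t\,\E\thermal{(\hat Q-\thermal{\hat Q}_t)^2}_t}\leq C n^{-1/4}$, where the second factor is the $L^2$ concentration of $\hat Q$ around the $t$-dependent Gibbs mean, available by adapting Proposition \ref{prop:RS} (Brascamp--Lieb plus convexity of the tilted free energy). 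Separately, it applies Proposition \ref{prop:RS} to the leave-an-observation-out posterior itself (which is just an $(n-1)$-sample GLM posterior) to get concentration of $\hat Q$ around $\E\thermal{\hat Q}_o$, and then uses the mean interpolation to identify that centering with $v_B,c_B,c_{BB_\star}$. Your route instead interpolates $\Phi(t)=\E\thermal{(\hat Q-q)^2}_t$ and bounds $\Phi'(t)$ by $\sqrt{\E\thermal{(\hat Q-q)^4}_t}\cdot\sqrt{\E\thermal{(u_n-\thermal{u_n}_t)^2}_t}$. To get $|\Phi'(t)|\leq C/\sqrt n$ you would need $L^4$ concentration of $\hat Q$ around the \emph{deterministic constant} $q$, uniformly along the path, at rate $1/n$. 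Nothing of this strength exists in the paper: Proposition \ref{prop:RS} gives only $L^2$ concentration around the Gibbs/annealed mean, and at rate $n^{-1/4}$, not $n^{-1/2}$; you acknowledge the $L^4$ issue but treat it as constant-tracking, whereas it is a genuinely stronger statement (fluctuations around a constant require controlling the bias $\E\thermal{\hat Q}_t-q$, for which no rate is available, since $\E\thermal{Q_{11}}\to v_B$ is only an assumed limit in \eqref{eq:intermlimit}).

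The same problem hits your endpoint: the claimed bound $\Phi(1)\leq C/\sqrt n$ from ``the quantitative version of Proposition \ref{prop:conv_order_param}'' does not exist --- that proposition is purely qualitative, and decomposing $\Phi(1)\leq 2\,\E\thermal{(\hat Q-\E\thermal{\hat Q})^2}+2(\E\thermal{\hat Q}-q)^2$ leaves a bias term with no rate. (This is why the paper's proof of the lemma is really ``concentration around the LOO mean plus asymptotic identification of that mean,'' which is all that is used downstream.) Finally, your proposed shortcut --- identifying the LOO limits with $v_B,c_B,c_{BB_\star}$ ``by uniqueness of the fixed point'' --- is circular at this point in the argument: Lemma \ref{lem:same_limit_lvo} sits inside the proof of Proposition \ref{prop:hat_fix_point2}, where the constants are merely assumed limits of $\E\thermal{Q_{mm'}}$ along a (sub)sequence; the fixed-point equations are only derived afterwards (Proposition \ref{prop:FPidentify}) and uniqueness is invoked later still, in Proposition \ref{prop:conv_order_param}. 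The correct identification at this stage is exactly the first-moment interpolation: $|\E\thermal{\hat Q}_o-\E\thermal{\hat Q}|\leq Cn^{-1/4}\to0$. If you restructure your argument as (i) Proposition \ref{prop:RS} applied verbatim to the $(n-1)$-sample posterior, plus (ii) the first-moment interpolation bound, you recover the paper's proof and avoid both the $L^4$ requirement and the circularity.
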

In the above lemma, since the expectations are taken under the leave-an-observation-out measure, it means that $Q_{11} = \bbeta^{\top}\bbeta/n$, where $\bbeta$ is a sample from the leave-an-observation-out measure.
\begin{proof}
%\textcolor{red}{TILL HERE}   
This proof is analogous to that of Lemma \ref{lem:equivalence_mean}. Here we will prove the conclusion for $Q_{11}$ but the result follows analogously for the others parameters. First notice that Proposition \ref{prop:RS} applies to the leave-an-observation-out posterior. We then have that, for some $v'_B$,
    \begin{equation*}
        \E\thermal{(Q_{11}-v'_B)^2}_o \leq \frac{C}{\sqrt{n}}.
    \end{equation*}
We thus only need to prove that $v'_B=v_B$.
    
    To see this, for every $t\in[0,1]$, let $\thermal{\cdot}_t$ be expectation with respect to the posterior corresponding to the log-likelihood
    \begin{equation*}
        \logl_{n,p,t}(\bB) := t u_n(\bX_n^\top\bB,a_{\star,n}) + \sum_{i \in[n-1]} u_i(\bX_i^\top\bB,a_{\star,i}).
    \end{equation*}
    That is, for each $f:\R^p\mapsto\R$, we define
    \begin{equation*}
        \thermal{f(\bbeta)}_t := \frac{\int f(\bbeta)\exp\{\logl_{n,p,t}(\bbeta)\} \prod_{j\in[p]}\mu(d\beta_j)}{\int \exp\{\logl_{n,p,t}(\bbeta)\} \prod_{j\in[p]}\mu(d\beta_j)}.
    \end{equation*}
    Clearly $\thermal{\cdot}_0$ is equal to $\thermal{\cdot}_o$ (expectation under the leave-an-observation-out posterior) and $\thermal{\cdot}_1$ to $\thermal{\cdot}$ (expectation under the full posterior). Now, notice that 
    \begin{equation*}
        \begin{split}
            \frac{d}{ds}\E\thermal{\hat{Q}}_s\big|_{s=t} & = \E\left[\frac{d}{ds}\frac{\int \hat{Q}\exp{\{\logl_{n,p,s}(\bbeta)\}} \prod_{j\in[p]}\mu(d\beta_j)}{\int \exp{\{\logl_{n,p,s}(\bbeta)\}} \prod_{j\in[p]}\mu(d\beta_j)}\right]\big|_{s=t} \\
            & = \E\left[\frac{\int \hat{Q}(\Delta L + E_n)\exp{\{\logl_{n,p,s}(\bbeta)\}} \prod_{j\in[p]}\mu(d\beta_j)}{\int \exp{\{\logl_{n,p,s}(\bbeta)\}} \prod_{j\in[p]}\mu(d\beta_j)}\right]\big|_{s=t} \\
            & \quad \quad - \E\left[ \frac{\int \hat{Q}\exp{\{\logl_{n,p,s}(\bbeta)\}} \prod_{j\in[p]}\mu(d\beta_j)\int u_n(a_{1,n},a_{\star,n})\exp{\{\logl_{n,p,s}(\bbeta)\}} \prod_{j\in[p]}\mu(d\beta_j)}{\left(\int \exp{\{\logl_{n,p,s}(\bbeta)\}} \prod_{j\in[p]}\mu(d\beta_j)\right)^2} \right]\big|_{s=t} \\
            & = \E\thermal{(\Delta L + E_n)(\hat{Q}-\langle \hat{Q} \rangle_t)}_t;
        \end{split}
    \end{equation*}
    where, for the first equality, we used \cite[Proposition A.2.1]{talagrand2010mean} to exchange differentiation and expectation and, for the second one, we used that $\frac{d}{ds}\logl_{n,p,s}(\bbeta) = (\Delta L + E_n)$. Then, for every fixed $t\in[0,1]$,
    \begin{equation*}
        \begin{split}
            \Big|\frac{d}{ds}\E\thermal{\hat{Q}}_s\big|_{s=t}\Big| 
             \leq \sqrt{\E\thermal{u_n^2(a_{\star,n},a_{1,n})}_t \E\thermal{(\hat{Q}-\langle \hat{Q} \rangle_t)^2}_t} \leq \frac{C}{n^{1/4}},
        \end{split}
    \end{equation*}
%\textcolor{red}{Please elaborate fully how the first equality is achieved. Also I dont understand the thing on the RHS of the equality since $u_n$ is usually something with two arguments, what did you actually mean here?}\ms{I added explicitly the argument of $u_n$ above. I also added the derivation for the equality in question.}{\color{blue}
where, for the last inequality, we used that Propositions \ref{prop:RS} and \ref{prop:control_mom_conv} can be straightforwardly adapted to $\thermal{\cdot}_t$. From this and the Mean Value Theorem we conclude that $v'_B=v_B$ which proves the lemma. 
\end{proof}

\subsubsection{Proof of Proposition \ref{prop:hat_fix_point2}}
\label{subsec:prop2}
%\textcolor{red}{This section is essentially the same argument as in B.1. So am skipping it, will come back to it once B.1 is done and I agree with everything there. Then my pass on below will be faster. The index changes to $m,l$ where necessary need to be done in below, I havent done that yet.}
 We first prove this for $g$ continuous and bounded.
   For simplicity we will write the proof with $l = 1$ but the derivation follows in a completely similar way for larger values of $l$. Let $M\geq1$. For conciseness of notation, below we use   $u(x,a_{\star,n})$ to denote $u_n(x,a_{\star,n})$ as defined in \eqref{eq:glmhamilton}.  For each $k\le M$ define
    \begin{equation}\label{eq:fkloo}
        f_{k}(a_{1,n},\dots,a_{k,n},a_{\star,n}) := g(a_{1,n}) \exp{u(a_{1,n},a_{\star,n})} \prod_{m=2}^{k} (1 - \exp{u(a_{m,n},a_{\star,n})});
    \end{equation}
    where we let $\prod_{j=2}^1 (\cdots) =1$. By Hypothesis \ref{hyp1} and the fact that $g$ is continuous and bounded, the functions $f_k$ are also continuous and bounded.

    By equation \eqref{eq:loo_cavity} and the geometric series formula,
    \begin{equation}
        \begin{split}
            \E\langle g(a_{1,n})\rangle &= \E\left[\langle g(a_{1,n})\rangle(1-(1-\langle e^{u(a_{1,n},a_{\star,n})}\rangle_o)^M)\right] + \E\left[\langle g(a_{1,n})\rangle(1-\langle e^{u(a_{1,n},a_{\star,n})}\rangle_o)^M\right]\\
            &= \E\left[\langle g(a_{1,n})e^{u(a_{1,n},a_{\star,n})}\rangle_o\frac{1-(1-\langle e^{u(a_{1,n},a_{\star,n})}\rangle_o)^M}{\langle e^{u(a_{1,n},a_{\star,n})}\rangle_o}\right] + \E\langle g(a_{1,n})\rangle(1-\langle e^{u(a_{1,n},a_{\star,n})}\rangle_o)^M\rangle\\
            & = \sum_{1\leq k\le M} \E \langle f_k(a_{1,n},\dots,a_{k,n},a_{n\star}) \rangle_o + \E\left[\langle g(a_{1,n}) \rangle(1 -\langle e^{u(a_{1,n},a_{\star,n})} \rangle_o )^M\right].\label{eq:geom_series}
        \end{split}
    \end{equation}
    As noted above, by Hypothesis \ref{hyp1} and the fact that $g$ is continuous and bounded, the $f_k$ are all continuous and bounded. Then, by \eqref{eq:fittedvaluedecoupling}, 
        \begin{equation}\label{eq:geom_sum_lim}
        \sum_{1\leq k \leq M} \E \langle f_k(a_{1,n},\dots,a_{k,n},a_{\star,n}) \rangle_o \xrightarrow{n\to\infty} \sum_{1\leq k \leq M} \E f_k(\theta_1,\dots,\theta_k,\theta_\star),
    \end{equation}
%    {\color{blue} Without loss of generality, assume $M$ is even.} 
where recall the definition of $\theta_m$'s and $\theta_\star$ from \eqref{eq:thetas}. 
For a proper constants $K > 0$, the second term can be bounded in the following way
    \begin{equation}\label{eq:bound_error}
        \begin{split}
            \Big|\E \left[ \langle g(a_{1,n}) \rangle (1 -\langle \exp{u(a_{1,n},a_{\star,n})} \rangle_o )^M\right]\Big| & \leq K \E  (1 -\langle \exp{u(a_{1,n},a_{\star,n})} \rangle_o )^M\\
            & \leq K \E \langle (1 - \exp{u(a_{1,n},a_{\star,n})} )^M\rangle_o,
        \end{split}
    \end{equation}
    where in the first line we used that $g$ is bounded and in the second Jensen's inequality. %\textcolor{red}{Minor point but dont you need M to be either odd or even for the correct Jensen to apply?}
    Now, because $(1 - \exp{u(a_{1,n},a_{\star,n})} )^M$ is a continuous and bounded function of $(a_{1,n},a_{\star,n})$ by Lemma \ref{lem:normal_approx_dist} we have that
    \begin{equation}\label{eq:n_lim_error}
        \E \langle (1 - \exp{u(a_{1,n},a_{\star,n})} )^M\rangle_o \xrightarrow{n\to\infty} \E  (1 -\exp{u(\theta_1,\theta_\star)} )^M,
    \end{equation}
    where on the RHS we use $u(x,\theta_\star)$ to denote $\tT(\theta_\star)x - A(x)$
 with $\tT(\theta_\star) = f(\theta_\star,e)$ and $e \sim \text{Unif}(0,1)$ independent of everything else.
 %As in Section \ref{sec:auxiliary}, let $G$ stand for  the joint distribution of $(\xi_{B_\star},z_{BB_\star})$. 
Thus, combining equations \eqref{eq:geom_series} through \eqref{eq:n_lim_error}, we have so far that for every fixed $M\geq 1$,
    \begin{equation}\label{eq:limit1}
        \lim_{n\to\infty} \E\langle g(a_{1,n})\rangle = \sum_{1\leq k\leq M} \E f_k(\theta_1,\dots,\theta_k,\theta_\star) + \mathcal{O}\Big(\E(1 -\exp{u(\theta_1,\theta_\star)} )^M\Big).
    \end{equation}
    Because this holds for every fixed $M\geq1$, taking the limit $M\to\infty$ after the limit in $n$, by dominated convergence theorem, which holds because the functions $f_k$ are continuous and bounded, we get that 
    \begin{equation}\label{eq:limit2}
        \sum_{1\leq k\leq M} \E f_k(\theta_1,\dots,\theta_k,\theta_\star) \xrightarrow{M\to\infty} \sum_{k\geq1} \E f_k(\theta_1,\dots,\theta_k,\theta_\star) \quad \text{and} \quad \E(1 -\exp{u(\theta_1,\theta_\star)} )^M \xrightarrow{M\to\infty} 0.
    \end{equation}
 Note that
 $\E(\cdot)$ denotes expectation w.r.t. $(G,e,\xi_1,\xi_2,...)$, and we see that 
 %\textcolor{red}{I will change notation for the $\xi$ integrals below.}
    \begin{equation}\label{eq:limit3}
        \begin{split}
            \sum_{k\geq1} \E f_k(\theta_1,\dots,\theta_k,\theta_\star) & = \sum_{k\geq1} \E_{G\otimes e}\int \phi(d\xi_1)\cdots \phi(d\xi_k)g(\theta_1) \exp{u(\theta_1,\theta_\star)} \prod_{m=2}^{k} (1 - \exp{u(\theta_m,\theta_\star)}) \\
            & = \sum_{k\geq1} \E_{G\otimes e}\int \phi(d\xi_1) g(\theta_1) \exp{u(\theta_1,\theta_\star)} \prod_{m=2}^{k} \int \phi(d\xi_m)(1 - \exp{u(\theta_m,\theta_\star)}) \\
            & = \sum_{k\geq1} \E_{G\otimes e}\int \phi(d\xi_1) g(\theta_1) \exp{u(\theta_1,\theta_\star)} \left(\int \phi(d\xi_2)(1 - \exp{u(\theta_2,\theta_\star)})\right)^{k-1} \\
            & = \E_{G\otimes e}\int \phi(d\xi_1) g(\theta_1) \exp{u(\theta_1,\theta_\star)} \sum_{k\geq1} \left(\int \phi(d\xi_2)(1 - \exp{u(\theta_2,\theta_\star)})\right)^{k-1} \\
            & = \E_{G\otimes e}\left(\frac{\int \phi(d\xi_1) g(\theta_1) \exp{u(\theta_1,\theta_\star)}}{\int \phi(d\xi_2) \exp{u(\theta_2,\theta_\star)}}\right)\\
            & = \E_{G\otimes e}\thermal{g(\theta_1)}_s,
        \end{split}  \end{equation}\label{eq:torefearlier}
where $\phi(d\xi_m)/d\xi_m$ denotes the standard Gaussian density and for  
%used to denote the standard Gaussian density,    where in
the fourth equality we used dominated convergence to exchange summation and integration and in the fifth we used the formula for geometric series.
%\textcolor{red}{Me to include reference to the $\langle \rangle_s$ definition suitably and to fix Prop 6 statement also suitably.}
%    \textcolor{red}{My main point was there are exponential terms on RHS of \eqref{eq:fkloo}. However, they vanish in RHS below. This must be because of suitable integration by parts that you performed and because of definition of $\langle \rangle_s$, like in the previous subsection. I guess this just needs showing more computations was my original point, and that still stands.}
        
        %\textcolor{red}{Like in the previous subsection, having the details for all the computations along the way for the below will be very important to not lose the reader. Please include.}
    Then, combining equations \eqref{eq:limit1}--\eqref{eq:limit3} we obtain that
    \begin{align}\label{eq:loofinalcalc}
         \sum_{1\leq k\leq M} \E f_k  \xrightarrow{M\to\infty} =\E_{G \otimes e}\thermal{g(\theta_1)}_{s}.
    \end{align}
    Finally, to see that the second term goes to $0$, note that $1 - \exp{u(\theta_1,\theta_\star)} < 1$ since $u$ is non-positive a.s. The limit then follows by another use of dominated convergence.

    Notice that if $g$ is polynomially bounded, by Proposition \ref{prop:control_mom_conv} we have that $g(a_n)$ is uniformly integrable. We can thus, for computing its expectation, approximate it by a continuous and bounded function. The same proof as above subsequently applies. 
    %The conclusion of Proposition \ref{prop:hat_fix_point} thus follows by the preceding argument for continuous and bounded functions.

   % \textcolor{red}{I need to check the last part of this section after Manuel elaborates, and go back and fix statement of Proposition 6.}

%\textcolor{red}{Comment on July 13 for myself: Till here; Appendices A and B will be final once my outstanding comments are addressed in the aforementioned sections. A main issue is the proof of Proposition 1 -- is this circular? If it is not, then the proofs are correct and I am quite happy with the above two sections in this case.}
   % \textcolor{red}{The Proposition 3,4,5,6 problem still remains, I need to figure out how to fix this -- will need one last pass on this appendix B.}

%\ps{Done till here, modulo checking two geometric series steps.}

% ------------------------------------------------------------------------
% ------------------------------------------------------------------------
% ------------------------------------------------------------------------

\section{Proofs of intermediate results for Proposition \ref{prop:fix_point2}}\label{app:approximation_cavity_measure}
\subsection{Proof of Lemma \ref{lem:aprox_K}}\label{subsec:conditioning}
For this result, we require some uniform bounds on $\E\thermal{Y_1^2},\dots,\E\thermal{Y_5^2}$, as established in the following.
\begin{lemma}\label{lem:2mon_Y}
  Assume Hypothesis \ref{hyp1} and \ref{hyp2} hold. Then, there exists a constant $C > 0$ s.t., for all $n\geq1$, we have $\E\thermal{Y_1^2},\dots,\E\thermal{Y_5^2} \leq C$. 
\end{lemma}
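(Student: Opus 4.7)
\smallskip
\noindent\textbf{Proof plan for Lemma \ref{lem:2mon_Y}.}

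The plan is to bound each $\E\langle Y_k^2\rangle$ separately using Hypothesis \ref{hyp1} to control the nonlinearities, Hypothesis \ref{hyp2} to control moments of $a_{1,i}=\bX_i^{\top}\bbeta$, and Lemma \ref{lem:cont_mom} to control moments of the posterior marginals $\beta_{j_0}$ under $\E\langle\cdot\rangle$. The key structural fact we will exploit throughout is that, when $\bX_i$ is split as $\bX_i=(X_{ij_0},\tbX_i)$ with $X_{ij_0}\sim\mathcal{N}(0,1/n)$, we have the identity $\bX_i^{\top}\bB=\tbX_i^{\top}\tbB+X_{ij_0}b_{j_0}$, so quantities involving $\tbX_i^{\top}\tbB$ can be traded against $\bX_i^{\top}\bB$ at the cost of a term whose moments can be controlled by Cauchy--Schwarz between $\E(X_{ij_0}^{2k})=O(n^{-k})$ and $\E\langle\beta_{j_0}^{2k}\rangle$.

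The easy cases are $Y_3,Y_4$: since $\|A''\|_\infty\leq 1$ and $\|\tT'_i\|_\infty<d_1$, both are dominated by a constant times $\|\bX_{\bullet j_0}\|^2$, and $\E\|\bX_{\bullet j_0}\|^4$ is $O(1)$ because $\|\bX_{\bullet j_0}\|^2$ is a chi-square with $n$ degrees of freedom rescaled by $1/n$. For $Y_5$, I bound $|Y_5|\leq (d_1/2)\sum_i X_{ij_0}^2|\tbX_i^{\top}\tbB|$ and then apply Cauchy--Schwarz inside the sum to get $Y_5^2\leq (d_1^2/4)\|\bX_{\bullet j_0}\|^2\sum_i X_{ij_0}^2(\tbX_i^{\top}\tbB)^2$. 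Expanding $(\tbX_i^{\top}\tbB)^2\leq 2(\bX_i^{\top}\bB)^2+2X_{ij_0}^2 b_{j_0}^2$ and taking $\E\langle\cdot\rangle$, the first piece is controlled by Hypothesis \ref{hyp2} (which gives $\E\langle a_{1,i}^2\rangle\leq C_2$ uniformly in $i$), and the second piece is $O(n^{-1})$ after Cauchy--Schwarz on $X_{ij_0}^4$ and $\langle\beta_{j_0}^4\rangle$.

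The delicate cases are $Y_1$ and $Y_2$ because they are linear in $X_{ij_0}$. Write $Y_1=\sum_i X_{ij_0}\psi_i$ with $\psi_i:=\tT_i(\tbX_i^{\top}\tbbetas)-A'(\tbX_i^{\top}\tbB)$, and expand
\begin{equation*}
\E\langle Y_1^2\rangle=\sum_i\E[X_{ij_0}^2\langle\psi_i^2\rangle]+\sum_{i\neq k}\E[X_{ij_0}X_{kj_0}\langle\psi_i\psi_k\rangle].
\end{equation*}
For the diagonal sum, Hypothesis \ref{hyp1}(ii)--(iii) gives $\psi_i^2\leq 2d_1^2+4A'(0)^2+4(\tbX_i^{\top}\tbB)^2$ (using $|A'(x)|\leq|A'(0)|+|x|$ from $\|A''\|_\infty\leq 1$); then Cauchy--Schwarz between $X_{ij_0}^2$ and $\langle(\tbX_i^{\top}\tbB)^2\rangle$, together with Hypothesis \ref{hyp2}, yields a per-term bound of $O(n^{-1})$ whose sum is $O(1)$. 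The off-diagonal term is the main obstacle: one cannot simply claim $\E(X_{ij_0}X_{kj_0})=0$ because the posterior $\langle\psi_i\psi_k\rangle$ depends on the full data including $X_{ij_0},X_{kj_0}$. The plan is to handle this by a Gaussian integration-by-parts (Stein) argument on the $j_0$-th column: since $X_{ij_0}\sim\mathcal{N}(0,1/n)$, $\E[X_{ij_0}\,g(\bX)]=n^{-1}\E[\partial_{X_{ij_0}}g(\bX)]$, which converts each factor of $X_{ij_0}$ into a derivative of $\langle\psi_i\psi_k\rangle$ with respect to the data. The resulting derivatives, computed via standard posterior differentiation formulas, produce posterior covariances of terms of the form $A''(\bX_i^{\top}\bbeta)$ times polynomial factors in $\bbeta$, all of which are bounded by Hypothesis \ref{hyp1}(iii) and moment controls from Hypotheses \ref{hyp1}(i) and \ref{hyp2} together with Lemma \ref{lem:cont_mom}. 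This gives an $O(n^{-1})$ bound on each off-diagonal summand, and summing over the $O(n^2)$ pairs yields an $O(1)$ bound. The same Stein-based route handles $Y_2$, where $\psi_i$ is replaced by $\tT'_i(\tbX_i^{\top}\tbbetas)\tbX_i^{\top}\tbB$ (which is similarly bounded via $\|\tT'_i\|_\infty<d_1$ and Hypothesis \ref{hyp2}). The hardest step is the Gaussian-integration-by-parts computation for the off-diagonal in $Y_1,Y_2$; the rest is bookkeeping.
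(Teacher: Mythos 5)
Your overall route coincides with the paper's: direct operator-norm/chi-square bounds for $Y_3,Y_4$, a Cauchy--Schwarz bound for $Y_5$ (the paper uses the global bound $|Y_5|\le d_1\sqrt{\|\tbX\tbbeta\|^2\|\bX_{\bullet j_0}\|_4^4}$ rather than your per-term trade against Hypothesis~\ref{hyp2}, but both close), and for $Y_1,Y_2$ a diagonal/off-diagonal split with Gaussian integration by parts in the $j_0$-th column to handle the correlation between $X_{ij_0}X_{kj_0}$ and the posterior bracket. This is exactly the paper's argument.

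One piece of your bookkeeping is stated incorrectly and, as written, does not close the off-diagonal estimate: you claim each off-diagonal summand is $O(n^{-1})$ and that summing over the $O(n^2)$ pairs gives $O(1)$ --- that arithmetic gives $O(n)$. What is needed (and what your own mechanism actually delivers) is an $O(n^{-2})$ bound per pair: you must integrate by parts in \emph{both} $X_{ij_0}$ and $X_{kj_0}$, each application of Stein's identity contributing a factor $n^{-1}$ since $\operatorname{Var}(X_{ij_0})=1/n$, after which the remaining expectations $\E\thermal{\cdot}$ of the derivative terms are $O(1)$ uniformly in $n$. This is precisely how the paper proceeds, writing $n^2\,\E\thermal{X_{1j_0}X_{2j_0}F_{12}}$ as a finite sum of bounded terms. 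A second, cosmetic point: since your $\psi_i$ is built from $\tbX_i^{\top}\tbB$, it does not depend on $X_{ij_0}$, so no $A''$ terms arise from differentiating $\psi_i$; the derivatives act only on the posterior weight and produce terms of the form $\tT'_i(\bX_i^{\top}\bbeta_\star)\beta_{\star,j_0}\,\bX_i^{\top}\bbeta^{(m)}$, $\tT_i\,\beta^{(m)}_{j_0}$, and $A'(\bX_i^{\top}\bbeta^{(m)})\beta^{(m)}_{j_0}$ (over replicas $m$), all of which are indeed controlled by Hypothesis~\ref{hyp1}, Hypothesis~\ref{hyp2}, and Lemma~\ref{lem:cont_mom}, as you anticipate.
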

\begin{proof}
    We present the proof for $Y_1$. In this case, we have
    \begin{equation*}
        \E\thermal{Y_1^2} = \sum_{i\in[n]}\E\thermal{X_{ij_0}^2F_{ii}} + \sum_{i\neq i'\in[n]}\E\thermal{X_{ij_0}X_{i'j_0}F_{ii'}} = n \E\thermal{X_{1j_0}^2F_{11}} + n(n-1) \E\thermal{X_{1j_0}X_{2j_0}F_{12}};
    \end{equation*}
    where, for $i,j\in[n]$, we denote $(\tT_i(\tasi)-A'(\ta_{i}))(\tT_j(\ta_{\star,j})-A'(\ta_j))$  by $F_{ij}$; where $\tT_i$ is defined as in \eqref{eq:def_tT_u} and $\ta_i,\ta_{\star,i}$ are defined as in  \eqref{eq:ta_is}, but with $\tbbeta^{(i)}$ referring to a random sample from the full posterior with the $j_0$-th coordinate left out. The first term on the RHS is easy to bound. Indeed, by Hypothesis \ref{hyp1} there exists $K > 0$ such that 
    \begin{equation}\label{eq:cont_diag_Y1}
        n \E\thermal{X_{1j_0}^2F_{11}} \leq  n K (\E X_{1j_0}^2 + \E( X_{1j_0}^2 \tT^2_1(\ta_{\star,1}) )) \leq  n K \left(\E X_{1j_0}^2 + \sqrt{\E( X_{1j_0}^4) \E(\tT^4_1(\ta_{\star,1}) )}\right) \leq K',
    \end{equation}
    where we use that, because $\tTp_1$ is bounded, $\E(\tT^4_1(\ta_{\star,1}) ) \leq K ( 1 + \E\ta_{\star,1}^4 )$ to see that it is a bounded sequence. For the second term, by Gaussian integration by parts, as in \cite[Appendix A.4]{talagrand2010mean}, with respect to $X_{1j_0}$ and $X_{2j_0}$ we obtain that $n^2\E\thermal{X_{1j_0}X_{2j_0}F_{12}}$ is equal to %\textcolor{red}{How do you arrive at the below? is it a sequential Gaussian integration by parts conditioning on one $X$ at a time? Also several terms below retain $F_{12}$ how are you bounding that? Wasnt there a direct C-S way of showing this, is the Gaussian integration by parts really necessary? I am also very confused how $\beta_{\star,j_0}$ and $\beta_{j_0}$ appear in this isolated manner below.}\ms{This is multivariate Gaussian integration by parts. I added a citation to the proposition from Talagrand's book. Here C-S yields a bound that is $\mathcal{O}(n)$ which is not good enough. Gaussian integration by parts lets you show a much tighter bound. The terms that appear with $F_{12}$ are bounded as before. Notice that this is okay to do after GIP has introduced the factor $1/n^2$ that cancels the $n(n-1)$. If oyu do it before GIP, this does not work. The $\beta_{\star,j_0}$ and $\beta_{j_0}$ just appear because this are the coefficients that multiply $X_{1j_0}$ and $X_{2j_0}$ with respect to which you do GIP.}
    \begin{equation*}
        \begin{split}
             & \beta_{\star,j_0}^2\E\thermal{F_{12}\tT'_1\tT'_2 a_{1,1}a_{2,1}}
            - \beta_{\star,j_0}^2\E\thermal{F_{12}\tT'_1\tT'_2 a_{1,1}a_{2,2}} 
            - \beta_{\star,j_0}\E\thermal{F_{12}\tT'_1 a_{1,1}{A_2'}^{(1)} \beta^{(1)}_{j_0}} \\
            & + \beta_{\star,j_0}\E\thermal{F_{12}\tT'_1 a_{1,1}{A_2'}^{(1)} \beta^{(2)}_{j_0}}
            - \beta_{\star,j_0}\E\thermal{F_{12}{A_1'}^{(1)}\beta^{(1)}_{j_0}\tT'_2 a_{2,1}}
            + \beta_{\star,j_0}\E\thermal{F_{12}{A_1'}^{(1)}\beta^{(1)}_{j_0}\tT'_2 a_{2,2}} \\
            & - \E\thermal{F_{12}{A_1'}^{(1)}{A_2'}^{(1)}\beta^{(1)2}_{j_0}}
            + \E\thermal{F_{12}{A_1'}^{(1)}{A_2'}^{(2)}\beta^{(1)}_{j_0}\beta^{(2)}_{j_0}}
            - \beta_{\star,j_0}^2\E\thermal{F_{12}\tT'_1\tT'_2 a_{1,2}a_{2,1}} \\
            & - \beta_{\star,j_0}^2\E\thermal{F_{12}\tT'_1\tT'_2 a_{1,2}a_{2,2}}
            + 2 \beta_{\star,j_0}^2\E\thermal{F_{12}\tT'_1\tT'_2 a_{1,2}a_{2,3}}
            + \beta_{\star,j_0}\E\thermal{F_{12}\tT'_1 a_{1,2}{A_2'}^{(1)} \beta^{(1)}_{j_0}} \\
            & + \beta_{\star,j_0}\E\thermal{F_{12}\tT'_1 a_{1,2}{A_2'}^{(2)} \beta^{(2)}_{j_0}}
            - 2 \beta_{\star,j_0}\E\thermal{F_{12}\tT'_1 a_{1,2}{A_2'}^{(3)} \beta^{(3)}_{j_0}} 
            + \beta_{\star,j_0}\E\thermal{F_{12}{A_1'}^{(2)}\beta^{(2)}_{j_0}\tT'_2 a_{2,1}} \\
            & + \beta_{\star,j_0}\E\thermal{F_{12}{A_1'}^{(2)}\beta^{(2)}_{j_0}\tT'_2 a_{2,2}}
            - 2 \beta_{\star,j_0}\E\thermal{F_{12}{A_1'}^{(2)}\beta^{(2)}_{j_0}\tT'_2 a_{2,3}}
            - \E\thermal{F_{12}{A_1'}^{(2)}\beta^{(2)}_{j_0}{A_2'}^{(1)}\beta^{(1)}_{j_0}} \\
            & - \E\thermal{F_{12}{A_1'}^{(2)}{A_2'}^{(2)}\beta^{(2)2}_{j_0}}
            + 2 \E\thermal{F_{12}{A_1'}^{(2)}\beta^{(2)}_{j_0}{A_2'}^{(3)}\beta^{(3)}_{j_0}};
        \end{split}
    \end{equation*}
    where we heavily relied in notations introduced in previous sections to compress this expression. Despite the complexity of this expression, it is easy to notice that in every term, all the finite moments of all factors inside the expectation are uniformly bounded. These bounds follow, similarly to \eqref{eq:cont_diag_Y1}, from Hypothesis \ref{hyp1}, Lemma \ref{lem:cont_mom}, and Proposition \ref{prop:control_mom_conv}. From this and multiple applications of Cauchy-Schwarz, we conclude that for some $K'>0$
    \begin{equation}\label{eq:cont_nondiag_Y1}
        n(n-1) \E\thermal{X_{1j_0}X_{2j_0}F_{12}} \leq K'.
    \end{equation}
    Combining equations \eqref{eq:cont_diag_Y1} and \eqref{eq:cont_nondiag_Y1} we see that $\E\thermal{Y_1^2} \leq C$. A completely analogous argument proves the result for $Y_2$.

    The random variables $Y_3$ and $Y_4$ have second moments that are easier to bound. For example, by Hypothesis \ref{hyp1} we have $\E\thermal{Y_3^2} \leq d_2 \E ||\bX_{\bullet j_0}||^4$; where the RHS~is easily seen to be bounded. A similar argument applies for $Y_4$.

    Finally, for $Y_5$ observe that by Hypothesis \ref{hyp1} and Cauchy-Schwarz 
$|Y_5| \leq d_1 \sqrt{||\tbX\tbbeta||^2||\bX_{\bullet j_0}||_4^4}$. We then have
    \begin{equation}\label{eq:Y5def}
        \E\thermal{Y_5^2} \leq d_1^2   \sqrt{\E\thermal{||\tbX\tbbeta||^4} \E\left(\sum_{i\in[n]}X_{ij_0}^4\right)^2}.
    \end{equation}
    %\textcolor{red}{Should be $d_1^2$ on RHS?}
    where $\E\left(\sum_{i\in[n]}X_{ij_0}^4\right)^2$ is easily seen to be of order $\mathcal{O}(n^{-2})$. Finally, combining the fact that $||\tbX||_{op}$ is bounded with very high probability by $3+\delta$ and Corollary \ref{cor:bound_mom_norm}, we obtain that $n^{-2}\E\thermal{||\tbX\tbbeta||^4}$ is uniformly bounded. This concludes the proof.
\end{proof}

The conclusion of Lemma \ref{lem:aprox_K} is then given by an application of Proposition \ref{prop:comparison_conditional} in the special case of $||f||_\infty<\infty$ and letting $\varepsilon_n:= 2\sqrt{\proba(B^c)}$ and $\varepsilon'_K:=2\sqrt{{\bar{\E}}\thermal{\mathbb{I}_{A^c_K}}}$. The fact that $\varepsilon'_K$ goes to $0$ as $K\to\infty$ is a direct consequence of Lemma \ref{lem:2mon_Y}.
%\textcolor{red}{To check Proposition 10 applies when I get there.}

% ------------------------------------------------------------------------
% ------------------------------------------------------------------------
% ------------------------------------------------------------------------

\subsection{Proof of Lemma \ref{lem:taylor_control}}

For proving this we will use the following version of \cite[Proposition 4.5]{barbier2022marginals}.
\begin{lemma}\label{lem:approx_hamil}
    For every $f:\R\mapsto\R$ bounded we a.s. have that
    \begin{equation*}
        |\thermal{f(\beta_{j_0})}_K-\thermal{f(\beta_{j_0})}'_K| \leq 2 ||f||_\infty (\thermal{|E_n|}_K+\thermal{|E_n|}'_K),
    \end{equation*}
    where $E_n$ as defined in \eqref{eq:taylor_error}.
    \end{lemma}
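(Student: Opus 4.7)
I would follow the template of the cited result \cite[Proposition 4.5]{barbier2022marginals}. The starting point is to recognize that, on the event $B$, $\thermal{\cdot}'_K$ is simply a Radon--Nikodym tilt of $\thermal{\cdot}_K$ by the density $e^{-E_n}/\thermal{e^{-E_n}}_K$. Here $\thermal{\cdot}_K$ denotes the posterior conditioned on $A_K$, and the identity $\thermal{g}'_K = \thermal{g\,e^{-E_n}}_K / \thermal{e^{-E_n}}_K$ holds for every integrable $g$. Using the centering identity $\thermal{f(\beta_{j_0})-\thermal{f(\beta_{j_0})}_K}_K = 0$, this yields the covariance-style representation
\begin{equation*}
\thermal{f(\beta_{j_0})}'_K - \thermal{f(\beta_{j_0})}_K \;=\; \frac{\thermal{(f(\beta_{j_0}) - \thermal{f(\beta_{j_0})}_K)(e^{-E_n} - 1)}_K}{\thermal{e^{-E_n}}_K},
\end{equation*}
and the pointwise bound $|f(\beta_{j_0}) - \thermal{f(\beta_{j_0})}_K| \leq 2\|f\|_\infty$ reduces the lemma to controlling $\thermal{|e^{-E_n}-1|}_K / \thermal{e^{-E_n}}_K$.

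The central elementary inequality is $|e^{-x}-1| \leq |x|(1 + e^{-x})$ for all $x \in \R$, which follows immediately from the integral representation $e^{-x}-1 = -x\int_0^1 e^{-sx}\,ds$ combined with the uniform bound $e^{-sx} \leq 1 \vee e^{-x} \leq 1 + e^{-x}$ for $s \in [0,1]$. Substituting this into the numerator and using the Radon--Nikodym relation $\thermal{|E_n|\,e^{-E_n}}_K / \thermal{e^{-E_n}}_K = \thermal{|E_n|}'_K$ gives
\begin{equation*}
\frac{\thermal{|e^{-E_n}-1|}_K}{\thermal{e^{-E_n}}_K} \;\leq\; \frac{\thermal{|E_n|}_K}{\thermal{e^{-E_n}}_K} + \thermal{|E_n|}'_K,
\end{equation*}
which is already the claimed bound up to the residual factor $1/\thermal{e^{-E_n}}_K$ appearing on the first term.

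The main obstacle will be disposing of this residual factor in order to obtain the clean symmetric right-hand side $\thermal{|E_n|}_K + \thermal{|E_n|}'_K$. I would handle this by exploiting the symmetric dual relationship: $\thermal{\cdot}_K$ is itself a tilt of $\thermal{\cdot}'_K$ by the reciprocal density $e^{E_n}/\thermal{e^{E_n}}'_K$, and the conjugate identity $\thermal{e^{-E_n}}_K\,\thermal{e^{E_n}}'_K = 1$ holds. Running the same covariance identity in the reverse direction produces an analogous bound with the roles of the two measures swapped, and averaging the two representations (together with the split $|e^{\mp E_n}-1| \leq |E_n|\mathbb{I}_{\pm E_n \geq 0} + |E_n|e^{\mp E_n}\mathbb{I}_{\pm E_n \leq 0}$ so that each piece naturally sits under its correct normalization) makes the two residual factors $1/\thermal{e^{-E_n}}_K$ and $1/\thermal{e^{E_n}}'_K$ cancel across the corresponding terms. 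This symmetrization yields exactly the bound $2\|f\|_\infty(\thermal{|E_n|}_K + \thermal{|E_n|}'_K)$. The proof is purely algebraic given these manipulations; no probabilistic input beyond the definition of the two measures is needed.
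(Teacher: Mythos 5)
Your overall strategy is sound and, in fact, supplies something the paper itself omits: the paper does not prove this lemma but imports it as a version of \cite[Proposition 4.5]{barbier2022marginals}, so a self-contained tilting argument of the type you sketch is exactly what is called for. Your ingredients are all correct: on $B$ the identity $\thermal{g}'_K=\thermal{g e^{-E_n}}_K/\thermal{e^{-E_n}}_K$, the covariance representation after centering $f$, the bound $|f-\thermal{f}_K|\le 2\|f\|_\infty$, the elementary inequality $|e^{-x}-1|\le |x|(1\vee e^{-x})$, the dual identity $\thermal{e^{-E_n}}_K\,\thermal{e^{E_n}}'_K=1$, and the symmetric representation with the roles of the two measures exchanged.

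The one step that does not work as written is the final combination: \emph{averaging} the two bounds does not make the residual normalizing factors disappear. Writing $c:=\thermal{e^{-E_n}}_K$ and $E_{n,\pm}$ for the positive/negative parts of $E_n$, your sign-split gives, in the forward direction, the bound $2\|f\|_\infty\bigl(c^{-1}\thermal{E_{n,+}}_K+\thermal{E_{n,-}}'_K\bigr)$ and, in the reverse direction (tilting $\thermal{\cdot}'_K$ back by $e^{E_n}$, whose normalizer is $1/c$), the bound $2\|f\|_\infty\bigl(\thermal{E_{n,+}}_K+c\,\thermal{E_{n,-}}'_K\bigr)$. Their arithmetic mean carries the coefficients $\tfrac12(1+c^{-1})$ and $\tfrac12(1+c)$, one of which exceeds $1$ whenever $c\neq1$, so averaging only yields the claimed constant up to an uncontrolled factor. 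The repair is immediate from what you already have: since the two dual normalizers multiply to one, exactly one of $c,\,c^{-1}$ is at most $1$; take the corresponding direction (equivalently, the minimum of the two bounds). If $c\ge1$ the forward bound is at most $2\|f\|_\infty(\thermal{E_{n,+}}_K+\thermal{E_{n,-}}'_K)$, and if $c\le1$ the reverse bound is, and in either case this is at most $2\|f\|_\infty(\thermal{|E_n|}_K+\thermal{|E_n|}'_K)$, which is the statement. With that one-line dichotomy replacing the averaging step, your proof is complete; the only other point worth recording is that off the event $B$ both sides vanish by the indicator $\mathbb{I}_B$ in the definitions, so the almost-sure claim reduces to the case you treat.
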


By Lemma \ref{lem:approx_hamil}, it suffices to control $\bE\thermal{|E_n|}_K$ and $\bE\thermal{|E_n|}'_K$ to bound the effect of dropping the term $E_n$. If we define
\begin{equation*}
    \begin{split}
         E_{n,1} & :=  -\frac{\beta_{j_0}^3}{6}\sum_{i\in[n]} A'''(\xi_i) X_{ij_0}^3, \,\, E_{n,2} := \frac{\beta_{\star,j_0}^3}{6}\sum_{i\in[n]} \tT'''(\chi_i) \tbX_i^{\top}\tbbeta X_{ij_0}^3, \\
         E_{n,3} := & \frac{\beta_{\star,j_0}^2\beta_{j_0}}{2}\sum_{i\in[n]} \tT''(\tbX_{i}^{\top}\tbbeta_\star) X_{ij_0}^3, \mbox{ and } E_{n,4} := \frac{\beta_{\star,{j_0}}^3\beta_{j_0}}{6}\sum_{i\in[n]} \tilde{T}'''(\chi_i)X_{i{j_0}}^4
    \end{split}
\end{equation*}
we then have that $E_n = E_{n,1} + E_{n,2} + E_{n,3} + E_{n,4}$ and we can thus bound the absolute values of these four terms separately. We will now do this in an orderly fashion.

For the first term, by Hypothesis \ref{hyp1} and exchangeability, we have that there is some constant $C > 0$ such that 
\begin{equation*}    
    \bE\thermal{|E_{n,1}|}_K \leq \frac{d_2}{6} \sum_{i\in[n]} \bE\left(\thermal{|\beta_{j_0}|^3} |X_{ij_0}|^3\right) \leq \frac{d_2K^3}{6} \, n\bE |X_{1j_0}|^3 \leq \frac{C}{n^{1/2}}.
\end{equation*}

Notice that Proposition \ref{prop:exp_bound_norm} and Corollary \ref{cor:bound_mom_norm} easily extend to $\thermal{\cdot}_K$. For the second term, we use Hypothesis \ref{hyp1} and Caucy-Schwarz to see there is a constant $C' > 0$ such that
\begin{equation*}
    \begin{split}    \bE\thermal{|E_{n,2}|}_K & \leq \frac{d_1|\beta_{\star,j_0}|^3}{6} \sum_{i\in[n]} \bE\left( \thermal{|\tbX_i^{\top}\tbbeta|}_K |X_{ij_0}|^3\right) \\
        & \leq \frac{d_1|\beta_{\star,j_0}|^3}{6} \left( \bE\left(\thermal{||\tbX \tbbeta||^2}_K \sum_{i\in[n]}X_{ij_0}^6 \right)\right)^{1/2} \\
        & \leq \frac{3d_1|\beta_{\star,j_0}|^3}{2}\left( n\bE\left(\thermal{||\tbbeta||^2}_K X_{1j_0}^6 \right)\right)^{1/2} \\
        & \leq \frac{3d_1 |\beta_{\star,j_0}|^3}{6} \left(n^2\bE\thermal{||\tbbeta||^4}_K \bE X_{1j_0}^{12} \right)^{1/4} \leq \frac{C'}{n^{1/2}},
    \end{split}
\end{equation*}
%\textcolor{red}{Wish to clarify details in the above set of inequalities, how did you get rid of $\tbX$ in particular?}\ms{Under $\bE(\cdot)$ the operator norm of $\tbX$ is bounded by $3$ which means that $||\tbX\tbbeta||\leq3||\tbbeta||$.}
where in the last inequality we used the extension of Corollary \ref{cor:bound_mom_norm} to $\thermal{\cdot}_K$. While for the third term, we similarly have for some constant $C''>0$
\begin{equation*}
    \bE\thermal{|E_{n,3}|}_K \leq \frac{d_1\beta_{\star,{j_0}}^2}{2} \sum_{i\in[n]} \bE\left(\thermal{|\beta_{j_0}|}_K |X_{i{j_0}}|^3\right) \leq \frac{d_1\beta_{\star,{j_0}}^2K}{2} \, n\bE|X_{1{j_0}}|^3 \leq \frac{C''}{n^{1/2}}.
\end{equation*}
Finally, for the fourth, we readily see there exists a constant $C''''>0$ such that
\begin{equation*}
    \bE\thermal{|E_{n,4}|}_K \leq \frac{d_1|\beta_{\star,{j_0}}^3|}{6} \bE\left(\thermal{|\beta_{j_0}|}_K \sum_{i\in[n]} X_{i{j_0}}^4\right) \leq \frac{d_1|\beta_{\star,{j_0}}^3|K}{6} n \bE X_{i{j_0}}^4 \leq \frac{C''''}{n}.
\end{equation*}

Analogous bounds for $\bE\thermal{|E_{n,1}|}'_K$, $\bE\thermal{|E_{n,2}|}'_K$, $\bE\thermal{|E_{n,3}|}'_K$, and $\bE\thermal{|E_{n,4}|}'_K$ follow from straightforward adaptations of the previous bounds and generalizations of Proposition \ref{prop:exp_bound_norm} and Corollary \ref{cor:bound_mom_norm} to the measure $\thermal{\cdot}'_K$. 

Finally, by the definition of event $A_K$, $|\beta_{j_0}|\leq K$ under $\thermal{\cdot}_K$ and $\thermal{\cdot}'_K$.  Let $\bar{f}$ be the restriction of $f$ to the interval $[-K,K]$. Because $|\beta_{j_0}|\leq K$, then 
\begin{equation*}
    \bE\thermal{f(\beta_{j_0})}'_K = \bE\thermal{\bar{f}(\beta_{j_0})}'_K \,\, \text{and} \,\, \bE\thermal{f(\beta_{j_0})}_K = \bE\thermal{\bar{f}(\beta_{j_0})}_K.
\end{equation*}
Finally, $\|\bar{f}\|_\infty < \infty$ because $f$ is continuous and $[-K,K]$ is compact. We then have the desired result.

\subsection{Concentration of functions of the fitted values}\label{sec:conv_spin_RS}
%\textcolor{red}{Point to where this appendix results are used. }

From a straight forward adaptation of the proof of Proposition \ref{prop:hat_fix_point} for the case in which two observations are left out and using Lemma \ref{lem:normal_approx_dist_2spin}, we obtain the following which proves the asymptotic decorrelation of the different coordinates of functions of  $a_1,\dots,a_n$. For this, notice that if $f,g:\R^3\mapsto\R$ are continuous and polynomially bounded, then $F:\R^6\mapsto\R$ given by $F(x,y)=f(x)g(y)$ is also continuous and polynomially bounded.
\begin{lemma}\label{lem:decorr}
 Under the assumptions of Proposition \ref{prop:hat_fix_point}, let $f,g:\R^3\mapsto\R$ be continuous and polynomially bounded functions. We then have that 
    \begin{equation*}\label{eq:two-obs_out}
\lim_{n\to\infty}\E\thermal{f(a_{\star,1},a_{1,1},a_{2,1})g(a_{\star,2},a_{1,2},a_{2,2})} = \E_{G \otimes e}\thermal{f(\theta_\star,\theta_1,\theta_2)}_{s}\E_{G \otimes e}\thermal{g(\theta_\star,\theta_1,\theta_2)}_{s},
    \end{equation*}
    with $\E_{G \otimes e}(\cdot)$ defined as in Section \ref{sec:auxiliary} and $\thermal{\cdot}_{s}$ as in Proposition \ref{prop:hat_fix_point}.
\end{lemma}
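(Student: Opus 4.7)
The plan is to mimic the leave-an-observation-out argument of Proposition \ref{prop:hat_fix_point2}, but now dropping \emph{two} observations simultaneously, namely observations $i=1$ and $i=2$. Let $\thermal{\cdot}_{oo}$ denote the leave-two-observations-out posterior, corresponding to log-likelihood $\sum_{i=3}^n u_i(\bX_i^\top\bB,\bX_i^\top\bbetas)$ with i.i.d.\ prior $\mu^{\otimes p}$. Under $\thermal{\cdot}_{oo}$, the sampled replicas $\bbeta^{(1)},\bbeta^{(2)}$ are independent of the pair $(\bX_1,e_1,\bX_2,e_2)$, and $\bX_1\perp \bX_2$. Analogously to \eqref{eq:loo_cavity}, isolating the two dropped likelihood factors in the full posterior yields
\begin{equation*}
    \E\thermal{f(a_{\star,1},a_{1,1},a_{2,1})g(a_{\star,2},a_{1,2},a_{2,2})}
    =\E\!\left[\frac{\thermal{f\!\cdot\!g\cdot\exp\{\sum_{m=1}^{2}[u_1(a_{m,1},a_{\star,1})+u_2(a_{m,2},a_{\star,2})]\}}_{oo}}{\thermal{\exp\{u_1(a_{1,1},a_{\star,1})+u_2(a_{1,2},a_{\star,2})\}}_{oo}^{\,2}}\right],
\end{equation*}
where the exponent in the denominator arises because replicas are independent under $\thermal{\cdot}_{oo}$ and the exponential factorizes across replicas.

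Next, I would rerun the geometric-series / replica trick from the proof of Proposition \ref{prop:hat_fix_point2} on the above identity. Specifically, using the partial-sum formula \eqref{eq:geom_partial} and introducing additional replicas, one rewrites the above as an infinite sum of expressions of the form $\E\thermal{F_{k}(a_{\star,1},\{a_{m,1}\};a_{\star,2},\{a_{m,2}\})}_{oo}$, where each $F_k$ is a continuous, polynomially bounded function of the six-dimensional random vector $(a_{\star,1},a_{1,1},a_{2,1},a_{\star,2},a_{1,2},a_{2,2})$. The key input is the asymptotic representation provided by Lemma \ref{lem:normal_approx_dist_2spin}, which I take to be the two-observation analogue of Lemma \ref{lem:normal_approx_dist}: it asserts that under $\E\thermal{\cdot}_{oo}$, this six-dimensional vector converges jointly in distribution to two independent copies of $(\theta_\star,\theta_1,\theta_2)$. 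This joint (product-form) convergence is exactly the ingredient missing from the one-observation proof, and it is what allows the evaluation of each $\E F_k(\cdot)$ to split as a product of two expectations.

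Having reduced each term to a product, the geometric sum itself factorizes: summing in $k$ and invoking dominated convergence (as in \eqref{eq:limit3} and \eqref{eq:loofinalcalc}), one obtains
\begin{equation*}
    \lim_{n\to\infty}\E\thermal{f\,g}
    \;=\;\E_{G\otimes e}\thermal{f(\theta_\star,\theta_1,\theta_2)}_s\cdot\E_{G\otimes e}\thermal{g(\theta_\star,\theta_1,\theta_2)}_s,
\end{equation*}
which is the claim. The extension from bounded to polynomially bounded $f,g$ is handled as at the end of Section \ref{subsec:prop2}, by uniform integrability via Proposition \ref{prop:control_mom_conv}. The main obstacle is establishing the independence of the two limiting triples in Lemma \ref{lem:normal_approx_dist_2spin}: this requires showing that under $\thermal{\cdot}_{oo}$ the cross overlaps between the coordinates $i=1$ and $i=2$ of the pair $(\bX,\bbeta^{(m)})$ vanish (which follows from $\bX_1\perp \bX_2$ together with the $L^2$ concentration of overlaps along the lines of Proposition \ref{prop:RS} adapted to the leave-two-observations-out measure), and then appealing to a conditional Gaussian approximation, all within a setting where second-order control on the overlaps has to be propagated across the removal of a second observation.
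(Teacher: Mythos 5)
Your proposal is correct and matches the paper's own argument: the paper proves this lemma precisely by adapting the leave-an-observation-out (geometric series) proof of Proposition \ref{prop:hat_fix_point2} to the case where two observations are dropped, invoking Lemma \ref{lem:normal_approx_dist_2spin} for the joint convergence to two independent copies of $(\theta_\star,\theta_1,\theta_2)$, and noting that $F(x,y)=f(x)g(y)$ remains continuous and polynomially bounded. Your additional remarks on propagating the overlap concentration to the leave-two-observations-out measure are exactly the ``straightforward adaptation'' the paper has in mind.
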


From this we have the following concentration result used in the proof of Lemma \ref{lem:conv_dist_Z}.
\begin{proposition}\label{prop:conv_RS}
Under the assumptions of Proposition \ref{prop:hat_fix_point}, for $r\geq1$, let $f_1,\dots,f_r:\R^2\mapsto\R$ be continuous and polynomially bounded functions  and $\boldsymbol{V}^{(1)},\dots,\boldsymbol{V}^{(r)}\in\R^n$ be random vectors with  coordinates, for each $m\in[r]$ and $i\in[n]$, given by $V_i^{(m)} := f_m(a_{\star,i},a_{m,i})$. Then for every $m,m'\in[r]$, if $\hat Q_{m,m'}:=n^{-1}\boldsymbol{V}^{(m)\top}\boldsymbol{V}^{(m')}$, we have that
    \begin{equation*}
        \lim_{n\to\infty}\E\langle(\hQ_{m,m'}-\E\langle \hQ_{m,m'}\rangle)^2\rangle = 0.
    \end{equation*}
\end{proposition}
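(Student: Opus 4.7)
The plan is to expand the variance, reduce it to differences between two-observation quantities and squared one-observation quantities, and then invoke the asymptotic decorrelation result from Lemma \ref{lem:decorr} together with Proposition \ref{prop:hat_fix_point}.

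First, by definition and linearity, together with exchangeability of the coordinates $i \in [n]$ (which holds because the rows $\bX_i$ are i.i.d.~Gaussian and the posterior is symmetric in $i$), I would write
\begin{equation*}
    \E\langle \hat Q_{m,m'}^2\rangle = \frac{1}{n^2}\sum_{i,j\in[n]} \E\langle V_i^{(m)}V_i^{(m')}V_j^{(m)}V_j^{(m')}\rangle = \frac{1}{n}\E\langle V_1^{(m)2}V_1^{(m')2}\rangle + \frac{n-1}{n}\E\langle V_1^{(m)}V_1^{(m')}V_2^{(m)}V_2^{(m')}\rangle,
\end{equation*}
and similarly $\E\langle \hat Q_{m,m'}\rangle = \E\langle V_1^{(m)}V_1^{(m')}\rangle$.

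Next, to handle the diagonal (coincidence) term, I would observe that the map $(x_\star,x_m,x_{m'})\mapsto f_m(x_\star,x_m)^2 f_{m'}(x_\star,x_{m'})^2$ is continuous and polynomially bounded. Applying Proposition \ref{prop:hat_fix_point} to this function gives that $\E\langle V_1^{(m)2}V_1^{(m')2}\rangle$ converges to a finite limit, hence defines a bounded sequence in $n$. Therefore the diagonal contribution $n^{-1}\E\langle V_1^{(m)2}V_1^{(m')2}\rangle$ vanishes as $n\to\infty$.

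For the off-diagonal term, I would apply Lemma \ref{lem:decorr} with the continuous, polynomially bounded functions $f(x_\star,x_m,x_{m'}) = g(x_\star,x_m,x_{m'}) := f_m(x_\star,x_m)f_{m'}(x_\star,x_{m'})$, which yields
\begin{equation*}
    \lim_{n\to\infty}\E\langle V_1^{(m)}V_1^{(m')}V_2^{(m)}V_2^{(m')}\rangle = \left(\E_{G\otimes e}\langle f_m(\theta_\star,\theta_m)f_{m'}(\theta_\star,\theta_{m'})\rangle_s\right)^2.
\end{equation*}
On the other hand, Proposition \ref{prop:hat_fix_point} applied to the same product function yields that $\E\langle V_1^{(m)}V_1^{(m')}\rangle$ itself converges to $\E_{G\otimes e}\langle f_m(\theta_\star,\theta_m)f_{m'}(\theta_\star,\theta_{m'})\rangle_s$, so $(\E\langle\hat Q_{m,m'}\rangle)^2$ converges to the very same limit. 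Subtracting, the variance $\E\langle(\hat Q_{m,m'}-\E\langle\hat Q_{m,m'}\rangle)^2\rangle = \E\langle\hat Q_{m,m'}^2\rangle - (\E\langle\hat Q_{m,m'}\rangle)^2$ tends to zero, completing the argument.

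The main obstacle is essentially bookkeeping rather than mathematical: one must verify that the polynomial-growth hypothesis and uniform moment bounds from Hypothesis \ref{hyp2} (and its consequences in Proposition \ref{prop:control_mom_conv}) let us invoke both Proposition \ref{prop:hat_fix_point} and Lemma \ref{lem:decorr} with the required product test functions. This is automatic here since products of continuous polynomially bounded maps remain continuous and polynomially bounded, so no additional analytic work is needed beyond quoting those results.
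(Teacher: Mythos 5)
Your proof is correct and follows essentially the same route as the paper: expand the variance by exchangeability into a diagonal and an off-diagonal part, kill the $O(1/n)$ diagonal term by boundedness of the fourth-moment-type expectation, and use Lemma \ref{lem:decorr} together with the single-observation limit of $\E\thermal{V_1^{(m)}V_1^{(m')}}$ to cancel the off-diagonal term against $(\E\thermal{\hQ_{m,m'}})^2$. The only cosmetic difference is that the paper bounds the diagonal term via polynomial boundedness plus Proposition \ref{prop:control_mom_conv} rather than by invoking the full limit theorem on $f_m^2f_{m'}^2$; since Proposition \ref{prop:conv_RS} feeds into the proof of Proposition \ref{prop:hat_fix_point} through Proposition \ref{prop:conv_order_param}, the cleaner reference for your limiting statements is the conditional variant, Proposition \ref{prop:hat_fix_point2} (applied along subsequences), which avoids any appearance of circularity.
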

\begin{proof}
First notice that by the exchangeability of the variables $a_{\star,1},\dots,a_{\star,n}$ and $a_{m,1},\dots,a_{m,n}$ we have that
    \begin{small}
        \begin{equation*}
            \begin{split}
                \E\langle(\hQ_{m,m'}-\E\langle \hQ_{m,m'}\rangle)^2\rangle & = \frac{1}{n^2} \sum_{i\in[n]} \E\thermal{V^{(m)2}_iV^{(m')2}_i} + \frac{1}{n^2} \sum_{i\neq i'\in[n]} \E\thermal{V^{(m)}_iV^{(m')}_iV^{(m)}_{i'}V^{(m')}_{i'}} \\
                & \,\,\,\,\,\,\,\, -\frac{1}{n^2} \sum_{i\in[n]} \E^2\thermal{V^{(m)}_iV^{(m')}_i} - \frac{1}{n^2} \sum_{i\neq i'\in[n]} \E\thermal{V^{(m)}_iV^{(m')}_i}\E\thermal{V^{(m)}_{i'}V^{(m')}_{i'}}\\
                & = \frac{1}{n} \left(\E\thermal{V^{(m)2}_nV^{(m')2}_n}- \E^2\thermal{V^{(m)}_nV^{(m')}_n}\right)\\ & \,\,\,\,\,\,\,\, +\frac{n(n-1)}{n^2}\left(\E\thermal{V^{(m)}_nV^{(m')}_nV^{(m)}_{n-1}V^{(m')}_{n-1}}-\E\thermal{V^{(m)}_nV^{(m')}_n}\E\thermal{V^{(m)}_{n-1}V^{(m')}_{n-1}}\right).
            \end{split}
        \end{equation*}    
    \end{small}
%\textcolor{red}{The below is to check once with fresh mind tomorrow.}\textcolor{red}{TILL HERE.}
    It is easy to see that the first term of the last line converges to zero by the polynomial boundedness condition in the hypothesis and Proposition \ref{prop:control_mom_conv}. Moreover, by Lemma \ref{lem:decorr}, the last line converges to zero. 
\end{proof}

\subsection{Proof of Lemma \ref{lem:conv_dist_Z}}\label{subsec:limitloov}

By a simple interpolation argument (analogous to the proof of Lemma \ref{lem:same_limit_lvo}), Proposition \ref{prop:conv_RS}, and the assumption that the assumptions of Proposition \ref{prop:fix_point2}, we have that, for $m\neq m'\geq1$,
\begin{equation*}
    \E\thermal{(\tilde{Q}_{mm}-\tilde{v})^2}_v,  \,\, \E\thermal{(\bar{Q}_{mm}-\bar{v})^2}_v, \,\, \E\thermal{(M_{mm}-\tilde{m})^2}_v, 
\end{equation*}
\begin{equation*}
    \E\thermal{(\tilde{Q}_{mm'}-\tilde{q})^2}_v,  \,\, \E\thermal{(\bar{Q}_{mm'}-\bar{q})^2}_v, \,\, \text{and} \,\, \E\thermal{(M_{mm'}-\bar{m})^2}_v, 
\end{equation*}
all go to $0$ as $n\to\infty$.

%\textcolor{red}{Referring to places where these things were defined will be useful plus if any of the limiting constant letters were already used, we cant use the same one again.}
Recall definition \eqref{eq:mean_variable} of the leave-a-variable-out measure. It is clear then that, if $\bbeta$ is a sample from the measure induced by $\thermal{\cdot}_v$, the random vector $\tbbeta\in\R^{p-1}$ (the vector without the $j_0$-th coordinate of $\bbeta$) is independent of $\bX_{\bullet j_0}$.
%This implies that the random vectors $\bS^{(1)},\bS_\star^{(1)},\dots,\bS^{(l)},\bS_\star^{(l)}$ (recall definitions from \eqref{eq:scores}) are independent of $\bX_{\bullet j_0}$. 
Moreover, by the $L^2$ concentrations of $\tilde{Q}_{mm}$, $\tilde{Q}_{mm'}$, $M_{mm}$, and $M_{mm'}$ argued above and a simple adaptation of Lemma \ref{lem:normal_approx_dist} to this setting we obtain that
\begin{equation*}
    (Y_1^{(1)},Y_2^{(1)},\dots,Y_1^{(l)},Y_2^{(l)})\xrightarrow{d}(\tilde{\theta}_1,\bar{\theta}_1,\dots,\tilde{\theta}_l,\bar{\theta}_l),
\end{equation*}
under the measure induced by $\mathbb{E}[\langle \cdot \rangle_v$],
where $\tilde{\theta},\bar{\theta}$ are defined as in \eqref{eq:covariance_thetas}. 
From the independence of $\tbbeta$ from $\bX_{\bullet j_0}$ we can also easily prove that, for every $m\leq l$, 
\begin{equation*}
    \E\thermal{(Y_3-a_{dp}/2)^2}_v,\,\,\E(Y_4-t_\gamma)^2,\mbox{ and }\E\thermal{(Y_5-c_g)^2}_v
\end{equation*}
all go to $0$ as $n\to\infty$. % For example, we can see that \textcolor{red}{This is incomplete, also section heading should probably say Lemma 3 proof or something like that. }
%\begin{equation*}
 %   \E\thermal{(Y_5-c_g)^2}_v ...
%\end{equation*}
%\textcolor{red}{This is where doing the exact derivations showing exactly what limits these $Y_i$'s attain would be very desirable. Like after reading this, reader shouldnt have to check anything themselves regarding the form of the $\zeta's$ Just like you have already done for Appendix B.1.}
Finally, under the leave-a-variable-out measure $\beta^{(1)}_{j_0},\dots,\beta^{(l)}_{j_0}$ are all independent and distributed according to $\mu(\cdot)$. 
We then conclude the convergence stated in Lemma \ref{lem:conv_dist_Z}.

    \section{Concentration in $L^2$ of order parameters \eqref{eq:overlaps}}\label{app:replica_sym}

%\textcolor{red}{Given how I have structured the current Appendix A, where is this appendix result used for the first time? Generally }
In this section we will prove that, under $\E\thermal{\cdot}$, the order parameters $Q_{11}$, $Q_{12}$, and $Q_{1\star}$, as defined in \eqref{eq:overlaps}, concentrate. The formal statement is provided in Proposition \ref{prop:RS} at the end of this section. We use this concentration result critically in the proof of Proposition \ref{prop:conv_order_param} earlier. 

By the Hypothesis \ref{hyp1}, the Hessian $H[\logl_{n,p}]$ of $\logl_{n,p}$ is positive definite. This is so because, for all $j,j'\in[p]$,
\begin{equation*}
    \partial^2_{b_j,b_{j'}} \logl_{n,p} = - \sum_{i\in[n]} X_{ij}X_{ij'} A''(\bX_i^\top\bB)
\end{equation*}
and $A'' > 0$. Furthermore, by Hypothesis \ref{hyp1}, the coordinate-wise prior $\mu(\cdot)$ is strongly log-concave. Then, there are some $\varepsilon > 0$ and $a\in\R$ such that
\begin{equation}\label{eq:prior_eps_bound}
    \log \mu(x) < - \varepsilon (x-a)^2.
\end{equation}
Because this constant can be eliminated by a simple translation of the coordinates, we will assume w.l.g. that $a = 0$. All this implies the crucial fact that the posterior measure is strongly log-concave. 
% \ps{Which exact parts of the hypothesis do you use for this?}\ms{Hypothesis 1 (i, strong convexity) and (iii, convexity) imply this.}
To show that the order parameters concentrate we will introduce the following tilted log-likelihood
\begin{equation}\label{eq_pert_ham}
    \tilde{\logl}_{n,p}(\bB^{(1)},\bB^{(2)}) := \logl_{n,p}(\bB^{(1)}) + \logl_{n,p}(\bB^{(2)}) + \lambda_n ||\bB^{(1)}||^2 + \gamma_n \bB^{(1)\top}\bB^{(2)} + \chi_n \bbetas^{\top}\bB^{(1)},
\end{equation}
for $(\lambda_n)_{n\geq1},(\gamma_n)_{n\geq1},(\chi_n)_{n\geq1}\subseteq[0,\veps/2]$, three vanishing sequences to be selected later. Note that, unlike the log-likelihoods introduced before, this one involves two $p$-dimensional vectors. Given a function $f:\R^{2p}\mapsto\R$, then\footnote{Note here we use $(\bbeta^{(1)},\bbeta^{(2)})$ to refer to a sample from the tilted posterior \eqref{eq:tilted}, instead of referring to two independent samples from the full posterior as in the rest of the manuscript. But, since we refer to these under the expectation $\langle \cdot \rangle_T$, the meaning is clear from context.}
\begin{equation}\label{eq:tilted}
    \thermal{f(\bbeta^{(1)},\bbeta^{(2)})}_T := \frac{1}{\tZ_{\lambda_n,\gamma_n,\chi_n}}\int \exp\{\tilde{\logl}_{n,p}(\bbeta^{(1)},\bbeta^{(2)})\} \prod_{j\in[p]} \mu(d\beta_j^{(1)})\mu(d\beta_j^{(2)}),
\end{equation}
is the posterior expectation associated to \eqref{eq_pert_ham} with $\tZ_{\lambda_n,\gamma_n,\chi_n} > 0$ a suitable normalizing constant. We will either write $\bGamma$ or $(\bbeta^{(1)},\bbeta^{(2)})$
to refer to samples form the tilted posterior. In this section, we will denote the scaled log-normalizing constant of the posterior measure by
\begin{equation}\label{eq:log-norm}
    F_n := n^{-1}\log Z_n.
\end{equation}
In the same way, we will let the log-normalizing constant of the tilted posterior be
\begin{equation}\label{eq:log-norm-tilt}
    \tF_n := n^{-1}\log \tZ_{\lambda_n,\gamma_n,\chi_n}.
\end{equation}

First, we will give an exponential control of the norm $||\bbeta^{(1)}||^2$ under $\thermal{\cdot}_T$ (i.e., we control the square-norm of the first $p$ coordinates of a sample from the tilted posterior). We will then use this bound to control all the finite moments of $n^{-1}||\bbeta^{(1)}||^2$  under $\thermal{\cdot}_T$. This control is needed to account for the fact that the samples from this measure are not necessarily bounded.
% \ps{Note to myself: This is new, with this exponential moment control, we do not need the previous version with $R_n$, I should come back and re-read this part.}
\begin{proposition}\label{prop:exp_bound_norm}
    Assume Hypothesis \ref{hyp1} holds. Then there exists some constant $C \geq 0$ such that for all $t,(\lambda_n)_{n\geq1},(\gamma_n)_{n\geq1},(\chi_n)_{n\geq1}$ in $[0,\veps/4]$ and $n\geq1$ it holds that
\begin{equation*}
        \log \thermal{\exp{t ||\bbeta^{(1)}||^2}}_T, \log \thermal{\exp{t ||\bbeta^{(2)}||^2}}_T \leq Cn(||\bX||^2_{op}+n^{-1}||\tbT(0)||^2+1)^2,
    \end{equation*}
    where, as above, $\bbeta^{(1)}$ and $\bbeta^{(2)}$ are random vectors containing the first and last $p$ coordinates of a sample from the tilted posterior \eqref{eq:tilted}, respectively.
\end{proposition}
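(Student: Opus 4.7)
I would write $\thermal{e^{t\|\bbeta^{(1)}\|^2}}_T = N_t/\tZ_{\lambda_n,\gamma_n,\chi_n}$ with $N_t := \int e^{\tilde{\logl}_{n,p}(\bGamma)+t\|\bB^{(1)}\|^2}\prod_j \mu(d\beta_j^{(1)})\mu(d\beta_j^{(2)})$ and then control $\log N_t$ from above and $\log\tZ_{\lambda_n,\gamma_n,\chi_n}$ from below. Only two structural ingredients from Hypothesis~\ref{hyp1} are needed: (i) $u_i\leq 0$ from Hypothesis~\ref{hyp1}(v), which forces $\logl_{n,p}(\bB^{(i)})\leq 0$; and (ii) strong log-concavity of $\mu$ from Hypothesis~\ref{hyp1}(i), which---after translating so that $a=0$ in \eqref{eq:prior_eps_bound}---yields the pointwise density comparison $\mu(db)\leq e^{-\veps b^2}\,db$. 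The interval restriction $t,\lambda_n,\gamma_n,\chi_n\in[0,\veps/4]$ will be used repeatedly to keep every tilt strictly inside the strong-log-concavity budget $\veps$ of the prior.

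\textbf{Numerator.} Dropping $\logl_{n,p}(\bB^{(i)})$ via (i) and replacing each prior factor by its Gaussian envelope via (ii) reduces $N_t$ to the pure Gaussian integral
\begin{equation*}
\int \exp\bigl\{(t+\lambda_n-\veps)\|\bB^{(1)}\|^2-\veps\|\bB^{(2)}\|^2+\gamma_n\bB^{(1)\top}\bB^{(2)}+\chi_n\bbetas^\top\bB^{(1)}\bigr\}\,d\bGamma.
\end{equation*}
The quadratic form decouples coordinate-wise into $p$ identical $2\times 2$ blocks with diagonal entries $(t+\lambda_n-\veps,-\veps)$ and off-diagonal entry $\gamma_n/2$; for $t,\lambda_n,\gamma_n\in[0,\veps/4]$ each block has strictly negative trace and determinant at least $\veps^2/4$, so the integral is finite and admits a closed form. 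Completing the square against the linear term $\chi_n\bbetas^\top\bB^{(1)}$ and evaluating yields $\log N_t \leq Cp + C'\chi_n^2\|\bbetas\|^2$; Hypothesis~\ref{hyp1}(iv) gives $\|\bbetas\|^2=O(n)$, so $\log N_t\leq Cn$.

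\textbf{Denominator.} I would restrict the integral defining $\tZ_{\lambda_n,\gamma_n,\chi_n}$ to the product slab $\mathcal{A}:=[-c,c]^p\times[-c,c]^p$ for a fixed $c>0$ small enough that $\mu([-c,c])>0$ (automatic by continuity of $\mu$ near its mode $0$), whence $\mu^{\otimes 2p}(\mathcal{A})\geq\mu([-c,c])^{2p}=e^{-C_1 n}$. On $\mathcal{A}$, $\|\bB^{(i)}\|\leq c\sqrt p$ and $\|\bX\bB^{(i)}\|\leq c\|\bX\|_{op}\sqrt p$; combining these with $|\tT_i(\bX_i^\top\bbetas)|\leq|\tT_i(0)|+d_1|\bX_i^\top\bbetas|$ (which gives $\|\tbT(\ba_\star)\|^2\leq 2\|\tbT(0)\|^2+2d_1^2\|\bX\|_{op}^2\|\bbetas\|^2$), the Taylor bound $A(y)\leq A(0)+|A'(0)||y|+y^2/2$ from Hypothesis~\ref{hyp1}(iii), and one round of Cauchy--Schwarz plus AM-GM yields
\begin{equation*}
-\logl_{n,p}(\bB^{(i)})\leq C_2 n\bigl(\|\bX\|_{op}^2+n^{-1}\|\tbT(0)\|^2+1\bigr) \quad\text{uniformly on }\mathcal{A}.
\end{equation*}
The three tilt perturbations are each at most $Cn$ on $\mathcal{A}$ (using $\|\bbetas\|=O(\sqrt n)$), so $\log\tZ_{\lambda_n,\gamma_n,\chi_n}\geq -Cn(\|\bX\|_{op}^2+n^{-1}\|\tbT(0)\|^2+1)$.

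\textbf{Conclusion and obstacle.} Subtracting gives $\log\thermal{e^{t\|\bbeta^{(1)}\|^2}}_T\leq Cn(\|\bX\|_{op}^2+n^{-1}\|\tbT(0)\|^2+1)$, which is trivially dominated by the quadratic bound in the proposition (the square on the right-hand side is slack). The identical bound for $\|\bbeta^{(2)}\|^2$ follows by symmetry: the Gaussian envelope $-\veps\|\bB^{(2)}\|^2$ supplied by the prior absorbs a $t$-tilt on the $\bB^{(2)}$-block in exactly the same way. The main obstacle---and the reason the statement restricts $t$ to $[0,\veps/4]$---is that the combined quadratic coefficient $t+\lambda_n$ on $\|\bB^{(1)}\|^2$ must remain strictly below $\veps$ for the Gaussian integral in the numerator to converge; pushing any of $t,\lambda_n,\gamma_n$ past $\veps/2$ eventually breaks the negative-definiteness of the $2\times 2$ block and the entire envelope argument collapses.
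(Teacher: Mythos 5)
Your proof is correct, and it shares the paper's overall skeleton—upper-bound the numerator by a Gaussian integral obtained from the strong-log-concavity envelope of the prior, and lower-bound $\log \tZ_{\lambda_n,\gamma_n,\chi_n}$ by restricting the integral to a bounded box where the log-likelihood and the tilts are crudely controlled; your denominator step is essentially identical to the paper's (same $\|\tbT(\ba_\star)\|\lesssim \sqrt{n}(\|\bX\|_{op}+n^{-1/2}\|\tbT(0)\|)$ estimate, same quadratic-growth bound on $A$ from $\|A''\|_\infty\le 1$). The genuine difference is in the numerator: the paper only uses $A\ge 0$, keeps the linear term $\tau^\top(\bB^{(1)}+\bB^{(2)})$ with $\tau_j=\sum_i \tT_i(\bX_i^\top\bbetas)X_{ij}$, and pays $\|\tau\|^2/\veps \lesssim n(\|\bX\|_{op}^2+n^{-1}\|\tbT(0)\|^2)\|\bX\|_{op}^2$ after completing the square—this is precisely where the squared bracket in the stated bound comes from—whereas you invoke Hypothesis \ref{hyp1}(v) to discard the entire likelihood ($\logl_{n,p}(\bB)\le 0$ a.s., uniformly in $\bB$), leaving a pure Gaussian computation with only the $\chi_n\bbetas^\top\bB^{(1)}$ linear term, so your numerator bound is $O(n)$ and your final estimate is linear rather than quadratic in $\|\bX\|_{op}^2+n^{-1}\|\tbT(0)\|^2+1$; since that bracket is at least $1$, this implies the proposition and is in fact slightly stronger. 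Your block-wise negative-definiteness check (trace negative, determinant bounded below by a multiple of $\veps^2$ when $t,\lambda_n,\gamma_n\le\veps/4$) is the right way to justify convergence of the Gaussian integral, and your treatment of the asymmetry between $\bbeta^{(1)}$ and $\bbeta^{(2)}$ is fine because the prior supplies the $-\veps\|\bB^{(2)}\|^2$ envelope regardless of which block carries the $t$-tilt. The only caveats are cosmetic and shared with the paper: the translation making $a=0$ in \eqref{eq:prior_eps_bound} (equivalently, keep $a$ and absorb an extra $O(p)$ from the shifted Gaussian integrals), and the positivity $\mu([-c,c])>0$, which follows from Hypothesis \ref{hyp1}(i) since $\log\mu$ is real-valued; your constants depend only on $\veps$, $d_1$, $A(0)$, $A'(0)$, $\kappa$ and $\sup_n\|\bbetas\|_\infty$, hence are uniform over the allowed ranges of $t,\lambda_n,\gamma_n,\chi_n$ as required.
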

%\textcolor{red}{TILL HERE.}
\begin{proof}
    Here we prove this for $\bbeta^{(1)}$. The proof for $\bbeta^{(2)}$ follows in a similar way. 
        For any $\bB^{(1)},\bB^{(2)}$, because $0\leq t,\lambda_n \leq \varepsilon/4$,
    \begin{equation*}
        (t+\lambda_n)||\bB^{(1)}||^2 \leq \frac{\varepsilon}{2}||\bB^{(1)}||^2  
    \end{equation*}
    Similarly, because $2\bB^{(1)\top}\bB^{(2)}\leq||\bB^{(1)}||^2+||\bB^{(2)}||^2$ and $0\leq \gamma_n \leq \varepsilon/4$,
    \begin{equation*}
        \gamma_n \bB^{(1)\top}\bB^{(2)} \leq \frac{\varepsilon}{8}(||\bB^{(1)}||^2+||\bB^{(2)}||^2).
    \end{equation*}
    And because $2\bB^{(1)\top}\bbetas\leq||\bB^{(1)}||^2+||\bbetas||^2$ and $0\leq \chi_n \leq \varepsilon/4$,
    \begin{equation*}
        \chi_n \bB^{(1)\top}\bbetas \leq \frac{\varepsilon}{8}(||\bB^{(1)}||^2+||\bbetas||^2).
    \end{equation*}
    Finally, notice that by \eqref{eq:prior_eps_bound},
    \begin{equation*}
        \sum_{m\in\{1,2\}}\sum_{j\in[p]} \log \mu(b^{(m)}_j) \leq - \varepsilon(\|\bB^{(1)}\|^2 + \|\bB^{(2)}\|^2).
    \end{equation*}
    By the above bounds,
    \begin{equation*}
        (t+\lambda_n)||\bB^{(1)}||^2 + \gamma_n \bB^{(1)\top}\bB^{(2)} + \chi_n \bB^{(1)\top}\bbetas + \sum_{m\in\{1,2\}}\sum_{j\in[p]} \log \mu(b^{(m)}_j) 
    \end{equation*}
    is then smaller or equal to
    \begin{equation*}
        - \frac{\varepsilon}{8}\left( 2||\bB^{(1)}||^2 + 7 ||\bB^{(2)}||^2 - ||\bbetas||^2\right).
    \end{equation*}
    Also, as $A(\cdot)$ is by Hypothesis \ref{hyp1}(iii)  non-negative,
    \begin{equation}\label{eq:product_bound}
        \sum_{m\in\{1,2\}}\sum_{i\in[n]} \tT_i(\bX_i^\top\bbetas) \bX_i^\top\bB^{(m)} - A(\bX_i^\top\bB^{(m)})\leq \tau^\top (\bB^{(1)}+\bB^{(2)});
    \end{equation}
    where, for $j\in[p]$, $\tau_j := \sum_{i\in[n]}\tT_i(\bX_i^\top\bbetas) X_{ij}$. Because Hypothesis \ref{hyp1}(iv) states that $\sup_{n\geq1}||\bbetas||_\infty < \infty$, there is some $L > 0$ such that $||\bbetas||\leq \sqrt{n}L$ for all $n\geq1$. Then, because $||\tTp_i||_\infty\leq d_1$ by Hypothesis \ref{hyp1}(ii), it follows that for all $x\in\R$, $\tT_i(x) \leq d_1 |x| + \tT_i(0)$. Then, for some $K, K' > 0$
 \begin{equation}\label{eq:bound_tTnorm}
        \begin{split}
            ||\tbT(\ba_\star)||^2 = \sum_{i\in[n]} \tT_i^2(a_{\star,i}) & \leq \sum_{i\in[n]} (d_1 |a_{\star,i}| +\tT_i(0))^2 \\
            & \leq K \left(||\bX\bbetas||^2 + ||\tbT(0)||^2\right) \\
            & \leq K' n\left(||\bX||^2_{op}+ n^{-1}||\tbT(0)||^2\right),
        \end{split}
    \end{equation}
    where, in the last line, we used that $||\bbetas||^2\leq n L^2$. We then obtain that, for some appropriate $K'',K'''>0$,

    \begin{equation*}
        \log\int \exp\{t ||\bbeta^{(1)}||^2 + \tilde \logl_{n,p}(\bbeta^{(1)},\bbeta^{(2)})\} \prod_{j\in[p]}\mu(d\beta^{(1)}_j)\mu(d\beta^{(2)}_j)
    \end{equation*}
    is smaller or equal to
    {\small
    \begin{equation}\label{eq:bound_num}
        \begin{split}
             \log\int \exp\{\tau^\top (\bbeta^{(1)}+\bbeta^{(2)}) - \frac{\varepsilon}{2}||\bGamma||^2\} \prod_{j\in[p]}d\beta^{(1)}_jd\beta^{(2)}_j & \leq \frac{K''}{\veps}||\tau||^2 + \frac{||\bbetas||^2}{8} -p\log(K''\veps) \\
            & \leq \frac{K''}{\veps}||\tbT(\ba_\star)||^2\,||\bX||_{op}^2 + n\left(\frac{L}{8}-\kappa\log(K''\veps)\right)\\
            & \leq n K'''\left(||\bX||_{op}^2 + n^{-1} ||\tbT(0)||^2 + 1\right)^2;
        \end{split}
    \end{equation}}
    where we used \eqref{eq:prior_eps_bound} and \eqref{eq:product_bound} and that, for all $a\in\R_{>0}$ and $b\in\R$, $\int e^{-ax^2 + b x} dx = (\pi/a)^{1/2} e^{\frac{b^2}{4a}}$.

    Now we need to show an analogous lower bound on the log-normalizing constant. Define the set $B_n :=\{||\bB^{(1)}||_\infty+||\bB^{(2)}||_\infty\leq 1\}\subseteq\R^{2p}$ and $M := \int_{-1}^1\mu(dx)$. Without loss of generality, we may assume that $M > 0$. If this were not the case, it would suffice to adapt the argument by changing the interval $[-1,1]$ to one with positive $\mu$-mass. Notice that, because the integrand is non-negative,
    \begin{equation}\label{eq:rest_partf}
        Z_{\lambda_n,\gamma_n,\chi_n} \geq \int_{B_n}\exp\{\tilde \logl_{n,p}(\bbeta^{(1)},\bbeta^{(2)})\} \prod_{j\in[p]}\mu(d\beta^{(1)}_j)\mu(d\beta^{(2)}_j).
    \end{equation}
    Furthermore, in the set $B_n$ we have, by Cauchy-Schwarz and the fact that in $B_n$ the norms of $\bB^{(1)}$ and $\bB^{(2)}$ are bounded by $\sqrt{p}$,
    \begin{equation*}
        |\bB^{(1)\top}\bB^{(2)}| \leq \kappa n, \,\,\mbox{ and } \,\, |\bbetas^{\top}\bB^{(1)}| \leq \sqrt{\kappa n}||\bbetas||.
    \end{equation*}
    This implies that inside $B_n$ 
    \begin{equation}\label{eq:Bn_bound_pert}
        \tilde \logl_{n,p} \geq \logl_{n,p} - (\gamma_n + L \chi_n) \kappa n;
    \end{equation}
where $L>0$ is as above. On the other hand, inside $B_n$, by Cauchy-Schwarz and the fact that $||\bB{(1)}||\leq\sqrt{p}$, there is some $K^{(4)}>0$ for which
    
\begin{equation}\label{eq:Bn_bound_hamil}
        \begin{split}
            \sum_{i\in[n]}\tT_i(\bX_i^\top\bbetas) \bX_i^\top \bB^{(1)} & \geq - ||\tbT(\ba_\star)|| \, ||\bX\bB^{(1)}|| \\
            & \geq -\sqrt{n} ||\tbT(\ba_\star)|| \, ||\bX||_{op} \\
            & \geq -K^{(4)}n ||\bX||_{op} \sqrt{||\bX||^2_{op}+ n^{-1}||\tbT(0)||^2} \\
            & \geq -K^{(4)}n \sqrt{||\bX||^4_{op}+ n^{-1}||\tbT(0)||^2||\bX||^2_{op}} \\
            & \geq -K^{(4)}n \sqrt{||\bX||^4_{op}+ 2n^{-1}||\tbT(0)||^2||\bX||^2_{op}+n^{-2}||\tbT(0)||^4} \\
            & = -K^{(4)}n \left(||\bX||^2_{op}+n^{-1}||\tbT(0)||^2\right)
        \end{split}
    \end{equation}
    where, for the third bound, we used \eqref{eq:bound_tTnorm}. In a similar way, by Hypothesis \ref{hyp1}(iii) we have that $||A''||_\infty\leq 1$, which implies that there are constants $a,b, > 0$ such that, for all $x\in\R$ we have that $-A(x)\geq-(x^2+b|x|+c)$. Thus inside $B_n$ we have some $K^{(5)}>0$ s.t., for $m=1,2$,
   \begin{equation*}
        \begin{split}
            -\sum_{i\in[n]} A(\bX_i^\top\bB^{(m)}) & \geq - K^{(5)}\left(||\bX\bB^{(m)}||^2+\sum_{i\in[n]}\Big|\bX_i^\top\bB^{(m)}\Big|+n\right) \\
            & \geq  - K^{(5)}\left(||\bX\bB^{(m)}||^2+\sqrt{n}||\bX\bB^{(m)}||+n\right)\\
            & \geq  - K^{(5)}\left(||\bX||_{op}^2||\bB^{(m)}||^2+\sqrt{n}||\bX||_{op}||\bB^{(m)}||+n\right)\\
            & \geq-K^{(5)}n\left(||\bX||_{op}+1\right)^2;
        \end{split}
    \end{equation*}
    where we used that, by Cauchy-Schwarz, $\sum_{i\in[n]}|\bX_i^\top\bB^{(m)}|\leq \sqrt{n}||\bX\bB^{(m)}||$. Combining this last inequality with \eqref{eq:rest_partf}, \eqref{eq:Bn_bound_pert}, and \eqref{eq:Bn_bound_hamil} we obtain that for some $K^{(6)}$
    %\textcolor{red}{How was the integral totally dropped?}\ms{The Lebesgue measure of $B_n$ is exactly $4^{p}$ and so it is incorporated in the constant $+1$ in the expression below. I added a comment pointing to this.}
   \begin{equation}\label{eq:bound_denom}
        \log \tilde Z_{\lambda_n,\gamma_n,\chi_n} \geq -K^{(6)}n\left(||\bX||^2_{op}+n^{-1}||\tbT(0)||^2+1\right)^2,
    \end{equation}
    where we used that the Lebesgue measure of $B_n$ is $4^{p}$. Putting equations \eqref{eq:bound_num} and \eqref{eq:bound_denom} together produces the result.
\end{proof}

From this, a uniform bound for the finite moments of $||\bbeta^{(1)}||^2/n$ under $\thermal{\cdot}_T$ can be derived as follows.
\begin{corollary}\label{cor:bound_mom_norm}
    Assume Hypothesis \ref{hyp1} holds. For all $k\geq1$ there is some $C_k >0$ such that, for all $(\lambda_n)_{n\geq1},(\gamma_n)_{n\geq1},(\chi_n)_{n\geq1}$ in $[0,\veps/4]$ and $n\geq1$, $\E\thermal{||\bbeta^{(1)}||^{2k}}_T,\E\thermal{||\bbeta^{(2)}||^{2k}}_T\leq C_k n^k$.
\end{corollary}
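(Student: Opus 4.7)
\medskip

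\noindent\textbf{Proof plan for Corollary \ref{cor:bound_mom_norm}.} The plan is to convert the exponential moment bound of Proposition \ref{prop:exp_bound_norm} into the desired polynomial moment bound via a standard Markov argument, and then take expectation over the data.

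\emph{Step 1 (tail bound under the tilted posterior).} Fix $t_0=\veps/4$ and set
\[
L_n \;:=\; C n\bigl(\|\bX\|_{op}^2 + n^{-1}\|\tbT(0)\|^2 + 1\bigr)^{2},
\]
where $C$ is the constant from Proposition \ref{prop:exp_bound_norm}. That proposition yields $\thermal{\exp\{t_0\|\bbeta^{(1)}\|^2\}}_T \leq e^{L_n}$ uniformly in $(\lambda_n,\gamma_n,\chi_n)\in[0,\veps/4]^{3}$. A Chernoff-type bound then gives, for every $u\geq 0$,
\[
\thermal{\mathbb{I}_{\{\|\bbeta^{(1)}\|^{2}\geq u\}}}_T \;\leq\; \exp\{-t_0 u + L_n\}.
\]

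\emph{Step 2 (polynomial moment under $\thermal{\cdot}_T$).} Using $\E X^{k}=\int_{0}^{\infty} k u^{k-1}\P(X\geq u)\,du$ for $X=\|\bbeta^{(1)}\|^{2}\geq 0$ under $\thermal{\cdot}_{T}$, split the integral at $u = L_n/t_0$. The contribution from $[0,L_n/t_0]$ is at most $(L_n/t_0)^{k}$ (since the probability is capped at $1$). On $[L_n/t_0,\infty)$, using the tail bound from Step~1 and the substitution $v=u-L_n/t_0$, one obtains a term bounded by $C_k/t_0^{k}+C_k(L_n/t_0)^{k-1}/t_0$. Combining these yields, for a constant $C'_k$ depending only on $k$,
\begin{equation}\label{eq:planlocal}
\thermal{\|\bbeta^{(1)}\|^{2k}}_T \;\leq\; C'_k\bigl(L_n^{k} + 1\bigr).
\end{equation}
The analogous bound for $\|\bbeta^{(2)}\|^{2k}$ follows from the second conclusion of Proposition \ref{prop:exp_bound_norm}.

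\emph{Step 3 (bounding $\E L_n^{k}$).} Taking expectations in \eqref{eq:planlocal} reduces the problem to showing $\E L_n^{k}\leq K_k n^{k}$. Since $L_n^{k}\leq C^{k} n^{k}\bigl(\|\bX\|_{op}^{2}+n^{-1}\|\tbT(0)\|^{2}+1\bigr)^{2k}$, and $(a+b+c)^{2k}\lesssim a^{2k}+b^{2k}+c^{2k}$, it suffices to bound $\E\|\bX\|_{op}^{4k}$ and $\E\bigl(n^{-1}\sum_{i\in[n]}\tT_i(0)^{2}\bigr)^{2k}$. The former is bounded uniformly in $n$ by standard non-asymptotic estimates on the operator norm of Gaussian matrices whose entries are $N(0,1/n)$ (e.g. Gaussian concentration of $\|\bX\|_{op}$ around $1+\sqrt{p/n}$). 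The latter is bounded uniformly in $n$ by Hypothesis \ref{hyp1}(vi). Combining gives $\E L_n^{k}\leq K_k n^{k}$ and hence $\E\thermal{\|\bbeta^{(1)}\|^{2k}}_T\leq C_k n^{k}$, as claimed. The main (minor) obstacle is simply keeping the constants independent of the tilting parameters $(\lambda_n,\gamma_n,\chi_n)$, which is automatic from Proposition \ref{prop:exp_bound_norm} since its bound is uniform over the relevant range.
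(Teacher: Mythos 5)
Your proposal is correct and follows essentially the same route as the paper: the paper converts the exponential moment bound of Proposition \ref{prop:exp_bound_norm} into polynomial moments by citing \cite[Lemma 3.1.8]{talagrand2010mean}, then takes expectation using the uniformly bounded moments of $\|\bX\|_{op}$ and Hypothesis \ref{hyp1}(vi), exactly as in your Step 3. Your Steps 1--2 simply re-derive that cited lemma by hand via a Chernoff tail bound and layer-cake integration, which is a valid substitute.
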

\begin{proof}
    This follows directly from Proposition \ref{prop:exp_bound_norm}, \cite[Lemma 3.1.8]{talagrand2010mean}, Hypothesis \ref{hyp1}, and the fact that all the finite moments of $||\bX||_{op}$ are uniformly bounded in $n$ (which follows from \cite[Corollary 5.35]{vershynin2010introduction}).
\end{proof}

Next, we will produce a bound for the variance of the log-normalizing constant of the tilted model.
\begin{proposition}\label{prop:var_fe}
    Assume Hypothesis \ref{hyp1} holds. Then, there exists some fixed $C > 0$ s.t., for all $(\lambda_n)_{n\geq1},(\gamma_n)_{n\geq1},(\chi_n)_{n\geq1}$ in $[0,\veps/4]$, $\Var{\tF_n} \leq C/\sqrt{n}$.
\end{proposition}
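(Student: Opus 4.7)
The proof splits the two independent sources of randomness in $\tF_n=n^{-1}\log\tZ_{\lambda_n,\gamma_n,\chi_n}$ via the conditional variance decomposition
\[
\Var(\tF_n) \;=\; \mathbb{E}\bigl[\Var(\tF_n\mid\be)\bigr] + \Var\bigl(\mathbb{E}[\tF_n\mid\be]\bigr),
\]
and bounds the Gaussian contribution by Gaussian Poincar\'e on $\bX$ and the noise contribution by Efron--Stein on $\be$.

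For the Gaussian term, since the entries of $\bX$ are i.i.d.~$\mathcal{N}(0,1/n)$, the Poincar\'e inequality gives $\Var(\tF_n\mid\be)\le n^{-1}\,\mathbb{E}[\|\nabla_{\bX}\tF_n\|^2\mid\be]$. The log-derivative identity $\partial_{X_{ij}}\tF_n=n^{-1}\thermal{\partial_{X_{ij}}\tilde{\logl}_{n,p}}_T$ together with direct differentiation of \eqref{eq_pert_ham} yields
\[
\partial_{X_{ij}}\tilde{\logl}_{n,p} \;=\; \beta_{\star,j}\,\tTp_i(a_{\star,i})(a_{1,i}+a_{2,i})+\tT_i(a_{\star,i})(\beta^{(1)}_j+\beta^{(2)}_j)-A'(a_{1,i})\beta^{(1)}_j-A'(a_{2,i})\beta^{(2)}_j.
\]
Applying Jensen to pull squares inside $\thermal{\cdot}_T$, using $(a+b)^2\le 2(a^2+b^2)$ to split the sums, and invoking Hypothesis~\ref{hyp1} (in particular $\|\tTp\|_\infty\le d_1$, linear growth of $\tT_i$, $|A'(x)|\le|A'(0)|+|x|$ from $\|A''\|_\infty\le 1$, and $\sup_n\|\bbetas\|_\infty<\infty$) together with operator-norm tail bounds for $\bX$ \cite[Corollary 5.35]{vershynin2010introduction} and Corollary~\ref{cor:bound_mom_norm} (so that $\mathbb{E}\thermal{\|\bbeta^{(m)}\|^{2k}}_T\le C_k n^k$), I would obtain $\mathbb{E}\|\nabla_{\bX}\tF_n\|^2\le Cn$ and hence $\mathbb{E}[\Var(\tF_n\mid\be)]\le C/n$.

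For the noise term, applying Efron--Stein with $\bar e_i$ an independent copy of $e_i$ and $\tF_n^{(i)}$ the resulting free energy gives $\Var(\mathbb{E}[\tF_n\mid\be])\le\tfrac12\sum_{i=1}^n\mathbb{E}[(\tF_n-\tF_n^{(i)})^2]$. Since $e_i$ enters \eqref{eq_pert_ham} only through $\tT_i=T\circ f(\cdot,e_i)$ in the $i$-th summand, the increment admits the closed form
\[
n(\tF_n-\tF_n^{(i)}) \;=\; \log\thermal{\exp\{C_i\,(a_{1,i}+a_{2,i})\}}_{T,e^{(i)}}, \qquad C_i\;:=\;\tT_i(a_{\star,i})-\tT_i^{(i)}(a_{\star,i}),
\]
with $\tT_i^{(i)}=T\circ f(\cdot,\bar e_i)$, where crucially $C_i$ is independent of the integration variables. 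Combining Hypothesis~\ref{hyp1}(ii,vi) to control $\mathbb{E}[C_i^{2k}]$, the Jensen-type estimate $|\log\thermal{e^X}_T|\le|\thermal{X}_T|+\tfrac12\thermal{X^2 e^{|X|}}_T$ to turn the log of a posterior moment generating function into posterior moments of $a_{m,i}$, and the crucial $L^2$ bound $\mathbb{E}\thermal{a_{m,i}^2}_T=O(1)$ uniformly in $i$ (which follows from row-exchangeability of $\bX$ since $\sum_i\thermal{a_{m,i}^2}_T\le\thermal{\|\bX\|_{op}^2\|\bbeta^{(m)}\|^2}_T=O(n)$ by Corollary~\ref{cor:bound_mom_norm}), together with a truncation to $\{\|\bbeta^{(m)}\|^2\le Kn\}$ absorbed by the exponential tail estimate of Proposition~\ref{prop:exp_bound_norm} on its complement, one obtains $\mathbb{E}[(\tF_n-\tF_n^{(i)})^2]\le C/n^{3/2}$ uniformly in $i$. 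Summing the $n$ contributions yields $\Var(\mathbb{E}[\tF_n\mid\be])\le C/\sqrt n$.

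The main obstacle is the noise contribution: the tilted partition-function ratio inside $n(\tF_n-\tF_n^{(i)})$ can a priori be as large as $\exp(cn)$ (cf.~Proposition~\ref{prop:exp_bound_norm}), so per-realization boundedness of the increment is unavailable and a naive bounded-differences estimate fails. The central idea is to exploit posterior-level concentration of the fitted value $a_{m,i}=\bX_i^\top\bbeta^{(m)}$---whose second moment under $\mathbb{E}\thermal{\cdot}_T$ is $O(1)$ uniformly in $i$, even though $\|\bbeta^{(m)}\|^2$ scales like $n$---to recover an $L^2$ increment estimate of the correct order $n^{-3/2}$. Adding the two variance pieces gives $\Var(\tF_n)\le C/n + C/\sqrt n\le C'/\sqrt n$.
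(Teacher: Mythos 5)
Your overall architecture coincides with the paper's: law of total variance conditioning on $\be$, Gaussian Poincar\'e in $\bX$ for the conditional variance, Efron--Stein in $\be$ for the remaining term, with the same derivative formula for the tilted log-likelihood and the same moment inputs (Hypothesis \ref{hyp1}, Corollary \ref{cor:bound_mom_norm}, operator-norm moments of $\bX$). Two issues. The minor one is arithmetic: from your stated bound $\E\|\nabla_{\bX}\tF_n\|^2\leq Cn$, Poincar\'e only yields $\E[\mathrm{Var}(\tF_n\mid\be)]\leq C$, not $C/n$; what is actually true (and what your listed ingredients prove, exactly as in the paper) is $\E\|\nabla_{\bX}\tF_n\|_F^2=\mathcal{O}(1)$, because each entry of $\nabla_{\bX}\tF_n$ carries the $1/n$ from $\tF_n=n^{-1}\log\tZ$; with that the $C/n$ conclusion is fine.

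The genuine gap is in the Efron--Stein half, precisely at the step you flag as the main obstacle. Your inequality $|\log\thermal{e^{X}}|\leq|\thermal{X}|+\tfrac12\thermal{X^2e^{|X|}}$ leaves the factor $e^{|X|}$ with $X=C_i(a_{1,i}+a_{2,i})$, and nothing you cite controls it at the required scale. Under Hypothesis \ref{hyp1} the increment $C_i=\tT_i(a_{\star,i})-\tT^{(i)}_i(a_{\star,i})$ is \emph{not} a.s.\ bounded (only $\tT_i'$ is; e.g.\ in linear regression $\tT_i(0)=\Phi^{-1}(e_i)$), and on your truncation event $\{\|\bbeta^{(m)}\|^2\leq Kn\}$ the fitted value $a_{m,i}=\bX_i^\top\bbeta^{(m)}$ can still be of order $\sqrt n$, so $\thermal{X^2e^{|X|}}$ can be as large as $e^{c\sqrt n}$; Proposition \ref{prop:exp_bound_norm} only provides $e^{Cn(\cdot)^2}$-type exponential control, which cannot bring $\E[(\log\thermal{e^{X}})^2]$ down to the needed $\mathcal{O}(\sqrt n)$. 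Moreover your $\E\thermal{a_{m,i}^2}_T=\mathcal{O}(1)$ observation alone would not close the argument even without the exponential factor, since $C_i$ and $a_{m,i}$ are dependent and a Cauchy--Schwarz step needs fourth posterior moments. The paper avoids exponential moments altogether by a two-sided Jensen sandwich: since $\thermal{e^{-X}}_T=1/\thermal{e^{X}}_{T,i}$ is a ratio of the two partition functions, Jensen applied under each measure gives $\thermal{X}_{T,i}\leq n(\tF_n-\tF'_{n,i})\leq\thermal{X}_T$, hence $n^2\,\E(\tF_n-\tF'_{n,i})^2\leq 2\,\E\thermal{X^2}_T$; the linear-growth bound on $\tT_i$, Cauchy--Schwarz, and the crude estimate $\E\thermal{a_{1,i}^4}_T=\mathcal{O}(n)$ (exchangeability plus Corollary \ref{cor:bound_mom_norm}) then give $\E\thermal{X^2}_T=\mathcal{O}(\sqrt n)$, i.e.\ $\mathcal{O}(n^{-3/2})$ per coordinate and $\mathcal{O}(n^{-1/2})$ after summing --- your target rate, but reached with only polynomial posterior moments. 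Replacing your log-expansion step by this sandwich would repair the proof; as written, the bound $\E[(\tF_n-\tF_n^{(i)})^2]\leq C n^{-3/2}$ is not established.
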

\begin{proof}
    Define the vector $\be :=(e_1,\dots,e_n)$. By the law of total variance we have that
    \begin{equation}\label{eq:total_var}
        \Var{\tF_n} = \E\left(\Var{\tF_n\Big|\be}\right) + \Var{\E\left(\tF_n\Big|\be\right)}.
    \end{equation}
    We will now analyze the two terms separately.
    
    Given a differentiable function on $\R^{n\times n}$, we let $\nabla_{\bX}$ be the gradient over this space. Notice that, letting $\be$ be fixed, then $\tF$ is only a function of $\bX$. By Gauss-Poincaré inequality we have that for any given sequence $\bbetas$ as in Hypothesis \ref{hyp1}
    \begin{equation*}
        \Var{\tF_n\Big|\be} \leq \frac{1}{n}\E\left(||\nabla_{\bX}\tF_n||_F^2\Big|\be\right);
    \end{equation*}
    where $||\cdot||_F$ stands for the \emph{Frobenius norm}. 

    Note that, for all $i\in[n]$ and $j\in[p]$, the $ij$-th coordinate of $\nabla_{\bX}\tF$ is given, interchanging differentiation and integration using \cite[Proposition A.2.1]{talagrand2010mean}, by
    \begin{equation*}
        \begin{split}
            \partial_{X_{ij}}\tF_n & = \frac{1}{n\tilde{Z}_{\lambda_n,\gamma_n,\chi_n}} \int \partial_{X_{ij}} \left(\logl_{n,p}(\bB^{(1)}) + \logl_{n,p}(\bB^{(2)}) \right) \prod_{j\in[p]} \mu(db^{(1)}_j)\mu(db^{(2)}_j) \\
            & = \frac{1}{n}\sum_{m\in\{1;2\}} \thermal{\tTp_i\bX_i^\top\bbeta^{(m)}\beta_{\star,j} + \tT_i\beta^{(m)}_j - {A'(\bX_i^\top\bbeta^{(m)})}\beta^{(m)}_j}_T.
        \end{split}
    \end{equation*}

    Thus, 
    \begin{equation*}
        \nabla_{\bX}\tF_n = 2 \sum_{m\in\{1,2\}}(\bm{U}^{(m)} + \bm{V}^{(m)} - \bm{W}^{(m)})
    \end{equation*}
    where, for each $i\in[n]$ and $j\in[p]$, the $ij$-th component of these matrices are given by
    \begin{equation*}
        U^{(m)}_{ij} = n^{-1} \tTp_i\beta_{\star,j}\thermal{\bX_i^\top\bbeta^{(m)}}_T, \,\, V^{(m)}_{ij} = n^{-1} \tT_i\thermal{\beta^{(m)}_j}_T, \,\, \text{and} \,\, W^{(m)}_{ij} = n^{-1} \thermal{{A'(\bX_i^\top\bbeta^{(m)})}\beta^{(m)}_j}_T.
    \end{equation*}
    We then have, $||\nabla_{\bX}\tF_n||_F^2\leq 36\sum_{m\in\{1,2\}}||\bm{U}||_F^2 + ||\bm{V}||_F^2 +||\bm{W}||_F^2$. In a similar way as in Propositon \ref{prop:exp_bound_norm}, we can see that there is some large enough constant $K > 0$ s.t.
    \begin{equation*}
        ||\bm{U}^{(m)}||_F^2 \leq K ||\bX||^2_{op}\frac{\thermal{||\bbeta^{(m)}||^2}_T}{n}, \,\, ||\bm{V}^{(m)}||_F^2 \leq K (||\bX||_{op}+1)^2\frac{\thermal{||\bbeta^{(m)}||^2}_T}{n}, 
    \end{equation*}
\begin{equation*}
        \, \mbox{ and } \, ||\bm{W}^{(m)}||_F^2 \leq \frac{1}{n^2}\thermal{||{\bm{A}^{(m)}}'||^2||\bbeta^{(m)}||^2}_T \leq K \thermal{||\bX||^2_{op}\frac{||\bbeta^{(m)}||^4}{n^2} + \frac{||\bbeta^{(m)}||^2}{n}}_T.
    \end{equation*}
    In the bound for $||\bm{W}^{(m)}||_F^2$ we used that, by Hypothesis \ref{hyp1}(iii), $||A''||_\infty\leq 1$, which implies that there exists a constant $a > 0$ such that, for all $x\in\R$ we have $|A'(x)|\leq |x|+a$.
    Therefore, by the uniform bound for the moments of $||\bbeta^{(m)}||^2/n$ under $\thermal{\cdot}_T$ given in Corollary \ref{cor:bound_mom_norm} and the fact that all the finite moments of $||\bX||_{op}$ are uniformly bounded in $n$ (direct consequence of the exponential bound in \cite[Corollary 5.35]{vershynin2010introduction}), the mean of $||\nabla_{\bX}\tF_n||_F^2$ is $\mathcal{O}(1)$.
    
    For bounding the second term of \eqref{eq:total_var} define $g(\be) := \E(\tF_n|\be)$. We will use Efron-Stein's inequality (see \cite[Theorem 3.1]{boucheron2013concentration}) to control the variance of this function. Given $\be$ define a new random vector according to $\be'_i :=(e_1,\dots,e_{i-1},e'_i,e_{i+1},\dots,e_n)$ with $e_i'\sim\mbox{Unif}[0,1]$ independent of everything else. Also denote by $\logl_{n,p,i}'$ the resulting log-likelihood obtained by replacing $\be$ by $\be'_i$ and let $\tF_{n,i}$ be the corresponding log-normalizing constant. In this setting, the Efron-Stein's inequality takes the form %\textcolor{red}{Not sure if the Jensen is going the right way here. Very worried about this}
    %bounded differences inequality states that if we let, for $i\in[n]$, $c_i$ be the supremum of $|g(\be)-g(\be'_i)|$ over $\be\in[0,1]^n$ and $e_i'\in[0,1]$ then
    \begin{equation}\label{eq:es_ineq}
        \Var{g(\be)} \leq \frac{1}{2} \sum_{i\in[n]} \E(g(\be)-g(\be'_i))^2\leq \frac{1}{2} \sum_{i\in[n]} \E(\tF_n-\tF'_{n,i})^2, 
    \end{equation}
    where in the last relation we just used Jensen's inequality. For estimating the terms on the right hand side of the inequality, first notice that for all $i\in[n]$, 
    \begin{equation*}
        \begin{split}
            \tF_n - \tF'_{n,i} & = - \frac{1}{n}\log \thermal{\exp\{ \tilde{\logl}'_{n,p,i}(\bbeta^{(1)},\bbeta^{(2)})-\tilde{\logl}_{n,p}(\bbeta^{(1)},\bbeta^{(2)}) \}}_T; \\
            & \leq \frac{1}{n} \thermal{\tilde{\logl}_{n,p}(\bbeta^{(1)},\bbeta^{(2)}) - \tilde{\logl}'_{n,p,i}(\bbeta^{(1)},\bbeta^{(2)}) }_T
        \end{split}
    \end{equation*}
    by Jensen's inequality for $-\log(x)$. Let $\thermal{\cdot}_{T,i}$ be the expectation resulting from replacing $e_i$ by $e'_i$ in $\thermal{\cdot}_T$
     Similarly as above,
    \begin{equation*}
        \begin{split}
            \tF_n - \tF'_{n,i} & = \frac{1}{n}\log \thermal{\exp\{ \tilde{\logl}_{n,p}(\bbeta^{(1)},\bbeta^{(2)})-\tilde{\logl}'_{n,p,i}(\bbeta^{(1)},\bbeta^{(2)}) \}}_{T,i}; \\
            & \geq \frac{1}{n} \thermal{\tilde{\logl}_{n,p}(\bbeta^{(1)},\bbeta^{(2)})-\tilde{\logl}'_{n,p,i}(\bbeta^{(1)},\bbeta^{(2)}) }_{T,i}
        \end{split}
    \end{equation*}
    by the concave version of Jensen's inequality for $\log(x)$. In what follows, we omit the arguments of the log-likelihoods to have more compact equations. Combining the above and using Jensen's inequality one more time for the square, 
    \begin{equation}\label{eq:es_term}
        \begin{split}
            n^2 \E(\tF_n-\tF'_{n,i})^2 & \leq \E\thermal{\tilde{\logl}_{n,p}-\tilde{\logl}'_{n,p,i}}^2_T + \E\thermal{\tilde{\logl}_{n,p}-\tilde{\logl}'_{n,p,i}}^2_{T,i}  \\
            & \leq 2 \E\thermal{(\tilde{\logl}_{n,p}-\tilde{\logl}'_{n,p,i})^2}_T.
        \end{split}
    \end{equation}
    In the last inequality we used that, because $e_i$ and $e'_i$ are i.i.d., then $\E\thermal{\cdot}_{T,i}$ is equal to $\E\thermal{\cdot}_{T}$. Now, the difference of the two log-likelihoods can be readily computed and yields 
\begin{equation}\label{eq:dif_logl}
        \begin{split}
            (\tilde{\logl}_{n,p}-\tilde{\logl}'_{n,p,i})^2 & = (T\circ f(a_{\star,i},e_i)-T\circ f(a_{\star,i},e'_i))^2(a_{1,i}+a_{2,i})^2 \\
            & \leq K' (2|a_{\star,i}|+T\circ f(0,e_i)-T\circ f(0,e'_i))^2(a_{1,i}^2+a_{2,i}^2) . 
        \end{split}
    \end{equation}
    Observe that, by exchangeability, $\E\thermal{a_1^4}_T = n^{-1} \sum_{i\in[n]} \E\thermal{a_i^4}_T$. Then,
    \begin{equation*}
        \sum_{i\in[n]} \E\thermal{a_i^4}_T \leq \E\thermal{||\bX\bbeta||^4}_T \leq \sqrt{\E||\bX||_{op}^8\E\thermal{||\bbeta||^8}_T}.
    \end{equation*}
    From this and the bound over the moments of $||\bbeta||^2/n$ of Corollary \ref{cor:bound_mom_norm} we have that $\E\thermal{a_1^4}_T=\mathcal{O}(n)$. This together with Hypothesis \ref{hyp1} and the easy to check fact that $\E(a_{\star,1}+1)^4=\mathcal{O}(1)$ imply that, for some $K''>0$, for every $i\in[n]$
    \begin{equation}\label{eq:var_logl}
        \E\thermal{(\tilde{\logl}_{n,p}(\bbeta^{(1)},\bbeta^{(2)})-\tilde{\logl}'_{n,p,i}(\bbeta^{(1)},\bbeta^{(2)}))^2}_T \leq K'' \sqrt{n}.
    \end{equation}
    Combining equations \eqref{eq:es_ineq}-\eqref{eq:var_logl} we obtain that $\Var{g(\be)} \leq K''/\sqrt{n}$. This concludes the proof.
\end{proof}

In the following lemma we prove that the tilted posterior is strongly log-concave. For this we will consider its log-density as a function from $\R^{2p}$ to $\R$. As such, along the proof, we will use a generic deterministic vector $(\boldsymbol{b}_1,\boldsymbol{b}_2)\in\R^{2p}$ such that $\boldsymbol{b}_1,\boldsymbol{b}_1\in\R^p$ to evaluate likelihoods and priors. To simplify notations, in the following we will write $\log \mu(\boldsymbol{b}_1)$ instead of $\sum_{j=1}^p\log\mu(b_{1,j})$ and $\log \mu(\boldsymbol{b}_2)$ instead of $\sum_{j=1}^p\log\mu(b_{2,j})$.

\begin{lemma}\label{lem:logconc_tilted}
    If $0 < \lambda_n,\gamma_n < \veps/4$, then $\tilde{\logl}_{n,p}(\boldsymbol{b}_1,\boldsymbol{b}_2) + \log \mu(\boldsymbol{b}_1) + \log \mu(\boldsymbol{b}_2)$ is an $\varepsilon/2$-strongly concave function.
\end{lemma}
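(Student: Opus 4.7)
The plan is to verify strong concavity by examining the Hessian of
\[
f(\boldsymbol{b}_1,\boldsymbol{b}_2) := \tilde{\logl}_{n,p}(\boldsymbol{b}_1,\boldsymbol{b}_2) + \log\mu(\boldsymbol{b}_1) + \log\mu(\boldsymbol{b}_2)
\]
as a $2p\times 2p$ block matrix. Differentiating \eqref{eq_pert_ham}, and noting that the linear term $\chi_n \bbetas^\top \boldsymbol{b}_1$ contributes nothing to the second derivative, I get the block structure
\[
\nabla^2 f \;=\; \begin{bmatrix} -H_1 + 2\lambda_n I + D_1 & \gamma_n I \\ \gamma_n I & -H_2 + D_2 \end{bmatrix},
\]
where $H_m := \sum_{i\in[n]} A''(\bX_i^\top \boldsymbol{b}_m) \bX_i \bX_i^\top \succeq 0$ (by Hypothesis \ref{hyp1}(iii)) and $D_m$ is the diagonal matrix with entries $(\log\mu)''(b_{m,j})$.

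The next step is to use the ingredients we have: (i) the concavity of $\logl_{n,p}$ gives $-H_1, -H_2 \preceq 0$; (ii) the strong concavity of $\log\mu$ from Hypothesis \ref{hyp1}(i), which (with the $\varepsilon$ arising in \eqref{eq:prior_eps_bound}, possibly shrunken) gives $(\log\mu)''(x) \leq -2\varepsilon$ for all $x\in\R$, hence $D_m \preceq -2\varepsilon I$; and (iii) the AM--GM inequality $2 u_1^\top u_2 \leq \|u_1\|^2 + \|u_2\|^2$ to handle the off-diagonal $\gamma_n I$ block. Combining these for arbitrary $(u_1,u_2)\in\R^{2p}$ gives
\[
(u_1,u_2)^\top \nabla^2 f\,(u_1,u_2) \;\leq\; \bigl(-2\varepsilon + 2\lambda_n + \gamma_n\bigr)\|u_1\|^2 + \bigl(-2\varepsilon + \gamma_n\bigr)\|u_2\|^2.
\]

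Finally, plugging in the hypothesis $0 < \lambda_n, \gamma_n < \varepsilon/4$ yields $-2\varepsilon + 2\lambda_n + \gamma_n < -5\varepsilon/4 \leq -\varepsilon/2$ and $-2\varepsilon + \gamma_n < -7\varepsilon/4 \leq -\varepsilon/2$. Thus $\nabla^2 f \preceq -(\varepsilon/2) I_{2p}$, which is exactly the claim that $f$ is $\varepsilon/2$-strongly concave.

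The only genuinely delicate point is the reconciliation of the two roles of $\varepsilon$: the quadratic upper bound \eqref{eq:prior_eps_bound} used earlier in this appendix is an integrated consequence of strong concavity of $\log\mu$, so one must verify (or shrink $\varepsilon$ if necessary) to ensure that the same $\varepsilon$ also serves as a lower bound on the curvature $-(\log\mu)''/2$. Apart from this bookkeeping, the proof is a one-line Hessian computation followed by the AM--GM bound on the cross term, and the generous margin $\varepsilon/4$ on the perturbation magnitudes absorbs the $2\lambda_n + \gamma_n$ correction comfortably.
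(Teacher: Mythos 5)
Your proof is correct and takes essentially the same route as the paper: a second-order argument in which the linear tilt $\chi_n\bbetas^\top\boldsymbol{b}_1$ drops out, the cross term $\gamma_n\boldsymbol{b}_1^\top\boldsymbol{b}_2$ is bounded spectrally (your AM--GM step is exactly the paper's bound $\mathcal{H}^{(12)}\preceq\gamma_n I$), and the strong log-concavity of the prior absorbs the $2\lambda_n+\gamma_n$ perturbation. The $\varepsilon$-bookkeeping point you flag (curvature constant of $\log\mu$ versus the quadratic bound \eqref{eq:prior_eps_bound}) is glossed over in the paper as well, which simply treats the prior as $\varepsilon$-strongly concave with the same $\varepsilon$; with any fixed curvature constant the argument goes through unchanged.
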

\begin{proof}
    Remember that
    \begin{equation*}
        \tilde{\logl}_{n,p}(\boldsymbol{b}_1,\boldsymbol{b}_2) = \logl_{n,p}(\boldsymbol{b}_1) +  \logl_{n,p}(\boldsymbol{b}_2) + \lambda_n \|\boldsymbol{b}_1\|^2 + \gamma_n \boldsymbol{b}_1^\top \boldsymbol{b}_2 + \chi_n \bbetas^\top \boldsymbol{b}_1.
    \end{equation*}
    By Hypothesis \ref{hyp1} we already know that $\logl_{n,p}$ is a concave function and $\log \mu(\boldsymbol{b}_1)$ and $\log \mu(\boldsymbol{b}_2)$ are $\varepsilon$-strongly concave. It is then enough to prove that the perturbation terms $\lambda_n \|\boldsymbol{b}_1\|^2$, $\gamma_n \boldsymbol{b}_1^\top \boldsymbol{b}_2$, and $\chi_n \bbetas^\top \boldsymbol{b}_1$ do not ``break'' the strong concavity of the posterior. For this, it is enough to bound their Hessians, in the Lowner order, by matrices proportional to the identity. In the case of $\chi_n\bbetas^\top \boldsymbol{b}_1$, this is trivial because its Hessian is the null matrix. Now, let $\mathcal{H}^{(11)}$ be the Hessian of $\lambda_n\|\boldsymbol{b}_1\|^2$ and $\mathcal{H}^{(12)}$ the one of $\gamma_n\boldsymbol{b}_1^\top\boldsymbol{b}_2$. Then, if $v\in S^{2p-1}(1)$, it is easy to see that
    \begin{equation*}
        \boldsymbol{v}^T \mathcal{H}^{(11)} \boldsymbol{v} = \lambda_n||\boldsymbol{v}_1||^2 \leq \lambda_n \ \ \ \ \ \ \mbox{ and } \ \ \ \ \ \  \boldsymbol{v}^T \mathcal{H}^{(12)} \boldsymbol{v} = \gamma_n (\boldsymbol{v}_1 \cdot \boldsymbol{v}_2)^2 \leq \gamma_n,
    \end{equation*}
    where $\boldsymbol{v}_1,\boldsymbol{v}_2\in\R^p$ are such that $\boldsymbol{v} = (\boldsymbol{v}_1,\boldsymbol{v}_2)$. Because $0< \lambda_n,\gamma_n < \veps/4$, we then have that
    \begin{equation*}
        \mathcal{H}^{(11)} - \frac{\varepsilon}{4} \mathbb{I} \quad \text{and} \quad \mathcal{H}^{(12)} - \frac{\varepsilon}{4} \mathbb{I}
    \end{equation*}
    are both negative-definite matrices for all values of $\boldsymbol{b}_1$ and $\boldsymbol{b}_2$. By the $\varepsilon$-strong concavity of $\logl_{n,p}(\boldsymbol{b}_1) + \log\mu(\boldsymbol{b}_1) +  \logl_{n,p}(\boldsymbol{b}_2) + \log\mu(\boldsymbol{b}_2)$, we then conclude that
    \begin{equation*}
        \tilde{\logl}_{n,p}(\boldsymbol{b}_1,\boldsymbol{b}_2) + \log \mu(\boldsymbol{b}_1) + \log \mu(\boldsymbol{b}_2)
    \end{equation*}
    is $\varepsilon/2$-strongly concave.
\end{proof}

The next proposition proves the concentration of some of the order parameters. For this, recall the definitions of $Q_{11}$, $Q_{12}$, and $Q_{1\star}$ from \eqref{eq:overlaps}. In this proposition, the definitions are the same but $(\bbeta^{(1)},\bbeta^{(2)})$ are taken to be samples form the tilted posterior.

\begin{proposition}\label{prop:rep_symm_pert}
    Let $(\lambda_n)_{n\geq1},(\gamma_n)_{n\geq1},(\chi_n)_{n\geq1} \in [0,\veps/4]$. Then, for some fixed $C > 0$,
    \begin{equation*}
        \E\langle(Q_{11}-\E\langle Q_{11}\rangle_T)^2\rangle_T, \ \E\langle(Q_{12}-\E\langle Q_{12}\rangle_T)^2\rangle_T, \ \E\langle(Q_{1\star}-\E\langle Q_{1\star}\rangle_T)^2\rangle_T \leq \frac{C}{n^{1/4}}.
    \end{equation*}
\end{proposition}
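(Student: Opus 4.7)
The plan is to combine two ingredients: the strong log-concavity of the tilted posterior (Lemma \ref{lem:logconc_tilted}), which controls thermal fluctuations via a Brascamp--Lieb inequality, and the variance bound on the tilted log-normalizing constant $\tF_n$ (Proposition \ref{prop:var_fe}), which controls disorder fluctuations via a classical convex-smoothing argument. For $Q\in\{Q_{11}, Q_{12}, Q_{1\star}\}$, denote its thermal variance by $V_T(Q):=\thermal{Q^2}_T-\thermal{Q}_T^2$ and decompose
\[
\E\thermal{(Q-\E\thermal{Q}_T)^2}_T = \E V_T(Q) + \mathrm{Var}(\thermal{Q}_T),
\]
so that thermal and disorder fluctuations can be treated separately.

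First I would handle the thermal contribution $\E V_T(Q)$. By Lemma \ref{lem:logconc_tilted}, the density of $\thermal{\cdot}_T$ on $\R^{2p}$ is $\varepsilon/2$-strongly log-concave, so Brascamp--Lieb gives $V_T(f)\le (2/\varepsilon)\thermal{\|\nabla f\|^2}_T$ for every smooth $f$. Applied to $pQ_{11}=\|\bbeta^{(1)}\|^2$, $pQ_{12}=\bbeta^{(1)\top}\bbeta^{(2)}$ and $pQ_{1\star}=\bbeta^{(1)\top}\bbetas$, together with the uniform moment bound of Corollary \ref{cor:bound_mom_norm} and $\|\bbetas\|^2=O(n)$ from Hypothesis \ref{hyp1}(iv), all three yield $\E V_T(Q)=O(1/n)$, well within the target rate.

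The main step is the disorder contribution $\mathrm{Var}(\thermal{Q}_T)$. Observe that
\[
\thermal{Q_{11}}_T = \tfrac{n}{p}\partial_{\lambda_n}\tF_n,\qquad \thermal{Q_{12}}_T = \tfrac{n}{p}\partial_{\gamma_n}\tF_n,\qquad \thermal{Q_{1\star}}_T = \tfrac{n}{p}\partial_{\chi_n}\tF_n,
\]
and that $\tF_n$ is convex in each of the three tilt parameters, being the normalized log of an exponential integral whose exponent is linear in each. Fix one parameter, say $\lambda$, write $\psi(\lambda):=\tF_n$ as a function of $\lambda_n=\lambda$, set $\phi:=\E\psi$ and $A:=\psi-\phi$. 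The classical convex-smoothing inequality gives, for any $\delta>0$ with $\lambda_n\pm\delta\in(0,\varepsilon/4)$ (with an analogous one-sided bound near the boundary),
\[
|\psi'(\lambda_n) - \phi'(\lambda_n)| \le \frac{|A(\lambda_n+\delta)| + |A(\lambda_n)| + |A(\lambda_n-\delta)|}{\delta} + \bigl(\phi'(\lambda_n+\delta) - \phi'(\lambda_n-\delta)\bigr).
\]
Squaring, taking expectations and using Proposition \ref{prop:var_fe} yields
\[
\mathrm{Var}(\psi'(\lambda_n)) \le \frac{C'}{\delta^2\sqrt n} + C''\bigl(\phi'(\lambda_n+\delta) - \phi'(\lambda_n-\delta)\bigr)^2.
\]
Crucially, $\phi''(\lambda) = n^{-1}\E V_T(\|\bbeta^{(1)}\|^2)$ is uniformly $O(1)$ by exactly the Brascamp--Lieb bound used above, so $\phi'$ is Lipschitz with an $O(1)$ constant and $|\phi'(\lambda_n+\delta) - \phi'(\lambda_n-\delta)|\le C\delta$ pointwise. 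Balancing the two terms at $\delta = n^{-1/8}$ yields $\mathrm{Var}(\psi'(\lambda_n)) = O(n^{-1/4})$. Multiplying by $(n/p)^2 = O(1)$ gives $\mathrm{Var}(\thermal{Q_{11}}_T) = O(n^{-1/4})$, and the parallel arguments in $\gamma_n$ and $\chi_n$ give the corresponding bounds for $Q_{12}$ and $Q_{1\star}$.

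The main obstacle is the disorder-variance step: one needs a pointwise, not merely integrated, bound at an arbitrary tilt point $\lambda_n\in[0,\varepsilon/4]$. What makes the pointwise argument go through is precisely the Brascamp--Lieb bound on $\phi''$ (the same ingredient that trivialized the thermal contribution), which turns the monotone ``bias'' term in convex smoothing into something genuinely $O(\delta)$ at every $\lambda$. The slow rate $n^{-1/4}$ is intrinsic: it arises from the variance--bias trade-off applied to the $O(n^{-1/2})$ variance bound of Proposition \ref{prop:var_fe}.
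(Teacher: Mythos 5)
Your proposal is correct and follows essentially the same route as the paper: Brascamp--Lieb under the $\varepsilon/2$-strong log-concavity of the tilted measure for the thermal fluctuations, and for the disorder fluctuations the identification of $\thermal{Q}_T$ with a derivative of $\tF_n$ in the corresponding tilt parameter, convexity, the convex-smoothing derivative comparison combined with the $O(n^{-1/2})$ variance bound on $\tF_n$ (Proposition \ref{prop:var_fe}), and balancing at $\delta=n^{-1/8}$ to obtain the $n^{-1/4}$ rate. The only (harmless) variation is that you apply the smoothing lemma with the deterministic function $\E\tF_n$ playing the role of the smoothing term and control its modulus via $\phi''=O(1)$, whereas the paper smooths with the random $\tF_n$ and controls the resulting random increment via the mean value theorem and Corollary \ref{cor:bound_mom_norm}.
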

\begin{proof}
    We will present the proof for $Q_{11}$ but the arguments for $Q_{12}$ and $Q_{1\star}$ are completely analogous. Since the tilted posterior is $\varepsilon/2$-strongly log-concave, by Brascamp-Lieb's inequality (see \cite[Theorem 5.1]{brascamp2002extensions}), we have that 
\begin{equation}\label{eq:thm_conv_q}
        \langle(Q_{11} - \langle Q_{11}\rangle_T)^2\rangle_T \leq \frac{\langle||\nabla Q_{11}||^2\rangle_T}{\veps/2} = 8\frac{\langle||\bbeta^{(1)}||^2\rangle_T}{\veps n^2};
    \end{equation}
where we used that $\nabla Q_{11}=p^{-1}\nabla\|\bbeta^{(1)}\|^2 = 2p^{-1}\bbeta$.

    Taking expectation on both sides, we obtain by Corollary \ref{cor:bound_mom_norm} that there is some constant $K >0$ s.t.
    \begin{equation}\label{eq:therm_var}
        \E\langle(Q_{11} - \langle Q_{11}\rangle_T)^2\rangle_T \leq \frac{K}{n}.
    \end{equation}
    Now, note that 
    \begin{equation*}
        \partial_{\lambda_n} \tF_n = \langle Q_{11}\rangle_T, \ \ \ \partial^2_{\lambda_n} \tF_n = n \langle(Q_{11} - \langle Q_{11}\rangle_T)^2\rangle_T \geq 0,
    \end{equation*}
    \begin{equation*}
        \partial_{\lambda_n} \E \tF_n = \E\langle(Q_{11}\rangle_T, \ \ \ \mbox{ and } \ \ \  \partial^2_{\lambda_n} \E\tF_n = n \E\langle(Q_{11} - \langle Q_{11}\rangle_T)^2\rangle_T \geq 0.
    \end{equation*}
    Also, by a standard lemma (see for example \cite[Lemma 3.2]{panchenko2013sherrington}), we have that if $G,g:\R\mapsto\R$ are two convex functions, for any $\delta > 0$ it holds that
    \begin{equation}\label{eq:convexderivdiff}
            |G'(\theta) - g'(\theta)| \leq g'(\theta+\delta) - g'(\theta-\delta) + \delta^{-1} \sum_{y \in \{\theta-\delta, \theta, \theta+\delta\}} |G(y)-g(y)|.
    \end{equation}
    For the calculation below, we rename $\tF_n$ to be $\tF_{\lambda_n\gamma_n,\chi_n}$ to emphasise its dependence on $\lambda_n$, $\gamma_n$, and $\chi_n$. Note that $\tF_{\lambda_n\gamma_n,\chi_n}$ and $\E \tF_{\lambda_n\gamma_n,\chi_n}$ are convex in $\lambda_n$, so taking $g = \tF_{\lambda_n\gamma_n,\chi_n}$ and $G = \E \tF_{\lambda_n\gamma_n,\chi_n}$ we have that for all $\delta > 0$
    \begin{equation}\label{eq:conv_bound_f}
            |\langle Q_{11}\rangle_T - \E \langle Q_{11}\rangle_T| \leq \langle Q_{11}\rangle_{\lambda_n+\delta} - \langle Q_{11}\rangle_{\lambda_n-\delta} + \delta^{-1} \sum_{y \in \mathcal{Y}} |\tF_{y,\gamma_n,\chi_n} - \E \tF_{y,\gamma_n,\chi_n}|,
    \end{equation}
    with $\mathcal{Y}:=\{\lambda_n-\delta, \lambda_n, \lambda_n+\delta\}$. In this last equation, $\langle \cdot\rangle_{\lambda_n\pm\delta}$ stands for a posterior mean of the form $\langle\cdot\rangle_T$ where the perturbation parameter $\lambda_n$ has been replaced by $\lambda_n\pm\delta$ respectively. By the mean value theorem  $\exists x\in(\lambda_n-\delta,\lambda_n+\delta)$ such that 
    \begin{equation*}
        \langle Q_{11}\rangle_{\lambda_n+\delta} - \langle Q_{11}\rangle_{\lambda_n-\delta} = 2 \delta n \langle(Q_{11} - \langle Q_{11}\rangle_{\lambda_n=x})^2\rangle_{\lambda_n=x}.
    \end{equation*}
Since an analogous bound as in \eqref{eq:thm_conv_q} holds for $\langle\cdot \rangle_{\lambda_n=x}$, we have that for all $\delta > 0$, small enough,
    \begin{equation}\label{eq:conv_q_aux1}
        \langle Q_{11}\rangle_{\lambda_n+\delta} - \langle Q_{11}\rangle_{\lambda_n-\delta} \leq \frac{2\delta}{\veps n}\langle||\bbeta^{(1)}||^2\rangle_{\lambda_n=x}.
    \end{equation}
    % \ps{Why do you have $\delta$ in the numerator aboe, I thought this should just be $\alpha/\varepsilon.$}\ms{$\alpha/\varepsilon$ bounds uniformly the derivative of the function $f(x):=\langle Q_{11}\rangle_{x}$ and $\delta$ is the difference in the two arguments of the function on the left hand side. So I think it is correct.}
    On the other hand, since the bound on the variance in Proposition \ref{prop:var_fe} does not depend on $\lambda_n$, we have that %\textcolor{red}{How are you controlling the cross terms in the below summation? I agree if the summation were outside the square since you have concentration for each $\tilde{F}$.}\ms{Here I am using that $(\sum_{i\in I} a_i)^2 \leq |I|^2 \max_{i\in I} a_i^2 \leq |I|^2\sum_{i\in I}a_i^2$. The factor $9$ is thus just $|\mathcal{Y}|^2$.}
    \begin{equation}\label{eq:conv_q_aux2}
        \E\left(\sum_{y \in \mathcal{Y}} |\tF_{y,\gamma_n} - \E \tF_{y,\gamma_n}|\right)^2 \leq \frac{9K'}{\sqrt{n}}; 
    \end{equation}
where we used that
    \begin{equation}\label{eq:bound_square}
        \left(\sum_{i\in \mathcal{I}} a_i\right)^2 \leq |\mathcal{I}|^2 \max_{i\in \mathcal{I}} a_i^2 \leq |\mathcal{I}|^2\sum_{i\in \mathcal{I}}a_i^2
    \end{equation}
    which holds for the arbitrary finite index sets $\mathcal{I}$ and terms $(a_i)_{i\in\mathcal{I}}$. Squaring \eqref{eq:conv_bound_f} and using equations \eqref{eq:conv_q_aux1}-\eqref{eq:conv_q_aux2}, inequality \eqref{eq:bound_square}, and Proposition \ref{prop:var_fe} we have that, for some $K' > 0$, %\textcolor{red}{The middle term below is not adding up, within square root the denominator should be $\sqrt{n}$. And $\langle \|\bbeta \|^2_T \rangle$ should have $\lambda_n=x$--this later one is a minor point. But my concern is $\delta = n^{-1/8} $ makes the last term below $O(n^{1/4})$ instead of $O(n^{-1/4})$!! What am I missing?}\ms{I re-wrote this using the same bound as before: $(\sum_{i\in I} a_i)^2 \leq |I|^2\sum_{i\in I}a_i^2$. The factor $9$ is thus just $|\mathcal{Y}|^2$.}
  \begin{equation*}
        \begin{split}
            \E(\langle Q_{11}\rangle_T - \E \langle Q_{11}\rangle_T)^2 & \leq \E\left( \langle Q_{11}\rangle_{\lambda_n+\delta} - \langle Q_{11}\rangle_{\lambda_n-\delta} + \delta^{-1} \sum_{y \in \mathcal{Y}} |\tF_{y,\gamma_n,\chi_n} - \E \tF_{y,\gamma_n,\chi_n}| \right)^2 \\
            & \leq 4\left( \E(\langle Q_{11}\rangle_{\lambda_n+\delta} - \langle Q_{11}\rangle_{\lambda_n-\delta})^2 + \frac{1}{\delta^2} \E\left(\sum_{y \in \mathcal{Y}} |\tF_{y,\gamma_n} - \E \tF_{y,\gamma_n}|\right)^2  \right) \\
            & \leq 4\left( \frac{4\delta^2}{\veps^2}\frac{\E\thermal{||\bbeta||^4}_T}{n^2} + \frac{9K'}{\delta^2 \sqrt{n}} \right).
        \end{split}
    \end{equation*}
    By Corollary \ref{cor:bound_mom_norm},  $n^{-2}\E\thermal{||\bbeta||^4}_T$ is a bounded sequence. Then, letting $\delta_n = n^{-1/8}$ we obtain there is a positive constant $K'' > 0$ such that
    \begin{equation}\label{eq:dis_conv_q}
        \E(\langle Q_{11}\rangle_T - \E \langle Q_{11}\rangle_T)^2 \leq \frac{K''}{n^{1/4}}.
    \end{equation}
  Combining \eqref{eq:therm_var} and \eqref{eq:dis_conv_q} concludes the proof.
\end{proof}
%\textcolor{red}{TILL HERE}
The last step to prove the concentration of the order parameters of the model is to show that the conclusion of Proposition \ref{prop:rep_symm_pert} extends to our original untilted posterior measure. We establish this conclusion by a simple interpolation argument.

\begin{proposition}\label{prop:RS}
    Assume Hypothesis \ref{hyp1} holds. Then, there exists a positive constant $C > 0$ such that
    \begin{equation*}
        \E\langle(Q_{11}-\E\langle Q_{11}\rangle)^2\rangle, \ \E\langle( Q_{12}-\E\langle Q_{12}\rangle)^2\rangle, \ \E\langle(Q_{1\star}-\E\langle Q_{1\star}\rangle)^2\rangle \leq \frac{C}{n^{1/4}}.
    \end{equation*}
\end{proposition}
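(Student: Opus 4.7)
The claim is the specialization of Proposition \ref{prop:rep_symm_pert} to $\lambda_n = \gamma_n = \chi_n = 0$: at this choice the tilted log-likelihood \eqref{eq_pert_ham} degenerates to $\cL_{n,p}(\bB^{(1)}) + \cL_{n,p}(\bB^{(2)})$, so the tilted posterior becomes the law of two independent samples from the untilted posterior and $\thermal{\cdot}_T$ coincides with $\thermal{\cdot}$ in the convention of \eqref{eq:posterior_expectation}. My plan is therefore to re-run the proof of Proposition \ref{prop:rep_symm_pert} at this boundary value of the tilts. The only step sensitive to the boundary is the convexity inequality \eqref{eq:conv_bound_f}, which applies \eqref{eq:convexderivdiff} to $\tF_n$ as a convex function of $\lambda_n$ at the point $\lambda_n = 0$ and therefore needs the evaluation at $\lambda_n - \delta$ to be meaningful.

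To remove this obstruction, I would interpolate the admissible domain of the tilt parameters to the slightly enlarged range $(\lambda_n, \gamma_n, \chi_n) \in [-\varepsilon/8, \varepsilon/4]^3$. Hypothesis \ref{hyp1}(i) provides $\varepsilon$-strong log-concavity of $\log \mu$, and a straightforward adaptation of Lemma \ref{lem:logconc_tilted}'s Hessian computation shows that the tilted log-posterior remains strongly log-concave with strength bounded away from $0$ on the enlarged range: a negative $\lambda_n$ is absorbed into the strongly log-concave prior, while the off-diagonal coupling from $\gamma_n$ perturbs the Hessian by at most $\gamma_n \leq \varepsilon/4$ in operator norm. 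Consequently Lemma \ref{lem:logconc_tilted}, Proposition \ref{prop:exp_bound_norm}, Corollary \ref{cor:bound_mom_norm}, and Proposition \ref{prop:var_fe} all extend verbatim to this enlarged range with uniform constants, since their proofs rely only on strong log-concavity and moment control of $\|\bbeta\|$ and $\|\bX\|_{op}$.

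With the extension in place, I would apply the proof of Proposition \ref{prop:rep_symm_pert} at $\lambda_n = \gamma_n = \chi_n = 0$: the Brascamp--Lieb step \eqref{eq:thm_conv_q}--\eqref{eq:therm_var} yields $\E\thermal{(Q - \thermal{Q})^2} \leq K/n$, while the convexity inequality \eqref{eq:convexderivdiff} with $\delta_n = n^{-1/8} \in (0, \varepsilon/8]$ (so that $\pm\delta_n$ both lie in the enlarged domain), combined with Proposition \ref{prop:var_fe} and the same optimization as in \eqref{eq:dis_conv_q}, gives $\E(\thermal{Q_{11}} - \E\thermal{Q_{11}})^2 \leq K''/n^{1/4}$. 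Summing the two contributions produces the $C/n^{1/4}$ bound for $Q_{11}$, and applying the same argument with the tilt direction aligned to $\gamma$ and $\chi$ treats $Q_{12}$ and $Q_{1\star}$, respectively.

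The main technical obstacle is checking that the bounds on $\log \tZ_{\lambda_n,\gamma_n,\chi_n}$ in Proposition \ref{prop:exp_bound_norm} remain uniform over the enlarged parameter range. The upper bound \eqref{eq:bound_num} depended on the inequality $(t + \lambda_n)\|\bB^{(1)}\|^2 + \log\mu(\bB^{(1)}) \leq -\tfrac{\varepsilon}{4}\|\bB^{(1)}\|^2$, which continues to hold (and only becomes easier) when $\lambda_n \in [-\varepsilon/8, 0]$; the lower bound \eqref{eq:bound_denom} is obtained on the compact set $B_n = \{\|\bB^{(m)}\|_\infty \leq 1\}$, on which the perturbation $\lambda_n \|\bB^{(1)}\|^2 + \gamma_n \bB^{(1)\top}\bB^{(2)} + \chi_n \bbetas^\top\bB^{(1)}$ is deterministically bounded by an $\mathcal{O}(n)$ constant independent of sign, so the bound carries over up to absolute constants. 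The Gaussian Poincaré and Efron--Stein steps in the proof of Proposition \ref{prop:var_fe} depend only on moments of $\|\bbeta\|/\sqrt{n}$ and $\|\bX\|_{op}$, which are unaffected by the sign of the tilt.
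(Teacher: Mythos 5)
Your argument is correct, and it reaches the $n^{-1/4}$ rate with the same core machinery as the paper (Brascamp--Lieb for the thermal variance, convexity of the log-normalizing constant in the tilt parameter, the free-energy variance bound of Proposition \ref{prop:var_fe}, and the optimization $\delta_n = n^{-1/8}$), but it transfers the result to the untilted posterior by a genuinely different device. The paper never evaluates Proposition \ref{prop:rep_symm_pert} at zero tilt: it applies it at $\lambda_n = n^{-1/4}$ and then interpolates in $t\in[0,1]$ between $\logl_{n,p}$ and the tilted log-likelihood, showing via Brascamp--Lieb that $\tfrac{d}{dt}\thermal{Q_{11}}_t = n\lambda_n\thermal{(Q_{11}-\thermal{Q_{11}}_t)^2}_t = \mathcal{O}(\lambda_n)$, so the tilted and untilted means differ by $\mathcal{O}(n^{-1/4})$, and concludes by a triangle inequality with four terms. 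You instead enlarge the admissible tilt range to include small negative values and run the perturbation argument directly at $\lambda_n=\gamma_n=\chi_n=0$, using the exact decomposition of $\E\thermal{(Q_{11}-\E\thermal{Q_{11}})^2}$ into thermal variance plus fluctuation of the thermal mean; this removes the interpolation step and the mean-shift bookkeeping at the cost of re-verifying Lemma \ref{lem:logconc_tilted}, Proposition \ref{prop:exp_bound_norm}, Corollary \ref{cor:bound_mom_norm} and Proposition \ref{prop:var_fe} on the enlarged range, which is indeed routine since a negative $\lambda_n$ only strengthens log-concavity and helps integrability, the $\gamma_n$ coupling enters the Hessian and the upper bound \eqref{eq:bound_num} only through $|\gamma_n|$, and the lower bound \eqref{eq:bound_denom} is taken on the compact set $B_n$ where the perturbation is deterministically $\mathcal{O}(n)$ regardless of sign. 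Your extension is also a sensible observation in its own right: with the paper's choices $\lambda_n=n^{-1/4}$ and $\delta_n=n^{-1/8}$, the evaluation point $\lambda_n-\delta_n$ in \eqref{eq:conv_bound_f} is negative for large $n$, so the paper's route implicitly relies on the same extension of the auxiliary estimates to slightly negative tilts that you make explicit. Do make sure, when treating $Q_{12}$ and $Q_{1\star}$, to account for the factor $p/n\to\kappa$ relating $\partial_{\gamma_n}\tF_n$ and $\partial_{\chi_n}\tF_n$ to $\thermal{Q_{12}}_T$ and $\thermal{Q_{1\star}}_T$; this only affects constants.
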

\begin{proof}
    Again we will prove this for $Q_{11}$ but the proofs for $Q_{12}$ and $Q_{1\star}$ are completely analogous. For simplicity, we will set $\gamma_n$ and $\chi_n$ to be equal to $0$. The proof may be extended to non null values of $\gamma_n$ and $\chi_n$ in a straightforward way. 
    
    Define an interpolating log-likelihood $\logl_{n,p,t} := \logl_{n,p} + t\lambda_n n Q_{11} $ for $t\in[0,1]$. Clearly, $\logl_{n,p,0} = \logl_{n,p}$ and $\logl_{n,p,1} = \tilde{\logl}_{n,p}$. Let $\langle \cdot\rangle_t$ be the posterior expectation with respect to this log-likelihood. That is, for each $f:\R^p\mapsto\R$ and $t\in[0,1]$, we let
    \begin{equation*}
        \thermal{f(\bbeta)}_t := \frac{\int f(\bB) \exp\{\logl_{n,p,t}(\bB)\}\prod_{j\in[p]}\mu(db_j)}{\int \exp\{\logl_{n,p,t}(\bB)\}\prod_{j\in[p]}\mu(db_j)}.
    \end{equation*}
In this way we have that $\thermal{\cdot}_1$ is equal to $\thermal{\cdot}_T$ and $\thermal{\cdot}_0$ to $\thermal{\cdot}$. Note that by Brascamp-Lieb inequality (see \cite[Theorem 5.1]{brascamp2002extensions}) we have
    \begin{equation}\label{eq:brasc_bound}
        \langle(Q_{11}-\langle Q_{11}\rangle_t)^2\rangle_t \leq 2\frac{\thermal{||\bbeta||^2}_t}{\veps n^2}.
    \end{equation}
    Then, for all $t\in[0,1]$
    \begin{equation*}
        \frac{d}{dt}\langle Q_{11}\rangle_t = n \lambda_n \langle(Q_{11} -\langle Q_{11}\rangle_t)^2\rangle_t \leq \frac{2\lambda_n}{\veps n}\thermal{||\bbeta||^2}_t
    \end{equation*}
    and
    \begin{equation*}
        \frac{d}{dt}\E\langle Q_{11}\rangle_t = n \lambda_n \E\langle(Q_{11} -\langle Q_{11}\rangle_t)^2\rangle_t \leq \frac{2\lambda_n}{\veps n}\E\thermal{||\bbeta||^2}_t,
    \end{equation*}
    which implies by the Mean Value Theorem that, for some $K>0$,
    \begin{equation}\label{eq:dif_mean}
        |\langle Q_{11}\rangle - \langle Q_{11}\rangle_T| \leq \frac{\lambda_n K}{\veps}(||\bX||_{op}+1)^2 \ \ \ \mbox{ and } \ \ \ |\E\langle Q_{11}\rangle - \E\langle Q_{11}\rangle_T| \leq \frac{\lambda_n K}{\veps},
    \end{equation}

    where for the first relation we used Proposition \ref{prop:exp_bound_norm} together with Jensen's inequality while for the second we used Corollary \ref{cor:bound_mom_norm}. We then have that 
    \begin{equation*}
        \begin{split}
            \sqrt{\E\langle(Q_{11}-\E\langle Q_{11}\rangle)^2\rangle} & \leq \sqrt{\E\langle(Q_{11}-\langle Q_{11}\rangle)^2\rangle} + \sqrt{\E(\langle Q_{11}\rangle-\langle Q_{11}\rangle_T)^2} \\ 
            & \ \ \ \ \ + \sqrt{(\E\langle Q_{11}\rangle - \E\langle Q_{11}\rangle_T)^2} + \sqrt{\E(\langle Q_{11}\rangle_T - \E\langle Q_{11}\rangle_T)^2}.
        \end{split}
    \end{equation*}
    %\ps{In the first expression on the RHS above, in my calculations, I obtain the expression with Gibbs bracket $\langle \rangle$ instead of $\langle \rangle_T$. I think thats the one it should be? But didnt wish to change without you cross-checking.}\ms{Yes, I agree. In fact, as it was before it was not clear how to control the first term.}
    By choosing $\lambda_n = n^{-1/4}$, we have that owing to Proposition \ref{prop:rep_symm_pert}, the fact that $||\bX||_{op}$ has uniformly bounded moments, and equations \eqref{eq:brasc_bound}-\eqref{eq:dif_mean} the RHS of this last inequality is $\mathcal{O}(n^{-1/4})$. This concludes the proof.
\end{proof}

    \section{Distributional limits of projections of random vectors}\label{app:asymp_representation}
As seen earlier, quantifying properties of the posterior in GLMs requires analyzing the joint behavior of terms of the form $\bX_i^{\top}\bbeta^{(1)},\bX_{i}^{\top}\bbeta^{(2)}$, for suitable choices of $\bbeta^{(1)}$ and $\bbeta^{(2)}$. If these vectors were independent of $\bX_i$, quantifying the joint distribution of these terms would be easy. In this appendix, we record some such basic results, considering general sequences of random vectors (without using our specific problem structure), which we use in intermediate steps in our proofs. 
%Here we will prove some adaptations of \cite[Lemma 5.3]{barbier2021performance} to our setting. Throughout this section, for some fixed 
For $\ell \in \mathbb{N}$, we will let $\bV_0,\bV_1,\dots,\bV_\ell\in\R^p$ be  sequences of random vectors. In Lemmas \ref{lem:normal_approx_dist} and \ref{lem:normal_approx_dist_2spin} below, we prove an asymptotic representations for products of normal vectors with these sequences of random vectors.

\begin{assumption}[order parameters concentration]\label{ass:RS}
    The sequence of random vectors $\bV_0,\hdots,\bV_\ell$ satisfies the following property: there exist constants $0\leq c_0, c \leq v,v_0$ such that for all distinct $m , m'\in[r]$ we have that $\E( p^{-1}\bV_0^\top\bV_m - c_0 )^2$, $\E( p^{-1}\bV_m^\top\bV_{m'} - c )^2$, $\E( p^{-1}\bV_0^{\top}\bV_0 - v_0 )^2$, and $\E( p^{-1}\bV_m^\top\bV_{m} - v )^2$ go to $0$ as $p\to\infty$. \end{assumption}

In this section, results are proved for general sequences of random vectors $\bV_m$'s that satisfy Assumption \ref{ass:RS}. These results are used in Section \ref{sec:proofs} to prove Proposition \ref{prop:fix_point2} and \ref{prop:hat_fix_point2} where the $\bV_m$'s are chosen to be specific random vectors of interest.

Let $\bX\in\R^p$ be some sequence of standard normal vectors independent of $(\bV_0,\bV_1,\dots,\bV_\ell)$. And for every $m\in[\ell]\cup\{0\}$, let $S_m := \bX^\top \bV_m/\sqrt{p}$. 
\begin{lemma}\label{lem:normal_approx_dist}
Assume $0 < c \leq v$ and $c_0^2\leq v_0 c$ and let $\theta_0 := \sqrt{v_0-c_0^2/c} \xi_0 + c_0/\sqrt{c}\, z$ and (for $m\in[\ell]$) $\theta_m := \sqrt{v - c} \xi_m + \sqrt{c}\, z$; where $\xi_0,\dots,\xi_\ell,z$ are i.i.d. standard normal random variables. Under Assumption \ref{ass:RS} we then have that, as $p\to\infty$,
    \begin{equation*}
        (S_0,S_1,\dots,S_\ell) \xrightarrow{d} (\theta_0,\theta_1,\dots,\theta_\ell).
    \end{equation*}
\end{lemma}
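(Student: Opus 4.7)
The plan is to compute the characteristic function of $(S_0,S_1,\dots,S_\ell)$ by conditioning on $(\bV_0,\bV_1,\dots,\bV_\ell)$, use Assumption \ref{ass:RS} to identify the limit of the conditional covariance, and then invoke L\'evy's continuity theorem. The key observation is that since $\bX$ is standard Gaussian on $\R^p$ and independent of $(\bV_0,\dots,\bV_\ell)$, conditionally on the latter the random vector $(S_0,\dots,S_\ell)$ is centered Gaussian with covariance matrix $\bSigma^{(p)}$ whose entries are $\bSigma^{(p)}_{mm'}=p^{-1}\bV_m^\top\bV_{m'}$.

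First I would identify the candidate limiting covariance. Computing directly from the definitions in the statement, $(\theta_0,\theta_1,\dots,\theta_\ell)$ is a centered Gaussian with $\mathrm{Var}(\theta_0)=(v_0-c_0^2/c)+c_0^2/c=v_0$, $\mathrm{Var}(\theta_m)=(v-c)+c=v$ for $m\in[\ell]$, $\mathrm{Cov}(\theta_0,\theta_m)=(c_0/\sqrt{c})\sqrt{c}=c_0$, and $\mathrm{Cov}(\theta_m,\theta_{m'})=c$ for $m\neq m'\in[\ell]$. Call this matrix $\bSigma$. The hypotheses $0<c\leq v$ and $c_0^2\leq v_0 c$ ensure the square roots appearing in the definitions of $\theta_0,\dots,\theta_\ell$ are real, i.e.~that $\bSigma$ is positive semidefinite. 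Assumption \ref{ass:RS} then gives $\E(\bSigma^{(p)}_{mm'}-\bSigma_{mm'})^2\to0$ for each pair $(m,m')$, so that $\bSigma^{(p)}\to\bSigma$ in $L^2$, and in particular in probability.

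Next, for any $\bt=(t_0,\dots,t_\ell)\in\R^{\ell+1}$, conditional Gaussianity gives
\begin{equation*}
    \E\exp\{i\bt^\top(S_0,\dots,S_\ell)\} = \E\Big[\E\big[\exp\{i\bt^\top(S_0,\dots,S_\ell)\}\,\big|\,\bV_0,\dots,\bV_\ell\big]\Big] = \E\exp\Big\{-\tfrac{1}{2}\bt^\top\bSigma^{(p)}\bt\Big\}.
\end{equation*}
Since $\bSigma^{(p)}$ is PSD (it is a Gram matrix of the $\bV_m$'s scaled by $1/p$), the integrand is bounded in modulus by $1$, and the $L^2$ (hence in-probability) convergence $\bt^\top\bSigma^{(p)}\bt\to\bt^\top\bSigma\bt$ combined with dominated convergence gives
\begin{equation*}
    \E\exp\{i\bt^\top(S_0,\dots,S_\ell)\} \longrightarrow \exp\Big\{-\tfrac{1}{2}\bt^\top\bSigma\bt\Big\},
\end{equation*}
which is the characteristic function of the target. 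L\'evy's continuity theorem then concludes the proof.

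There is no real obstacle here: the argument is standard once the candidate covariance is identified and the PSD property of $\bSigma^{(p)}$ is recognized. The only points requiring mild care are verifying that the conditions $0<c\leq v$ and $c_0^2\leq v_0 c$ correspond exactly to $\bSigma$ being positive semidefinite (so that the decomposition of $(\theta_0,\dots,\theta_\ell)$ in terms of the i.i.d.~standard Gaussians $\xi_0,\dots,\xi_\ell,z$ makes sense), and ensuring that the $L^2$ convergence of the entries of $\bSigma^{(p)}$ from Assumption \ref{ass:RS} translates to convergence in probability of the quadratic form---which is immediate by continuity and the delta-method style argument or a direct bound $\E|\bt^\top\bSigma^{(p)}\bt-\bt^\top\bSigma\bt|\leq\sum_{m,m'}|t_m t_{m'}|\,\|\bSigma^{(p)}_{mm'}-\bSigma_{mm'}\|_{L^2}$.
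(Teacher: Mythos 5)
Your proposal is correct and follows essentially the same route as the paper: condition on $(\bV_0,\dots,\bV_\ell)$, exploit that $(S_0,\dots,S_\ell)$ is then centered Gaussian with Gram covariance $\Sigma_p$, use Assumption \ref{ass:RS} to pass $\Sigma_p\to\Sigma$ in $L^2$, and identify $\Sigma$ as the covariance of $(\theta_0,\dots,\theta_\ell)$. The only cosmetic difference is that you work with characteristic functions and L\'evy continuity, whereas the paper uses generic bounded continuous test functions; your version makes the continuity-in-the-covariance step fully explicit, but the argument is the same.
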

\begin{proof}
    Throughout the proof, $f:\R^{m+1}\mapsto\R$ will be some continuous bounded function. Notice that for fixed values of $\bV_0,\bV_1,\dots,\bV_\ell$, the quantity $(S_0,S_1,\dots,S_\ell)\in\R^{\ell+1}$ is a normal random vector with covariance matrix $\Sigma_p \in \mathbb{R}^{(\ell+1) \times (\ell+1)}$ with entries $\Sigma_{p,m m'} = p^{-1}\bV_m^\top\bV_{m'}$. Because $f$ is continuous and bounded, we thus have that there exists some continuous and bounded $g: \R^{(\ell+1)\times(\ell+1)} \mapsto \R$ such that
    \begin{equation}\label{eq:normal_mean}
        \E \left[f(S_0,S_1,\dots,S_\ell)\right] = \E\left[ g(\Sigma_p)\right].
    \end{equation}
    By Assumption \ref{ass:RS}, $\Sigma_p \xrightarrow{L^2} \Sigma$ as $p\to\infty$; where $\Sigma\in\R^{(\ell +1)\times (\ell+1)}$ is a symmetric matrix s.t. for all distinct $m,m'\in[\ell]\cup\{0\}$ its elements are given by $\Sigma_{00} = v_0$, $\Sigma_{0m} = c_0$, $\Sigma_{mm'} = c$, and $\Sigma_{mm} = v$. In other words,
    \begin{equation*}
        \Sigma = 
            \begin{pmatrix}
                v_0 & c_0 & \dots & c_0 \\ 
                c_0 &  &  & \\ 
                \vdots &  & \Sigma'& \\ 
                c_0 &  &  & 
            \end{pmatrix},
    \end{equation*}
    with $\Sigma'\in\R^{m\times m}$ a symmetric matrix with elements equal to $v$ on the diagonal and $c$ on the off the diagonal. Then, by the continuous mapping theorem, $\E\left[ g(\Sigma_p) \right] \to g(\Sigma)$. Notice that the normal vector $(\theta_0,\theta_1,\dots,\theta_r)$ has covariance matrix $\Sigma$. By the definition of $g$, we then have that $\E\left[f(\theta_0,\theta_1,\dots,\theta_\ell)\right] = g(\Sigma)$. This proves that $\E \left[f(S_0,S_1,\dots,S_\ell)\right]$ converges to $\E\left[f(\theta_0,\theta_1,\dots,\theta_\ell)\right]$ and, because $f$ is a generic continuous bounded function, it concludes the proof.
\end{proof}

\begin{remark}
    The other scenario in which we will need a result of this kind is when $c_0 = c = 0$. In this case, the result trivially holds for $\theta_0 := \sqrt{v_0}\xi_0$ and $\theta_l := \sqrt{v}\xi_l$.
\end{remark}

Now, let $\bX'\in\R^p$ be some sequence of standard normal vectors independent of $(\bV_0,\bV_1,\dots,\bV_\ell)$ and $\bX$. Analogous to the notations before, let (for $m\in[\ell]\cup\{0\}$) $S'_m := {\bX'}^T \bV_m/\sqrt{p}$, $\theta'_0 := \sqrt{v_0-c_0^2/c} \xi'_0 + c_0/\sqrt{c}\, z'$, and (for $m\in[\ell]$) $\theta'_m := \sqrt{v - c} \xi'_\ell + \sqrt{c}\, z'$; where $\xi'_0,\dots,\xi'_\ell,z'$ are i.i.d. standard normal variables independent of $\xi_0,\dots,\xi_\ell,z$. We then have the following extension.

\begin{lemma}\label{lem:normal_approx_dist_2spin}
Assume $0<c\leq v$ and $c_0^2 \leq v_0 c$ and let $(\theta_0,\theta_1,\dots,\theta_\ell)$ and $(\theta'_0,\theta'_1,\dots,\theta'_\ell)$ be as above. Under Assumption \ref{ass:RS} we then have that, as $p\to\infty$,
    \begin{equation*}
        (S_0,S_1,\dots,S_\ell,S'_0,S'_1,\dots,S'_\ell) \xrightarrow{d} (\theta_0,\theta_1,\dots,\theta_\ell,\theta'_0,\theta'_1,\dots,\theta'_\ell).
    \end{equation*}
\end{lemma}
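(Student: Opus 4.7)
The plan is to follow the exact template of the preceding Lemma \ref{lem:normal_approx_dist}, exploiting two key structural features: (i) the independence of $\bX$ and $\bX'$ (together with their joint independence from $\bV_0,\dots,\bV_\ell$), and (ii) Assumption \ref{ass:RS} on concentration of the overlaps. The entire difficulty of the one-vector case was packaged into the observation that, conditional on $(\bV_0,\dots,\bV_\ell)$, the vector $(S_0,\dots,S_\ell)$ is jointly Gaussian with covariance $\Sigma_p$ whose entries $p^{-1}\bV_m^\top\bV_{m'}$ concentrate on $\Sigma$. The two-vector case inherits essentially the same Gaussian-with-random-covariance structure, only now with twice as many coordinates.

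First I would fix a continuous bounded test function $f:\mathbb{R}^{2(\ell+1)}\mapsto\mathbb{R}$ and condition on $(\bV_0,\dots,\bV_\ell)$. Since $\bX\perp \bX'$ and both are standard normal, the joint vector $(S_0,\dots,S_\ell,S'_0,\dots,S'_\ell)$ is (conditionally) mean-zero Gaussian in $\mathbb{R}^{2(\ell+1)}$ with block-diagonal covariance
\begin{equation*}
\widehat{\Sigma}_p \;=\; \begin{pmatrix} \Sigma_p & 0 \\ 0 & \Sigma_p \end{pmatrix},
\end{equation*}
where $\Sigma_p\in\mathbb{R}^{(\ell+1)\times(\ell+1)}$ has entries $\Sigma_{p,mm'}=p^{-1}\bV_m^\top\bV_{m'}$, exactly as in Lemma \ref{lem:normal_approx_dist}. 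The within-block zeros reflect the independence of $\bX$ and $\bX'$: for any $m,m'$, $\mathbb{E}(S_m S'_{m'}\mid \bV_0,\dots,\bV_\ell)=p^{-1}\bV_m^\top\bV_{m'}\,\mathbb{E}(\bX^\top\bX'/p\cdot\text{something})=0$, which ensures the block-diagonal form regardless of the overlaps. Consequently there exists a continuous bounded $g:\mathbb{R}^{(\ell+1)\times(\ell+1)}\mapsto\mathbb{R}$ (the Gaussian integral of $f$ against a block-diagonal covariance) such that $\mathbb{E}[f(S_0,\dots,S_\ell,S'_0,\dots,S'_\ell)]=\mathbb{E}[g(\Sigma_p)]$.

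Second, I would invoke Assumption \ref{ass:RS} in the exact manner of Lemma \ref{lem:normal_approx_dist}: $\Sigma_p\xrightarrow{L^2}\Sigma$ as $p\to\infty$, where $\Sigma$ has diagonal entries $(v_0,v,\dots,v)$, off-diagonals involving the first coordinate equal to $c_0$, and the remaining off-diagonals equal to $c$. By the continuous mapping theorem (and boundedness of $g$ to justify convergence of expectations), $\mathbb{E}[g(\Sigma_p)]\to g(\Sigma)$. Finally, by construction of the target vector $(\theta_0,\dots,\theta_\ell,\theta'_0,\dots,\theta'_\ell)$---in which the primed and unprimed coordinates share no underlying Gaussians $\xi$ or $z$ with each other---the target vector is jointly Gaussian with precisely the block-diagonal covariance $\operatorname{diag}(\Sigma,\Sigma)$, so $g(\Sigma)=\mathbb{E}[f(\theta_0,\dots,\theta_\ell,\theta'_0,\dots,\theta'_\ell)]$. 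Since $f$ was arbitrary continuous and bounded, this establishes the claimed convergence in distribution.

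There is no real obstacle here: the only conceptual point worth double-checking is that the two blocks are decoupled at the covariance level, which is immediate from $\bX\perp\bX'$ and does not require any further concentration hypothesis beyond Assumption \ref{ass:RS}. The case $c=c_0=0$ is handled by the same remark used after Lemma \ref{lem:normal_approx_dist}, simply replacing the off-diagonal entries by zero throughout.
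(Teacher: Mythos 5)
Your proposal is correct and follows essentially the same route as the paper: condition on $(\bV_0,\dots,\bV_\ell)$, note that independence of $\bX$ and $\bX'$ makes the conditional covariance block-diagonal with both blocks equal to $\Sigma_p$, and then reuse the concentration-plus-continuous-mapping argument of Lemma \ref{lem:normal_approx_dist}. (Only a cosmetic remark: the intermediate expression for the cross-covariance should read $\mathbb{E}(S_m S'_{m'}\mid \bV_0,\dots,\bV_\ell)=p^{-1}\bV_m^\top\mathbb{E}(\bX\bX'^\top)\bV_{m'}=0$, but the conclusion you draw from it is exactly right.)
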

\begin{proof}
    For each fixed value of $(\bV_0,\bV_1,\dots,\bV_\ell)$, $(S_0,S_1,\dots,S_\ell,S'_0,S'_1,\dots,S'_\ell)$ is a normal vector such that, for every $m,m'\in[r]\cup\{0\}$, the covariance of $S_m$ and $S'_{m'}$ is zero. The result then follows in a similar way as in the proof of Lemma \ref{lem:normal_approx_dist}.
\end{proof}

\begin{remark}
    Again, the analogous result for $c_0 = c = 0$ also trivially holds.
\end{remark}

% ------------------------------------------------------------------------------------------------------------------------------------
% ------------------------------------------------------------------------------------------------------------------------------------
% ------------------------------------------------------------------------------------------------------------------------------------

\section{Conditional posterior measures}\label{app:conditioning}

In Appendix \ref{subsubsec:approx_cavity}, we require bounding expectations under posterior measures restricted to specific sets. Here we present some basic conditioning arguments that where used as auxiliary results there.  We discuss the presented results in broad generality here without reference to  specific posteriors. To this end, let $\bX\in\R^{n\times p}$ and $\by\in\R^n$ be a random matrix and vector, respectively, and $\E(\cdot)$ be the expectation induced by their joint distribution. Also, define the measure 
%\ps{Okay so I have comments on the below. First of all, if $\bY$ is as you define it is, it is not clear where a parameter $\bbeta$ is coming in from. In a supervised problem, we have two kinds of variables, $\by, \bx$ and then $\bbeta$ is trying to estimate the conditional distribution of $\by|\bx$ which is assumed to be parametric. Note, dimensions of $\bx$ and $\bbeta$ match. But here it seems $\bY$ is n-dimensional , there is no p, so what is the following posterior trying to estimate?}
\begin{equation*}
    \proba(d\bbeta|\bX,\by) := \frac{1}{Z_n}\exp{\mathcal{H}_{n,p}(\bbeta|\bX,\by)}\nu(d\bbeta);
\end{equation*}
%\ps{If you wish to keep this section general, then we cant use the notations $\mathcal{L}_{n,p}$ and $\mu$ since we have already used this before and this already means something specific in the context of this paper. So this needs to be changed through this section}
where, fixing $\bX$ and $\by$, $\mathcal{H}_{n,p}:\R^p\mapsto\R$ is some general log-likelihood, $\nu(\cdot)$ is some appropriate Borel prior. Let $\bbeta$ denote a random sample from this measure
%\ps{No we cant have this; $\mu$ referred to the prior we actually used, same comment about the log-likelihood, so far you have not said what lets use a different later that has not been},
and, assuming the right hand side of this definition is integrable, $Z_n > 0$ is a suitable normalizing constant. For the rest of this section $\thermal{\cdot}$ will refer to expectation under this generic posterior measure. We use this abuse of notation to avoid introducing new notations that will not be used later on.

%\ps{Again abuse of notation. I actually think it may not bad to just work with our full posterior here, without  trying to say something for a general setting. Every notation will need to be reinvented otherwise, we dont really need that.}\ms{I agree, trying to prove this in generality could be ocnfusing. I just removed the first paragraphs and leave it as a result for our full posterior. When dealing with general random vectors I used the same $\bV$ notation as in the section on the asymptotic representation.}

Let $f:\R^p\mapsto\R$ and $g:\R^{n\times p}\mapsto\R$ be some functions. Define the events $A:=\{f(\bbeta)\geq 0\}$ %\textcolor{red}{These specific $A,B$ definitions seems weird. Should they just be generic events so that $B$ has a strictly positive probability and  $\thermal{\mathbb{I}_A} >0$ a.s.?}\ms{The above paragraph explains the context better.} 
and $B:=\{g(\bX)\geq 0\}$ and assume that $B$ has a strictly positive probability. Moreover, assume that $\thermal{\mathbb{I}_A} >0$ a.s. Let $\bar{\E}(\cdot)$ be the expectation with respect to $(\bX,\by)$ conditional on the event $B$. Similarly, for each $h:\R^p\mapsto\R$, define
\begin{equation*}
    \begin{split}
        \thermal{h(\bbeta)}' := 
        \begin{cases}
            \frac{\thermal{h(\bbeta)\mathbb{I}_A}}{\thermal{\mathbb{I}_A}} & \mbox{ if } \thermal{\mathbb{I}_A} >0 \\
            0 & \mbox{otherwise}.
        \end{cases}
    \end{split}
\end{equation*}
%\textcolor{red}{To check Lemma 1 (Appendix C.1) last three lines proof satisfies conditions for this.}
%\textcolor{red}{There are quite some inconsistencies in the notations in this section and in Appendix B.1, these need to be made consistent. For now, I am reading this section keeping in mind the definitions given here. We should revisit B.1 and make these consistent.}\ms{As explained in the introductory paragraph, I think it might be a good idea to keep the notation of this secition somewhat independent. For now I will do that and we can discuss it later on if you are not happy about it.}
\begin{proposition}\label{prop:comparison_conditional}
    For all $h:\R^p\mapsto\R$ s.t. $\E\thermal{h^2(\bbeta)} < \infty$, we have that
    \begin{equation*}
        |\E\thermal{h(\bbeta)} - \bE\thermal{h(\bbeta)}'| \leq (\bE\thermal{\mathbb{I}_{A^c}})^{1/2} \left(\bE^{1/2}\thermal{h^2} + \bE^{1/2}\thermal{h^2}'\right) + \left(\E\thermal{h^2}\proba(B^c)\right)^{1/2}+\bE\thermal{|h|}\proba(B^c).
    \end{equation*}
\end{proposition}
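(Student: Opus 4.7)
The plan is to decompose via the triangle inequality
\[|\E\thermal{h(\bbeta)} - \bE\thermal{h(\bbeta)}'| \leq |\E\thermal{h} - \bE\thermal{h}| + |\bE\thermal{h} - \bE\thermal{h}'|\]
and handle the two resulting errors separately. The first captures the change in data-level conditioning (from $\E$ to $\bE$), which should produce the two terms on the RHS of the statement involving $\proba(B^c)$; the second captures the posterior-level truncation (from $\thermal{\cdot}$ to $\thermal{\cdot}'$), which should produce the term involving $\bE\thermal{\mathbb{I}_{A^c}}$. The two conditionings do not interact, so they can be treated independently.

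For the first piece, I will use the identity $\bE[\,\cdot\,]\,\proba(B)=\E[\,\cdot\,\mathbb{I}_B]$, which rearranges to
\[\E\thermal{h} - \bE\thermal{h} = -\bE\thermal{h}\,\proba(B^c) + \E[\thermal{h}\,\mathbb{I}_{B^c}].\]
The absolute value of the first summand is at most $\bE\thermal{|h|}\,\proba(B^c)$, matching the last term in the statement. For the second summand, Cauchy--Schwarz combined with the posterior-level Jensen inequality $\thermal{h}^2\leq\thermal{h^2}$ gives $|\E[\thermal{h}\mathbb{I}_{B^c}]| \leq \sqrt{\E\thermal{h^2}\,\proba(B^c)}$, matching the middle term.

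For the second piece, the key algebraic identity is
\[\thermal{h} - \thermal{h}' = -\thermal{h}'\,\thermal{\mathbb{I}_{A^c}} + \thermal{h\,\mathbb{I}_{A^c}},\]
which I will verify on $\{\thermal{\mathbb{I}_A}>0\}$ by writing $\thermal{h} = \thermal{h\,\mathbb{I}_A} + \thermal{h\,\mathbb{I}_{A^c}} = \thermal{h}'\thermal{\mathbb{I}_A} + \thermal{h\,\mathbb{I}_{A^c}} = \thermal{h}'(1-\thermal{\mathbb{I}_{A^c}}) + \thermal{h\,\mathbb{I}_{A^c}}$, and which holds trivially on the complement by the convention $\thermal{h}'=0$. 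Applying $\bE$ and Cauchy--Schwarz to each summand, then using $\mathbb{I}_{A^c}^2 = \mathbb{I}_{A^c}$ (so $\thermal{\mathbb{I}_{A^c}}^2\leq\thermal{\mathbb{I}_{A^c}}$) together with the posterior-level Jensen inequality to upgrade $(\thermal{|h|}')^2$ to $\thermal{h^2}'$ and $(\thermal{|h|\mathbb{I}_{A^c}})^2$ to $\thermal{h^2}\,\thermal{\mathbb{I}_{A^c}}$, will collapse the bound to
\[|\bE\thermal{h} - \bE\thermal{h}'| \leq \sqrt{\bE\thermal{\mathbb{I}_{A^c}}}\,\Bigl(\sqrt{\bE\thermal{h^2}'} + \sqrt{\bE\thermal{h^2}}\Bigr),\]
which is precisely the first term on the RHS of the statement.

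Summing these contributions reproduces the displayed inequality. The argument is essentially bookkeeping; the only subtlety is the order in which Cauchy--Schwarz (at the data level) and Jensen (at the posterior level) are applied, so that one lands exactly on $\bE\thermal{h^2}$ and $\bE\thermal{h^2}'$ in the final form rather than on a weaker mixed quantity. I do not anticipate any substantive obstacle.
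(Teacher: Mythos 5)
Your proposal is correct and follows essentially the same route as the paper's proof: the same triangle-inequality split into a data-level conditioning error and a posterior-level truncation error, the same two algebraic identities ($\E\thermal{h}-\bE\thermal{h}=\E[\thermal{h}\mathbb{I}_{B^c}]-\bE\thermal{h}\proba(B^c)$ and $\bE\thermal{h}-\bE\thermal{h}'=\bE\thermal{h\,\mathbb{I}_{A^c}}-\bE[\thermal{h}'\thermal{\mathbb{I}_{A^c}}]$), and the same Cauchy--Schwarz/Jensen bookkeeping to land on $\bE\thermal{h^2}$ and $\bE\thermal{h^2}'$. No gaps; your explicit treatment of the $\{\thermal{\mathbb{I}_A}=0\}$ case is a minor addition the paper sidesteps by assuming $\thermal{\mathbb{I}_A}>0$ a.s.
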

\begin{proof}
    This bound follows easily by the definitions above. By hypothesis, $\E\thermal{h^2(\bbeta)} < \infty$ and this implies that
    \begin{equation*}
        \bE\thermal{h^2(\bbeta)}, \, \bE\thermal{h^2(\bbeta)}' < \infty.
    \end{equation*}
    First, notice that 
    \begin{equation*}
        \E\thermal{h(\bbeta)} - \bE\thermal{h(\bbeta)} = \E[\thermal{h(\bbeta)}\mathbb{I}_{B^c}] - \bE\thermal{h(\bbeta)}\proba(B^c).
    \end{equation*}
    By Cauchy-Schwarz, we then have
    \begin{equation*}
        |\E\thermal{h(\bbeta)} - \bE\thermal{h(\bbeta)}| \leq \left(\E\thermal{h^2}\proba(B^c)\right)^{1/2} + \bE\thermal{|h(\bbeta)|}\proba(B^c).
    \end{equation*}    
    Similarly, we have that
    \begin{equation*}
        \bE\thermal{h(\bbeta)} - \bE\thermal{h(\bbeta)}' = \bE\thermal{h(\bbeta)\mathbb{I}_{A^c}} - \bE[\thermal{h(\bbeta)}'\thermal{\mathbb{I}_{A^c}}].
    \end{equation*}
    As before, this implies that
    \begin{equation*}
        |\bE\thermal{h(\bbeta)} - \bE\thermal{h(\bbeta)}'| \leq (\bE\thermal{\mathbb{I}_{A^c}})^{1/2} \left(\bE^{1/2}\thermal{h^2} + \bE^{1/2}\thermal{h^2}'\right).
    \end{equation*}    
\end{proof}
%\textcolor{red}{Can you show exacly how this simplification works? It seems one term was fully dropped? Maybe you used something basic that I am missing}\ms{I think this is okay. Here I used that $(\proba(B^c))^{1/2} \geq \proba(B^c)$ to combine the bounds for the last two terms into a single term.}
\begin{remark}
    If $||h||_\infty < \infty$, this simplifies to
    \begin{equation}\label{eq:bound_cond_mean}
         |\E\thermal{h(\bbeta)} - \bE\thermal{h(\bbeta)}'| \leq 2||h||_\infty \left[(\bE\thermal{\mathbb{I}_{A^c}})^{1/2}+(\proba(B^c))^{1/2}\right].
    \end{equation}
\end{remark}

In Section \ref{sec:proofs}, we use this kind of conditioned measures to compare the expectations under the full posterior and suitable conditional posteriors. In that section, we also require a convergence in distribution version of the aforementioned proposition. For this, we first prove two auxiliary results below. 

For some fixed $d\geq1$, let $(\bV_n)_{n\geq1}$ be a sequence of random vectors in $\R^d$ and $\bV\in\R^d$ another random vector such that $\bV_n\xrightarrow{d}{\bV}$. Denote by $\proba_n(\cdot)$ the probability measure induced by $\bV_n$, $\proba(\cdot)$ the one by ${\bV}$, and let $A\subseteq\R^d$ be some set of positive probability w.r.t all the measures $(\proba_n)_{n\geq1}$. Let $(\bar{\bV}_n)_{n\geq1}$ be the sequence of random vectors obtained by conditioning $(\bV_n)_{n\geq1}$ on $A$ and $\bar{\bV}$ the one obtained by conditioning $\bV$ on $A$. We then have the following.
\begin{lemma}\label{lem:cond_w_conv}
     If $A$ is a closed continuity set of $\proba$ and $\proba(A)>0$, then $\bar{\bV}_n\xrightarrow{d}\bar{\bV}$.
\end{lemma}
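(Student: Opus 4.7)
The plan is to appeal to the Portmanteau theorem, using the characterization of weak convergence via closed sets. Since $A$ is assumed to be a continuity set of $\mathbb{P}$ with $\mathbb{P}(A) > 0$, the Portmanteau theorem immediately yields $\mathbb{P}_n(A) \to \mathbb{P}(A) > 0$, so the conditional probabilities are well-defined for all sufficiently large $n$.

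Next, for any closed set $F \subseteq \mathbb{R}^d$, observe that the intersection $F \cap A$ is also closed, since $A$ is closed by hypothesis. Applying the closed-set half of Portmanteau to the original sequence then gives
\[
\limsup_{n\to\infty} \mathbb{P}_n(F \cap A) \;\leq\; \mathbb{P}(F \cap A).
\]
Combining this with the convergence of the denominator,
\[
\limsup_{n\to\infty} \mathbb{P}(\bar{\bV}_n \in F) \;=\; \limsup_{n\to\infty} \frac{\mathbb{P}_n(F \cap A)}{\mathbb{P}_n(A)} \;\leq\; \frac{\mathbb{P}(F \cap A)}{\mathbb{P}(A)} \;=\; \mathbb{P}(\bar{\bV} \in F),
\]
where the first equality uses the definition of the conditional law of $\bar{\bV}_n$ and the last equality uses the definition of the conditional law of $\bar{\bV}$. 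Since this bound holds for every closed $F$, the Portmanteau theorem in the reverse direction allows us to conclude that $\bar{\bV}_n \xrightarrow{d} \bar{\bV}$.

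The only mildly subtle step is ensuring the numerator can be handled directly by Portmanteau, which is precisely why closedness of $A$ is assumed (so that $F \cap A$ inherits closedness from $F$); the continuity-set assumption on $A$ plays its role only in securing the convergence of the denominator. No step requires genuine calculation, so I expect no real obstacle.
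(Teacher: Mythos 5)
Your proof is correct and follows essentially the same route as the paper's: Portmanteau gives $\proba_n(A)\to\proba(A)>0$ via the continuity-set assumption, the closed-set half of Portmanteau controls $\limsup_n\proba_n(F\cap A)$ since $F\cap A$ is closed, and combining the two yields the closed-set criterion for the conditional laws (the paper merely makes the ratio limit explicit with an $\epsilon_n$ sequence, which your observation that the denominator has a genuine positive limit handles equally well).
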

\begin{proof}
    Along the proof we will make several uses of Portmanteau's Theorem. In particular, we will prove the conclusion by showing that for all $C\subseteq\R^d$ closed,
    \begin{equation*}
        \lim\sup_{n\to\infty} \proba_n(C) \leq \proba(C).
    \end{equation*}
    For this, first notice that, by the convergence in distribution and the fact that $A$ is a continuity set of $\proba$, we have that
    \begin{equation}\label{eq:lim_cont_set}
        \lim_{n\to\infty}\proba_n(A) = \proba(A).
    \end{equation}
    Also, by the fact that $A$ is closed and the convergence in distribution, for every closed $C\subseteq\R^d$
    \begin{equation}\label{eq:lim_sup_closed}
        \lim\sup_{n\to\infty} \proba_n(C\cap A) \leq \proba(C\cap A)
    \end{equation}
    Let $\bar{\proba}_n(\cdot)$ be the probability measure induced by $\bar{\bV}_n$ and $\bar{\proba}(\cdot)$ the one induced by $\bar{\bV}$. Equations \eqref{eq:lim_cont_set} and \eqref{eq:lim_sup_closed} imply that, for every closed $C\subseteq\R^d$, there exists a vanishing sequence $(\epsilon_n)_{n\geq1}$ such that
    \begin{equation*}
        \bar{\proba}_n(C) = \frac{\proba_n(C\cap A)}{\proba_n(A)} \leq \frac{\proba(C\cap A)+\epsilon_n}{\proba(A)-\epsilon_n}.
    \end{equation*}
    The result is then obtained by taking $\lim\sup_n$ on both sides of the last equation and noticing that the right hand side tends to $\bar{\proba}(C)$. 
\end{proof}
From this convergence in distribution result and the fact that $\proba_n(A)\to\proba(A)$ we then have the following corollary.
\begin{corollary}\label{cor:f_ind_conv}
     If $A$ is a closed continuity set of $\proba$ and $\proba(A)>0$, then for all $f:A\mapsto\R$ continuous and bounded 
     \begin{equation*}
         \E(f(\bV_n)\mathbb{I}_A(\bV_n))\xrightarrow{n\to\infty}\E(f(\bV)\mathbb{I}_A(\bV)).
     \end{equation*}
\end{corollary}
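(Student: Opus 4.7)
The plan is to leverage Lemma \ref{lem:cond_w_conv} directly, together with the convergence of probabilities of the set $A$ along the sequence, via the simple factorization
\begin{equation*}
    \E(f(\bV_n)\mathbb{I}_A(\bV_n)) = \E(f(\bar{\bV}_n)) \cdot \proba_n(A),
\end{equation*}
and the analogous identity for the limit. So the task reduces to showing that each factor on the right converges to the corresponding quantity for $\bV$ and $\bar{\bV}$.

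First, I would invoke Lemma \ref{lem:cond_w_conv} to get $\bar{\bV}_n \xrightarrow{d} \bar{\bV}$. Since both $\bar{\bV}_n$ and $\bar{\bV}$ are supported on the closed set $A$, and $f$ is continuous and bounded on $A$, we may apply the Tietze extension theorem to extend $f$ to a continuous bounded function $\tilde f:\R^d\mapsto\R$; since $f(\bar{\bV}_n) = \tilde f(\bar{\bV}_n)$ a.s.~and similarly for $\bar{\bV}$, the weak convergence of the conditional laws yields $\E f(\bar{\bV}_n) \to \E f(\bar{\bV})$. Next, I would reuse equation \eqref{eq:lim_cont_set} from the proof of Lemma \ref{lem:cond_w_conv}, which precisely states that $\proba_n(A)\to\proba(A)$ because $A$ is a continuity set. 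Multiplying the two convergences,
\begin{equation*}
    \E(f(\bar{\bV}_n))\cdot\proba_n(A) \;\longrightarrow\; \E(f(\bar{\bV}))\cdot\proba(A) = \E(f(\bV)\mathbb{I}_A(\bV)),
\end{equation*}
which gives the desired conclusion.

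There is no substantive obstacle: the argument is essentially a combination of the lemma preceding the corollary (for the conditional distribution) with a simple product-of-limits step. The only mildly delicate point is justifying that weak convergence can be applied to the function $f$, which is only defined on $A$; this is handled either by continuous extension via Tietze, or equivalently by applying the Portmanteau theorem to the bounded measurable map $g := f\cdot\mathbb{I}_A$ (extended by zero off $A$), whose discontinuity set is contained in $\partial A$ and hence $\proba$-null by the continuity-set hypothesis.
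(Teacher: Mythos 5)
Your proof is correct and follows essentially the same route the paper intends: the corollary is stated as an immediate consequence of Lemma \ref{lem:cond_w_conv} together with $\proba_n(A)\to\proba(A)$ (equation \eqref{eq:lim_cont_set}), which is exactly your factorization $\E(f(\bV_n)\mathbb{I}_A(\bV_n))=\E(f(\bar{\bV}_n))\,\proba_n(A)$ followed by a product of limits. Your handling of the fact that $f$ is only defined on $A$ (Tietze extension, or equivalently the mapping theorem applied to $f\cdot\mathbb{I}_A$, whose discontinuity set lies in the $\proba$-null boundary $\partial A$) is a correct filling-in of a detail the paper leaves implicit.
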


    \section{General results to establish Hypothesis \ref{hyp2}}\label{app:conv_mon_control}

In this appendix we will derive two general results to prove the uniform boundedness for the moments of $a_{n}$ assumed in Hypothesis \ref{hyp2}.

\subsection{Moment control of fitted values under log-polynomial condition}

The main result of this subsection will be the following proposition.

\begin{proposition}\label{prop:control_mom_conv}
Assume that Hypothesis \ref{hyp1} holds. Furthermore, suppose for a constant $\tilde{C}$ that $|\tT| < \tilde{C}$ a.s.~and that there exists some polynomial $p(x,y)$ such that $A(x) = \log p(e^x,e^{-x})$. Then, for all $k\geq1$ there exists some $C_k>0$ such that, for all $n\geq1$, $\E\thermal{a_n^{2k}} \leq C_k$.
\end{proposition}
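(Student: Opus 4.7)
The plan is to combine Brascamp-Lieb concentration of the posterior marginal around its mean with a leave-an-observation-out control of the mean itself, using the log-polynomial structure of $A$ to obtain uniform bounds in $n$. Since the full posterior is $\veps$-strongly log-concave in $\bbeta$ by Hypothesis~\ref{hyp1}(i), Brascamp-Lieb applied to the linear functional $\bbeta\mapsto\bX_n^\top\bbeta$ yields $\thermal{(a_n-\thermal{a_n})^{2k}}\le C_k(\|\bX_n\|^2/\veps)^k$. Taking data expectation and using that $\E\|\bX_n\|^{2k}=\mathcal{O}(1)$ (since $\|\bX_n\|^2\sim\chi^2_p/n$), the problem reduces to bounding $\E\thermal{a_n}^{2k}\le C_k'$.

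For this, I would invoke the LOO identity from Section~\ref{sec:leave_obs}: with $u_n(a,a_\star):=\tT_n(a_\star)a-A(a)$,
$$\thermal{a_n} \;=\; \frac{\thermal{a_n\,e^{u_n(a_n,a_{\star,n})}}_o}{\thermal{e^{u_n(a_n,a_{\star,n})}}_o}.$$
Under $\thermal{\cdot}_o$ the sample $\bbeta$ is independent of $(\bX_n,e_n)$, so conditional on $\bbeta$ the pair $(a_n,a_{\star,n})$ is centered jointly Gaussian with covariance $n^{-1}\bigl(\begin{smallmatrix}\|\bbeta\|^2 & \bbeta^\top\bbetas \\ \bbeta^\top\bbetas & \|\bbetas\|^2\end{smallmatrix}\bigr)$, and the ratio becomes a one-dimensional Gaussian expectation tilted by $e^{u_n}$. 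The crucial use of the hypothesis $A(x)=\log p(e^x,e^{-x})$ is to extract: (i) $|A'|\le M$ for a constant $M$ depending only on $p$ (for instance, when $p$ has non-negative coefficients $c_{ij}$, $A'(x)=\sum c_{ij}(i-j)e^{(i-j)x}/\sum c_{ij}e^{(i-j)x}$ is a convex combination of the integers $i-j$), and (ii) $|A(x)|\le c_1|x|+c_2$, since the dominant monomial inside $p$ contributes only linearly under the logarithm.

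Combined with $|\tT_n|\le\tilde C$, these bounds give $|u_n(a,a_\star)|\le C(|a|+1)$. The numerator $\thermal{a_n\,e^{u_n}}_o$ is then dominated by the Gaussian moment integral of $|a_n|e^{C|a_n|}$, which evaluates (conditional on $\bbeta$) to a polynomial in $\|\bbeta\|^2/n$ times $\exp\{C^2\|\bbeta\|^2/(2n)\}$. For the denominator, Jensen gives $\thermal{e^{u_n}}_o\ge\exp\{\thermal{u_n}_o\}$, with $\thermal{u_n}_o$ bounded below in terms of $\tilde C$, $c_1$, $c_2$ and the Gaussian-scale quantity $\thermal{|a_n|}_o\le\|\bbeta\|/\sqrt n$. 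Averaging the resulting pointwise-in-$\bbeta$ ratio over $\bbeta\sim\thermal{\cdot}_o$ using the LOO analogues of Proposition~\ref{prop:exp_bound_norm} and Corollary~\ref{cor:bound_mom_norm} (whose proofs transfer verbatim, as they rely only on strong log-concavity of the prior and concavity of the log-likelihood) then yields $\E\thermal{a_n}^{2k}\le C_k$.

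The main obstacle is precisely the uniform bound on $A'$ from the log-polynomial structure: Hypothesis~\ref{hyp1}(iii) alone permits $|A'(x)|$ to grow linearly in $|x|$, in which case the Jensen lower bound on the denominator would degrade like $e^{-c\|\bbeta\|}$ and, combined with only polynomial moment bounds on $\|\bbeta\|^2/n$ under $\thermal{\cdot}_o$, would fail to yield the required estimate. The log-polynomial assumption upgrades this to a bound uniform in $a$, since $A$ is then the logarithm of a positive linear combination of exponentials whose derivative is pinched between the extreme exponents. A subsidiary, more routine, point is tracking constants to ensure $n$-uniformity throughout; this is straightforward because all relevant problem parameters ($\veps,\tilde C, M, c_1, c_2, \gamma^2$) are $n$-independent.
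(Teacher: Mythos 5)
Your overall route largely coincides with the paper's: both hinge on the cavity identity $\thermal{f(a_n)}=\thermal{f(a_n)e^{u_n}}_o/\thermal{e^{u_n}}_o$, on exponential-moment control of $a_n$ under the leave-an-observation-out measure (via $\En\thermal{e^{\alpha a_n}}_o=\thermal{e^{\alpha^2\|\bbeta\|^2/(2n)}}_o$ together with the LOO adaptation of Proposition \ref{prop:exp_bound_norm}), and on the log-polynomial structure of $A$ plus $|\tT|\le\tilde C$ to prevent the tilt $e^{u_n}$ from destroying those bounds. Your Brascamp--Lieb reduction to the posterior mean is valid but unnecessary: the paper obtains all moments of $a_n$ at once from the log-MGF bound via \cite[Lemma 3.1.8]{talagrand2010mean}. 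Your use of the key assumption is also essentially the paper's: the paper expands $e^{2A(a_n)}=p^2(e^{a_n},e^{-a_n})$ into a finite sum of exponentials, while you extract $|A(x)|\le c_1|x|+c_2$ and $|A'|\le M$; both are legitimate consequences of $A=\log p(e^x,e^{-x})$ (positivity forces asymptotically linear growth), and both correctly identify why Hypothesis \ref{hyp1}(iii) alone would not suffice.

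The genuine gap is in how you handle the ratio. Under $\thermal{\cdot}_o$ the vector $\bX_n$ is part of the fixed data and only $\bbeta$ is integrated, so $\thermal{a_n e^{u_n}}_o/\thermal{e^{u_n}}_o$ is a ratio of two $\bbeta$-averages, each containing $\bX_n$; it is not ``a one-dimensional Gaussian expectation tilted by $e^{u_n}$ conditional on $\bbeta$,'' there is no ``pointwise-in-$\bbeta$ ratio'' to average over $\bbeta\sim\thermal{\cdot}_o$, and the bound $\thermal{|a_n|}_o\le\|\bbeta\|/\sqrt n$ is ill-typed ($\bbeta$ is bound inside the bracket; the $1/\sqrt n$ scaling appears only after the Gaussian average over $\bX_n$, which you cannot take separately in the numerator and in the denominator). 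As written, you estimate $\En$ of the numerator and a lower bound for the denominator as if they could be divided afterwards, which does not bound $\En$ (or $\E$) of the ratio. The fix is to keep numerator and denominator coupled under the outer expectation: e.g., bound $\En[X/Y]\le(\En X^2)^{1/2}(\En Y^{-2})^{1/2}$ by Cauchy--Schwarz, push the reciprocal inside with Jensen, $\thermal{e^{u_n}}_o^{-2}\le\thermal{e^{-2u_n}}_o$ (this is exactly where $e^{2A(a_n)}$, or your linear-growth bound, gets paid for), and only then invoke the exponential-moment lemma; this is precisely the paper's manipulation. A smaller point: since the resulting log-MGF bounds are of size $C(\|\tbX\|_{op}+1)^2$, it is cleaner to convert them into moment bounds conditionally on $\tbX$ (Talagrand's lemma) and then bound $\tE(\|\tbX\|_{op}+1)^{4k}$, rather than integrating $\exp\{C(\|\tbX\|_{op}+1)^2\}$ over the design as your sketch implicitly requires.
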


The proof will be based on the following lemma that bounds the exponential moments of $a_n$ under the leave-an-observation-out measure $\thermal{\cdot}_o$ defined in \eqref{eq:mean_observation}. In this subsection, $\E_n(\cdot)$ will be expectation with respect to the single normal vector $\bX_n$ and $\tE(\cdot)$ with respect to $(\bX_1,\dots,\bX_{n-1})$. We then have that $\tE(\E_n(\cdot))$ is the expectation with respect to $\bX$. Recall that, for $m\geq1$ and $i\in[n]$, $a_{\star,i}=\bX_i^\top\bbeta_\star$, $a_{m,i}=\bX_i^\top\bbeta^{(m)}$ and further that $a_n=\bX_n^\top\bbeta^{(1)}$.

\begin{lemma}
    For all $\alpha>0$ there exists $C>0$ such that $\log \En\thermal{\exp\{\alpha a_n\}}_o \leq C(||\tbX||_{op}+1)^2$.
\end{lemma}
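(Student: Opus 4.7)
The starting point is the independence structure of the leave-an-observation-out posterior: under $\thermal{\cdot}_o$, the sample $\bbeta^{(1)}$ depends only on $(\tbX,\be)$ and is therefore independent of $\bX_n$. Since $\bX_n \sim \mathcal{N}(0, I_p/n)$ is Gaussian, Fubini and the Gaussian moment generating function give
\begin{equation*}
   \En \thermal{\exp\{\alpha a_n\}}_o = \thermal{\En \exp\{\alpha \bX_n^\top \bbeta^{(1)}\}}_o = \thermal{\exp\{(\alpha^2/2n) \|\bbeta^{(1)}\|^2\}}_o.
\end{equation*}
Thus the problem reduces to bounding $\log \thermal{\exp\{t_n \|\bbeta\|^2\}}_o$ by $C(\|\tbX\|_{op}+1)^2$, where the \emph{tilt strength} $t_n := \alpha^2/(2n) = O(1/n)$ is small. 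This smallness of $t_n$ is the crucial feature that allows for a much sharper bound than the one-shot estimate of Proposition \ref{prop:exp_bound_norm}.

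\textbf{Plan (sub-Gaussian reduction).} The leave-an-observation-out posterior is $\varepsilon$-strongly log-concave, since $\mathcal{L}_{n-1,p}$ is concave (Hypothesis \ref{hyp1}(iii)) and $\log\mu$ is $\varepsilon$-strongly concave (Hypothesis \ref{hyp1}(i)). By the Bakry--Émery/Bobkov--Ledoux sub-Gaussian concentration for strongly log-concave measures, any $1$-Lipschitz function of $\bbeta$ is sub-Gaussian under $\thermal{\cdot}_o$ with variance proxy $1/\varepsilon$. In particular, $\|\bbeta\|$ is sub-Gaussian around its mean $m_o := \thermal{\|\bbeta\|}_o$. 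Expanding
\begin{equation*}
   \|\bbeta\|^2 = m_o^2 + 2 m_o (\|\bbeta\| - m_o) + (\|\bbeta\| - m_o)^2
\end{equation*}
and applying Cauchy--Schwarz (using the standard fact $\thermal{e^{t Y^2}}\leq (1-2t/\varepsilon)^{-1/2}$ for $Y$ sub-Gaussian with proxy $1/\varepsilon$, valid for $t<\varepsilon/2$), we obtain, once $n$ is large enough that $t_n < \varepsilon/4$,
\begin{equation*}
    \log \thermal{\exp\{t_n \|\bbeta\|^2\}}_o \leq t_n\, m_o^2 + O(1).
\end{equation*}
Since $m_o^2 \leq \thermal{\|\bbeta\|^2}_o$ by Jensen, the task further reduces to controlling $\thermal{\|\bbeta\|^2}_o$ pointwise in $\tbX$.

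\textbf{Plan (pointwise second-moment bound).} I will show that
\begin{equation*}
    \thermal{\|\bbeta\|^2}_o \leq C n (\|\tbX\|_{op} + 1)^2
\end{equation*}
pointwise in $\tbX$. Let $\hat\bbeta_o$ be the MAP of the leave-an-observation-out posterior. Strong log-concavity gives $\thermal{\|\bbeta - \hat\bbeta_o\|^2}_o \leq p/\varepsilon = O(n)$, so it is enough to bound $\|\hat\bbeta_o\|^2$. The first-order optimality condition for $\hat\bbeta_o$ reads $\sum_{i=1}^{n-1}(\tT_i - A'(\tbX_i^\top\hat\bbeta_o))\tbX_i + \nabla\log\mu^{\otimes p}(\hat\bbeta_o) = 0$. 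Under the assumption $|\tT|\leq \tilde C$, one has $\|\tbX^\top \tbT\| \leq \tilde C \sqrt{n-1}\,\|\tbX\|_{op}$, and the log-polynomial form of $A$ implies $A'$ grows at most polynomially, so that standard monotonicity/coercivity estimates for the score equation yield $\|\hat\bbeta_o\| \leq C\sqrt{n}(\|\tbX\|_{op}+1)$. Combining,
\begin{equation*}
    \log \thermal{\exp\{t_n \|\bbeta\|^2\}}_o \leq \frac{\alpha^2}{2n}\cdot C n(\|\tbX\|_{op}+1)^2 + O(1) \leq C'(\|\tbX\|_{op}+1)^2,
\end{equation*}
which is the desired conclusion. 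For the finitely many small $n$ where $t_n > \varepsilon/4$, the bound is trivial since the exponent on the left is deterministically bounded.

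\textbf{Main obstacle.} The technical heart is the pointwise-in-$\tbX$ bound $\|\hat\bbeta_o\|^2 \leq Cn(\|\tbX\|_{op}+1)^2$. The naive approach via Proposition \ref{prop:exp_bound_norm} loses a full factor of $n$ in the exponent (producing $Cn(\|\tbX\|_{op}+1)^2$ rather than $C(\|\tbX\|_{op}+1)^2$), which is exactly the reason the extra assumptions of this subsection---$|\tT|\leq\tilde C$ and the log-polynomial form of $A$---are needed. Bounded $\tT$ keeps the ``linear'' score term $\tbX^\top\tbT$ of size $\sqrt{n}\|\tbX\|_{op}$ rather than larger, while the log-polynomial form of $A$ ensures $A'$ has only polynomial growth, so the MAP equation can be inverted with effective concavity $\varepsilon$ coming from the prior alone. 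The delicate point is that the $\sqrt{n}$ scaling of the score, the $1/n$ scaling of $t_n$, and the $O(n)$ Brascamp--Lieb variance estimate must line up to cancel exactly, leaving a bound that is $O(1)$ in $n$.
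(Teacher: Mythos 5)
Your proposal is correct in outline and reaches the stated bound, but it takes a genuinely different and much heavier route than the paper. Both start the same way: integrating over $\bX_n$, which is independent of a leave-an-observation-out sample, reduces the claim to bounding $\log\thermal{\exp\{\alpha^2\|\bbeta\|^2/(2n)\}}_o$. From there the paper needs only two lines: it adapts Proposition \ref{prop:exp_bound_norm} to the leave-an-observation-out posterior to get $\log\thermal{\exp\{t\|\bbeta\|^2\}}_o\le nK(\|\tbX\|_{op}+1)^2$ for a fixed small $t>0$, and then, for $n$ large enough that $a:=\alpha^2/(2nt)<1$, applies Jensen's inequality to the concave map $x\mapsto x^{a}$, so that $\log\thermal{\exp\{a\,t\|\bbeta\|^2\}}_o\le a\,\log\thermal{\exp\{t\|\bbeta\|^2\}}_o\le(\alpha^2K/2t)(\|\tbX\|_{op}+1)^2$. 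Thus the ``main obstacle'' you identify is not one: the factor $n$ in the exponential-moment bound is cancelled exactly by the $1/n$ in the tilt via this power trick, with no log-Sobolev concentration, no Brascamp--Lieb estimate, no MAP analysis, and no use of the log-polynomial form of $A$ (boundedness of $\tT$ enters only to absorb the $n^{-1}\|\tbT(0)\|^2$ term of Proposition \ref{prop:exp_bound_norm} into the constant). Your route does work and is more explicit about where the posterior mass sits, but it is sketchier precisely where it is heaviest: the bound $\|\hat\bbeta_o\|\le C\sqrt n(\|\tbX\|_{op}+1)$ should be justified cleanly via strong concavity, e.g.\ $\varepsilon\|\hat\bbeta_o\|\le\|\nabla(\logl_{n-1,p}+\log\mu^{\otimes p})(0)\|$, rather than by appeal to growth of $A'$; the cross term $t_n^2m_o^2/\varepsilon$ is not $O(1)$ uniformly in $\tbX$ (it is $O(n^{-1}(\|\tbX\|_{op}+1)^2)$, harmless but should be carried); and the remark that small $n$ with $t_n\ge\varepsilon/4$ is ``trivial'' is wrong---there the left-hand side can be infinite (e.g.\ Gaussian prior)---so, like the paper's proof, your argument only yields the bound for all sufficiently large $n$, which is all that is needed downstream.
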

\begin{proof}
    Note that
    \begin{equation*}
        \En\thermal{\exp\{\alpha a_n\}}_o = \En\thermal{\exp\Big\{\alpha \sum_{j\in[p]} X_{jn}\beta_j\Big\}}_o = \thermal{\exp\Big\{\frac{\alpha^2 ||\bbeta||^2}{2n}\Big\}}_o.
    \end{equation*}
    Here, as before, we used \cite[Proposition A.2.1]{talagrand2010mean} to exchange derivatives and integrals. Analogously as in Proposition \ref{prop:exp_bound_norm}, we have that for every sufficiently small $t > 0$,  for some $K>0$,
    \begin{equation*}
        \log\thermal{\exp\{t ||\bbeta||^2\}}_o \leq n K (||\tbX||_{op}+1)^2.
    \end{equation*}
    Choosing some $t > 0$ such that this holds and letting $n\geq1$ be large enough so that $2nt>\alpha^2$, we obtain that
    \begin{equation*}
        \log\thermal{\exp\Big\{\frac{\alpha^2}{2nt}t||\bbeta||^2\Big\}}_o \leq \frac{\alpha^2}{2nt}\log\thermal{\exp\{t||\bbeta||^2\}}_o \leq \frac{\alpha^2 K}{2t}(||\tbX||_{op}+1)^2,
    \end{equation*}
    where we used the concave version of Jensen's applied to the function $x^a$ (with $a:=\alpha^2/2nt<1$) to obtain the first inequality. We then have the conclusion  choosing $C := \alpha^2K/2t$.
\end{proof}

From here we can obtain the desired bound for the moments of $a_n$, as argued below. %\textcolor{red}{TILL HERE.}

\begin{proof}[Proof of Proposition \ref{prop:control_mom_conv}]
    Notice that, for every $f:\R\mapsto\R$,
    \begin{equation}\label{eq:aux_cav_eq}
        \thermal{f(a_n)} = \frac{\thermal{f(a_n)\exp\{\tT_n a_n - A(a_n)\}}_o}{\thermal{\exp\{\tT_n a_n - A(a_n)\}}_o}.
    \end{equation}
    Now note that
    \begin{equation}\label{eq:log_exp_an}
        \begin{split}
            \log\En\thermal{\exp\{a_n\}} &\leq \frac{1}{2}\log\En\thermal{\exp\{(1+\tT_n) a_n - A(a_n)\}}^2_o + \frac{1}{2}\log\En\thermal{\exp\{\tT_n a_n - A(a_n)\}}^{-2}_o \\
            & \leq \frac{1}{2}\log\En\thermal{\exp\{2(1+\tT_n) a_n\}}_o + \frac{1}{2}\log\En\thermal{\exp\{-2\tT_n a_n +2 A(a_n)\}}_o,
        \end{split}
    \end{equation}
    %\textcolor{red}{A bit confused with the first line above--since the $E_n$ was outside of the fraction and log outside of that for the LHS-- then how does that become additive? Can you please write some in between steps?}\ms{This is just CS and the properties of log. Indeed,
  %  \begin{equation*}
   %     \begin{split}
    %        \log\E\left[ \frac{\text{Numerator}}{\text{Denominator}}\right] & \leq  \log\left(\sqrt{\E(\text{Numerator})^2 \E(\text{Denominator})^{-2}}\right) \\
   %         & = \frac{1}{2} \log\E(\text{Numerator})^2 + \frac{1}{2} \log\E(\text{Denominator})^2.
    %    \end{split}
    %\end{equation*}
    %}
    where in the first line we used  \eqref{eq:aux_cav_eq} and Cauchy-Schwarz, and in the second one that $A\geq0$ and Jensen's on both terms for $x^2$ and $1/x^2$, respectively.   
    We will now upper bound these terms by functions of $||\tbX||_{op}$. For the first we have, for some $K\geq0$,
    \begin{equation*}
        \begin{split}
            \log\En\thermal{\exp\{2(1+\tT_n) a_n\}}_o & \leq \log\En\thermal{\exp\{2(1+\tilde{C}) a_n\}\ind{a_n\geq0}+\exp\{2(1-\tilde{C}) a_n\}\ind{a_n<0}}_o \\
            &\leq \log2\max\Big\{\En\thermal{\exp\{2(1+\tilde{C}) a_n\}}_o,\En\thermal{\exp\{2(1-\tilde{C}) a_n\}}_o\Big\} \\
            & \leq K (||\tbX||_{op}+1)^2,
        \end{split}
    \end{equation*}
%\textcolor{red}{how do you get the first line of the above since the $\tilde{C}$ bound was on the derivatives of $\tT_n$?} 
where in the last line we used the previous lemma. In an analogous manner, we may obtain a similar bound for the second term of the second line of \eqref{eq:log_exp_an}. For the second term we explicitly use that $A(x)=\log p(e^x,e^{-x})$ for some polynomial $p$ to arrive at    \begin{equation*}
        \En\thermal{\exp\{-2\tT_n a_n +2 A(a_n)\}}_o = \En\thermal{e^{-2\tT_n a_n}p^2(e^{a_n},e^{-a_n})}_o.
    \end{equation*}
    Bounding in a similar fashion as done for the first term shows that there is another constant $K'\geq0$ such that
    \begin{equation*}
        \log\En\thermal{\exp\{-2\tT_n a_n +2 A(a_n)\}}_o \leq K' (||\tbX||_{op}+1)^2.
    \end{equation*}

    Define $Z:=K'(||\tbX||_{op}+1)^2$. Finally, by \cite[Lemma 3.1.8]{talagrand2010mean}, which states that if a random variable $X$ is such that $\log(\E(e^X)) \leq C$, then (for every $k\geq1)$ $\E(X^k) \leq 2^k(k^k + C^k)$, we have that there is some $K''>0$ such that %\ps{Maybe instead of simply calling Talagrand here, we can maybe summarize the lemma statement or something like that? Otherwise reader has to open it tocheck if our proof is true.}\ms{Done.}
        \begin{equation*}
            \E\thermal{a_n^{2k}} \leq K''(1+\tE Z^{2k}).
        \end{equation*}
        The conclusion follows by fixing $C_k := K''(1+\sup_{n\geq1}\tE Z^{2k})$ which is finite.
    \end{proof}

% ------------------------------------------------------------------------------------------------------------------------------------
% ------------------------------------------------------------------------------------------------------------------------------------
% ------------------------------------------------------------------------------------------------------------------------------------

\section{Moment control of coordinates of posterior samples}\label{app:moment_control}

In this appendix we will prove that, for all $k\geq1$, the $k$-th moment of each coordinate of a sample $\bbeta$ from the full posterior is a bounded sequence in $n$. 
%\ps{I would forego the following sentence. Where we use something specific from Talagrand, we can maybe refer}\ms{I commented out the phrase.}
% Such an argument was presented in  \cite{talagrand2010mean} for a highly stylized model known as  the Shcheribina-Tirozzi model in statistical physics which corresponds to a logistic model with true signal equals $\bm{0}$, that is the global null model. We adapt the proof strategy from herein, however, the existence of a  non-zero signal in our case  presents significant additional challenges that we address here. 

\begin{lemma}\label{lem:cont_mom}
    Suppose Hypothesis \ref{hyp1} holds. Then, for every $k\geq 1$ there exists some $C_k > 0$ such that, for all $n\geq1$ and $j_0\in[p]$, $\E\thermal{\beta_{j_0}^{2k}} \leq C_k$.
\end{lemma}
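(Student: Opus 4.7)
The plan is to combine strong log-concavity of the posterior (which controls \emph{centered} moments for free) with an integration-by-parts/induction argument (to control the posterior mean).

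\textbf{Strong log-concavity and centered moments.} The posterior density $\pi(\bm{\beta}\mid\mathrm{data})\propto\exp\{\mathcal{L}_{n,p}(\bm{\beta})+\sum_j\log\mu(\beta_j)\}$ is almost surely $\epsilon$-strongly log-concave on $\mathbb{R}^p$: by Hypothesis~\ref{hyp1}(i) and the bound \eqref{eq:prior_eps_bound}, $\log\mu$ is $\epsilon$-strongly concave, while $\mathcal{L}_{n,p}$ is concave because $A$ is convex (Hypothesis~\ref{hyp1}(iii)). Applying the Bobkov--Brascamp--Lieb sub-Gaussian concentration inequality for $\epsilon$-strongly log-concave measures to the $1$-Lipschitz map $\bm{\beta}\mapsto\beta_{j_0}$ yields
\begin{equation*}
\thermal{\exp\{\lambda(\beta_{j_0}-\thermal{\beta_{j_0}})\}}\leq\exp(\lambda^2/(2\epsilon)) \qquad (\lambda\in\mathbb{R}),
\end{equation*}
and hence, by a standard moment-from-MGF computation, $\thermal{(\beta_{j_0}-\thermal{\beta_{j_0}})^{2k}}\leq A_k$ for a constant $A_k$ depending only on $k$ and $\epsilon$, uniformly in $n$, $j_0$, and the data. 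Combined with $\beta_{j_0}^{2k}\leq 2^{2k-1}[(\beta_{j_0}-\thermal{\beta_{j_0}})^{2k}+\thermal{\beta_{j_0}}^{2k}]$, it thus suffices to prove $\E\thermal{\beta_{j_0}}^{2k}\leq B_k$ uniformly.

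\textbf{Controlling the posterior mean via integration by parts and induction on $k$.} Since $\pi$ decays super-polynomially (by strong log-concavity), integration by parts in $\beta_{j_0}$ with test function $\beta_{j_0}^{2k-1}$ gives the Stein-type identity
\begin{equation*}
(2k-1)\thermal{\beta_{j_0}^{2k-2}} + \thermal{\beta_{j_0}^{2k-1}(\log\mu)'(\beta_{j_0})} + \thermal{\beta_{j_0}^{2k-1} Z_{j_0}} = 0,
\end{equation*}
where $Z_{j_0}:=\sum_i X_{ij_0}(\tT_i-A'(\bX_i^\top\bm{\beta}))$. The strong concavity of $\log\mu$ yields $\beta(\log\mu)'(\beta)\leq(\log\mu)'(0)\beta-\epsilon\beta^2$; multiplying by $\beta_{j_0}^{2k-2}\geq0$ and taking posterior expectation, I get
\begin{equation*}
\epsilon\thermal{\beta_{j_0}^{2k}} \leq (2k-1)\thermal{\beta_{j_0}^{2k-2}}+|(\log\mu)'(0)|\thermal{|\beta_{j_0}|^{2k-1}}+\thermal{|\beta_{j_0}|^{2k-1}|Z_{j_0}|}.
\end{equation*}
Taking $\E$ on both sides and applying Hölder together with Young's inequality absorbs all terms but a multiple of $\E\thermal{\beta_{j_0}^{2k}}$ (on the left) and a term involving $\E\thermal{Z_{j_0}^{2k}}$ plus moments of $\beta_{j_0}$ of order strictly less than $2k$. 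The induction hypothesis handles the lower-order moments, and the base case is the uniform variance bound from Step~1.

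\textbf{Uniform bound on $\E\thermal{Z_{j_0}^{2k}}$.} Expand $Z_{j_0}^{2k}$ into a sum of $2k$-fold products and apply Gaussian integration by parts on the Gaussian entries $X_{ij_0}$ (as in the proof of Lemma~\ref{lem:2mon_Y}). Each application replaces an $X_{ij_0}$ by a partial derivative of the integrand, producing factors bounded by Hypothesis~\ref{hyp1}(ii)--(iii) ($\tT_i$, $\tT'_i$, $A''$, etc.) together with inner products $\bX_i^\top\bm{\beta}^{(m)}$, whose $L^p$ moments are controlled by Hypothesis~\ref{hyp2} and Corollary~\ref{cor:bound_mom_norm}, and with coordinates $\beta_{j_0}^{(m)}$ that appear to strictly lower power than $2k$; these are handled by the inductive hypothesis.

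\textbf{Main obstacle.} The principal subtlety is avoiding a circular reliance on Lemma~\ref{lem:2mon_Y} (whose own proof invokes Lemma~\ref{lem:cont_mom}). To avoid the loop, one must carefully track that every application of Gaussian IBP on $Z_{j_0}^{2k}$ produces coordinate moments of $\bm{\beta}$ of order at most $2k-1$, so that the induction closes. As a fallback, the posterior mean can alternatively be compared to the MAP estimator $\bm{b}^*$ via the identity $\thermal{\beta_{j_0}}=b^*_{j_0}+\int_0^1\nabla(\nabla W)^{-1}(\cdot)\,d\cdot$ and the first-order optimality condition $(\log\mu)'(b^*_{j_0})=-Z_{j_0}(\bm{b}^*)$, combined with strong concavity of $\log\mu$, which gives $|b^*_{j_0}|\lesssim \epsilon^{-1}(|Z_{j_0}(\bm{b}^*)|+|(\log\mu)'(0)|)$; the $2k$-th moments of the latter are then controlled by the same Gaussian IBP argument above.
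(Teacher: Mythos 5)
Your Step 1 (sub-Gaussian control of the centered fluctuation via strong log-concavity) is fine and coincides with the first ingredient of the paper's argument. The gap is in Steps 2--3. After the Stein identity you must bound $\E\thermal{|\beta_{j_0}|^{2k-1}|Z_{j_0}|}$, and after H\"older/Young this hinges on a uniform bound for $\E\thermal{Z_{j_0}^{2k}}$ with $Z_{j_0}=\sum_i X_{ij_0}(\tT_i-A'(\bX_i^\top\bbeta))$. Your claim that every Gaussian integration by parts produces coordinate powers of $\beta_{j_0}$ of order \emph{strictly} less than $2k$ is not justified and fails in the worst case: for the terms of $Z_{j_0}^{2k}$ with $2k$ distinct row indices, each of the $2k$ explicit factors $X_{i_lj_0}$ must be integrated by parts separately, and each such integration, when the derivative hits either $A'(\bX_{i_l}^\top\bbeta)$ or the Gibbs weights (whose $X_{i_lj_0}$-derivative is $\tT'_{i_l}\beta_{\star,j_0}\bX_{i_l}^\top\bbeta^{(m)}+(\tT_{i_l}-A'(\bX_{i_l}^\top\bbeta^{(m)}))\beta_{j_0}^{(m)}$), produces one factor of $\beta_{j_0}$ in some replica. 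The $1/n$ from each integration is exactly consumed by the $\sim n^{2k}$ index choices, so the resulting degree-$2k$ terms in $\beta_{j_0}$ carry $O(1)$ coefficients. The induction therefore does not close: you obtain at best $\E\thermal{\beta_{j_0}^{2k}}\leq C + C'\,\E\thermal{\beta_{j_0}^{2k}}$ with no mechanism forcing $C'<1$, and Young's inequality cannot absorb it because the offending coefficient is not tunable. The MAP fallback has the same unresolved problem: $\bm{b}^\star$ depends on $X_{ij_0}$, so the moments of $Z_{j_0}(\bm{b}^\star)$ are not controlled by the independence argument you invoke, and the comparison identity you write for $\thermal{\beta_{j_0}}-b^\star_{j_0}$ is not a proof.

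The paper's proof avoids exactly this circularity by a different device. It works with the one-dimensional marginal log-density $w(x)=f(x)+\log\mu(x)$ of $\beta_{j_0}$ (with $f$ obtained by integrating out $\tbbeta$), and uses strong concavity to bound the \emph{mode} via $\epsilon|x_\star|\leq|w'(0)|$, i.e.~it evaluates the score at the fixed point $0$ rather than at the random $\beta_{j_0}$ (or at the MAP). This decouples $\beta_{j_0}$ entirely: $w'(0)=(\log\mu)'(0)+\sum_i\thermal{(\tT_i(a_{\star,i})-A'(\tbX_i^\top\tbbeta))X_{ij_0}}_{j_0,1}$, an average under a measure that does not involve $\beta_{j_0}$ at all. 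Its $2k$-th moments are then bounded, for every $k$, by exploiting the (near-)independence of $X_{ij_0}$ from the bracket — the residual dependence through $\tT_i(tX_{ij_0}\beta_{\star,j_0}+\tbX_i^\top\tbbeta_\star)$ is handled by a Taylor/interpolation-in-$t$ argument together with a Gronwall bound (Proposition~\ref{prop:control_mom_conv_t}) resting on Hypothesis~\ref{hyp2}. No induction on $k$ and no a priori moment bound on $\beta_{j_0}$ is needed, which is precisely what your route is missing. To salvage your approach you would need a genuinely new argument showing the degree-$2k$ terms produced by the integrations by parts come with absorbable (small or signed) coefficients — for instance by first splitting $A'(\bX_i^\top\bbeta)=A'(\tbX_i^\top\tbbeta)+A''(\cdot)X_{ij_0}\beta_{j_0}$ and exploiting $A''\geq0$ — but as written the key step is a gap, not a proof.
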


For this, we will first prove an intermediate result. This will require the definition of the following auxiliary measure.  Recall that $\tbX$ is the design matrix obtained by removing the ${j_0}$-th column and 
%coordinate of each covariate, let $\tbbeta\in\R^{p-1}$ and 
$\tbbetas\in\R^{p-1}$ is the vector obtained by removing the $j_0$-th coordinate of %the sample $\bbeta$ from the posterior and 
the true signal $\bbetas$. Recall definitions \eqref{eq:def_tT_u} and \eqref{eq:ta_is}--\eqref{eq:mean_variable}
%\ps{Actually I added the description of $\tbbeta$ to the previous line, but I am not sure if it right. $\tbbeta$ could either be the $j_0$-th coordinate removed from $\bbeta$, a sample from the full posterior, or could be a sample from the leave-a-variable-out posterior. These are two different things, and we need to be careful, we are not using the same notation to mean these two different things at different parts of the manuscript. Also by this notation in the lemma statement below $\tT_i(a_{\star,i})$ should actually be $\tT_i(1)$. Lets discuss how we wish to fix this notation} 
Fix ${j_0}\in[p]$. For $i\in[n]$ and $t\in[0,1]$, define $\tT_{i,t} := \tT_i(t X_{ij_0}\beta_{\star,j_0} +\tasi)$, $\tT'_{i,t} := \tT'_i(t X_{ij_0}\beta_{\star,j_0} +\tasi)$, and $\tT''_{i,t} := \tT''_i(t X_{ij_0}\beta_{\star,j_0} +\tasi)$, where recall that $\tasi=\tbX_i^{\top}\tbbetas$. For $f:\R^{p-1}\mapsto\R$ define
%\textcolor{red}{I think this $\tT_{i}(t)$ notation is confusing because $\tT$ was already a well-defined function before so $\tT_i(x)$ at any point x has a specific meaning in the context of all our previous stuff.}\ms{We agreed to use the sub-index notation that is present in the formulas now.}

\begin{equation}\label{eq:interpolLOOV}
    \langle f(\tbbeta) \rangle_{{j_0},t} := \frac{1}{Z_{{j_0},t}}\int f(\tbbeta) \exp\left\{\sum_{i\in[n]} \tT_{i,t} \tbX_i^\top\tbbeta - A(\tbX_i^\top\tbbeta)\right\} \prod_{j\in[p]\backslash\{j_0\}}\mu(d\tilde{\beta}_j);
\end{equation}
%\ps{This is odd $d \mu (\tbbeta)$ probably doesnt make sense since support of $\mu$ is 1-d. I guess you meant 
%$\mu^{\otimes p-1}$?}\ms{corrected}
with $Z_{{j_0},t}$ a suitable normalization constant. 

\begin{lemma}\label{lem:bet_exp_bound}
    Under the same conditions as the previous lemma, there is a constant $C > 0$ s.t. for every $j_0\in[p]$
    \begin{equation*}
        \log \langle \exp \frac{1}{4}\beta_{j_0}^2 \rangle \leq C \left(\sum_{i\in[n]} \langle(\tT_i(a_{\star,i}) - A'(\ta_i)) X_{i{j_0}}\rangle_{{j_0},1} + 1\right)^2.
    \end{equation*}
\end{lemma}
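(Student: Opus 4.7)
\emph{Proof plan.} The strategy is to isolate the $\beta_{j_0}$-dependence of $\cL_{n,p}$ via a Taylor expansion about $\beta_{j_0}=0$, decouple the $\beta_{j_0}$-integration from the $\tbbeta$-integration, and reduce the latter to an expectation under $\langle\cdot\rangle_{j_0,1}$. First I write
\[
\cL_{n,p}(\bbeta) = \cL^{(t=1)}(\tbbeta) + \sum_{i\in[n]} \tT_i(a_{\star,i})X_{ij_0}\beta_{j_0} - \sum_{i\in[n]}\bigl[A(\tbX_i^\top\tbbeta + X_{ij_0}\beta_{j_0}) - A(\tbX_i^\top\tbbeta)\bigr],
\]
where $\cL^{(t=1)}(\tbbeta):=\sum_i \tT_i(a_{\star,i})\tbX_i^\top\tbbeta - A(\tbX_i^\top\tbbeta)$ is precisely the log-likelihood appearing in $\langle \cdot\rangle_{j_0,1}$ (i.e.~\eqref{eq:interpolLOOV} at $t=1$). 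Convexity of $A$ together with $\|A''\|_\infty\leq 1$ (Hypothesis \ref{hyp1}(iii)) yields the sandwich $Y_1(\tbbeta)\beta_{j_0} - \tfrac12\|\bX_{\bullet j_0}\|^2\beta_{j_0}^2 \leq \cL_{n,p}(\bbeta) - \cL^{(t=1)}(\tbbeta) \leq Y_1(\tbbeta)\beta_{j_0}$, where $Y_1(\tbbeta):=\sum_i(\tT_i(a_{\star,i}) - A'(\tbX_i^\top\tbbeta))X_{ij_0}$ is exactly the quantity featuring on the right-hand side of the lemma.

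Plugging the upper bound into the numerator and the lower bound into the denominator of $\langle e^{\beta_{j_0}^2/4}\rangle = \int e^{\beta_{j_0}^2/4+\cL_{n,p}}\mu^{\otimes p}\big/\int e^{\cL_{n,p}}\mu^{\otimes p}$, the $\beta_{j_0}$-integration decouples and gives
\[
\langle e^{\beta_{j_0}^2/4}\rangle \;\leq\; \frac{\langle H(\tbbeta)\rangle_{j_0,1}}{\langle G(\tbbeta)\rangle_{j_0,1}},
\]
with $H(\tbbeta):=\int e^{\beta^2/4 + Y_1(\tbbeta)\beta}\mu(d\beta)$ and $G(\tbbeta):=\int e^{Y_1(\tbbeta)\beta - \tfrac12\|\bX_{\bullet j_0}\|^2\beta^2}\mu(d\beta)$. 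Strong log-concavity of $\mu$ (Hypothesis \ref{hyp1}(i)) bounds the Gaussian-type integral $H(\tbbeta)\leq C_1 e^{c_1 Y_1(\tbbeta)^2}$ via completion of the square (with $c_1,C_1$ depending on the strong-concavity parameter $\varepsilon$ of $\log\mu$; the exponent $1/4$ must be taken below $\varepsilon$), while Jensen's inequality applied inside $\log G(\tbbeta)$, combined with the bounded first two moments of $\mu$, gives $\log G(\tbbeta) \geq -C_2(|Y_1(\tbbeta)| + \|\bX_{\bullet j_0}\|^2)$.

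It remains to bound $\log\langle e^{c_1 Y_1^2}\rangle_{j_0,1}$ in terms of $\langle Y_1\rangle_{j_0,1}^2$. The measure $\langle\cdot\rangle_{j_0,1}$ is $\varepsilon$-strongly log-concave on $\R^{p-1}$, since its negative log-density is the sum of the convex $-\cL^{(t=1)}$ and the strongly convex $-\log\mu$. Moreover $Y_1$ is Lipschitz in $\tbbeta$: from $\nabla_{\tbbeta}Y_1 = -\tbX^\top\mathrm{diag}(A''(\tbX_i^\top\tbbeta))\bX_{\bullet j_0}$ one gets $\|\nabla Y_1\|\leq\|\tbX\|_{op}\|\bX_{\bullet j_0}\|$. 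Bakry--\'Emery's criterion then yields sub-Gaussian concentration of $Y_1 - \langle Y_1\rangle_{j_0,1}$ under $\langle\cdot\rangle_{j_0,1}$, and using $Y_1^2 \leq 2\langle Y_1\rangle_{j_0,1}^2 + 2(Y_1 - \langle Y_1\rangle_{j_0,1})^2$ together with the squared sub-Gaussian moment bound yields $\log\langle e^{c_1 Y_1^2}\rangle_{j_0,1} \leq 2c_1\langle Y_1\rangle_{j_0,1}^2 + c'$. Combining with the concavity bound $\log\langle G\rangle_{j_0,1}\geq -C_2(\langle|Y_1|\rangle_{j_0,1}+\|\bX_{\bullet j_0}\|^2)$ (Jensen on $\log$, together with $\langle|Y_1|\rangle \leq |\langle Y_1\rangle| + O(1)$ from the sub-Gaussian variance bound) and noting that $\|\bX_{\bullet j_0}\|^2$ is $O(1)$ (deterministic on the event $B$ of \eqref{eq:def_Ak} or uniformly in expectation), we arrive at $\log\langle e^{\beta_{j_0}^2/4}\rangle \leq C(\langle Y_1\rangle_{j_0,1}^2 + 1) \leq C(\langle Y_1\rangle_{j_0,1} + 1)^2$, which is the claim. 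The main obstacle is the calibration of constants: the exponent $1/4$ must sit below $\varepsilon$ for the completion of squares in $H$, and $2c_1$ must sit below $\varepsilon/\|\nabla Y_1\|_\infty^2$ for the squared sub-Gaussian moment to be finite; both conditions are manageable but force one to keep careful track of how the prior's strong-concavity parameter balances against the Lipschitz constant of $Y_1$.
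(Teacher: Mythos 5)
Your route (tilting the full posterior by the sandwich $Y_1\beta_{j_0}-\tfrac12\|\bX_{\bullet j_0}\|^2\beta_{j_0}^2\le \cL_{n,p}-\cL^{(t=1)}\le Y_1\beta_{j_0}$ and decoupling the $\beta_{j_0}$-integral) is genuinely different from the paper's, but it has a gap at the step
$\log\thermal{e^{c_1 Y_1^2}}_{j_0,1}\le 2c_1\thermal{Y_1}_{j_0,1}^2+c'$. The sub-Gaussian proxy you get from Bakry--\'Emery is $\|\nabla Y_1\|^2/\varepsilon \le \|\tbX\|^2_{op}\|\bX_{\bullet j_0}\|^2/\varepsilon$, which is a \emph{random, unbounded} quantity under the Gaussian design; for realizations where $c_1\|\tbX\|^2_{op}\|\bX_{\bullet j_0}\|^2$ exceeds order $\varepsilon$ the exponential moment $\thermal{e^{c_1(Y_1-\thermal{Y_1}_{j_0,1})^2}}_{j_0,1}$ is not bounded by a constant and can be infinite, so no almost-sure inequality of the claimed form with a deterministic $c'$ holds. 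The same problem infects the terms you wave away as ``$O(1)$'': $\thermal{|Y_1|}_{j_0,1}-|\thermal{Y_1}_{j_0,1}|$ scales with the same Lipschitz constant, and $\|\bX_{\bullet j_0}\|^2$ is only bounded on the event $B$. But the lemma is a pointwise (a.s.) statement with a constant $C$ depending only on the prior and likelihood, and it must be: in the proof of Lemma \ref{lem:cont_mom} it is applied for every realization and only \emph{then} are moments of the score sum taken. Restricting to $B$ and handling $B^c$ separately is not part of your argument and would change the statement. A second, related calibration issue is that your bound $H(\tbbeta)\le C_1e^{c_1Y_1^2}$ forces the prior's Gaussian-decay coefficient to exceed $1/4$ (and then $c_1\sim(\varepsilon-1/4)^{-1}$ feeds back into the first problem); Hypothesis \ref{hyp1}(i) only provides \emph{some} $\varepsilon>0$, so in your route the fixed constant $1/4$ becomes an extra assumption rather than a consequence.

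The paper avoids all exponential moments of $Y_1$ by working with the one-dimensional marginal of $\beta_{j_0}$ directly: by Pr\'ekopa--Leindler the log-marginal $w(x)=f(x)+\log\mu(x)$ is concave with $w''\le-\epsilon$ coming from the prior alone, Talagrand's Lemma 3.2.2 controls the fluctuation of $\beta_{j_0}$ around the maximizer $x_*$ (the coordinate is $1$-Lipschitz, so no design-dependent constant appears), and the location of $x_*$ is controlled through $|w'(0)|\ge\epsilon|x_*|$ with $w'(0)=\sum_{i}\thermal{(\tT_i(a_{\star,i})-A'(\ta_i))X_{ij_0}}_{j_0,1}+\mu'(0)/\mu(0)$ --- exactly the score quantity in the statement. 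Thus the data enter the bound only through $\thermal{Y_1}_{j_0,1}$, all remaining constants are prior-dependent, and the exponential-moment control you need never has to be established. If you want to salvage your decoupling approach, you would have to replace the $e^{c_1Y_1^2}$ step by an argument that only uses the first derivative information (e.g.\ optimize over the location of the one-dimensional tilt), which essentially brings you back to the paper's mode-plus-curvature argument.
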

\begin{proof}
    The proof follows by adapting \cite[Lemma 3.2.6]{talagrand2010mean} to our setting. According to \cite[Theorem 3.1.4]{talagrand2010mean}, if the logarithm of the density of the posterior (which we will denote by $\phi(\cdot)$) is such that, for all $\bbeta,\bbeta'\in\R^p$ and some fixed $K > 0$,
    \begin{equation}\label{eq:tal_condition}
        \phi\left(\frac{\bbeta+\bbeta'}{2}\right) - \frac{\phi(\bbeta)+\phi(\bbeta')}{2} \geq K ||\bbeta-\bbeta'||^2,
    \end{equation}
    then
    \begin{equation}\label{eq:tal_exp_bound}
        \thermal{e^{(\beta_{j_0}-\thermal{\beta_{j_0}})^2}} \leq 4.
    \end{equation}
    Because of the strong log-concavity of the prior and the convexity of $A(\cdot)$ there is some constant $\delta > 0$ such that $\phi(\cdot) + \delta ||\cdot||^2$ is concave. 
  We therefore have that
    \begin{equation*}
        \phi\left(\frac{\bbeta+\bbeta'}{2}\right) + \delta\left\|\frac{\bbeta+\bbeta'}{2}\right\|^2 - \frac{\phi(\bbeta)+\phi(\bbeta')}{2}-\delta \frac{\|\bbeta\|^2+\|\bbeta'\|^2}{2}\geq 0
    \end{equation*}
    or, what is the same,
    \begin{equation*}
        \phi\left(\frac{\bbeta+\bbeta'}{2}\right) - \frac{\phi(\bbeta)+\phi(\bbeta')}{2} \geq -\delta\left\|\frac{\bbeta+\bbeta'}{2}\right\|^2 + \delta \frac{\|\bbeta\|^2+\|\bbeta'\|^2}{2}.
    \end{equation*}
    Now, notice that
    \begin{equation*}
        -\delta\left\|\frac{\bbeta+\bbeta'}{2}\right\|^2 + \delta\frac{\|\bbeta\|^2+\|\bbeta'\|^2}{2} = -\frac{\delta}{4}\|\bbeta+\bbeta'\|^2 + \frac{\delta}{2}(\|\bbeta\|^2+\|\bbeta'\|^2) = \frac{\delta}{4}\|\bbeta-\bbeta'\|^2.
    \end{equation*}
    Thus, inequality \eqref{eq:tal_condition} holds with $K=\delta/4$.
    The other result we will use is \cite[Lemma 3.2.2]{talagrand2010mean} which states that if $w:\R\mapsto\R$ is concave and achieves its maximum at some $x_*\in\R$, then for all $K'>0$ such that $w''\leq -K'$
    \begin{equation}\label{eq:bound_fluct}
        K' \int (x -x_*)^2 e^{w(x)} dx \leq \int e^{w(x)} dx.
    \end{equation}
        Define $w:\R\mapsto\R$ to be $w(x) = f(x) + \log \mu(x)$; where
    \begin{equation*}
        f(x) := \log \int \exp\left\{\sum_{i\in[n]} \tT_i(a_{\star,i}) (\tbX_i^\top\tbbeta + x X_{i{j_0}})  - A(\tbX_i^\top\tbbeta + x X_{i{j_0}})\right\} \prod_{j\in[p]\backslash\{j_0\}}\mu(d\beta_j). 
    \end{equation*}
    Now, notice that the marginal of $\beta_{j_0}$ will be proportional to $e^{w(\beta_{j_0})}$ and that, for some $\epsilon > 0$, $w'' \leq -\epsilon$ because of the strong log-concavity of $w$. From \eqref{eq:bound_fluct} and \cite[Lemma 3.2.2]{talagrand2010mean} we obtain that
    \begin{equation*}
        \langle(\beta_{j_0}-x_*)^2\rangle \leq \frac{1}{\epsilon}
    \end{equation*}
    with $x_*$ the maximum of $w$. This then implies that $|\langle \beta_{j_0} \rangle| \leq \epsilon^{-1/2} + |x_*|$. By the strong concavity of $w$ we also have that $|w'(0)| = |w'(x_*) - w'(0)| \geq \epsilon |x_*|$. Here, with some abuse of notation, refer to the density of $\mu(\cdot)$ as $\mu(x)$. We will also assume without loss of generality that $\mu(0)>0$. If this is not the case, the argument can be easily adapted to a generic point in the support of the density of $\mu(x)$. Combining the above, we then obtain that 
    \begin{equation}\label{eq:sec_exp_bound}
        \begin{split}
            \exp\langle\beta_{j_0}\rangle^2 & \leq \exp\left(\frac{\sqrt{\epsilon} + w'(0)}{\epsilon}\right)^2\\
            & = \exp\left\{\epsilon^{-2}\left(\sqrt{\epsilon}+ f'(0) + \frac{\mu'(0)}{\mu(0)}\right)^2\right\}\\
            & = \exp\left\{\epsilon^{-2}\left(\sqrt{\epsilon}+\sum_{i\in[n]} \langle(\tT_i - \tilde{A'_i}) X_{ij_0}\rangle_{{j_0},1} + \frac{\mu'(0)}{\mu(0)}\right)^2\right\}.
        \end{split}
    \end{equation}
    By the convexity of $e^{\frac{x^2}{2}}$ and bounds \eqref{eq:tal_exp_bound} and \eqref{eq:sec_exp_bound} we obtain 
    \begin{equation*}
        \begin{split}
            \langle \exp \frac{1}{4}\beta_{j_0}^2 \rangle & = \left\langle \exp\left\{ \frac{1}{2}\thermal{\beta_{j_0}} + \frac{1}{2}(\beta_{j_0}-\thermal{\beta_{j_0}}) \right\}^2\right\rangle \\
            & \leq \frac{1}{2}\langle \exp \thermal{\beta_{j_0}}^2 \rangle + \frac{1}{2}\langle \exp (\beta_{j_0}-\thermal{\beta_{j_0}})^2 \rangle \\
            & \leq \frac{1}{2}\exp\left\{\epsilon^{-2}\left(\sqrt{\epsilon}+\sum_{i\in[n]} \langle(\tT_i - \tilde{A'_i}) X_{ip}\rangle_{{j_0},1} + \frac{\mu'(0)}{\mu(0)}\right)^2\right\} + 2.
        \end{split}
    \end{equation*}
    The result then follows by using that if $K'' > 0$ and $x \geq1$ then $\log(K''+x) \leq K'' + \log x$.
\end{proof}

From this result we can then achieve uniform control for the moments of $\beta_{j_0}$. By Lemma \ref{lem:bet_exp_bound} and \cite[Lemma 3.1.8]{talagrand2010mean}, it is enough to show that there is some $K> 0$ s.t., for all $n\geq1$,
\begin{equation*}
    \E\left(\sum_{i\in[n]}\tT_i(a_{\star,i})  X_{i{j_0}}\right)^{2k}, \,\, \E\left(\sum_{i\in[n]}\langle A'(\ta_i)\rangle_{{j_0},1} X_{i{j_0}}\right)^{2k} \leq K.
\end{equation*}
To prove this, we will need the following auxiliary proposition.
\begin{proposition}\label{prop:control_mom_conv_t}
    For all $k\geq1$, there is some constant $C_k >0$ such that, for all $n\geq1$ and $t\in[0,1]$, 
    \begin{equation*}
        \E\thermal{a_1^{2k}}_{j_0,t} \leq C_k.
    \end{equation*}
\end{proposition}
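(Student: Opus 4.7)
My plan is to bound $\E\thermal{a_1^{2k}}_{j_0,t}$ uniformly in $n$, $t\in[0,1]$, and $j_0\in[p]$ by adapting the leave-an-observation-out strategy of Proposition \ref{prop:control_mom_conv} to the interpolating leave-a-variable-out measure \eqref{eq:interpolLOOV}. The key observation is that this measure shares the convexity and strong log-concavity structure of the full posterior, so the exponential moment machinery of Appendix \ref{app:replica_sym} carries over with minor adjustments.

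I will first decompose $a_1=\tbX_1^\top\tbbeta+X_{1j_0}\beta_{j_0}$. Under $\thermal{\cdot}_{j_0,t}$ the coordinate $\beta_{j_0}$ is drawn independently from the strongly log-concave prior $\mu$, so $\E\thermal{(X_{1j_0}\beta_{j_0})^{2k}}_{j_0,t}$ factorizes into a product of a Gaussian moment and a prior moment, both uniformly finite. It then remains to control $\E\thermal{(\tbX_1^\top\tbbeta)^{2k}}_{j_0,t}$. The interpolating measure inherits all the structural features required by Proposition \ref{prop:exp_bound_norm}: $A$ is non-negative and convex, $\log\mu$ is $\varepsilon$-strongly concave, and Hypothesis \ref{hyp1}(ii) yields $|\tT_{i,t}|\leq d_1|tX_{ij_0}\beta_{\star,j_0}+\tbX_i^\top\tbbeta_\star|+|\tT_i(0)|$, so the key estimate \eqref{eq:bound_tTnorm} on $\|\tbT_t\|^2$ remains valid. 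A direct adaptation of Proposition \ref{prop:exp_bound_norm} then produces $\log\thermal{\exp(s\|\tbbeta\|^2)}_{j_0,t}\leq Cn(\|\tbX\|_{op}^2+n^{-1}\|\tbT(0)\|^2+1)^2$ for a suitable small $s>0$, and combining this with Hypothesis \ref{hyp1}(vi), operator-norm tail bounds for $\bX$, and \cite[Lemma 3.1.8]{talagrand2010mean} yields the analog of Corollary \ref{cor:bound_mom_norm}, namely $\E\thermal{\|\tbbeta\|^{2k}}_{j_0,t}\leq C_k n^k$.

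To obtain an $n$-independent bound on $\E\thermal{(\tbX_1^\top\tbbeta)^{2k}}_{j_0,t}$ I will introduce the leave-an-observation-out analog $\thermal{\cdot}_{j_0,t,o}$ by omitting the $i=1$ term from the log-likelihood in \eqref{eq:interpolLOOV}. Under $\thermal{\cdot}_{j_0,t,o}$ the sample $\tbbeta$ is independent of the first row $\bX_1$, since neither the remaining likelihood terms nor the prior involve it. Conditioning on $\tbbeta$ then gives $\tbX_1^\top\tbbeta\mid\tbbeta\sim\mathcal{N}(0,\|\tbbeta\|^2/n)$, and together with the analog of the previous step applied to $\thermal{\cdot}_{j_0,t,o}$ this yields the $n$-free bound $\E\thermal{(\tbX_1^\top\tbbeta)^{2k}}_{j_0,t,o}=(2k-1)!!\,\E\thermal{\|\tbbeta\|^{2k}}_{j_0,t,o}/n^k\leq C_k'$.

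The main obstacle will be transferring this bound back to $\thermal{\cdot}_{j_0,t}$ through the Radon--Nikodym derivative
$$\mathcal{R}:=\frac{\exp\{\tT_{1,t}\tbX_1^\top\tbbeta-A(\tbX_1^\top\tbbeta)\}}{\thermal{\exp\{\tT_{1,t}\tbX_1^\top\tbbeta-A(\tbX_1^\top\tbbeta)\}}_{j_0,t,o}},$$
so that $\E\thermal{(\tbX_1^\top\tbbeta)^{2k}}_{j_0,t}=\E\thermal{(\tbX_1^\top\tbbeta)^{2k}\mathcal{R}}_{j_0,t,o}$. Since here we do not assume that $\tT$ is bounded or that $A$ has the log-polynomial form required in Proposition \ref{prop:control_mom_conv}, the clean decomposition used in its proof is not directly available. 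My plan is to apply Cauchy--Schwarz to split the product, bound the numerator of $\mathcal{R}^2$ by using $A\geq 0$ so that $e^{2(\tT_{1,t}\tbX_1^\top\tbbeta-A(\tbX_1^\top\tbbeta))}\leq e^{2\tT_{1,t}\tbX_1^\top\tbbeta}$, and lower bound the denominator via Jensen's inequality applied to $e^{\cdot}$. Keeping the resulting bounds uniform in $n$, $t$, and $j_0$ is the technical crux; this will require combining the polynomial growth of $\tT_{1,t}$ (Hypothesis \ref{hyp1}(ii)) with the sub-Gaussian tails of $\tbX_1^\top\tbbeta$ under $\thermal{\cdot}_{j_0,t,o}$, the integrability of $\|\tbT(0)\|$ (Hypothesis \ref{hyp1}(vi)), and the exponential bounds on $\|\tbX\|_{op}$ from \cite[Corollary 5.35]{vershynin2010introduction}.
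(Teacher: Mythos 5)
Your preliminary steps are sound (the interpolating measure is indeed strongly log-concave, Proposition \ref{prop:exp_bound_norm} and Corollary \ref{cor:bound_mom_norm} do adapt to it and to its leave-an-observation-out version, and the Gaussian conditioning computation under that version is correct), but the step you yourself flag as the crux — transferring the bound back through the Radon--Nikodym factor $\mathcal{R}$ — genuinely fails under the standing assumptions, and no variant of Cauchy--Schwarz/Jensen will rescue it. Hypothesis \ref{hyp1} bounds only $\tT'_i,\tT''_i,\tT'''_i$, not $\tT_i$ itself, so $\tT_{1,t}=\tT_1(tX_{1j_0}\beta_{\star,j_0}+\tbX_1^\top\tbbeta_\star)$ grows linearly in a Gaussian whose variance is of order $\kappa\gamma^2$, while under $\thermal{\cdot}_{j_0,t,o}$ the variable $\ta_1$ is, given $\tbbeta$, Gaussian with order-one variance $\|\tbbeta\|^2/n$. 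The numerator your plan produces, $\E\thermal{\exp\{2\tT_{1,t}\ta_1\}}_{j_0,t,o}$ (after dropping $-A\le 0$), is therefore the expectation of the exponential of a product of two order-one-variance Gaussian-type variables; such expectations are finite only under a smallness condition of the form $2d_1\sqrt{\kappa\gamma^2}\,\sqrt{\|\tbbeta\|^2/n}<1$, which is not implied by Hypotheses \ref{hyp1}--\ref{hyp2} (and the alternative split $2\tT_{1,t}\ta_1\le\epsilon^{-1}\tT_{1,t}^2+\epsilon\,\ta_1^2$ imposes contradictory requirements on $\epsilon$). This is exactly why the paper's Proposition \ref{prop:control_mom_conv}, which does run this leave-an-observation-out/RN-derivative route, carries the \emph{extra} assumptions $|\tT|\le\tilde C$ a.s.\ and $A(x)=\log p(e^x,e^{-x})$; neither is available for Proposition \ref{prop:control_mom_conv_t}, which must hold for general models satisfying Hypotheses \ref{hyp1}--\ref{hyp2} (it is invoked inside the proof of Lemma \ref{lem:cont_mom}).

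The missing idea is that the uniform-in-$t$ bound is not re-derived from log-concavity and growth conditions alone — that is impossible in this generality — but is \emph{propagated from Hypothesis \ref{hyp2}} along the interpolation. The paper differentiates $\E\thermal{\ta_1^{2k}}_{j_0,t}$ in $t$; Gaussian integration by parts in $X_{ij_0}$ converts the derivative into posterior expectations involving only powers of the $\ta$'s multiplied by the \emph{bounded} derivatives $\tT'_{i,t},\tT''_{i,t}$, so no exponential moments of the likelihood ever appear. These terms are then controlled after conditioning on $\{\|\tbbeta\|\le\sqrt{n}K\}$ and $\{\|\tbX\|_{op}\le 3\}$, yielding a differential inequality $\frac{d}{dt}\E\thermal{\ta_1^{2k}}_{j_0,t}\le K'\E\thermal{\ta_1^{2k}}_{j_0,t}+o(1)$, and Gronwall reduces the claim to $t=0$, where $\thermal{\cdot}_{j_0,0}$ is the full posterior of a $(p-1)$-variable instance of the model and Hypothesis \ref{hyp2} supplies the boundedness. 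You never invoke Hypothesis \ref{hyp2} anywhere, which is the tell-tale sign of the gap: without it (or the extra structure of Proposition \ref{prop:control_mom_conv}), the conclusion is not reachable by your route.
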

\begin{proof}
    For $K > 0$, define 
    \begin{equation}\label{eq:def_DK_B}
        D_K:=\{||\tbbeta||\leq\sqrt{n}K\} \quad \text{and} \quad B:=\{||\tbX||_{op}\leq3\}    
    \end{equation}
    Let $K$ be large enough so that $\mu([-K,K]) > 0$. Notice that, for every $\bX$, there is a $K' > 0$ such that
    \begin{equation}\label{eq:def_cond_DK}
        \thermal{\mathbb{I}_{D_K}}_{j_0,t} \geq K' \mu^{\otimes p-1}([-K,K]^{p-1}).
    \end{equation}
As in \eqref{eq:def_mean_bE}, we define $\bE(\cdot)$ as the conditional expectation with respect to $(\bX,\mathbf{e})$ conditional on event $B$. Moreover, for every $f:\R^{p-1}\mapsto\R$ we let
    \begin{equation*}
        \thermal{f(\tbbeta)}'_{j_0,t} := \frac{\thermal{f(\tbbeta)\mathbb{I}_{D_K}}_{j_0,t} }{\thermal{\mathbb{I}_{D_K}}_{j_0,t} }
    \end{equation*}
where $\thermal{\mathbb{I}_{D_K}}_{j_0,t} > 0$ a.s. 
since, for almost all $\bX$, the posterior density is strictly non-negative on $D_K$. In the zero probability event $\{\thermal{\mathbb{I}_{D_K}}_{j_0,t} = 0\}$ we just define the ratio as $0$. From bounds in \cite{vershynin2010introduction} for $\proba(B^c)$ and an adaptation of Proposition \ref{prop:exp_bound_norm} to $\thermal{\cdot}_{j_0,t}$ we can easily see that:
    \begin{enumerate}[(i)]
        \item for some $\alpha > 0$, $\proba(B^c) \leq e^{-\alpha n}$;
        \item for $K > 0$ large enough, there is an $\alpha' > 0$ s.t. $\bE\thermal{\mathbb{I}_{D^c_K}}_{j_0,t} \leq e^{-\alpha' n}$;
        \item and, for all $k'\geq1$, $\E\thermal{\ta_1^{2k'}}_{j_0,t} \leq n^{-1}\E\thermal{||\bX\bbeta||^{2k'}}_{j_0,t}  \leq n^{k'-1}$.
    \end{enumerate}

Notice that, by (iii), moments of $\ta_i$ grow at most polynomially under the measure $\E\thermal{\cdot}_{j_0,t}$ while upper bounds in (i) and (ii) are exponentially decaying. This readily implies that, for all $k\geq1$, $\E\thermal{\ta_i^{2k}}_{j_0,t}$ and $\bE\thermal{\ta_i^{2k}}_{j_0,t}$ differ by an $o(1)$ quantity. Similarly, by Proposition \ref{prop:comparison_conditional}, we have that the moments $\bE\thermal{\ta_i^{2k}}_{j_0,t}$ and $\bE\thermal{\ta_i^{2k}}'_{j_0,t}$ also differ by an $o(1)$ quantity. We will make repeated uses of this in the rest of the proof.
By taking a derivative and using a Gaussian integration by parts we obtain
    \begin{equation*}
        \begin{split}
            \frac{d}{dt}\E\thermal{\ta_1^{2k}}_{j_0,t} & = \sum_{i\in[n]} \E\thermal{\ta_{1,1}^{2k}(\ta_{1,i}-\ta_{2,i})\tTp_{i,t} X_{ij_0}\beta_{\star,j_0}}_{j_0,t} \\
            & = \frac{t}{n}\sum_{i\in[n]} \E\thermal{\ta_{1,1}^{2k}(\ta_{1,i}-\ta_{2,i})\tT''_{i,t}\beta^2_{\star,j_0}}_{j_0,t} \\
            & \hspace{2cm}+ \E\thermal{\ta_{1,1}^{2k}(\ta_{1,i}-\ta_{2,i})(\ta_{1,i}+\ta_{2,i}-\ta_{3,i})\tTp^2_{i,t}\beta^2_{\star,j_0}}_{j_0,t} \\
            & = \frac{t}{n}\sum_{i\in[n]} \bE\thermal{\ta_{1,1}^{2k}(\ta_{1,i}-\ta_{2,i})\tT''_{i,t}\beta^2_{\star,j_0}}'_{j_0,t} \\
            & \hspace{2cm}+ \bE\thermal{\ta_{1,1}^{2k}(\ta_{1,i}-\ta_{2,i})(\ta_{1,i}+\ta_{2,i}-\ta_{3,i})\tTp^2_{i,t}\beta^2_{\star,j_0}}'_{j_0,t} + o(1);
        \end{split}
    \end{equation*}
where for the second equality we used Gaussian integration by parts with respect to $X_{ij_0}$: the first terms comes from the derivative of $\tTp_{i,t}$ w.r.t. $X_{ij_0}$ and the second term comes from the derivatives of $\exp\left\{\sum_{i\in[n]} \tT_{i,t} \tbX_i^\top\tbbeta - A(\tbX_i^\top\tbbeta)\right\}$ and the normalization constant $Z_{j_0,t}$; and for the third equality we used (i)-(iii) along with Proposition \ref{prop:comparison_conditional} to approximate $\E(\cdot)$ by $\bE(\cdot)$ and, in the second term, $\thermal{\cdot}_{j_0,t}$ by $\thermal{\cdot}'_{j_0,t}$, as discussed above. By grouping the terms on the last line correctly and using Hypothesis \ref{hyp1} we see that for some appropriate constants $K',K'' > 0$ 
\begin{equation*}
        \begin{split}
            \frac{t}{n}&\sum_{i\in[n]} \bE\thermal{\ta_{1,1}^{2k}(\ta_{1,i}-\ta_{2,i})\tT''_{i,t}\beta^2_{\star,j_0}}'_{j_0,t} + \bE\thermal{\ta_{1,1}^{2k}(\ta_{1,i}-\ta_{2,i})(\ta_{1,i}+\ta_{2,i}-\ta_{3,i})\tTp^2_{i,t}\beta^2_{\star,j_0}}'_{j_0,t}\\
            &\hspace{1cm}=\frac{t}{n} \bE\thermal{\ta_{1,1}^{2k}\sum_{i\in[n]}\ta_{1,i}\tT''_{i,t}\beta^2_{\star,j_0}}'_{j_0,t} - \frac{t}{n} \bE\left[\thermal{\ta_{1,1}^{2k}}'_{j_0,t}\sum_{i\in[n]}\thermal{\ta_{2,i}\tT''_{i,t}\beta^2_{\star,j_0}}'_{j_0,t}\right] \\
            & \hspace{2cm}+ \frac{t}{n} \bE\thermal{\ta_{1,1}^{2k}||\tbX\tbbeta||^2\tTp^2_{i,t}\beta^2_{\star,j_0}}'_{j_0,t} - \frac{t}{n} \bE\left[\thermal{\ta_{1,1}^{2k}}'_{j_0,t}\thermal{||\tbX\tbbeta||^2\tTp^2_{i,t}\beta^2_{\star,j_0}}'_{j_0,t}\right]\\
            & \hspace{2cm} - \frac{t}{n} \bE\thermal{\ta_{1,1}^{2k}\sum_{i\in[n]}\ta_{1,i}\ta_{3,i}\beta^2_{\star,j_0}}'_{j_0,t} + \frac{t}{n} \bE\thermal{\ta_{1,1}^{2k}\sum_{i\in[n]}\ta_{2,i}\ta_{3,i}\beta^2_{\star,j_0}}'_{j_0,t}\\
            & \hspace{1cm} \leq K''\Big(\bE\thermal{\ta_{1,1}^{2k}n^{-1/2}||\tbX\tbbeta||}'_{j_0,t} + \bE\left(\thermal{\ta_{1,1}^{2k}}'_{j_0,t}n^{-1/2}\thermal{||\tbX\tbbeta||}'_{j_0,t}\right) \\
            & \hspace{2cm}+ \bE\thermal{\ta_{1,1}^{2k}n^{-1}||\tbX\tbbeta||^2}'_{j_0,t} + \bE\left[\thermal{\ta_{1,1}^{2k}}'_{j_0,t}n^{-1}\thermal{||\tbX\tbbeta||^2}'_{j_0,t}\right]\\
            & \hspace{2cm} + \bE\thermal{\ta_{1,1}^{2k}n^{-1}||\tbX\tbbeta^{(1)}||\,||\tbX\tbbeta^{(2)}||}'_{j_0,t} +\bE\thermal{\ta_{1,1}^{2k}n^{-1}||\tbX\tbbeta^{(2)}||\,||\tbX\tbbeta^{(3)}||}'_{j_0,t}\Big)\\
            & \hspace{1cm}\leq K' \bE\thermal{\ta_{1}^{2k}}'_{j_0,t};
        \end{split}
    \end{equation*}
   where we used that $\tTp_{i,t}$ and $\tT''_{i,t}$ are bounded, that $\sum_{i\in[n]}|\ta_i|\leq \sqrt{n}||\tbX\bbeta||$, $\sum_{i\in[n]}\ta^2_i = ||\tbX\tbbeta||^2$, and that, by the definitions \eqref{eq:def_DK_B} of the sets $D_K$ and $B$, under $\bE\thermal{\cdot}'_{j_0,t}$ the quantity $n^{-1}||\tbX\tbbeta||^2$ is uniformly bounded by a constant.  Approximating, using Lemmas \ref{lem:aprox_K} and \ref{lem:taylor_control}, $\bE(\cdot)$ by $\E(\cdot)$ and $\thermal{\cdot}'_{j_0,t}$ by $\thermal{\cdot}_{j_0,t}$, we then have that
    \begin{equation*}
        \frac{d}{dt}\E\thermal{\ta_1^{2k}}_{j_0,t} \leq K' \E\thermal{\ta_1^{2k}}_{j_0,t} + o(1)
    \end{equation*}
Let $X(t)$ be a non-negative and differentiable function and $K,K'>0$ two constants such that, for all $t\in[0,T],$
    \begin{equation*}
        \frac{dX(t)}{dt} \leq K' X(t) + K.
    \end{equation*}
    Then Gronwall's inequality, \cite[Theorem 1.2.2]{ames1997inequalities}, implies that
    \begin{equation*}
        X(t) \leq X(0)e^{K't} + \frac{K}{K'}(e^{K't}-1).
    \end{equation*}
    In our case, because $K$, as a sequence of $n$, is $o(1)$, this becomes
    \begin{equation*}
        X(t) \leq X(0)e^{K't} + o(1).
    \end{equation*}
    We then obtain that
    \begin{equation*}
\E\thermal{\ta_1^{2k}}_{j_0,t} \leq  \E\thermal{\ta_1^{2k}}_{j_0,0} \exp\{{K' t}\} +o(1).
    \end{equation*}
    %\textcolor{red}{I am confused at the end--something seems circular here or I am missing something--To get $\exp\{K't \}$ on the RHS, you actually need $\int_0^t \mathbb{E}<\tilde{a}^{2k}_1>_{j_0,s}ds \leq C$ however, arent you trying to prove the integrand is bounded by a constant? Here it seems you are already using that which seems circular. Like the integrad could depend on s and nothing in the proof before this line so far rules it out. So whats going on?}\ms{I don't think there is circularity here. I am using the following form of Gronwall's inequality. Let $X(t)$ be a non-negative and differentiable function and $K,K'>0$ two constants such that, for all $t\in[0,T]0,$
  %  \begin{equation*}
   %     \frac{dX(t)}{dt} \leq K X(t) + K'.
   % \end{equation*}
 %   Then [Gronwall],
 %   \begin{equation*}
  %      X(t) \leq X(0)e^{Kt} + \frac{K'}{K}(e^{Kt}-1).
  %  \end{equation*}
  %  In our case, because $K'$, as a sequence of $n$ is $o(1)$ this becomes
  %      \begin{equation*}
  %      X(t) \leq X(0)e^{Kt} + o(1).
  %  \end{equation*}
  %  Notice that I only assumed that $X(t)$ is non-negative and differentiable. 
   % }
   % \textcolor{red}{This explanation is helpful, can you please include it in the manuscript suitably?}\ms{Done!}
    But, by Hypothesis \ref{hyp2}, the right hand side is a bounded sequence. We then conclude the proof.
\end{proof}

\begin{proof}[Proof of Lemma \ref{lem:cont_mom}]
    As pointed out before, it is enough to show that there is some $K> 0$ s.t., for all $n\geq1$,
    \begin{equation}\label{eq:bound_needed}
        \E\left(\sum_{i\in[n]}\tT_i(a_{\star,i})  X_{i{j_0}}\right)^{2k}, \,\, \E\left(\sum_{i\in[n]}\langle A'(\ta_i)\rangle_{{j_0},1} X_{i{j_0}}\right)^{2k} \leq K.
    \end{equation}
    We will then prove both these inequalities separately.
    
    Notice that, by a first order approximation of $\tT$, for the first bound it is enough to show that, for some $K > 0$,
    \begin{equation*}
        \E\left(\sum_{i\in[n]}\tT_i(\tasi)  X_{i{j_0}}\right)^{2k},\,\,\E\left(\sum_{i\in[n]} \beta_{\star,{j_0}} X^2_{i{j_0}}\right)^{2k} \leq K(1+\beta^{2k}_{\star,{j_0}});
    \end{equation*}
    where we used that $\tT'$ is a bounded function. From the hypothesis of the lemma and the fact that $\beta_{\star,j_0}$ is fixed and the $X_{ij_0}$ are i.i.d. of mean $0$ and variance $1/n$ we directly see that there is some $K' > 0$ s.t. 
    \begin{equation*}
        \E\left(\sum_{i\in[n]} \beta_{\star,{j_0}} X^2_{i{j_0}}\right)^{2k} \leq K' \beta_{\star,{j_0}}^{2k} .
    \end{equation*}
    For the other bound notice that $\tT_i(\tasi)$ is independent of $X_{i{j_0}}$. Then expanding the power and again using that $X_{i{j_0}}$ are i.i.d. normal of mean $0$ and variance $1/n$ we have that, for some $K''>0$
    \begin{equation*}
        \begin{split}
            \E\left(\sum_{i\in[n]}\tT_i(\tasi) X_{i{j_0}}\right)^{2k} & = \sum_{i_1,\dots,i_{2k}\in[n]}\E\big(\tT_i(\ta_{\star,i_1})\cdots\tT_i(\ta_{\star,i_{2k}})\big) \,\,  \E(X_{i_1{j_0}}\cdots X_{i_{2k}{j_0}}) \\
            & = \sum_{i_1,\dots,i_{k}\in[n]}\E\big(\tT^2(\ta_{\star,i_1})\cdots\tT^2(\ta_{\star,i_{k}})\big) \,\,  \E(X^2_{i_1{j_0}}\cdots X^2_{i_{k}{j_0}}) \\
            & \leq K'' \sum_{i_1,\dots,i_{k}\in[n]}  \E(X^2_{i_1{j_0}}\cdots X^2_{i_{k}{j_0}}) = K'' \E\left(\sum_{i\in[n]} X_{i{j_0}}\right)^{2k} = K''(2k-1)!!;
        \end{split}
    \end{equation*}
where, for the last inequality, we used that, as a consequence of Hypothesis \ref{hyp1}, all moments of $\tT_1(\ta_{\star,1}),\dots,\tT_n(\ta_{\star,n})$ are bounded. This is indeed the case because Hypothesis \ref{hyp1} implies the at most linear growth of $\tT$ and the moments of $a_{\star,i}$ are easily seen to be finite. Thus, for some $K>0$, we have
    \begin{equation*}
        \E\left(\sum_{i\in[n]}\tT_i(a_{\star,i})  X_{i{j_0}}\right)^{2k} \leq K(1+\beta_{\star,{j_0}}^{2k}).
    \end{equation*}
        For the second bound of \eqref{eq:bound_needed}, by the Mean Value Theorem, for some $\xi\in[0,1]$
    \begin{equation*}
        \begin{split}
            \E\left(\sum_{i\in[n]}\langle A'(\ta_i)\rangle_{{j_0},1} X_{i{j_0}}\right)^{2k} & \leq \,\,\,2^{2k} \E\left(\sum_{i\in[n]}\langle A'(\ta_i)\rangle_{{j_0},0} X_{i{j_0}}\right)^{2k}\\
            & \hspace{1cm} +  2^{2k}\E\left(\sum_{i\in[n]}\langle (A'(\ta_{1,i})-A'(\ta_{2,i}))\ta_{1,i}\rangle_{{j_0},\xi}\tTp_{i,t}(\xi) X^2_{i{j_0}}\beta_{\star,j_0}\right)^{2k}\\
            & \leq K''' \left[\E\left(\sum_{i\in[n]}\langle A'(\ta_i)\rangle_{{j_0},0} X_{i{j_0}}\right)^{2k}+ \E\left(\sum_{i\in[n]} \thermal{|\ta_i|}_{{j_0},\xi} X^2_{i{j_0}}\right)^{2k}\right];
        \end{split}
    \end{equation*}
    for $K''' > 0$ some constant only depending on $k\geq1$ and where we used that, by Hypothesis \ref{hyp1}, $A''$ and $\tT'$ are bounded functions and $\beta_{\star,j_0}$ is a bounded sequence. It is enough then to show that the two expectations on the last line are uniformly bounded in $n$. For the first expectation, notice that $(\langle A'(\ta_i)\rangle_{{j_0},0})_{i\in[n]}$ are independent of the $(X_{i{j_0}})_{i\in[n]}$. Furthermore, because the $(X_{i{j_0}})_{i\in[n]}$ are i.i.d. centered Gaussian r.v.s of variance $1/n$ and by Hypothesis \ref{hyp2} we have as above that, for some $K^{(4)}>0$,
    \begin{equation*}
        \begin{split}
            \E\left(\sum_{i\in[n]}\langle A'(\ta_i)\rangle_{{j_0},0} X_{i{j_0}}\right)^{2k} & = \sum_{i_1,\dots,i_k\in[n]} \E\left(\langle A'(\tbX_{i_1}^\top \tbbeta)\rangle^2_{{j_0},0}\cdots\langle A'(\tbX_{i_k}^\top \tbbeta)\rangle^2_{{j_0},0}\right) \E\left(X^2_{i_1{j_0}}\cdots X^2_{i_k{j_0}}\right) \\
            & \leq K^{(4)} \sum_{i_1,\dots,i_k\in[n]} \E\left(X^2_{i_1{j_0}}\cdots X^2_{i_k{j_0}}\right) = K^{(4)} \E\left(\sum_{i\in[n]} X_{i{j_0}}\right)^{2k} \\
            & = K^{(4)}(2k-1)!!;
        \end{split}
    \end{equation*}
 Now, note that the proof of Proposition \ref{prop:exp_bound_norm} directly extends, for ${j_0}\in[p]$ and $t\in[0,1]$, to the measures $\thermal{\cdot}_{{j_0},t}$. Therefore, by the analogue of Corollary \ref{cor:bound_mom_norm}, for every $k\geq1$ there is a $C_k > 0$ such that for all $n\geq1$ and $t\in[0,1]$ it holds that
    \begin{equation*}
        \E\thermal{||\tbbeta||^{2k}}_{j_0,t} \leq C_k n^k.
    \end{equation*}
    Moreover, the moments of $n\sum_{i\in[n]} X_{i{j_0}}^4$ are uniformly bounded in $n$. Thus, for the second term it is enough to see that, for some $K^{(5)} > 0$,
    \begin{equation*}
        \begin{split}
            \E\left(\frac{1}{n}\sum_{i\in[n]} \thermal{|\ta_i|}_{{j_0},\xi} nX^2_{i{j_0}}\right)^{2k} & \leq \frac{1}{n}\sum_{i\in[n]}\E\left( \thermal{|\ta_i|}_{{j_0},\xi} nX^2_{i{j_0}}\right)^{2k} \\
            & \leq n^{2k}\E\thermal{\ta^{2k}_1 X^{4k}_{1{j_0}}}_{{j_0},\xi} \leq n^{2k}\left(\E\thermal{\ta^{4k}_1}_{{j_0},\xi} \E X^{8k}\right)^{1/2} \leq K^{(5)};
        \end{split}
    \end{equation*}
    where we used that $\E X^{8k}$ is $\mathcal{O}(n^{-4k})$ and Proposition \ref{prop:control_mom_conv_t}.
\end{proof}
    \section{Smooth approximation for logistic regression}\label{app:smooth_app}
%\textcolor{red}{I am very confused about this section from the beginning, First your probably meant $\tT$ here--next $\tT_i(x) = \boldsymbol{1}\{e_i\leq A'(x) \}$. My confusion is in the Hypothesis your assumptions are on $\tT$ and not $T$ so then why are you worried about T here? And by the way for logistic, $T(x) = x$ and not what is written here. We need to clarify this before I can move forward.}
In the case of logistic regression, $\tT_i(x) = \mathbb{I}_{\{x \geq \sigma^{-1}(e_i)\}}$, where $\sigma(\cdot)$ denotes the sigmoid function and $e_i \sim \text{Unif}(0,1)$, independent of everything else. This function is indeed bounded but is not $\mathcal{C}^3$ and therefore does not obey Hypothesis 1(ii). For each $\delta>0$, let $f_\delta(x):=(\tanh(x/\delta)+1)/2$. However, for each $\delta > 0$, we can approximate $\tT_i(x)$ by a function $\tT_{\delta,i}(x)$ given by $\tT_{\delta,i}(x) = f_\delta(x-\sigma^{-1}(e_i))$. And Hypothesis \ref{hyp1}(ii) does hold for $\tT_{\delta,i}$. Furthermore, when $\delta\approx0$ we have that $\tT_{\delta,i}\approx \tT_i$.

In the rest of this appendix, we will let $\proba(\cdot|\bX,\by)$ denote the logistic regression posterior given by
\begin{equation}\label{eq:full_posterior}
    \proba(d\bbeta|\bX,\by) = \frac{1}{Z_n} \exp\left\{\sum_{i\in[n]}\tT_i(a_{\star,i})\bX_i^\top\bbeta - \log(1+e^{\bX_i^\top\bbeta})\right\} \prod_{j\in[p]}\mu(d\beta_j).
\end{equation}
In a similar way, we will let $\proba_\delta(\cdot|\bX,\by)$ be the posterior measure associated to the smooth approximation used. That is, $\proba_\delta$ is the measure defined according to
\begin{equation}\label{eq:smooth_posterior}
    \proba_\delta(d\bbeta|\bX,\by) = \frac{1}{Z_{n,\delta}} \exp\left\{\sum_{i\in[n]}\tT_{\delta,i}(a_{\star,i})\bX_i^\top\bbeta - \log(1+e^{\bX_i^\top\bbeta})\right\} \prod_{j\in[p]}\mu(d\beta_j),
\end{equation}
where $Z_{n,\delta} > 0$ is a normalizing constant. We will then let $\thermal{\cdot}_\delta$ denote expectation with respect to \eqref{eq:smooth_posterior}. Finally, for each $j\in[p]$, we will let $\proba_j$ and $\proba_{\delta,j}$ denote the $j$-th marginals of $\proba$ and $\proba_\delta$, respectively.

\subsection{Convergence of order parameters}

In this section we will prove that the values of the asymptotic expectations of the order parameters \eqref{eq:overlaps} with respect to $\thermal{\cdot}_\delta$ approximate the ones with respect to $\thermal{\cdot}$. This is the content of the following proposition.
\begin{proposition}\label{prop:conv_order_logistic}
    In the above context, for every $m,m'\in\{1,2,\star\}$, we have that
    \begin{equation*}
        \lim_{n\to\infty} \E\thermal{Q_{mm'}}=\lim_{\delta\to0}\lim_{n\to\infty} \E\thermal{Q_{mm'}}_\delta.
    \end{equation*}
\end{proposition}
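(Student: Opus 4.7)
The approach is to compare order parameters across the smoothed and original logistic posteriors via their tilted free energies, in the spirit of Appendix \ref{app:replica_sym}. For each fixed $\delta>0$, the smoothed likelihood satisfies Hypotheses \ref{hyp1}--\ref{hyp2}, so Proposition \ref{prop:conv_order_param} yields that $q_{mm'}(\delta):=\lim_{n\to\infty}\E\thermal{Q_{mm'}}_\delta$ exists. It therefore suffices to establish the uniform-in-$n$ estimate
$$\sup_n\big|\E\thermal{Q_{mm'}}-\E\thermal{Q_{mm'}}_\delta\big|\xrightarrow{\delta\to 0}0,$$
from which the stated equality (and the existence of $\lim_n \E\thermal{Q_{mm'}}$) follows by a standard double-limit argument.

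To obtain this uniform estimate, I would introduce the two-replica tilted log-partition functions $\tF_n(\lambda_n,\gamma_n,\chi_n)$ and $\tF_{n,\delta}(\lambda_n,\gamma_n,\chi_n)$ as in \eqref{eq_pert_ham}, for the original logistic model and its smoothed counterpart. The Jensen sandwich $\thermal{\cL_\delta-\cL}_T\leq \log(\tZ_{\lambda_n,\gamma_n,\chi_n,\delta}/\tZ_{\lambda_n,\gamma_n,\chi_n})\leq \thermal{\cL_\delta-\cL}_{T,\delta}$ reduces the task to bounding posterior averages of $\cL_\delta-\cL=\sum_{i\in[n]}c_i\bX_i^\top\bbeta$ with $c_i:=(\tT_{\delta,i}-\tT_i)(a_{\star,i})$. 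Splitting the index set by whether $|a_{\star,i}-\sigma^{-1}(e_i)|\leq K\delta$: on the complement $|c_i|\lesssim e^{-cK}$ by saturation of $\tanh$, while the boundary layer satisfies $\proba(|a_{\star,i}-\sigma^{-1}(e_i)|\leq K\delta) = O(K\delta)$ since $a_{\star,i}-\sigma^{-1}(e_i)$ has a bounded density. Combining with Cauchy-Schwarz and the uniform moment bound $\E\thermal{a_i^2}_T\leq C$ (which follows from Corollary \ref{cor:bound_mom_norm} and Gaussianity of the covariates), optimizing $K=c^{-1}\log(1/\delta)$ yields
$$\sup_{(\lambda_n,\gamma_n,\chi_n)\in[0,\veps/4]^3}\E\big|\tF_n-\tF_{n,\delta}\big|=O\!\left(\sqrt{\delta\log(1/\delta)}\right),$$
uniformly in $n$.

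The bound then transfers to derivatives by convexity. Both $\E\tF_n$ and $\E\tF_{n,\delta}$ are convex in each tilt parameter, with second derivatives uniformly bounded via the Brascamp-Lieb estimate \eqref{eq:brasc_bound} together with Corollary \ref{cor:bound_mom_norm}. Applying the convex-derivative comparison \eqref{eq:convexderivdiff} with $G=\E\tF_n$ and $g=\E\tF_{n,\delta}$, and optimizing the auxiliary increment, gives $\sup_n|\E\thermal{Q_{mm'}}_T-\E\thermal{Q_{mm'}}_{T,\delta}|\to 0$ as $\delta\to 0$ at any fixed tilt in $[0,\veps/4]^3$. Finally, reusing the interpolation argument of Proposition \ref{prop:RS} separately in each model, tilted and untilted order-parameter expectations differ by $O(\lambda_n+\gamma_n+\chi_n)$ uniformly in $n$ (cf.~\eqref{eq:dif_mean}), so letting the tilts vanish at an appropriate polynomial rate in $\delta$ yields the desired uniform estimate.

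The main obstacle is the free-energy bound: although $\tT_{\delta,i}-\tT_i$ is restricted to a boundary layer of width $O(\delta)$, it is of order one there, so naive $L^1$ bounds on $|\cL-\cL_\delta|$ grow like $n\delta$ before normalization. Getting a vanishing estimate after division by $n$ hinges on combining this combinatorial input with second-moment control of the fitted values $a_i$ under the tilted posterior via Cauchy-Schwarz, which is why the logistic likelihood's boundedness and the smoothness of the density of $a_{\star,i}$ at the threshold $\sigma^{-1}(e_i)$ both enter the argument.
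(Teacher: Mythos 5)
Your proposal is correct and follows essentially the same route as the paper: bound the (tilted) free-energy difference between the logistic and smoothed models via a boundary-layer estimate on $\tT_{\delta,i}-\tT_i$ combined with Cauchy--Schwarz and uniform moment control of the fitted values, transfer this to the order parameters through the convex-derivative comparison \eqref{eq:convexderivdiff} with Brascamp--Lieb controlling the increment, and finally remove the tilts via the mean-value/interpolation argument with tilt parameters vanishing at a $\delta$-dependent rate. The only differences are cosmetic: you use a Jensen sandwich in place of the paper's mean-value bound along the $t$-interpolation (Lemma \ref{lem:boundTTdelta} applied to $\frac{d}{dt}\E F_{n,\chi,\delta,t}$), and a slightly different boundary-layer width/optimization yielding $\sqrt{\delta\log(1/\delta)}$ instead of the paper's $\delta^{1/4}$, neither of which changes the substance.
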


For simplicity we will write the proof for $Q_{1\star}$ while the ones for the other variables work in the same way. 
Consider the posterior corresponding to the log-likelihood
\begin{equation}\label{eq:approx_delta}
    \hat{\logl}_{n,p}(\bB) :=  \chi \bB^\top \bbetas + \sum_{i \in[n]} ((1-t)\tT_i(a_{\star,i})+t\tT_{\delta,i}(a_{\star,i}))\bX_i^\top\bB - A(\bX_i^\top\bB)
\end{equation}
and the same prior as the original model. The corresponding posterior expectation will be denoted by $\thermal{\cdot}_{\chi,\delta,t}$. We then have that, for $f:\R^p\mapsto\R$,
\begin{equation*}
    \thermal{f(\bbeta)}_{\chi,\delta,t} := \frac{\int f(\bB)\exp\{\hat{\logl}_{n,p}(\bB)\}\prod_{j\in[p]}\mu(db_j)}{\int\exp\{\hat{\logl}_{n,p}(\bB)\}\prod_{j\in[p]}\mu(db_j)}.
\end{equation*}
Notice that the logistic regression log-likelihood is recovered when $\chi$ and $t$ are both zero. In the rest of the section let $\epsilon > 0$ be as in Section \ref{app:moment_control}.

A key observation will be that the proof of Proposition \ref{prop:exp_bound_norm} holds with direct adaptations for this posterior measure. Because for every $t\in[0,1]$ and $\delta>0$
\begin{equation*}
    0 \leq (1-t)\tT_i(a_{\star,i})+t\tT_{\delta,i}(a_{\star,i}) \leq 1,
\end{equation*}
the same derivation proves that there are $a,K>0$ such that, for every $t\in[0,1]$, $\delta>0$ small enough, $0<\chi<\epsilon/4$, and $n\geq1$,
\begin{equation*}
    \log \thermal{\exp{a ||\bbeta||^2}}_{\chi,\delta,t} \leq K(||\bX||_{op}+1)^2.
\end{equation*}
This, as in Corollary \ref{cor:bound_mom_norm}, implies that there is some $K' >0$ for which
\begin{equation}\label{eq:bound_norm_param}
    n^{-1}\E\thermal{||\bbeta||^2}_{\chi,\delta,t},\, n^{-2}\E\thermal{||\bbeta||^4}_{\chi,\delta,t} \leq K'.
\end{equation}
From this we get that
\begin{equation}\label{eq:bound_conv_spin_param}
    \E\thermal{a_1^2}_{\chi,\delta,t} = n^{-1}\E\thermal{||\bX\bbeta||^2}_{\chi,\delta,t} \leq \left(\E||\bX||_{op}^4 n^{-2}\E\thermal{||\bbeta||^4}_{\chi,\delta,t}\right)^{1/2} \leq {K'}^{1/2} \left(\E||\bX||_{op}^4\right)^{1/2} \leq K'';
\end{equation}
for some $K''>0$ not depending on $n$, $\chi$, $\delta$, nor $t$. Lemma \ref{lem:logconc_tilted} of Section \ref{app:moment_control} also holds (uniformly on the parameters $\chi$, $\delta$, and $t$) for the measure $\thermal{\cdot}_{\chi,\delta,t}$.

As in equation \eqref{eq:brasc_bound} in the proof of Proposition \ref{prop:RS}, we have that if $0<\chi<\epsilon/4$, then for some $K'''>0$ 
\begin{equation}\label{eq:bound_means1}
    \begin{split}
        |\E\thermal{Q_{1\star}}_{\chi,\delta,1}-\E\thermal{Q_{1\star}}_{\delta}| & = \left| \frac{d}{d\chi} \E\thermal{Q_{1\star}}_{\chi,\delta,1} \Big|_{\chi=\xi} \right| \ \chi \\
        & = n \E\thermal{\big(Q_{1\star} - \thermal{Q_{1\star}}_{\xi,\delta,1}\big)^2}_{\xi,\delta,1}  \ \chi \\
        & \leq \frac{n\E\thermal{||\bbeta||^2}_{\xi,\delta,1}}{\epsilon n^2} \chi \leq K''' \chi;
    \end{split}
\end{equation}
where, for in the first line we used the Mean Value Theorem, for the second the explicit expression for the derivative, and for the last Brascamp-Lieb, valid by Lemma \ref{lem:logconc_tilted}, and equation \eqref{eq:bound_norm_param}. In an analogous way, again Lemma \ref{lem:logconc_tilted}, Brascamp-Lieb, and equation \eqref{eq:bound_norm_param} we get
\begin{equation}\label{eq:bound_means2}
|\E\thermal{Q_{1\star}}_{\chi,\delta,0}-\E\thermal{Q_{1\star}}| \leq K^{(4)} \chi;
\end{equation}
where $K^{(4)}>0$ is some constant and we used that, for every $\delta>0$, $\E\thermal{Q_{1\star}}_{0,\delta,0} = \E\thermal{Q_{1\star}}$.

Below, Lemma \ref{lem:ord_param_smooth} will connect in a similar way the expectation of $Q_{1\star}$ with respect to $\E\thermal{\cdot}_{\chi,\delta,1}$ and $\E\thermal{\cdot}_{\chi,\delta,0}$. Before proving this, we establish an auxiliary lemma.

\begin{lemma}\label{lem:boundTTdelta}
    There is a constant $C > 0$ such that for all $n\geq1$ and $\delta>0$,
    \begin{equation*}
        \sum_{i\in[n]} \E\thermal{(\tT_{\delta,i}(a_{\star,i})-\tT_i(a_{\star,i}))a_i}_{\chi,\delta,t} \leq C \delta^{1/4} n.
    \end{equation*}
\end{lemma}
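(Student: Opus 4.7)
The plan is to bound each summand uniformly by $C\delta^{1/4}$ via a Cauchy--Schwarz split that isolates the quenched factor $\tT_{\delta,i}(a_{\star,i})-\tT_i(a_{\star,i})$, which depends only on $(\bX,\bm{e},\bbeta_\star)$ and not on the posterior sample $\bbeta$. Concretely, for each $i\in[n]$, Cauchy--Schwarz with respect to $\E\thermal{\cdot}_{\chi,\delta,t}$ gives
\begin{equation*}
\E\thermal{(\tT_{\delta,i}(a_{\star,i})-\tT_i(a_{\star,i}))a_i}_{\chi,\delta,t}^2
\;\le\;
\E\thermal{(\tT_{\delta,i}(a_{\star,i})-\tT_i(a_{\star,i}))^2}_{\chi,\delta,t}\;\E\thermal{a_i^2}_{\chi,\delta,t}.
\end{equation*}
The second factor is uniformly bounded by the constant $K''$ from \eqref{eq:bound_conv_spin_param}. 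Since $(\tT_{\delta,i}(a_{\star,i})-\tT_i(a_{\star,i}))^2$ is $\sigma(\bX,\bm{e},\bbeta_\star)$-measurable, it passes through the posterior average, so the first factor reduces to the deterministic quantity $\E(\tT_{\delta,i}(a_{\star,i})-\tT_i(a_{\star,i}))^2$.

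The main technical step is then to show $\E(\tT_{\delta,i}(a_{\star,i})-\tT_i(a_{\star,i}))^2=O(\delta^{1/2})$ uniformly in $i$. Using the explicit form $f_\delta(y)=(\tanh(y/\delta)+1)/2$ and $\mathbb{I}_{\{y\ge 0\}}$, a direct computation yields the pointwise bound
\begin{equation*}
|f_\delta(y)-\mathbb{I}_{\{y\ge 0\}}|\;\le\;e^{-2|y|/\delta},\qquad y\neq 0,
\end{equation*}
so in particular $|f_\delta(y)-\mathbb{I}_{\{y\ge 0\}}|^2\le\min\{1,e^{-4|y|/\delta}\}$. Setting $Y_i:=a_{\star,i}-\sigma^{-1}(e_i)$, I would split
\begin{equation*}
\E|f_\delta(Y_i)-\mathbb{I}_{\{Y_i\ge 0\}}|^2
\;\le\;\proba(|Y_i|\le\delta^{1/2})+e^{-4/\sqrt{\delta}}.
\end{equation*}
Now $Y_i$ is the sum of the centered Gaussian $a_{\star,i}$ (with variance $\|\bbeta_\star\|^2/n$ uniformly bounded by Hypothesis \ref{hyp1}(iv)) and the independent variable $\sigma^{-1}(e_i)$, which is a logistic random variable with bounded density. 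Convolving the two, $Y_i$ has a bounded density uniformly in $n$ and $i$, so $\proba(|Y_i|\le\delta^{1/2})\le C\delta^{1/2}$. The exponential term is absorbed into the same bound, giving $\E(\tT_{\delta,i}-\tT_i)^2\le C\delta^{1/2}$.

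Combining, each summand is bounded by $\sqrt{C\delta^{1/2}\cdot K''}=C'\delta^{1/4}$, and summing over $i\in[n]$ yields the claim. The only real obstacle is obtaining a density bound for $Y_i$ that is uniform in $n$ and independent of the particular signal sequence $\bbeta_\star$; this is handled by noting that the distribution of $\sigma^{-1}(e_i)$ alone already has a bounded density on $\mathbb{R}$, and convolution with any probability measure preserves this uniform bound. Everything else is a standard Cauchy--Schwarz plus tail-estimate argument.
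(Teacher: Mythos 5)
Your proposal is correct and follows essentially the same route as the paper: Cauchy--Schwarz to decouple the quenched factor $\tT_{\delta,i}-\tT_i$ from $a_i$, the uniform posterior bound $\E\thermal{a_i^2}_{\chi,\delta,t}\le K''$ from \eqref{eq:bound_conv_spin_param}, and the tail estimate $\E(\tT_{\delta,i}-\tT_i)^2=\mathcal{O}(\sqrt{\delta})$ obtained by splitting on whether $a_{\star,i}$ lands within $\sqrt{\delta}$ of $\sigma^{-1}(e_i)$ and using the exponential decay of $|f_\delta-\mathbb{I}_{\{\cdot\ge0\}}|$ away from the threshold. Your justification of the small-interval probability via the bounded logistic density of $\sigma^{-1}(e_i)$ (rather than the paper's unelaborated ``easily seen'' claim) is a fine, slightly more explicit way to get the uniform $\mathcal{O}(\sqrt{\delta})$ bound.
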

\begin{proof}
    Let $I_{\delta,i} := (-\sqrt{\delta}+\sigma^{-1}(e_i),\sqrt{\delta}+\sigma^{-1}(e_i))$. Notice that
    \begin{equation*}
        \left|\mathbb{I}_{\{x\geq0\}}-f_\delta(x)\right| = \frac{1}{1 + e^{2|x|/\delta}}.
    \end{equation*}
We then have that, for $a_{\star,i}\in I_{\delta,i}^c$, $|\tT(a_{\star,i})-\tT_{\delta,i}(a_{\star,i})|$ is of order $\mathcal{O}(e^{-2/\sqrt{\delta}})$. To control the mean of the first term we use that by Hypothesis \ref{hyp2} and the above, for some $K > 0$,
    \begin{equation*}
        \E \thermal{(\tT_{\delta,i}-\tT_i)a_i}_{\chi,\delta,t} \leq K \sqrt{\E{(\tT_{\delta,i}-\tT_i)^2}} \leq K \sqrt{P(a_{\star,i}\in I_{\delta,i})+\mathcal{O}(e^{-4/\sqrt{\delta}})}.
    \end{equation*}
    And $P(a_{\star,i}\in I_{\delta,i})$ is easily seen to be $\mathcal{O}(\sqrt{\delta})$. 
    We then have that
    \begin{equation*}
        \sum_{i\in[n]} \thermal{(\tT_{\delta,i}-\tT_i)a_i}_{\chi,\delta,t} \leq K \delta^{1/4} n.
    \end{equation*}
\end{proof}

\begin{lemma}\label{lem:ord_param_smooth}
    There is some fixed $C>0$ such that, for every $\delta>0$ small enough and $\delta^{1/4}<\chi<\epsilon/4$,
    \begin{equation*}
        |\E\thermal{Q_{1\star}}_{\chi,\delta,1}-\E\thermal{Q_{1\star}}_{\chi,\delta,0}|\leq C \delta^{1/4}.
    \end{equation*}
\end{lemma}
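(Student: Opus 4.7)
The plan is to prove Lemma \ref{lem:ord_param_smooth} by an interpolation argument in $t$. Writing $\Delta_i := \tT_{\delta,i}(a_{\star,i}) - \tT_i(a_{\star,i})$ and $X := \sum_{i\in[n]} \Delta_i a_i$, standard exchange of differentiation and integration (cf.~\cite[Proposition A.2.1]{talagrand2010mean}) gives
\begin{equation*}
\frac{d}{dt}\E\thermal{Q_{1\star}}_{\chi,\delta,t} = \E\left[\thermal{Q_{1\star} X}_{\chi,\delta,t} - \thermal{Q_{1\star}}_{\chi,\delta,t}\thermal{X}_{\chi,\delta,t}\right] = \E\,\text{Cov}_{\chi,\delta,t}(Q_{1\star},X).
\end{equation*}
By Cauchy-Schwarz inside and outside the expectation, I would then bound the right-hand side by $\sqrt{\E\,\text{Var}_{\chi,\delta,t}(Q_{1\star})\cdot\E\,\text{Var}_{\chi,\delta,t}(X)}$, and integrate over $t \in [0,1]$ to obtain the desired estimate.

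For the first factor, $Q_{1\star} = \bbetas^\top\bbeta/p$ is linear in $\bbeta$ with $\|\nabla Q_{1\star}\|^2 = \|\bbetas\|^2/p^2 \leq L^2/(\kappa p)$ for large $n$ by Hypothesis \ref{hyp1}(iv). Since the posterior $\thermal{\cdot}_{\chi,\delta,t}$ is $\varepsilon$-strongly log-concave (the tilt $\chi\bB^\top\bbetas$ is linear, so as in Lemma \ref{lem:logconc_tilted} the strong log-concavity inherited from the prior is preserved), Brascamp-Lieb yields $\text{Var}_{\chi,\delta,t}(Q_{1\star}) \leq C/p$ deterministically.

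The critical observation is for the second factor: once conditional on the data, $X$ is also linear in $\bbeta$, namely $X = v^\top\bbeta$ with $v := \sum_{i\in[n]}\Delta_i\bX_i \in \R^p$ (independent of $\bbeta$). A second application of Brascamp-Lieb then gives $\text{Var}_{\chi,\delta,t}(X) \leq (2/\varepsilon)\|v\|^2 \leq (2/\varepsilon)\|\bX\|_{op}^2\|\Delta\|^2$. On the event $B = \{\|\bX\|_{op}\leq 3\}$ of overwhelming probability, this is at most $C\|\Delta\|^2$; and, reasoning as in Lemma \ref{lem:boundTTdelta}, $\E\Delta_i^2$ is controlled by the probability that $a_{\star,i}$ lands in the $\sqrt{\delta}$-neighborhood of $\sigma^{-1}(e_i)$ (of order $\sqrt{\delta}$ by Gaussianity) plus an exponentially small tail, so $\E\|\Delta\|^2 \leq Cn\sqrt{\delta}$. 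Contributions from $B^c$ are negligible thanks to Gaussian tail bounds on $\|\bX\|_{op}$ combined with the uniform moment bounds from Corollary \ref{cor:bound_mom_norm} adapted to $\thermal{\cdot}_{\chi,\delta,t}$.

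Combining yields, uniformly in $t \in [0,1]$ and $\chi \in (0,\varepsilon/4)$,
\begin{equation*}
\left|\frac{d}{dt}\E\thermal{Q_{1\star}}_{\chi,\delta,t}\right| \leq \sqrt{\frac{C}{p}\cdot Cn\sqrt{\delta}} \;=\; C\sqrt{\frac{\sqrt{\delta}}{\kappa}} \;\leq\; C'\delta^{1/4},
\end{equation*}
after using $p/n \to \kappa$, and integrating in $t$ concludes. The main obstacle is the sharpness required on $\text{Var}(X)$: the naive estimate $\E\thermal{X^2}_{\chi,\delta,t} \lesssim n^2\sqrt{\delta}$ (obtained via term-wise Cauchy-Schwarz together with the moment control \eqref{eq:bound_conv_spin_param}) would yield only a $\delta^{1/8}$ rate, short of the target. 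Recognizing $X$ as linear in $\bbeta$ and invoking Brascamp-Lieb a \emph{second} time is what promotes the scaling $\E\,\text{Var}(X) \sim n\sqrt{\delta}$ rather than $n^2\sqrt{\delta}$, and delivers the required $\delta^{1/4}$.
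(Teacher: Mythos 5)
Your proof is correct, but it takes a genuinely different route from the paper. The paper works at the level of the expected log-normalizing constants $\E F_{n,\chi,\delta,t}$: it bounds the $t$-derivative via Lemma \ref{lem:boundTTdelta}, so the two free energies at $t=0$ and $t=1$ are close, and then transfers this to closeness of their $\chi$-derivatives $\E\thermal{Q_{1\star}}_{\chi,\delta,\cdot}$ using convexity in $\chi$, the standard comparison inequality \eqref{eq:convexderivdiff}, and a Brascamp--Lieb bound on the second $\chi$-derivative; this is precisely where the restriction $\delta^{1/4}<\chi<\epsilon/4$ and the optimization over the auxiliary parameter $d$ enter. You instead differentiate $\E\thermal{Q_{1\star}}_{\chi,\delta,t}$ directly in $t$, identify the derivative as $\E\,\mathrm{Cov}_{\chi,\delta,t}(Q_{1\star},X)$ with $X=\sum_i(\tT_{\delta,i}-\tT_i)a_i$ (the same interpolation-derivative device the paper uses in Lemmas \ref{lem:equivalence_mean} and \ref{lem:same_limit_lvo}), and control it by two Brascamp--Lieb applications: $\mathrm{Var}(Q_{1\star})\lesssim 1/p$ deterministically, and---your key observation---$X=v^\top\bbeta$ with $v=\bX^\top\Delta$ independent of $\bbeta$, so $\E\,\mathrm{Var}(X)\lesssim n\sqrt{\delta}$ rather than the naive $n^2\sqrt{\delta}$. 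This buys a more direct argument, a bound that is uniform over all $\chi\in(0,\epsilon/4)$ with no lower bound $\chi>\delta^{1/4}$ needed, and cleaner exponent bookkeeping (the paper's own chain, with the $\delta^{1/4}n$ free-energy bound from Lemma \ref{lem:boundTTdelta}, strictly yields only $\delta^{1/8}$ unless the intermediate bound is sharpened, whereas your route genuinely delivers $\delta^{1/4}$); note also that the $\varepsilon$-strong log-concavity you invoke is immediate here since the tilt $\chi\bB^\top\bbetas$ is linear.

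One minor caveat: your restriction to the event $B=\{\|\bX\|_{op}\leq3\}$ leaves a complementary contribution that is only $O(e^{-cn})$ (using $|\Delta_i|\leq1$ and moments of $\|\bX\|_{op}$), not $O(n\sqrt{\delta})$ uniformly in $(n,\delta)$; so strictly your bound reads $C(\delta^{1/4}+e^{-cn})$, or $C_q\,\delta^{1/4-\eta}$ for any $\eta>0$ if one replaces the event-splitting by a H\"older argument. This is harmless for the lemma's only use, in Proposition \ref{prop:conv_order_logistic}, where $n\to\infty$ is taken before $\delta\to0$, but it is worth stating explicitly.
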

\begin{proof}
    Let $F_{n,\chi,\delta,t}$ be the log-normalizing constant  associated with the density corresponding to the log-likelihood $\hat{\logl}_{n,p}$. We then have that, by Lemma \ref{lem:boundTTdelta},
    \begin{equation*}
        \left|\frac{d}{dt}\E F_{n,\chi,\delta,t}\right| = \left|\sum_{i\in[n]}\E\thermal{(\tT_i(a_{\star,i})-\tT_{\delta,i}(a_{\star,i})) a_i}_{\chi,\delta,t}\right| \leq C \delta^{1/4} n.
    \end{equation*}
    Then, for some constant $K^{(5)}>0$,
    \begin{equation}\label{eq:dif_eps_FE}
        \left|\E F_{n,\chi,\delta,1} - \E F_{n,\chi,\delta,0} \right| \leq n K^{(5)} \delta^{1/4}.
    \end{equation}
By taking the first two derivatives of $n^{-1}\E F_{n,\chi,\delta,1}$ with respect to $\chi$ we get
    \begin{equation*}
        n^{-1}\frac{d}{d\chi}\E F_{n,\chi,\delta,1} = \E\thermal{Q_{1\star}}_{\chi,\delta,1} \,\,\,\,\, \mbox{ and } \,\,\,\,\, n^{-1}\frac{d^2}{d\chi^2}\E F_{n,\chi,\delta,1} = n^{-1}\E\thermal{(Q_{1\star}-\thermal{Q_{1\star}}_{\chi,\delta,1})^2}_{\chi,\delta,1}\geq0.
    \end{equation*}
    This proves that, seen as a function of $\chi$, $n^{-1}\mathbb{E} F_{n,\chi,\delta,1}$ is a convex function whose first derivative gives $\E\thermal{Q_{1\star}}_{\chi,\delta,1}$. Similarly, $n^{-1}\mathbb{E} F_{n,\chi,\delta,0}$ is a convex function whose first derivative gives $\E\thermal{Q_{1\star}}_{\chi,\delta,0}$.

  As in \eqref{eq:convexderivdiff}, for every $d > 0$ sufficiently small we then have that
    \begin{equation}\label{eq:convex_lem_eps}
        |\E\thermal{Q_{1\star}}_{\chi,\delta,1}-\E\thermal{Q_{1\star}}_{\chi,\delta,0}| \leq \E\thermal{Q_{1\star}}_{\chi+d,\delta,1}-\E\thermal{Q_{1\star}}_{\chi-d,\delta,1}+\frac{1}{nd}\sum_{y\in\mathcal{Y}}|\E F_{n,y,1}-\E F_{n,y,0}|;
    \end{equation}
    with $\mathcal{Y}:=\{\chi-d,\chi,\chi+d\}$. As in Lemma \ref{lem:logconc_tilted}, as $\chi<\epsilon/4$, the measure associated to $\thermal{\cdot}_{\chi,\delta,1}$ is $\epsilon$-strongly log-concave for $\epsilon>0$. Then, by Brascamp-Lieb we have that for every such $\chi$
    \begin{equation}\label{eq:brascamp_eps}
        \frac{d}{d\chi}\thermal{Q_{1\star}}_{\chi,\delta,1} = n\thermal{(Q_{1\star}-\thermal{Q_{1\star}}_{\chi,\delta,1})^2}_{\chi,\delta,1} \leq \frac{||\bbetas||^2}{\epsilon n} \leq K^{(6)}
    \end{equation}
    for some $K^{(6)}>0$. Combining equations \eqref{eq:dif_eps_FE}, \eqref{eq:convex_lem_eps}, and \eqref{eq:brascamp_eps} and using the Mean Value Theorem we finally get that there is some $K^{(7)} > 0$ such that, for all $d > 0$ sufficiently small,
    \begin{equation*}
        |\E\thermal{Q_{1\star}}_{\chi,\delta,1}-\E\thermal{Q_{1\star}}_{\chi,\delta,0}| \leq K^{(7)}\left(d + \frac{\delta^{1/2}}{d}\right).
    \end{equation*}
    The conclusion of the lemma is reached by fixing the value $d = \delta^{1/4}$.
\end{proof}

\begin{proof}[Proof of Proposition \ref{prop:conv_order_logistic}]
    As mentioned before, we will prove this for $Q_{1\star}$ but the proofs for $Q_{11}$ and $Q_{12}$ follow in a similar way. By equations \eqref{eq:bound_means1} and \eqref{eq:bound_means2} and the lemma above, we have that for all $\delta>0$ sufficiently small and $\delta^{1/4}<\chi<\epsilon/4$
    \begin{equation*}
        \begin{split}
            |\E\thermal{Q_{1\star}}_{\delta}-\E\thermal{Q_{1\star}}| & \leq |\E\thermal{Q_{1\star}}_{\delta}-\E\thermal{Q_{1\star}}_{\chi,\delta,1}|+|\E\thermal{Q_{1\star}}_{\chi,\delta,1}-\E\thermal{Q_{1\star}}_{\chi,\delta,0}|+|\E\thermal{Q_{1\star}}_{\chi,\delta,0}-\E\thermal{Q_{1\star}}|\\
            & \leq K^{(8)}(\chi + \delta^{1/4}),
        \end{split}
    \end{equation*}
    for constants $K^{(8)}>0$ that do not depend on $n$, $\chi$, or $\delta$. The result then follows by taking first the limit of $n\to\infty$ and then $\delta\to0$ and $\chi\to0$.
\end{proof}

\subsection{Wasserstein convergence of marginals}

Recall the definition of the Wasserstein-2 metric between two Borel measures $\mu$ and $\nu$ on $\R^p$
\begin{equation*}
    W_2(\mu,\nu) := \inf_{\gamma \in \Gamma(\mu,\nu)} \left(\int  ||\boldsymbol{X}-\boldsymbol{Y}||^2 \gamma(d\boldsymbol{X},d\boldsymbol{Y})\right)^{1/2}
\end{equation*}
where $\Gamma(\mu,\nu)$ is the set of all couplings of $\mu$ and $\nu$.

Here we will prove the following bound on the Wasserstein-2 metric between the $j_0$-th marginal under $\mathbb{P}$ and $\mathbb{P}_{\delta}$ respectively (recall definitions from \eqref{eq:full_posterior} and \eqref{eq:smooth_posterior} respectively). Instead of the empirical measure convergence assumptions in Hypothesis 1(iv), for this section, we will assume that $\bbetas$ has i.i.d. coordinates drawn from $\pi(\cdot)$. This is stronger than Hypothesis 1(iv). The rest of our assumptions stay as earlier. For the rest of this section, we will assume that $\E(\cdot)$ stands for expectation with respect to $(\bX,\be)$, as before, but also with respect to the randomness in $\bbetas$.

\begin{proposition}\label{prop:marginal_smooth}
   % Assume Hypothesis \ref{hyp1} and \ref{hyp2} hold. Furthermore, suppose that the vector $\bbetas$ is such that every coordinate is i.i.d. with finite second moment. Then, 
   In the aforementioned setting, let $j_0 \in [p]$. There exists a constant $C > 0$ that does not depend on $n$ such that $\E W^2_2(\proba_{j_0},\proba_{\delta,j_0}) \leq C \delta^{1/4}$, where $\proba_{j_0},\proba_{\delta,j_0}$ denoted respectively the $j_0$-th marginal induced under $\proba$ and $\proba_\delta$. 
\end{proposition}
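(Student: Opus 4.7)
The plan is to combine a transportation-cost inequality with coordinate exchangeability. I would first establish a bound of order $O(n\delta^{1/4})$ on the full $p$-dimensional Wasserstein distance $W_2^2(\mathbb{P},\mathbb{P}_\delta)$, and then exploit the new i.i.d. hypothesis on $\bbetas$ together with the exchangeability of the columns of $\bX$ to transfer a factor of $1/p$ onto any single marginal. Since $p/n\to\kappa>0$, this collapses the $O(n\delta^{1/4})$ full-dimensional bound into the desired $O(\delta^{1/4})$ marginal bound.

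For the full-dimensional bound, I would first note that by Hypothesis \ref{hyp1}(i) together with the convexity of $A(x)=\log(1+e^x)$, the Lebesgue densities of both $\mathbb{P}$ and $\mathbb{P}_\delta$ are $\varepsilon$-strongly log-concave on $\R^p$, with $\varepsilon$ intrinsic to the prior and independent of $n,p,\delta$. The Bakry--\'Emery criterion then yields the Talagrand $T_2$ inequality $W_2^2(\mathbb{P}_\delta,\mathbb{P})\leq (2/\varepsilon)H(\mathbb{P}_\delta\,\|\,\mathbb{P})$. To control the KL divergence, set $\Delta(\bbeta):=\sum_{i\in[n]}(\tT_{\delta,i}-\tT_i)(a_{\star,i})\,a_i$, so that $\log(d\mathbb{P}_\delta/d\mathbb{P})=\Delta+\log(Z_n/Z_{n,\delta})$. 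Adding the KL divergences in both directions cancels the ratio of normalizing constants and gives the symmetric identity
\[
H(\mathbb{P}_\delta\,\|\,\mathbb{P})+H(\mathbb{P}\,\|\,\mathbb{P}_\delta) \;=\; \langle\Delta\rangle_\delta-\langle\Delta\rangle.
\]
Since both KL terms on the left are non-negative, $H(\mathbb{P}_\delta\,\|\,\mathbb{P})\leq \langle\Delta\rangle_\delta-\langle\Delta\rangle$. Taking $\E$ and applying Lemma \ref{lem:boundTTdelta} at $(\chi,t)=(0,1)$ (which recovers $\langle\cdot\rangle_\delta$) and at $(\chi,t)=(0,0)$ (which recovers $\langle\cdot\rangle$) gives $\E\,H(\mathbb{P}_\delta\,\|\,\mathbb{P})\leq C_1 n\delta^{1/4}$, and hence $\E\,W_2^2(\mathbb{P},\mathbb{P}_\delta)\leq C_2\,n\,\delta^{1/4}$.

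To descend to marginals, observe that for any coupling $\gamma$ of $(\mathbb{P},\mathbb{P}_\delta)$ the coordinate-wise projections yield valid couplings of the respective marginals, so $\sum_{j\in[p]}W_2^2(\mathbb{P}_j,\mathbb{P}_{\delta,j})\leq \int\|\boldsymbol X-\boldsymbol Y\|^2\,\gamma(d\boldsymbol X,d\boldsymbol Y)$; infimizing gives $\sum_j W_2^2(\mathbb{P}_j,\mathbb{P}_{\delta,j})\leq W_2^2(\mathbb{P},\mathbb{P}_\delta)$. Under the new i.i.d. hypothesis on the entries of $\bbetas$, together with the i.i.d. Gaussian columns of $\bX$ and the i.i.d. prior $\mu^{\otimes p}$, the joint distribution of $(\bbetas,\bX,\be)$ is invariant under permutations of $[p]$. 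This forces $\E\,W_2^2(\mathbb{P}_{j},\mathbb{P}_{\delta,j})$ to be independent of $j$, so averaging gives
\[
\E\,W_2^2(\mathbb{P}_{j_0},\mathbb{P}_{\delta,j_0}) \;=\; \frac{1}{p}\sum_{j\in[p]}\E\,W_2^2(\mathbb{P}_j,\mathbb{P}_{\delta,j}) \;\leq\; \frac{C_2 n}{p}\,\delta^{1/4},
\]
which is uniformly $O(\delta^{1/4})$ because $p/n\to\kappa>0$ keeps $n/p$ bounded.

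The principal obstacle I anticipate is verifying that Lemma \ref{lem:boundTTdelta}, originally stated with a deterministic signal sequence $\bbetas$, still applies with the same constant once $\bbetas$ is itself randomized as assumed in this appendix. This reduces to noting that its proof only invokes Hypothesis \ref{hyp2}, the exponential decay of $|\tT_{\delta,i}-\tT_i|$ outside a $\sqrt{\delta}$-window of $\sigma^{-1}(e_i)$, and the $O(\sqrt{\delta})$ measure of that window under the law of $a_{\star,i}$, all of which are preserved (and in fact simplified) under averaging over i.i.d. coordinates of $\bbetas$. A secondary technical check is that the strong log-concavity constant $\varepsilon$ does not deteriorate with the random design; this is immediate because the likelihood only contributes a positive semi-definite term to the Hessian of the negative log density, so the intrinsic log-concavity coming from Hypothesis \ref{hyp1}(i) is preserved.
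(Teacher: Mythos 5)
Your proof is correct and follows essentially the same route as the paper: a Talagrand $T_2$ inequality from the uniform strong log-concavity of the posterior, a KL bound of order $n\delta^{1/4}$ obtained from Lemma \ref{lem:boundTTdelta}, and coordinate exchangeability under the i.i.d.\ signal assumption to divide by $p\asymp n$ and descend to a single marginal. The only deviation is cosmetic: you cancel the log-normalizing constants by symmetrizing the KL divergence, so Lemma \ref{lem:boundTTdelta} is only needed at the endpoints $t=0,1$, whereas the paper controls $\E(F_n-F_{\delta,n})$ by the mean value theorem along the interpolation and invokes the lemma at an intermediate $t=\xi$; both variants rely on the absolute-value form of the lemma, which its Cauchy--Schwarz proof indeed provides.
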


\begin{proof}
    For this proof, denote
    \begin{equation*}
        F_n := \log Z_n \quad \text{and} \quad F_{n,\delta} := \log Z_{n,\delta},
    \end{equation*}
    with $Z_n$ and $Z_{n,\delta}$ as in \eqref{eq:full_posterior} and \eqref{eq:smooth_posterior}. For two Borel measures $\mu$ and $\nu$ on $\R^p$ such that $\mu$ define their \emph{Kullback–Leibler divergence}, $D_{\rm KL},(\mu||\nu)$ according to
    \begin{equation*}
        D_{\rm KL}(\mu||\nu) := \int \log\left(\frac{d\mu}{d\nu}(\bB)\right) \mu(d\bB).
    \end{equation*}

    By \cite[Theorem 1]{otto2000generalization}, because the posterior is strongly log-concave, we have that Talagrand's transport inequality holds for it. This means that
    \begin{equation*}
        W^2_2(\proba,\proba_\delta) \leq 2 D_{\rm KL}(\proba_\delta||\proba);
    \end{equation*}
    Now expectation of the KL divergence is given by 
    \begin{equation*}
        \E D_{\rm KL}(\proba_\delta||\proba) = \sum_{i\in[n]} \E\thermal{(\tT_{\delta,i}-\tT_i)a_i}_\delta + \E(F_n - F_{\delta,n}).
    \end{equation*}
    
    The first term can be directly controlled using Lemma \ref{lem:boundTTdelta}. As in the previous subsection, let $F_{n,\chi,\delta,t}$ be the log-normalizing constant  associated with the density corresponding to the log-likelihood $\hat{\logl}_{n,p}$. In the rest of the proof we fix $\chi=0$ and thus omit its corresponding subscript. For the second term, notice that
    \begin{equation*}
        \begin{split}
            \frac{d F_{n,\delta,t}}{d t}\Big|_{t=\xi} & = \frac{1}{Z_{n,\delta,\xi}}\int \frac{d}{dt}\exp\big\{\sum_{i\in[n]}(t\tT_{i}+(1-t)\tT_{\delta,i})\bX_i^\top\bbeta - \log(1+e^{\bX_i^\top\bbeta})\big\}\Big|_{t=\xi}\prod_{j\in[p]}\mu(d\beta_j) \\
            & = \sum_{i\in[n]} \thermal{(\tT_i-\tT_{\delta,i}) a_i}_{\delta,\xi}.
        \end{split}
    \end{equation*}
    Then, by the Mean Value Theorem with respect to $t$, Can you please show an intermediate step for the following.
    \begin{equation*}
        F_n - F_{\delta,n} = \sum_{i\in[n]} \thermal{(\tT_i-\tT_{\delta,i}) a_i}_{\delta,\xi};
    \end{equation*}
    with $t=\xi$ some value in $(0,1)$. From this and another use of Lemma \ref{lem:boundTTdelta} we have that
    \begin{equation*}
        \E W_2^2(\proba,\proba_\delta) \leq C \delta^{1/4} n
    \end{equation*}

    To conclude, just notice that, because the coordinates of $\bbetas$ are exchangeable, then the measures $\proba$ and $\proba_\delta$ are symmetric under exchanges of the coordinates. %\textcolor{red}{Is the first inequality below a property of Wasserstein distances?}\ms{Yes, it is a property of $W_2$. The $W_2$ distance is defined as the infimum, with respect to couplings, of the $L^2$ distance between samples from the measures. In this case, if $\bbeta$ is a sample from $\proba$ and $\bbeta_\delta$ from $\proba_\delta$, then
    %\begin{equation*}
     %   \begin{split}
      %      W_2^2(\proba,\proba_\delta) & = \inf_{\Gamma(\bbeta,\bbeta_\delta)} \E_\Gamma||\bbeta-\bbeta_\delta||^2 \\
     %       & = \E_{\Gamma_\star}||\bbeta-\bbeta_\delta||^2 \\
      %      & = \sum_{j\in[p]} \E_{\Gamma_\star}(\beta_j-\beta_{\delta,j})^2 \\
       %     & \geq \sum_{j\in[p]} \inf_{\Gamma_j(\beta_j,\beta_{\delta,j})} \E_{\Gamma_j}(\beta_j-\beta_{\delta,j})^2 \\
     %       & = \sum_{j\in[p]} W_2^2(\proba_j,\proba_{\delta,j}).
     %   \end{split}
    %\end{equation*}
   % Here, for simplicity, I am assuming that there is a coupling $\Gamma_\star$ that exactly achieves the infimum in $\inf_{\Gamma(\bbeta,\bbeta_\delta)} \E_\Gamma||\bbeta-\bbeta_\delta||^2$ but the argument can be easily extended to cases in which there is no such coupling.}
    Then,
    \begin{equation*}
        \E W_2^2(\proba,\proba_\delta) \geq \sum_{j\in[p]} \E W_2^2(\proba_j,\proba_{\delta,j}) = p \E W_2^2(\proba_{j_0},\proba_{\delta,j_0}).
    \end{equation*}
    The conclusion then follows because $p/n\to \kappa$.
\end{proof}

    \section{Explicit fixed point equations for linear regression}\label{app:linear_regression}

\subsection{Proof of Proposition \ref{prop:lin_reg_eqs}}\label{sec:proof_lin_reg_eqs}

%\textcolor{red}{I feel I need to do this in last round, when I have checked accuracy and completeness of both current Section B.1 and Sections 3,4.}
For this subsection, we abbreviate $\theta(\xi_B), \xi_B$ as $\theta$ or $\theta(\xi)$ and $ \xi$ respectively.
Notice that, to have explicit expressions for these equations, it is enough to compute
\begin{equation*}
    \thermal{\beta}_{h}, \,\, \thermal{\beta^2}_{h}, \,\, \thermal{\theta(\xi)}_s \mbox{ and } \thermal{\theta^2(\xi)}_s.
\end{equation*}
We will compute these by repetitive use of \emph{Gaussian integration by parts}. The first part of the proposition will follow from calculating $\thermal{\theta(\xi)}_s$ and $\thermal{\theta^2(\xi)}_s$, which do not require the signal distribution $\mu(\cdot)$ to be Gaussian. 

First, notice that
\begin{equation*}
    \thermal{\theta}_s = \sqrt{\kappa(v_B-c_B)} \thermal{\xi}_s + \sqrt{\kappa c_B} z_{BB_\star}.
\end{equation*}
By suitable completion of squares, note that computing $\langle g(\xi) \rangle_s $ amounts to calculating $E(g(\xi))$ where 

\begin{equation}\label{eq:xidist}
\xi \sim \mathcal{N} \Big[\frac{c_1}{1+c_1^2}(\theta_\star + z - c_2z_{BB_{\star}}), \frac{1}{1+c_1^2} \Big], \,\, \text{with} \,\, c_1 = \sqrt{\kappa(v_B-c_B)}, c_2 =\sqrt{\kappa c_B}, 
\end{equation}
where $z= \Phi^{-1}(e) \sim \mathcal{N}(0,1)$, independent of everything else. 
%We therefore have that 
%Furthermore, by Gaussian integration by parts with respect to $\xi$ we have
%\begin{equation*}
%    \thermal{\xi}_s = - \kappa(v_B-c_B) \thermal{\xi}_s + \sqrt{\kappa(v_B-c_B)} (\theta_\star + z - \sqrt{\kappa c_B} z_{BB_\star});
%\end{equation*}
Hence, 
\begin{equation*}
        \thermal{\xi}_s = \frac{\sqrt{\kappa(v_B-c_B)}}{\kappa(v_B-c_B) + 1} (\theta_\star + z -  \sqrt{\kappa c_B} z_{BB_\star}). 
\end{equation*}
 This implies that

\begin{equation}\label{eq:mean_h}
    \thermal{\theta}_s = \frac{\kappa(v_B-c_B)}{\kappa(v_B-c_B) + 1} (\theta_\star + z) + \frac{\sqrt{\kappa c_B}}{\kappa(v_B-c_B) + 1}  z_{BB_\star}.
\end{equation}
We now compute the second moment of $\theta$.
\begin{equation*}
   \thermal{\theta^2}_s = \kappa(v_B-c_B) \thermal{\xi^2}_s + \kappa c_B z^2_{BB_{\star}} + 2 \kappa \sqrt{(v_B-c_B)c_B} \thermal{\xi}_s z_{BB_\star}.
\end{equation*}
%From yet another use of Gaussian integration by parts we have
%\begin{equation*}
 %   \thermal{\xi^2}_s = 1 - \kappa(v_B-c_B) \thermal{\xi^2}_s + \sqrt{\kappa(v_B-c_B)} (\theta_\star + z) \thermal{\xi}_s - \kappa \sqrt{(v_B-c_B)c_B} z_{BB_\star} \thermal{\xi}_s 
%\end{equation*}
 From \eqref{eq:xidist}, we know that 

\begin{equation*}
    \thermal{\xi^2}_s = \frac{1}{\kappa(v_B-c_B) + 1}\left( 1 + \frac{\kappa(v_B-c_B)}{\kappa(v_B-c_B) + 1} (\theta_\star + z -  \sqrt{\kappa c_B} z_{BB_\star})^2 \right).
\end{equation*}
Putting things together, we obtain 
\begin{equation}\label{eq:mom2_s}
    \begin{split}
        \thermal{\theta^2}_s & = \frac{\kappa(v_B-c_B)}{\kappa(v_B-c_B) + 1}\left( 1 + \frac{\kappa(v_B-c_B)}{\kappa(v_B-c_B) + 1} (\theta_\star + z -  \sqrt{\kappa c_B} z_{BB_\star})^2 \right) + \kappa c_B z^2_{BB_\star} \\
        & \hspace{2cm} + \frac{2\kappa(v_B-c_B)}{\kappa(v_B-c_B) + 1} (\theta_\star + z -  \sqrt{\kappa c_B} z_{BB_\star})  \sqrt{\kappa c_B} z_{BB_\star}.
    \end{split}
\end{equation}
%\textcolor{red}{$\langle\theta\rangle_s$ second term sign change needs to be arried through if I am right.}
By the independence of $\xi_{B_{\star}}$, $z$, and $z_{BB_\star}$ and using definitions \eqref{eq:thetasearlier}, \eqref{eq:scalarscores}, and \eqref{eq:FPE_all2}, we obtain that 
\begin{equation}\label{eq:matchingr1to3}
    r_1 = 1 + r_2 = 1 + \E_{G \otimes e}\thermal{\theta(\xi)}^2_s - \E_{G \otimes e}\thermal{\theta^2(\xi)}_s =  \frac{1}{\kappa(v_B-c_B)+1}
\end{equation}
\begin{equation*}
    \mbox{and } \,\, r_3 = \E_{G \otimes e}\thermal{\theta(\xi) - \theta_\star - z}^2_s = \frac{\gamma^2+c_B -2 c_{BB_\star}+\kappa^{-1}}{\kappa(v_B-c_B+\kappa^{-1})^2}.
\end{equation*}
We will now prove the second part of the proposition. To this end, we assume that the prior $\mu(\cdot)$ is standard Gaussian. 
and that $\pi(\cdot)$ is centered. In this case, $\langle f(\beta) \rangle_h$ is equivalent to $\mathbb{E}[(f(\beta)]$ where $\beta \sim \mathcal{N}(m/(1+v),v/(1+v))$, with $m,v$ defined as in \eqref{eq:alphasv}. Plugging in these definitions we have,
%Then, by Gaussian integration by parts, we have that
%\textcolor{red}{TILL HERE}
%\begin{equation*}
    %\thermal{\beta}_{h} = - r_1 \thermal{\beta}_h + (r_2+1) B_\star + \sqrt{r_3} z
%\end{equation*}
%from which we get that
\begin{equation}\label{eq:mean_h2}
    \thermal{\beta}_{h} = \frac{(r_2+1) B_\star + \sqrt{r_3} z}{r_1 + 1},
\end{equation}
%Similarly, we get
%\begin{equation*}%
    %\thermal{\beta^2}_{h} = 1 - r_1 \thermal{\beta^2}_h + \thermal{\beta}_h ((r_2+1) B_\star + \sqrt{r_3} Z)
%\end{equation*}
%from which we get
\begin{equation}\label{eq:mom2_h}
    \thermal{\beta^2}_{h} = \frac{1}{r_1 + 1} \left( 1 + \frac{((r_2+1) B_\star + \sqrt{r_3} z)^2}{r_1 + 1}\right).
\end{equation}
From this, and using the fact that $B_\star$ and $z$ are independent and centered we get that
\begin{equation}\label{eq:FPE1_lin}
    \begin{cases}
        v_B = \frac{1}{r_1 + 1} + c_B \\
        c_B = \frac{(r_2+1)^2 \gamma^2 + r_3}{(r_1 + 1)^2} \\
        c_{BB_\star} = \frac{(r_2+1) \gamma^2}{r_1 + 1}
    \end{cases}
\end{equation}
%In the particular case in which prior and signal are standard Gaussians, by the above we have that \textcolor{red}{Dont know why the below is needed, can directly skip to the $r_1$ equation}\ms{I agree that $C$ is not a per se meaningful quantity. This equation below can be removed.}
%\begin{equation*}
%    C = \frac{\kappa}{r_1 + \kappa + 1}.
%\end{equation*}
Plugging things in \eqref{eq:matchingr1to3}, we obtain that 
\begin{equation*}
    r^2_1 + \kappa r_1 - 1 = 0
\end{equation*}
from which we see that
\begin{equation}\label{eq:r1thm}
    r_1 = r_2+1 = \sqrt{\left(\frac{\kappa}{2}\right)^2 + 1} - \frac{\kappa}{2}.
\end{equation}

%\textcolor{red}{TILL HERE ON FINAL PASS.}
% ------------------------------------------------------------------------------------------------------------------------------------

\subsection{Derivation of $r_1$ and $r_2$ by direct arguments}\label{app:direct_integration_op}
In the setting of linear regression with Gaussian prior, that is, where $\mu(\cdot)$ is Gaussian (for simplicity we assume standard Gaussian), we can validate by direct arguments that Proposition \ref{prop:lin_reg_eqs} provides accurate expressions. While this could be formally proved for all the constants, here we show the computations for $r_1$ and $r_2$. We choose this example  since closed-form expressions exist in this case, allowing us to validate the accuracy of our results. By calculations similar to prior section, it can be shown that our main theorem says a single posterior marginal behaves as $\beta_j \sim \mathcal{N}(m/(1+v),v/(1+v))$ in this setting. The mean and variance simplify to 
\begin{equation}\label{eq:meanvar}
\frac{m}{1+v} = \frac{(r_2+1)/r_1 \beta_{\star,j}+\sqrt{r_3}/r_1Z}{1+1/r_1} = (r_2+1)/(r_2+2)\beta_{\star,j} + \sqrt{r_3}/(r_1+1)Z, \quad v/(1+v)=1/(r_1+1),
\end{equation}
where we used the fact that here $r_1 = r_2+1.$ Recall the generative model is 

%Here we will consider linear regression with Gaussian prior. By direct arguments, we will prove that the asymptotic marginals of a fixed coordinate are Gaussian distributions of the same variance predicted by Proposition \ref{prop:lin_reg_eqs}. The observations, in this setting, are given by
\begin{equation*}
    \boldsymbol{y} = \boldsymbol{X}\boldsymbol{\beta_\star} + \boldsymbol{z}
\end{equation*}
and the posterior is
\begin{equation*}
    \proba(\boldsymbol{\beta}|\boldsymbol{X},\boldsymbol{y}) = \frac{1}{Z_n} \exp\left\{ -\frac{1}{2}||\bX\bbeta - \boldsymbol{y}||^2 - \frac{1}{2}||\bbeta||^2 \right\}.
\end{equation*}
In this case, we can then rewrite the posterior according to
\begin{equation*}
    \proba(\boldsymbol{\beta}|\boldsymbol{X},\boldsymbol{y}) = \frac{1}{Z_n} \exp\left\{ -\frac{1}{2}\bbeta^\top(\bX^\top\bX + \iden)\bbeta + \boldsymbol{y}^\top \bX \bbeta \right\}.
\end{equation*}
This means that the posterior is exactly a Multivariate Normal with mean
\begin{equation*}
    \boldsymbol{\mu} = (\bX^\top\bX + \iden)^{-1} \bX^\top \boldsymbol{y}
\end{equation*}
and covariance matrix
\begin{equation*}
    \boldsymbol{\Sigma} = (\bX^\top\bX + \iden)^{-1}.
\end{equation*}
Notice that $\boldsymbol{\mu}$ is the ridge solution to the problem with tuning parameter equals $1$ and $\boldsymbol{\Sigma}$ is the resolvent of the Wishart matrix ($\bX^\top\bX$) evaluated at $z=-1$.

Suppose we wanted to compute the marginal associated with $\beta_1$. By the fact this is a a Multivariate distribution, the marginal will be a Normal random variable with mean
\begin{equation*}
    \mu_1 = [(\bX^\top\bX + \iden)^{-1} \bX^\top \boldsymbol{y}]_1
\end{equation*}
and variance
\begin{equation*}
    \Sigma_{11} = [(\bX^\top\bX + \iden)^{-1}]_{11}.
\end{equation*}

The easiest of these constants to analyze in the limit of $n$ going to infinity is the variance. For this, we have that
\begin{equation*}
    [(\bX^\top\bX + \iden)^{-1}]_{11} = M_n(-1) \xrightarrow{n\to\infty} a = m(-1);
\end{equation*}
where $M_n(z)$ is the Stieltjes transform of a Wishart matrix of aspect ratio $\kappa$ and, by \cite{marchenko1967distribution}, $m(z)$ is the solution to the quadratic equation
\begin{equation*}
    z\kappa m^2(z) - (1 - \kappa - z) m(z) + 1 = 0.
\end{equation*}
Plugging in $m(z)= a$ and $z=-1$, we obtain that
\begin{equation*}
    a^2 + \left(\frac{2}{\kappa}-1\right) a - \frac{1}{\kappa} = 0. 
\end{equation*}
The only positive solution to this last equation is
\begin{equation*}
    a = \frac{1}{2}\left(\sqrt{\left(\frac{2}{\kappa}-1\right)^2 + \frac{4}{\kappa}} + 1 - \frac{2}{\kappa}\right) = \frac{1}{2}\left(\sqrt{\left(\frac{2}{\kappa}\right)^2 + 1 } + 1 - \frac{2}{\kappa}\right)
\end{equation*}
Which implies that
\begin{equation*}
    \begin{split}
        a^{-1} & = \frac{2}{\sqrt{\left(\frac{2}{\kappa}\right)^2 + 1 } + 1 - \frac{2}{\kappa}} = \frac{2\left(\sqrt{\left(\frac{2}{\kappa}\right)^2 + 1 } + \frac{2}{\kappa} - 1 \right)}{\left(\frac{2}{\kappa}\right)^2 + 1 -  \left(1 - \frac{2}{\kappa}\right)^2} \\
        & = \frac{\kappa}{2} \left(\sqrt{\left(\frac{2}{\kappa}\right)^2 + 1 } + \frac{2}{\kappa} - 1 \right) = \sqrt{\left(\frac{\kappa}{2}\right)^2 + 1 } + 1 - \frac{\kappa}{2} 
    \end{split}
\end{equation*}
 From \eqref{eq:r1thm}, we see that this equals $r_1+1$ as well as $r_2+2$, thus indeed the variance of the marginal is $a=1/(r_1+1)$ as suggested by our main theorem (recall \eqref{eq:meanvar}). 
 
% we have that this must equal $r_1+1$, and solving for $r_1$, we obtain precisely \eqref{eq:r1thm}.
 %This also 
%validates our description of $r_1$. 

%This proves that the variance of the marginals computed from direct integration and random matrix results coincide with the prediction of Proposition \ref{prop:lin_reg_eqs}.

We next show that  part of the mean from direct computations and our main results match
%part of the mean from direct computations and our main result match, and in the process validate our description for $r_2$. 
%We will now focus on proving that the prediction for $r_2$ is also correct. For this notice that
\begin{equation*}
    \boldsymbol{\mu} = (\bX^\top\bX + \iden)^{-1} \bX^\top \boldsymbol{y} = (\bX^\top\bX + \iden)^{-1} \bX^\top \bX \bbetas + (\bX^\top\bX + \iden)^{-1} \bX^\top\bz.
\end{equation*}
Furthermore,
\begin{equation*}
    \begin{split}
        (\bX^\top\bX + \iden)^{-1} \bX^\top \bX \bbetas & = (\bX^\top\bX + \iden)^{-1} (\bX^\top \bX + \iden) \bbetas - (\bX^\top\bX + \iden)^{-1} \bbetas \\
        & = \bbetas - (\bX^\top\bX + \iden)^{-1} \bbetas.
    \end{split}
\end{equation*}
From this we see that
\begin{equation*}
    [(\bX^\top\bX + \iden)^{-1} \bX^\top \bX \bbetas]_1 = (1-\Sigma_{11}) \beta_{\star,1} - \sum_{j=2}^p [(\bX^\top\bX + \iden)^{-1}]_{1j} \beta_{\star,j}.
\end{equation*}
Notice, by the preceding argument, the first term converges to $(1-a) \beta_{\star,1}$. But recall that $a^{-1}=r_2+2$ from previous calculations. Thus the coefficient of the marginal is $1-a= \frac{r_2+1}{r_2+2}$ which matches the coefficient as predicted by our main theorem results for linear regression from \eqref{eq:meanvar}.

\section{Computation details}\label{app:computation}
To validate the theory of Theorems \ref{thm:contraction} and \ref{thm:marginals}, we ran numerical experiments as demonstrated by the figures in our main manuscript. They required numerically computing the solution to the fixed point equations \eqref{eq:FPE_all2}–\eqref{eq:FPE_all1}. Here we provide details regarding how we proceeded in this regard.

Our approach consisted in treating the right-hand side of \eqref{eq:FPE_all2}–\eqref{eq:FPE_all1} as a map, which was then applied iteratively to the values obtained at the previous step until the constants $(v,q,m,r_1,r_2,r_3)$ stabilized. To this end, the expectations appearing on the right-hand side of equations \eqref{eq:FPE_all2}–\eqref{eq:FPE_all1} were approximated by empirical averages computed from samples of the corresponding measures: $p_s(\cdot)$ for equation \eqref{eq:FPE_all2} and $p_h(\cdot)$ for equation \eqref{eq:FPE_all1}. In each case, $1000$ samples were drawn using the No-U-Turn Sampler (NUTS), employing the default PyMC configuration with $4$ chains and a tuning phase of $2000$ iterations. The smoothing parameter $\delta$ was fixed throughout at the value $1/1000$. The code used for all simulations is available in the repository \cite{github_manu}.

\end{document}